\documentclass[leqno,11pt]{amsart}
\usepackage{amssymb, amsmath}
\usepackage[usenames,dvipsnames]{xcolor}
\usepackage{color,ulem,textcomp}

\numberwithin{equation}{section}
\setlength{\oddsidemargin}{0mm} \setlength{\evensidemargin}{0mm}
\setlength{\topmargin}{-10mm} \setlength{\textheight}{230mm}
\setlength{\textwidth}{155mm}

%%%%%%%%%%%%%%%%%% packages %%%%%%%%%%%%%%%%%%%%
% \usepackage{showkeys}
%\usepackage[mathlines,displaymath,pagewise]{lineno}
%\usepackage{amsmath}
\usepackage{amsmath, amsfonts, amssymb, amsthm, amscd, graphicx}
\usepackage{enumerate}
\usepackage{graphicx}
\usepackage{cancel}

%\usepackage{mathpazo}
%%%%%%%%%%%%%%%%%%%%%%%%%%%%%%%%%%%%%%%%%%%%%%%%

%%%%%%%%%%%%%%%%% theorems %%%%%%%%%%%%%%%%%%%%%
\theoremstyle{definition}
%[section]
\theoremstyle{plain}
\newtheorem{thm}{Theorem}[section]%[chapter]%[section]%!!the counter really doesn't follow the chapters!!
\newtheorem{Prop}[thm]{Proposition}
\newtheorem{lem}[thm]{Lemma}
\newtheorem{Cor}[thm]{Corollary}
%[section]

 \newtheorem{Thm}{Theorem}[section]
 \newtheorem{Rmk}[thm]{Remark}
 \newtheorem{Lem}[thm]{Lemma}
  \newtheorem{Def}[thm]{Definition}
% \newtheorem{Lemma}{Lemma}
% \newtheorem{Def}[thm]{Definition}
%%%%%%%%%%%%%%%%%%%%%%%%%%%%%%%%%%%%%%%%%%%%%%%%

%%%%%%%%%%%%%%%% macros Laure %%%%%%%%%%%%%%%%%%

\def\sumetage#1#2{
\sum_{\scriptstyle {#1}\atop\scriptstyle {#2}} }

 \def\N {\mathbb{N}}
\def\R {\mathbb{R}}
\def\D {\mathbb{D}}
\def\T {\mathbb{T}}
\def\Z {\mathbb{Z}}

\def\cA {\mathcal{A}}

\def\cC {\mathcal{C}}

\def\cD {\mathcal{D}}
\def\cK {\mathcal{K}}

\def\cL {\mathcal{L}}

\def\cQ {\mathcal{Q}}
\def\cP {\mathcal{P}}

\def\cS {\mathcal{S}}
\def\cZ {\mathcal{Z}}

\def\cR {\mathcal{R}}
\def\cT {\mathcal{T}}

\def\eps {{\varepsilon}}
\def\e {{\varepsilon}}

\def\indc {{\bf 1}}

\def\la {\langle}
\def\ra {\rangle}

\def\d {{\partial}}

\newcommand{\Span}{\operatorname{span}}

\newcommand{\Ker}{\operatorname{Ker}}

\newcommand{\ba}{\begin{aligned}}
\newcommand{\ea}{\end{aligned}}

\newcommand{\be}{\begin{equation}}
\newcommand{\ee}{\end{equation}}

\def\i{\bar\imath }

%%%%%%%%%%%%%%%%%%%%%%%%%%%%%%%%%%%%%%%%%%%%%%%%

%%%%%%%%%%%%%%%% macros Diogo %%%%%%%%%%%%%%%%%%

%%%%%%%%%%%%%%%%%%%%%%%%%%%%%%%%%%%%%%%%%%%%%%%%

%%%%%%%%%%%%% numbering equations %%%%%%%%%%%%%%
\numberwithin{equation}{section}
%%%%%%%%%%%%%%%%%%%%%%%%%%%%%%%%%%%%%%%%%%%%%%%%

%%%%%%%%%%%%%%%%%%%%%%%%%%%%%%%%%%%%%%%%%%%%%%%%
\begin{document}%%%%%%%%%%%%%%%%%%%%%%%%%%%%%%%%
%%%%%%%%%%%%%%%%%%%%%%%%%%%%%%%%%%%%%%%%%%%%%%%%

% Vous pouvez mettre dans la prochain ligne la rubrique choisie
% (si vous la connaissez) - meme deux, format : Rubrique1/Rubrique2

\title[an $L^2$ analysis of the Boltzmann-Grad limit] 
{From hard sphere dynamics to the Stokes-Fourier equations: an $L^2$ analysis of the Boltzmann-Grad limit}
%{From hard spheres \\ to the linearized Boltzmann equation~:\\ an $L^2$ analysis of the Boltzmann-Grad limit }
\author{Thierry Bodineau, Isabelle Gallagher and Laure Saint-Raymond}
% utiliser les \`etiquettes pour indiquer l'adresse de chaque auteur,
%     s'il y a plusieurs adresses

% \author[label1,label2]{}
% \address[label1]{}
% \address[label2]{}

\begin{abstract}
We derive the linear acoustic and Stokes-Fourier equations as the limiting dynamics of a system of $N$ hard spheres of diameter $\eps$  in two space dimensions, when~$N\to \infty$, $\eps \to 0$, $N\eps =\alpha \to \infty$, using the linearized Boltzmann equation as an intermediate step.
Our proof is based on   Lanford's  strategy \cite{lanford}, and on the pruning procedure developed in~\cite{BGSR1} to improve the convergence time  to all kinetic times with a quantitative control which allows us to reach also hydrodynamic time scales. 
The main novelty here is that  uniform~$L^2$ a priori estimates  combined with a subtle symmetry argument  provide a weak version of chaos, in the form of a cumulant expansion describing the asymptotic decorrelation between the particles. A refined geometric analysis of recollisions is also required in order to discard the possibility of multiple recollisions.
\end{abstract}

\maketitle
%\tableofcontents

\section{Introduction to the Boltzmann-Grad limit and statement of the  result}

The sixth problem  raised  by Hilbert in 1900 on the occasion of the International Congress of Mathematicians  addresses the question of the axiomatization of mechanics, and more precisely of describing the transition between atomistic and continuous models for gas dynamics by rigorous mathematical convergence results.
Even though it is quite restrictive (since only perfect gases can be considered by this process), Hilbert further suggested  using Boltzmann's kinetic equation as an intermediate step to understand the  appearance  of irreversibility and dissipative mechanisms \cite{hilbert0}.
The derivation of the Boltzmann equation was then formalized in  the pioneering work of Grad \cite{grad}.

\medskip
 
A huge amount of literature has been devoted to these asymptotic problems, but up to now they remain still largely open. Important breakthroughs \cite{diperna-lions,BGL2} have allowed for a complete study of some hydrodynamic limits of the Boltzmann equation, especially in incompressible viscous regimes leading to the Navier-Stokes equations (see \cite{GSR} for instance). Note that other regimes such as the compressible Euler limit (which is the most immediate from a formal point of view) are still far from being understood \cite{caflisch, liu}.

But, at this stage, the main obstacle seems actually to come from the other step, namely  the derivation of the Boltzmann equation from a system of interacting particles: the best result to this day concerning this low density limit which is due to Lanford in the case of hard-spheres~\cite{lanford} (see also \cite{CIP,uchiyama,GSRT,PSS,PS} for a complete proof)  is indeed valid only for short times, i.e. breaks down before any relaxation can be observed.

\begin{Thm}\label{lanford-thm}
Consider a system of $N$ hard-spheres of diameter $\eps$ on $\T^d = [0,1]^d$ (with $d\geq 2$), initially ``independent" and identically distributed with density 
$f_0$ such that 
 $$\big\| f_0 \exp(\mu + \frac\beta 2|v|^2 ) \big\|_{L^\infty(\T^d_x \times \R^d_v)}\leq 1 \, ,$$
 for some $\beta>0, \mu \in \R$.
 
Fix $\alpha >0$, then,  in the Boltzmann-Grad limit~$N \to \infty$ with $N \eps^{d-1} = \alpha$, the first marginal  density converges almost everywhere to the solution of the Boltzmann equation
\begin{equation}
\label{boltzmann}
\begin{aligned}
& \d_t f +v\cdot \nabla_x f= \alpha \, Q(f,f),\\
& Q(f,f)(v):=\iint_{{\mathbb S}^{d-1} \times \R^d} [f(v')f(v'_1)-f(v)f(v_1)]  \,\big((v-v_1)  \cdot \nu\big)_+  \, dv_1 d\nu  \, ,\\
& v'=v +\nu \cdot (v_1-v)  \, \nu \, , 
	\quad v'_1=v_1 - \nu \cdot(v_1-v) \, \nu \,  ,
 \end{aligned}
\end{equation}
on a time interval $[0, C(\beta,\mu) /\alpha]$.
 As the propagation of chaos holds, the empirical measure converges in law to 
a density given by the  solution of the Boltzmann equation.
 \end{Thm}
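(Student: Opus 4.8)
The plan is to follow Lanford's original scheme, built around a comparison of two Duhamel series. First I would write the Liouville equation for the $N$ hard spheres on $\T^d$ and derive the BBGKY hierarchy for the marginals $f_N^{(s)}$, $1\le s\le N$, in which each equation couples $f_N^{(s)}$ to $f_N^{(s+1)}$ through a collision-transfer operator $C_{s,s+1}$ carrying the prefactor $(N-s)\eps^{d-1}\simeq\alpha$ coming from the surface measure on the collision sphere. In parallel I would write the Boltzmann hierarchy, the formal $N\to\infty$ limit, whose transport is the free flow and whose collision operator $C^0_{s,s+1}$ is the same kernel but with the colliding particles placed at the same point. The structural point is that for tensorized data $f_0^{\otimes s}$ the Boltzmann hierarchy has the unique solution $f(t)^{\otimes s}$ with $f$ solving \eqref{boltzmann}; hence it suffices to show that the BBGKY solution converges, marginal by marginal, to the Boltzmann one.

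Second, I would expand both hierarchies as iterated Duhamel series indexed by collision trees, alternating transport operators — the hard-sphere flow $S_\eps$, resp. the free flow $S_0$ — with the collision operators. The exponential bound $\|f_0\exp(\mu+\tfrac\beta2|v|^2)\|_{L^\infty}\le 1$ is propagated, with slightly degraded constants, along the flow, and combined with the usual weighted-$L^\infty$ bound on $C_{s,s+1}$ it gives geometric control of the general term; summing yields a bound on $f_N^{(s)}(t)$ uniform in $N$ on $[0,T]$ with $T=C(\beta,\mu)/\alpha$, and the identical estimate holds for the Boltzmann series. This is the step that pins down the time of validity.

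Third — the heart of the matter — I would prove term-by-term convergence. After truncating the series at an order $K$ (the tail being uniformly small by the geometric bound), it remains to pass to the limit in finitely many elementary terms. Changing variables so that each integral runs over a fixed backward pseudo-trajectory, the BBGKY term and its Boltzmann counterpart differ only through (i) the $O(\eps)$ displacement of positions at each collision, harmless by dominated convergence and continuity of $f_0$, and (ii) a pathological set of configurations for which the backward hard-sphere flow creates a recollision between two particles of the tree, or for which two particles start within distance $\eps$ without forming a collision pair. The main obstacle is to prove that this pathological set has vanishing measure as $\eps\to 0$: one parametrizes the backward trajectory, localizes the recollision time, and estimates the measure of velocities and impact parameters that force a second encounter — the delicate geometric lemma, and also the place where $d\ge 2$ and the a priori control of large velocities are genuinely used.

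Finally, the uniform bound together with term-by-term convergence give $f_N^{(s)}\to f^{\otimes s}$ almost everywhere for every fixed $s$, which is precisely the propagation of chaos, and in particular the announced convergence of the first marginal to the solution of \eqref{boltzmann}. For the empirical measure $\pi_N:=\frac1N\sum_{i=1}^N\delta_{(x_i,v_i)}$, since the candidate limit $f(t)\,dx\,dv$ is deterministic it suffices to check that, for each test function, the expectation of the associated linear functional of $\pi_N$ converges to its value against $f(t)$ and that the variance tends to $0$; the former follows from $f_N^{(1)}\to f$, the latter from $f_N^{(2)}\to f\otimes f$, whence $\pi_N\to f(t)\,dx\,dv$ in law.
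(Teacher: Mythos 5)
Your proposal is correct and follows essentially the same route as the proof this paper relies on: Theorem~\ref{lanford-thm} is quoted from Lanford's work (and the complete proofs in the cited references), and the paper's own recap of that strategy in Section~\ref{seriesexpansion}--\ref{lanfordstrategy} consists of exactly your two steps --- short-time uniform bounds on the iterated Duhamel series via weighted $L^\infty$ continuity estimates for the collision operators, then term-by-term convergence of BBGKY to Boltzmann pseudo-trajectories outside a small set of recollision parameters, with chaos and the empirical-measure statement deduced from the convergence of the first two marginals. No substantive discrepancy to report.
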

By independent we mean here that the correlations, which are only  due to the non overlapping condition, vanish asymptotically as $\eps \to 0$.

\medskip

The main reason why the convergence is not known to hold for longer time intervals is 
that the nonlinearity in the Boltzmann equation~(\ref{boltzmann}) is treated as if the equation was of the type~$\partial_t f = \alpha f^2$:   the cancellations between gain and loss terms in~$Q(f,f)$ are yet to be   understood. The only information we are able to get on these compensations comes from the stationarity of the canonical equilibrium measure.
%is the existence of invariant measures; 
In this work, we consider very small fluctuations around such equilibria and show that the convergence is valid for
all kinetic times with a quantitative control which allows us to reach also hydrodynamic time scales.

\subsection{Setting of the problem}

\subsubsection{The model}

In the following, we consider only the case of  dimension $d = 2$ 
(we refer the reader to Section \ref{openpbs} for a discussion of the difficulties to generalize our proof in higher dimensions).
We are interested   in describing the macroscopic behavior of a gas consisting of~$N$ hard spheres of diameter $\eps$ in a  periodic domain~${ \T^2} = [0,1]^2$ of~$ \R^2$, with positions and velocities~$(x_i, v_i)_{1\leq i \leq N} $ in~$ ({ \T^2} \times \R^2)^N$, the dynamics of which is given by
\begin{equation}
\label{hard-spheres1}
{dx_i\over dt} =  v_i \,,\quad {dv_i\over dt} =0 \quad \hbox{ as long as \ } |x_i(t)-x_j(t)|>\eps 
\quad \hbox{for \ } 1 \leq i \neq j \leq N
\, ,
\end{equation}
with specular reflection at a collision %for two indexes~$i$ and~$j$ satisfying~$1 \leq i \neq j \leq N$, we have
\begin{equation}
\label{hard-spheres2}
\begin{aligned}
\left. \begin{aligned}
 v_i'& := v_i - \frac1{\eps^2} (v_i-v_j)\cdot (x_i-x_j) \, (x_i-x_j)   \\
 v_j'& := v_j + \frac1{\eps^2} (v_i-v_j)\cdot (x_i-x_j) \, (x_i-x_j) 
\end{aligned}\right\} 
\quad  \hbox{ if } |x_i(t)-x_j(t)|=\eps\,.
\end{aligned}
\end{equation}

By macroscopic behavior, we mean that we   look for  a statistical description both taking the limit $N\to \infty$ and averaging  on the initial configurations.

	\medskip
	
  {  Denote~$X_N := (x_1,\dots,x_N) \in \T^{2N}$, $V_N := (v_1,\dots,v_N) \in \R^{2 N}$
and~$Z_N:= (X_N,V_N) \in \D^N$ where~$ \D^N:=  \T^{2N} \times \R^{2N}$.} Defining the Hamiltonian  
$$
H_N (V_N):= \frac12  \sum_{i=1}^N |v_i|^2  \, ,
$$
we consider the Liouville equation in the $4N$-dimensional phase space~{   
\begin{equation}
\label{eq: phase space}
{\mathcal D}_{\eps}^{N} := \big\{Z_N \in \D^N \, / \,  \forall i \neq j \, ,\quad |x_i - x_j| > \eps \big\} \, .
\end{equation}
The Liouville equation is the following
}
	$$
	\d_t f_N +\{ H_N, f_N\} =0\,,
$$
or in other words
\begin{equation}
\label{Liouville}
\d_t f_N +V_N \cdot \nabla_{X_N} f_N =0\,,
\end{equation}
	with specular reflection on the boundary, meaning that if~$Z_N$ belongs to~$\d {\mathcal D}_{\eps}^{N+}(i,j)$  then we impose that
\begin{equation}\label{tracecondition}
	 f_N (t,Z_N) = f_N (t,Z_N' )  \, ,
\end{equation} 
 where~$X_N'=X_N$ and~$v_k'=v_k$ if~$k \neq i,j$ while~$(v_i',v_j')$ are given by~(\ref{hard-spheres2}). We have also defined
\begin{align}
\label{eq: domain boundary}
\d {\mathcal D}_{\eps}^{N\pm }(i,j) := \Big \{Z_N \in \D^N \, / \,  &|x_i-x_j| = \eps \, , \quad \pm (v_i-v_j) \cdot (x_i- x_j) >0 \\
& \mbox{and}  \quad\forall (k,\ell) \in  [1,N]^2\setminus \{(i,j)\} \, , k \neq \ell \, ,     |x_k-x_\ell| > \eps\Big\} \, .
\nonumber
\end{align}
In the following we assume that~$f_N$ is symmetric under permutations of the~$N$ particles, meaning that the particles are exchangeable, and we  define~$f_N$ on the whole phase space~$\D^N$ by setting~$f_N \equiv 0$ on~$\D^N \setminus{\mathcal D}_{\eps}^{N}$.

\medskip
   We recall, as shown in~\cite{alexanderthesis} for instance, that
the set of initial configurations leading to   ill-defined characteristics (due to clustering of collision times, or collisions involving more than two particles) is of measure zero in~${\mathcal D}_\eps^N$.

\medskip
   In the following we shall denote by~${\bf \Psi}_N$ the solution operator to the ODE~(\ref{hard-spheres1}-\ref{hard-spheres2}) and by~${\bf S}_N$   the group associated with free transport in $\cD_\eps^N$ with specular reflection on the boundary.  In other words, for a function $\varphi_N$ defined on $\cD_\eps^N$, we write
$$
{\bf S}_N (\tau) \varphi_N (Z_N) = \varphi_N \big({\bf \Psi}_N (-\tau) Z_N\big) \, .
$$

\subsubsection{The BBGKY and Boltzmann hierarchies}\label{presentationhierarchies}
We are interested in the limiting behaviour of the previous system when~$N\to \infty$ and~$\eps \to 0$ under the Boltzmann-Grad scaling~$N\eps  =\alpha$, with~$\alpha = O(1)$ or diverging slowly to infinity. 
The quantities which are expected to have finite limits in the Boltzmann-Grad limit  are   the marginals 
$$ 
	f_N^{(s)} (t,Z_s): = \int _{\D^{N-s}}f_N(t,Z_N) dz_{s+1}\dots dz_N
$$
for every  $s$ fixed ($s<N$).

\medskip
 
 A formal computation based on Green's formula (see~\cite{CIP,Simonella,GSRT} for instance)  leads to the following  BBGKY  hierarchy for~$s<N$
	\begin{equation}
\label{eq: BBGKY}
	(\d_t +\sum_{i=1}^s v_i\cdot \nabla_{x_i} ) f_N^{(s)} (t,Z_s) = \alpha \big( C_{s,s+1} f_N^{(s+1)}\big) (t,Z_s)
	\end{equation}
	on~$\cD_\eps^s$, 	with the boundary condition as in~(\ref{tracecondition})
	$$f_N^{(s)} (t,Z_s) = f_N^{(s)} (t,Z_s') \hbox{ on }\d D_\eps^{s +}(i,j)  \, .$$

  	The collision term is defined by
		\begin{equation}
	\label{BBGKYcollision}
		\begin{aligned}
	&\big( C_{s,s+1} f_N^{(s+1)}\big) (Z_s)	  :=   (N-s) \eps \alpha^{-1}\\
&\times \Big( \sum_{i=1}^s \int_{{\mathbb S} \times \R^ 2}  f_N^{(s+1)}(\dots, x_i, v_i',\dots , x_i+\eps \nu, v'_{s+1}) \big((v_{s+1}-v_i) \cdot \nu\big)_+ d\nu dv_{s+1}  \\
& \quad    - \sum_{i=1}^s \int_{{\mathbb S} \times \R^ 2}    f_N^{(s+1)}(\dots, x_i, v_i,\dots , x_i+\eps \nu, v_{s+1}) \big((v_{s+1}-v_i) \cdot \nu\big)_- d\nu dv_{s+1} \Big)\\
&\mbox{with} \quad v_i ' := v_i  -  (v_i -v_{s+1})\cdot \nu \,  \nu \,   ,\quad v_{s+1}' := v_{s+1} +  (v_i -v_{s+1})\cdot \nu \,  \nu \, ,
\end{aligned}
	\end{equation}
	where~${\mathbb S}$ denotes the unit sphere in~$\R^2$.
	  Note that  the collision integral is split into two terms according to the sign of $(v_i-v_{s+1}) \cdot \nu $ and we used the trace condition on $\d {\mathcal D}_\eps^{N+}(i,s+1)$
		to  express all quantities in terms of pre-collisional configurations: in the following we shall also use the notation 
$$\begin{aligned}
C_{s,s+1}^{i,+}  f_{s+1} (Z_s)	 :=   (N-s) \eps \alpha^{-1} \int   f_{s+1}(\dots, x_i, v_i',\dots , x_i+\eps \nu, v'_{s+1})  \Big((v_{s+1}-v_i) \cdot \nu\Big)_+ d\nu dv_{s+1} \, , \\
C_{s,s+1}^{i,-}   f_{s+1}(Z_s) :=   (N-s) \eps  \alpha^{-1} \int   f_{s+1}(\dots, x_i, v_i,\dots , x_i-\eps \nu, v_{s+1})  \Big((v_{s+1}-v_i) \cdot \nu\Big)_+ d\nu dv_{s+1} \, , 
\end{aligned}
$$
so that
\begin{equation}
\label{eq: collision Cs}
 C_{s,s+1} =  \sum_{i=1}^s  (C_{s,s+1}^{i,+} -C_{s,s+1}^{i,-}   ) \, .
\end{equation}

		 The closure  for $s=N$ is given by the Liouville equation (\ref{Liouville}). 
		 
		 \medskip
		  
			To obtain the Boltzmann hierarchy, we  compute the formal  limit of the transport and collision operators when~$\eps$ goes to~$ 0$. Recalling that~$(N-s) \eps    \sim \alpha$, the limit hierarchy is  given by
\begin{equation}
\label{hier: Botzmann}
	(\d_t +\sum_{i=1}^s v_i\cdot \nabla_{x_i} ) f^{(s)} (t,Z_s) = \alpha \big( \bar C_{s,s+1} f^{(s+1)}\big) (t,Z_s) \,,
	\end{equation}
in~$\D^s$, where~$\bar C_{s,s+1}$ are   the limit collision operators defined by
$$
	\begin{aligned}
	\big(\bar  C_{s,s+1} f ^{(s+1)}\big) (Z_s)      &:=   \sum_{i=1}^s \int  f^{(s+1)}(\dots, x_i, v_i',\dots , x_i , v'_{s+1}) \Big((v_{s+1}-v_i) \cdot \nu\Big)_+ d\nu dv_{s+1} \\
&\quad-  \sum_{i=1}^s \int  f^{(s+1)}(\dots, x_i, v_i,\dots , x_i , v_{s+1}) \Big((v_{s+1}-v_i) \cdot \nu\Big)_+ d\nu dv_{s+1} \,.
	\end{aligned}
$$

\subsubsection{Initial data and closures for the Boltzmann hierarchy}

 Consider chaotic initial data of the form~$(f_0^{\otimes s})_{s\in \N^*}$, with
$$
f_0^{\otimes s}(Z_s):= \prod_{i=1}^s f_0(z_i) 
\quad \text{with} \quad
\int_\D f_0(z) dz = 1
\, ,
$$
and denote by $f(t)$ the solution of
the  nonlinear Boltzmann equation \eqref{boltzmann} which can be rewritten as
$$ (\d_t+v\cdot \nabla_x ) f =\alpha \bar C_{1,2} f^{\otimes 2}\, ,  \qquad f_{|t = 0} = f_0\,.$$
Then
an easy computation shows that $(f(t)^{\otimes s})_{s\in \N^*}$ is a chaotic solution  to the Boltzmann hierarchy, whose first marginal is nothing else than~$f(t)$.
Note that, even though it may look  like a very particular case, it is somehow generic as any symmetric initial datum may in fact be decomposed as a superposition of chaotic distributions (this is known as the Hewitt-Savage theorem, see~\cite{HS}).
This means that the Boltzmann hierarchy, even though consisting of linear equations,  encodes   nonlinear phenomena. In the absence of suitable uniform  a priori estimates, we therefore may expect the solution to blow up after a finite time. This is actually the main obstacle to get a rigorous derivation of the Boltzmann equation over time intervals larger than the mean free time~$O(1/\alpha)$.
 \smallskip
  
A different structure of initial datum can lead to other types of equations.
Recall that the Maxwellian
$$ M_\beta (v) := {\beta \over 2\pi}   \exp \left(-\beta {|v|^2\over 2} \right)$$  
is  an equilibrium for the Boltzmann dynamics, so that $(M_\beta ^{\otimes s})_{s \geq 1}$ is 
 a stationary solution to the Boltzmann hierarchy. 
Consider an initial datum which is a perturbation of this stationary  solution
\begin{equation}
\label{eq: initial boltz}
f^{(s)} _0 (Z_s) = M_\beta^{\otimes s} (V_s)\sum_{i=1}^s  g_{\alpha,0} (z_i)\,   ,
\end{equation}
where we added a dependency of $g_{\alpha,0}$ on $\alpha$ for later purposes.
This form is stable under the limit dynamics \cite{BLLS} so that a solution to the Boltzmann hierarchy 
\eqref{hier: Botzmann} is
\begin{equation}
\label{solboltz}
f^{(s)} (t,Z_s) = M_\beta^{\otimes s} (V_s)\sum_{i=1}^s  g_\alpha (t,z_i)
\end{equation}
where $g_\alpha$ is a solution of the linearized Boltzmann equation
\begin{equation}
\label{lBoltz}
\begin{aligned}
&(\d_t+v\cdot \nabla_x ) g_\alpha = - \alpha \cL _\beta g_\alpha \, ,\\
& \cL_\beta \,  g_\alpha (v):= 
-{1\over M_\beta} \bar C_{1,2}  (M_\beta  \otimes M_\beta g_\alpha  +  M_\beta g_\alpha  \otimes M_\beta )(v) \\
& \qquad \qquad = \int M_\beta (v_1) \Big(g_\alpha (v)+ g_\alpha (v_1) - g_\alpha (v') - g_\alpha (v_1')\Big)\Big((v_{1}-v) \cdot \nu\Big)_+ d\nu dv_{1} \, ,
\end{aligned}
\end{equation}
with initial datum $g_{\alpha,0}$. The functional space~$L^2(dx M_\beta dv)$ is natural to study the linearized Boltzmann equation,  because the associate norm is a Lyapunov functional for~(\ref{lBoltz})
 (see Appendix~\ref{appendixBoltzlin}).
As we will heavily use it later on, we introduce the following notation, for~$p = 1,2$:
% (we shall be using~$p=1,2$ in the sequel): 
for any function~$g_s$ defined on~$\D^s$,
\begin{equation}
\label{eq: Lp}
\|g_s\|_{L^p_\beta (\D^s)}:= \Big(\int M^{\otimes s}_\beta(V_s) |g_s|^p (Z_s) \, dZ_s\Big)^\frac1p \, .
\end{equation}

\bigskip

We now turn  to the particle dynamics and discuss the counterpart of the initial datum \eqref{eq: initial boltz}.
The  Gibbs measure 
\begin{equation}
\label{eq: fonction partition}
M_{N,\beta} ( Z_N) := \frac1 {\cZ_N}  \indc_{\cD_\eps^N} (X_N) \,  M_\beta^{\otimes N} (V_N)\, ,
%\prod_{i=1}^N  M_\beta (v_i),
\qquad 
\cZ_N: = \int_{\T^{2N}} \prod_{1 \leq i\neq j \leq N}\indc_{|x_i-x_j| > \eps } \, dX_N 
\end{equation}
is invariant for the dynamics. An idea to get such linear asymptotics as~(\ref{solboltz}) is to  consider small fluctuations around  an equilibrium of the form
$$
f_{N,0} (Z_N) = M_{N,\beta} ( Z_N)  \prod_{i=1}^N   \big(1+ \delta  g_{\alpha,0} (z_i) \big)   \,.
$$
However whatever the smallness of $\delta$, such a  sequence of initial data is never a small correction to 
$M_{N,\beta}$.
Thus, we shall tune the size of the perturbation with $N$
\begin{align}
f_{N,0} (Z_N) & = M_{N,\beta} ( Z_N)  \prod_{i=1}^N   \big(1+ \frac\delta N g_{\alpha,0} (z_i) \big) 
\nonumber \\
& = M_{N,\beta} ( Z_N) +  \frac\delta N M_{N,\beta} ( Z_N) \sum_{i=1}^N   g_{\alpha,0} (z_i) 
 + O( \delta^2) \,.
\label{eq: perturbation 1/N}
\end{align}
%Typically we would like to consider data of the form
%$$ 
%f_{N,0} (Z_N) = \frac1 {\cZ_N}  \indc_{\cD_\eps^N} 
%\left( \prod_{i=1}^N  M_\beta (v_i) \big(1+\frac\delta N g_0(z_i) \big)\right)  
%\quad \text{with} \quad \int M_\beta g_0(z) dz =0.
%$$
%say with $\displaystyle \int M_\beta g_0(z) dz =0$.
At the first order in $ {\delta}{ }$, we recover an initial datum for  the BBGKY hierarchy of the   form  \eqref{eq: initial boltz}
\begin{equation}
\label{initial data}
f_{N,0} (Z_N) = M_{N,\beta}(Z_N)  \sum_{i=1}^N   g_{\alpha,0}(z_i) \quad  \hbox{  with } 
\quad \int M_\beta g_{\alpha,0}(z) dz =0\, .
\end{equation}
This initial datum records only the perturbation and it is no longer a probability measure. In particular 
$$
\int  f_{N,0} (Z_N) dZ_N = 0 \, ,
$$
and this property is preserved by the Liouville equation~(\ref{Liouville}).
The question is then to know if the solution of the BBGKY hierarchy obeys a form similar  to~(\ref{solboltz}), at least approximately,   and if  one can obtain good enough bounds  in~$L^2$ spaces  to prove long-time convergence of the marginals  to~$f^{(s)}$ defined in~(\ref{solboltz}).

\begin{Rmk}
Note that another type of (non symmetric) perturbation was dealt with in \cite{BGSR1}, namely an initial datum of the form
	\begin{equation}
	\label{linear-initialdata}
	 f_{N,0} (Z_N) = M_{N,\beta}(Z_N) g_0(z_1)\,.
	 \end{equation}
	 This  describes the motion of a tagged particle in a background close to equilibrium, and we have shown that it satisfies asymptotically the linear Boltzmann equation, and the tagged particle dynamics converges to the Brownian motion in the diffusive limit. However the proof is less complicated since all quantities of interest are uniformly controlled in $L^\infty$, which will not be the case with the initial datum \eqref{initial data}.
	\end{Rmk}

	\subsection{Statement of the results}

		\subsubsection{Low density limit}
	
Our main result is the following.
\begin{Thm}
\label{long-time}
Consider~$N$ hard spheres on the space~$\D = \T^2 \times \R^2$, initially distributed 
according to  $f_{N,0}$ defined as in \eqref{initial data} where~$  g_{\alpha, 0}$ is a bounded, Lipschitz function on~$\D$ %$ \T^2 \times \R^2$ 
with zero average, and satisfying the following bound for some constant $C_1$
\begin{equation}
\label{eq: Linfty initial}
\| g_{\alpha, 0}\| _{W^{1,\infty}} \leq  C_1\exp(C_1\alpha^2)\,.
\end{equation}
Then the one-particle distribution~$f_N^{(1)}(t, z)$ 
is close to $M_\beta(v) g_\alpha  ( t , z) $, where $g_\alpha  ( t , z) $ is the solution of the linearized Boltzmann equation~{\rm(\ref{lBoltz})}
		  with initial datum~$g_{\alpha,0} (z)$.
		  
		  \medskip
  
More precisely,  there exists   a non negative constant~$C$ such that  for all~$T>1$ and all~$\alpha > 1$,  in the limit~$N \to \infty$,~$N\varepsilon \alpha^{-1}= 1$,  
\begin{equation}
\label{eq: approx temps gd}
\sup_{t \in [0,T]} \big\|  f_N^{(1)}(t)  - M_\beta  g _\alpha ( t ) 
\big\|_{L^2(\D)} \leq    \frac{ T ^2 e^{C \alpha^2} } { \sqrt{\log\log N}} \, \cdotp
\end{equation}
\end{Thm}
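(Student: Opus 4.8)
The plan is to follow Lanford's strategy of comparing the BBGKY hierarchy with the Boltzmann hierarchy term by term through the Duhamel (iterated collision) expansion, but with all estimates carried out in the weighted $L^2_\beta$ spaces $\|\cdot\|_{L^2_\beta(\D^s)}$ introduced in~\eqref{eq: Lp} rather than in $L^\infty$. First I would expand $f_N^{(s)}(t)$ and $f^{(s)}(t) = M_\beta^{\otimes s}\sum_i g_\alpha(t,z_i)$ as sums of Duhamel terms indexed by collision trees, and split the difference into (i) the truncated part, in which the number of collisions in each tree is bounded by some $n$ and the collision times are separated by at least a small parameter, and (ii) the remainder. The truncated BBGKY and Boltzmann terms are then compared geometrically: one shows that, outside a set of small measure in the integration variables, the two flows $\mathbf{S}_s$ and the free flow track each other, the only discrepancy coming from recollisions (trajectories that would collide again in the backward hard-sphere flow). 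This is where the statement ``a refined geometric analysis of recollisions is required to discard multiple recollisions'' enters: I would quantify, using the two-dimensional geometry of the hard-sphere flow, the measure of configurations producing a recollision, and in particular rule out trees with several simultaneous or nested recollisions, which would otherwise spoil the bound.

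The genuinely new ingredient, and the one I would develop most carefully, is controlling the remainder term (ii) and more generally closing the iteration over \emph{long} times. In $L^\infty$ the Duhamel series converges only up to time $O(1/\alpha)$ because each collision operator costs a factor growing with $s$; the point of the $L^2$ analysis is that the invariance of the Gibbs measure $M_{N,\beta}$, equivalently the fact that $\|\cdot\|_{L^2_\beta}$ is a Lyapunov functional for the linearized dynamics (Appendix on the linearized Boltzmann operator), gives an a priori bound on $f_N$ uniform in time. Concretely I would use the $L^2$ a priori estimate together with a Cauchy–Schwarz argument on each collision integral to gain back, on average, the combinatorial factors that are lost pointwise. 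To make this work for all iterates one needs the ``weak chaos'' statement: instead of $f_N^{(s)} \approx (f_N^{(1)})^{\otimes s}$ pointwise, one writes a cumulant expansion whose higher cumulants are small in $L^2$, and a symmetry (exchangeability) argument is used to show that the cross terms in the square of the sum $\sum_i g_\alpha(z_i)$ which are not diagonal contribute only lower-order corrections. This is the step I expect to be the main obstacle: organizing the cumulant/cluster expansion so that the $L^2$ smallness propagates through the pruning procedure of~\cite{BGSR1} (which extends the convergence time to all kinetic times) while keeping the constants of the form $e^{C\alpha^2}$, matching the hypothesis~\eqref{eq: Linfty initial} on the initial data.

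Once the truncated terms are shown to converge and the remainders are controlled, I would optimize the free parameters — the truncation order $n$ of the trees, the minimal time-separation between collisions, and the number of Duhamel iterations — as functions of $N$, $\alpha$ and $T$. The error from dropping long trees is super-exponentially small in $n$, the geometric (recollision) error is a small power of $\eps$ times a factorial in $n$, and balancing these against the requirement $N\eps\alpha^{-1}=1$ forces $n$ to grow only like $\log\log N$; this is precisely the origin of the $1/\sqrt{\log\log N}$ rate in~\eqref{eq: approx temps gd}, with the $T^2$ coming from iterating the Duhamel estimate twice over $[0,T]$ and the $e^{C\alpha^2}$ from the linearized Boltzmann semigroup bounds and the size of $g_{\alpha,0}$. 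Finally I would use the stability of the linearized Boltzmann equation (well-posedness and $L^2$ decay of $g_\alpha$) to identify the limit of $f_N^{(1)}$ with $M_\beta g_\alpha$, completing the proof.
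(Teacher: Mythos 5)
Your overall plan --- Duhamel/collision-tree expansion, the pruning of~\cite{BGSR1}, weighted $L^2_\beta$ estimates, a cumulant decomposition obtained purely from exchangeability and the conserved $L^2$ bound, a geometric analysis of recollisions for the comparison of pseudo-trajectories, and a final tuning of the truncation parameters producing the $1/\sqrt{\log\log N}$ rate with constants $e^{C\alpha^2}$ --- is indeed the strategy of the paper. In particular, your ``weak chaos'' step is exactly Proposition~\ref{lemfNL2sym 0} (the decomposition $f_N^{(s)}=M_\beta^{\otimes s}\sum_m\sum_\sigma g_N^m(Z_\sigma)$ with $\|g_N^m\|^2_{L^2_\beta}\le \binom{N}{m}^{-1}\|f_N/M_\beta^{\otimes N}\|^2_{L^2_\beta}$), and your Cauchy--Schwarz idea on the collision integrals is how the $L^2$ continuity estimate of Proposition~\ref{L2-est} is proved, the $\eps^{-(m-1)/2}$ loss being compensated by the cumulant decay rather than by averaging of combinatorial factors.

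There is, however, a genuine gap in your treatment of the remainder. The $L^2$ continuity estimates rest on the change of variables (collision parameters, elapsed time) $\mapsto$ (configuration), which is injective only for the transport truncated at the first recollision; hence the whole $L^2$/cumulant machinery applies only to pseudo-dynamics with no recollision (and, with the extra work of Section~\ref{lanfordL2}, exactly one). Since the super-exponential remainder contains arbitrarily many collisions, you cannot control its recollisions a priori, so your plan to bound the \emph{entire} remainder by $L^2$ arguments does not close: you place the recollision analysis only in the comparison of the truncated parts. The paper instead splits the remainder by the number of recollisions ($R_N^{K,0}$, $R_N^{K,1}$, $R_N^{K,>}$, plus a large-velocity part $R_N^{K,vel}$ requiring the cut-off $|V|^2\le C_0|\log\eps|$): the pieces with at most one recollision are handled by the $L^2$ estimates against the cumulant decay, while the piece with at least two recollisions is estimated in $L^\infty$, where the only time-uniform bound is $O(N)M_{N,\beta}$ from the maximum principle (it cannot be improved in time, unlike in the tagged-particle setting), and this $O(N)$ divergence is compensated \emph{exactly} by the $O(\eps)$ smallness of the parameter set producing two recollisions --- the step where $d=2$ and the refined multi-recollision geometry of Section~\ref{recollisions} and Appendix~\ref{geometricallemmasappendix} are indispensable. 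Without this three-way decomposition and the $L^\infty$/geometry balancing, the argument you sketch would fail on the recolliding part of the remainder.
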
  
Note that the $L^\infty$-convergence to the solution of the linearized equation was established in \cite{BLLS} following Lanford's strategy.
This convergence was derived for short times, but in any dimension $d \geq 3$. 
The generalization out of  equilibrium was then established in \cite{S3}.
 
\medskip

Following \cite{BLLS}, Theorem \ref{long-time} can also be interpreted as the limit of  time correlations in the fluctuation field at equilibrium.
Let $h$ be a smooth function in $\T^2 \times \R^2$ such that  $\int M_\beta h(z) dz =0$, then the fluctuation field $\zeta^N$ can be tested against 
$h$ at time $t$
\begin{eqnarray*}
\zeta^N (h,Z_N(t) ) := \frac{1}{\sqrt{N}} \sum_{i =1}^N h ( z_i(t) ) \, ,
\end{eqnarray*}
where $Z_N(t)$ stands for the particle configuration at time $t$. 
The equilibrium covariance of the fluctuation field at different times, say 0 and $t$, is given by
\begin{eqnarray*}
{\mathbb E}_{M_{N, \beta}} \left( \zeta^N ( h,Z_N(0))   \zeta^N (\tilde h,Z_N(t))  \right) 
= \int_{\T^{2N} \times \R^{2N}}  
M_{N, \beta}(Z_N) \, \zeta^N ( h,Z_N(0)) \zeta^N (\tilde h,Z_N(t)) \, ,
\end{eqnarray*}
for all smooth functions $h, \tilde h$ in $\T^2 \times \R^2$ with mean 0.
Using an initial datum of the form~\eqref{initial data}
$$
f_{N,0} (Z_N) = M_{N,\beta}(Z_N)  \sum_{i=1}^N    h(z_i) \quad  \hbox{  with } 
\quad \int M_\beta h (z) dz =0 \, ,
$$
the covariance can be rewritten,  thanks to the  exchangeability of the particles, as
\begin{align*}
{\mathbb E}_{M_{N, \beta}} \left( \zeta^N (h,Z_N(0))   \zeta^N (\tilde h,Z_N(t))  \right)  & = 
\int_{\T^{2 N} \times \R^{2 N}} dZ_N \, 
f_{N,0} (Z_N) \, \frac{ \sum_{i =1}^N \tilde h ( z_i(t) )}{N} \\
& = \int_{\D} dz_1 f_N^{(1)}(t, z_1) \tilde h ( z_1 ) \,  .
\end{align*}
Thus the limiting time covariance is related to the convergence of the first marginal $f_N^{(1)}$ and the following corollary
is an immediate consequence of Theorem \ref{long-time}.
\begin{Cor}
\label{cor: covariance}
Fix $\alpha >0$ and let $h, \tilde h$ be two functions  in $L^2_\beta (\D)$ with mean 0 with respect to $M_\beta dv dx$. 
Then for any $t \geq 0$, the time covariance converges 
in the Boltzmann-Grad  limit~$N \to \infty$,~$N\varepsilon \alpha^{-1}= 1$ 
\begin{align*}
\lim_{N \to \infty} & 
{\mathbb E}_{M_{N, \beta}} \left( \zeta^N (h,Z_N(0))   \zeta^N (\tilde h,Z_N(t))  \right) \\
& = \int_{\T^2 \times \R^2} dz \, M_\beta(v)   \exp \big( - t ( v \cdot \nabla_x + \alpha \cL_\beta)  \big) h (z) \; \tilde h(z),
\end{align*}
where $v \cdot \nabla_x + \alpha \cL_\beta$ is the operator associated with the linearized Boltzmann equation \eqref{lBoltz}.
\end{Cor}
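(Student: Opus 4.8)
The plan is to deduce Corollary~\ref{cor: covariance} from Theorem~\ref{long-time} by a soft density argument: up to exchangeability the equilibrium time covariance is nothing but the pairing of the first marginal against the test function. Indeed, choosing the (non‑probabilistic) initial datum $f_{N,0}(Z_N)=M_{N,\beta}(Z_N)\sum_{i=1}^N h(z_i)$ of the form~\eqref{initial data}, the computation carried out just before the statement — which uses only the exchangeability of the particles and the invariance of $M_{N,\beta}$ under the flow — gives
\[
{\mathbb E}_{M_{N,\beta}}\big(\zeta^N(h,Z_N(0))\,\zeta^N(\tilde h,Z_N(t))\big)=\int_\D f_N^{(1)}(t,z)\,\tilde h(z)\,dz\,,
\]
so that the whole task is to pass to the limit $N\to\infty$, $N\varepsilon\alpha^{-1}=1$, in the right‑hand side.

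First I would treat \emph{smooth} data. Fix $\alpha>0$. Since the initial datum~\eqref{initial data}, the Liouville evolution and the marginalisation are all linear in $h$, the covariance above is bilinear in $(h,\tilde h)$, and so is the candidate limit $(h,\tilde h)\mapsto\int_\D M_\beta(v)\,\big(\exp(-t(v\cdot\nabla_x+\alpha\cL_\beta))h\big)(z)\,\tilde h(z)\,dz$; hence by homogeneity it suffices to prove the convergence when $h$ is bounded, Lipschitz, of zero $M_\beta$‑average and small enough that~\eqref{eq: Linfty initial} holds, and when $\tilde h$ is bounded with compact support in $v$. For such $h$, Theorem~\ref{long-time} yields (taking $T>\max(t,1)$)
\[
\sup_{t\le T}\big\|f_N^{(1)}(t)-M_\beta\,g_\alpha(t)\big\|_{L^2(\D)}\le\frac{T^2e^{C\alpha^2}}{\sqrt{\log\log N}}\longrightarrow 0\,,
\]
where $g_\alpha$ solves the linearized Boltzmann equation~\eqref{lBoltz} with datum $h$, i.e.\ $g_\alpha(t)=\exp(-t(v\cdot\nabla_x+\alpha\cL_\beta))h$ in the semigroup notation justified in Appendix~\ref{appendixBoltzlin}. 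Pairing with $\tilde h$ and using the Cauchy--Schwarz inequality on $\D$ then gives
\[
\Big|\int_\D f_N^{(1)}(t,z)\,\tilde h(z)\,dz-\int_\D M_\beta(v)\,\big(\exp(-t(v\cdot\nabla_x+\alpha\cL_\beta))h\big)(z)\,\tilde h(z)\,dz\Big|\le\big\|f_N^{(1)}(t)-M_\beta\,g_\alpha(t)\big\|_{L^2(\D)}\,\|\tilde h\|_{L^2(\D)}\,,
\]
which tends to $0$; this is the announced limit for smooth data.

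It remains to remove the regularity assumptions, $\alpha>0$ being fixed. Given $h,\tilde h\in L^2_\beta(\D)$ of zero $M_\beta$‑average, approximate them in $L^2_\beta$ by sequences $h_k,\tilde h_k$ that are smooth, bounded, Lipschitz, compactly supported in $v$ and of zero average. On the limit side, since $\|\cdot\|_{L^2_\beta}$ is a Lyapunov functional for~\eqref{lBoltz} (Appendix~\ref{appendixBoltzlin}), the operator $\exp(-t(v\cdot\nabla_x+\alpha\cL_\beta))$ is a contraction on $L^2_\beta$, so the limit is continuous in $(h,\tilde h)$ for the $L^2_\beta$ topology. On the particle side, write the difference of covariances again in the form above and apply Cauchy--Schwarz in $L^2(M_{N,\beta})$ together with the invariance of $M_{N,\beta}$ under the flow:
\[
\Big|{\mathbb E}_{M_{N,\beta}}\big(\zeta^N(h-h_k,Z_N(0))\,\zeta^N(\tilde h,Z_N(t))\big)\Big|\le\|\zeta^N(h-h_k,Z_N)\|_{L^2(M_{N,\beta})}\,\|\zeta^N(\tilde h,Z_N)\|_{L^2(M_{N,\beta})}\,,
\]
and likewise for the term carrying $\tilde h-\tilde h_k$. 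Here $M_{N,\beta}^{(1)}=M_\beta$ exactly (by translation invariance of $\T^2$), and the standard control of the two‑point function of the hard‑sphere Gibbs measure — whose leading correction to $M_\beta^{\otimes 2}$ is the exclusion $-M_\beta^{\otimes 2}\,\indc_{|x_1-x_2|<\eps}$, contributing $O(\alpha\eps)$ once multiplied by $N$, the remaining terms being of lower order in $\eps$ at fixed $\alpha$ — yields the uniform bound $\|\zeta^N(g,Z_N)\|_{L^2(M_{N,\beta})}\le C\,\|g\|_{L^2_\beta(\D)}$ for every mean‑zero $g$. Thus the approximation error is $O(\|h-h_k\|_{L^2_\beta}+\|\tilde h-\tilde h_k\|_{L^2_\beta})$ uniformly in $N$, and a three‑$\varepsilon$ argument combining this with the convergence already obtained for each fixed $k$ gives the result; in passing, this Gibbs estimate also gives $\int_\D f_N^{(1)}(t,z)\,\tilde h(z)\,dz$ a well‑defined meaning (as the covariance) for arbitrary $\tilde h\in L^2_\beta$.

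Granting Theorem~\ref{long-time}, the corollary is essentially soft: the only ingredient beyond it is the uniform‑in‑$N$ control of the variance of the equilibrium fluctuation field under the hard‑sphere Gibbs measure, which is classical (equivalence of ensembles). I expect this elementary but slightly technical point — needed both to make sense of the pairing with a general $\tilde h\in L^2_\beta$ and to run the density argument — rather than any new dynamical estimate, to be the only place requiring attention.
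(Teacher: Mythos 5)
Your proposal is correct and its core is exactly the paper's (implicit) argument: the covariance is rewritten, via exchangeability and the choice of initial datum \eqref{initial data} with $g_{\alpha,0}=h$, as $\int_\D f_N^{(1)}(t,z)\tilde h(z)\,dz$, and Theorem~\ref{long-time} plus Cauchy--Schwarz gives the limit; the paper simply declares the corollary an ``immediate consequence'' and stops there. What you add beyond the paper is the density step needed because Theorem~\ref{long-time} only covers bounded Lipschitz data while the corollary allows arbitrary mean-free $h,\tilde h\in L^2_\beta(\D)$: you handle it with the contraction of $\exp(-t(v\cdot\nabla_x+\alpha\cL_\beta))$ on $L^2_\beta$ (Appendix~\ref{appendixBoltzlin}) on the limit side and, on the particle side, a uniform-in-$N$ bound $\|\zeta^N(g)\|_{L^2(M_{N,\beta})}\le C(\alpha)\|g\|_{L^2_\beta}$ for mean-zero $g$, together with the invariance of $M_{N,\beta}$ under the flow. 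That bound is indeed true and, in fact, is already available in the paper in the form \eqref{L2-init} (proved from \eqref{eq: ZN} by dropping the exclusion indicator and using orthogonality under $M_\beta^{\otimes N}$, so $N\,{\mathbb E}_{M_{N,\beta}}[\zeta^N(g)^2]=\int f_{N,0}^2/M_{N,\beta}\le CNe^{C\alpha^2}\|g\|_{L^2_\beta}^2$); citing it would be cleaner than your two-point-correlation sketch, whose bookkeeping of orders is slightly off (the correction coming from $\mathcal Z$-ratios, of size $O(\alpha\varepsilon)$ pointwise, produces after multiplication by $N$ a contribution $O(\alpha^2)\|g\|_{L^2_\beta}^2$, not one that is small in $\varepsilon$) — harmless here, since only uniformity in $N$ at fixed $\alpha$ is needed. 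With that adjustment your argument is complete and somewhat more careful than the paper's one-line deduction.
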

Correlation functions are cornerstones of statistical mechanics and besides the case of mean field models, 
mathematical results on these correlations are sparse in the context of classical interacting $n$-body systems
(see nevertheless \cite{LPS} for an explicit computation in the case of one dimensional hard rods).
The convergence of the fluctuation field (for arbitrary time) to a 
 stationary Ornstein-Uhlenbeck  process
was derived in \cite{Rez} for a related microscopic dynamics with random collisions.
A similar convergence of the fluctuation field for the Hamiltonian dynamics is conjectured  in~\cite{S2}, but 
its derivation would require a better understanding of the emergence of the noise arising from the deterministic evolution.

			\subsubsection{Hydrodynamic limits}

Once Theorem~\ref{long-time} is known, it is  possible to    take the limit~$\alpha \to \infty$ while conserving a small error on the right-hand side of~(\ref{eq: approx temps gd}). Using the classical convergence of the linearized Boltzmann equation to the acoustic equation (see Appendix A), one   infers the following result.
\begin{Cor}
\label{acoustics-cor}
Consider~$N$ hard spheres on the space~$\D =  \T^2 \times \R^2$, initially distributed 
according to  $f_{N,0}$ defined as in \eqref{initial data} with a sequence $(g_{\alpha, 0})$ 
of functions satisfying the assumptions of Theorem~{\rm\ref{long-time}} and
converging in $L^2_\beta (\D)$  as $\alpha$ diverges to
$$
g_0 (x,v):=  \rho_0 (x) +  \sqrt \beta \, u_0(x) \cdot v + \frac{\beta |v|^2 - 2}2 \, \theta_0(x)  
% \quad \nabla_x \cdot u_0 = 0\, , 
\quad \text{with} \quad \int_{\T^2} \rho_0 (x) dx = 0 \, .
$$
Then as~$N \to \infty$, $N\varepsilon = \alpha  \to \infty$ much slower than~$ \sqrt {\log \log \log N}$, the distribution~$f_N^{(1)}(t )$ 
 converges  in~$L^2(\D)$-norm to~$M_\beta g (t) $ with
$$
g  (t,x,v):=  \rho(t,x) +  \sqrt \beta \, u (t,x) \cdot v + \frac{\beta |v|^2 - 2}2 \theta (t,x) \, ,$$
where~$(\rho, u,\theta)$ satisfies the acoustic equations
$$
\left\{\begin{aligned}
&\d_t \rho + \frac1{ \sqrt \beta} \,  \nabla_x \cdot u = 0\\
&\d_t u + \frac1{ \sqrt \beta} \, \nabla_x ( \rho+\theta)  = 0   \\
&\d_t \theta  + \frac1{ \sqrt \beta} \,  \nabla_x \cdot u  = 0 \\
\end{aligned}\right.
$$
with initial datum $(\rho_0, u_0,\theta_0)$.
\end{Cor}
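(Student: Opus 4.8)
The plan is to interpose the solution $g_\alpha$ of the linearized Boltzmann equation (\ref{lBoltz}) between the particle marginal and the acoustic profile, and bound
\[
\begin{aligned}
\sup_{t\in[0,T]}\big\|f_N^{(1)}(t)-M_\beta g(t)\big\|_{L^2(\D)}
&\leq \sup_{t\in[0,T]}\big\|f_N^{(1)}(t)-M_\beta g_\alpha(t)\big\|_{L^2(\D)}\\
&\quad + \sup_{t\in[0,T]}\big\|g_\alpha(t)-g(t)\big\|_{L^2_\beta(\D)}.
\end{aligned}
\]
The first term is exactly what Theorem~\ref{long-time} controls: since the sequence $(g_{\alpha,0})$ obeys (\ref{eq: Linfty initial}), it is bounded by $T^2 e^{C\alpha^2}/\sqrt{\log\log N}$. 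This quantity tends to $0$ in the joint limit as soon as $\alpha$ diverges slowly enough that $e^{C\alpha^2}=o(\sqrt{\log\log N})$, i.e.\ $\alpha^2\ll\log\log\log N$, which is precisely the growth restriction $\alpha\ll\sqrt{\log\log\log N}$ in the statement.

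For the second term I would invoke the classical acoustic limit of (\ref{lBoltz}), recalled in Appendix~A, whose skeleton is as follows. The operator $\cL_\beta$ is nonnegative and self-adjoint on $L^2(M_\beta\,dv)$, with kernel the space of infinitesimal Maxwellians $\Ker\cL_\beta=\mathrm{Span}\{1,v_1,v_2,|v|^2\}$, and it is coercive on $(\Ker\cL_\beta)^\perp$ (spectral gap). Let $P$ be the orthogonal projection onto $\Ker\cL_\beta$ in $L^2(M_\beta\,dv)$. The energy identity $\tfrac12\tfrac{d}{dt}\|g_\alpha(t)\|_{L^2_\beta}^2=-\alpha\langle\cL_\beta g_\alpha,g_\alpha\rangle_{L^2_\beta}$ together with the spectral gap gives $\int_0^T\|(I-P)g_\alpha(t)\|_{L^2_\beta}^2\,dt\leq \tfrac{C}{\alpha}\|g_{\alpha,0}\|_{L^2_\beta}^2$, and a Chapman--Enskog-type identity refines this to $\|(I-P)g_\alpha(t)\|_{L^2_\beta}=O(1/\alpha)$ once one notes that there is no initial layer, because $g_{\alpha,0}\to g_0\in\Ker\cL_\beta$ in $L^2_\beta$ forces $\|(I-P)g_{\alpha,0}\|_{L^2_\beta}\to0$. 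Plugging this into the equation for $Pg_\alpha$, namely $\partial_t Pg_\alpha+P(v\cdot\nabla_x)Pg_\alpha=-P(v\cdot\nabla_x)(I-P)g_\alpha$, and passing to the limit (using the resulting uniform bounds together with uniqueness for the limit system), one identifies $g$ as the unique solution of $\partial_t g+P(v\cdot\nabla_x)g=0$ with datum $g_0$.

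It remains to compute the projected transport operator. Expanding $g=\rho+\sqrt\beta\,u\cdot v+\tfrac{\beta|v|^2-2}{2}\,\theta$ in the orthonormal basis $\{1,\sqrt\beta v_1,\sqrt\beta v_2,\tfrac{\beta|v|^2-2}{2}\}$ of $\Ker\cL_\beta$ and using the Gaussian moments $\int M_\beta v_iv_j\,dv=\beta^{-1}\delta_{ij}$ and $\int M_\beta v_i^2(\beta|v|^2-2)\,dv=2\beta^{-1}$, a direct calculation shows that $P(v\cdot\nabla_x)$ acts on the triple $(\rho,u,\theta)$ exactly as the right-hand side of the acoustic system displayed in the statement, with initial datum $(\rho_0,u_0,\theta_0)$ read off from $g_0$ (the constraint $\int_{\T^2}\rho_0=0$ coming from $\int M_\beta g_{\alpha,0}\,dz=0$). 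Combining the two estimates yields the claimed convergence.

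The substantive work is the acoustic limit itself --- the spectral gap of $\cL_\beta$ and the Chapman--Enskog relaxation estimate --- but this is classical and carried out in Appendix~A; at the level of this corollary the only genuinely delicate point is the bookkeeping of the two competing error sources, i.e.\ checking that the admissible growth $\alpha\ll\sqrt{\log\log\log N}$ simultaneously kills the Lanford-type error $T^2e^{C\alpha^2}/\sqrt{\log\log N}$ and leaves $\alpha\to\infty$ free to kill the $O(1/\alpha)$ hydrodynamic error.
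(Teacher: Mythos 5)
Your proposal is correct and follows essentially the paper's own route: interpose the linearized Boltzmann solution $g_\alpha$, control $\|f_N^{(1)}-M_\beta g_\alpha\|_{L^2}$ by Theorem~\ref{long-time} under the constraint $\alpha\ll\sqrt{\log\log\log N}$, and conclude with the classical acoustic limit of Appendix~\ref{appendixBoltzlin}, which gives strong $L^\infty_t(L^2_\beta)$ convergence precisely because the limiting datum $g_0$ is an infinitesimal Maxwellian (well-prepared case). The only cosmetic difference is your pointwise Chapman--Enskog claim $\|(I-P)g_\alpha(t)\|_{L^2_\beta}=O(1/\alpha)$, which is stronger than needed (and than what the energy/dissipation bound directly yields); the weak-compactness plus energy-equality argument of the appendix suffices.
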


\bigskip

It is even possible to rescale time as~$t = \alpha \tau$ and to take the limit~$\alpha \to \infty$. For well-prepared initial data, we then obtain the following diffusive approximation by the Stokes-Fourier dynamics.

\begin{Cor}
\label{stokes-cor}
Consider~$N$ hard spheres on the space~$\D = \T^2 \times \R^2$, initially distributed 
according to  $f_{N,0}$ defined in \eqref{initial data} with a sequence $(g_{\alpha, 0})$ 
of functions satisfying the assumptions of Theorem~{\rm\ref{long-time}}  and
converging in $L^2_\beta$ as $\alpha\to \infty $  to
%$(g_{\alpha, 0})$ converging in $L^2(M_\beta dxdv)$ to
$$
g_0 (x,v):=  \sqrt \beta \, u_0(x) \cdot v + \frac{\beta |v|^2 - 4}2 \, \theta_0(x)  \, , \quad \nabla_x \cdot u_0 = 0\, . 
$$
Then { as~$N \to \infty$, $N\varepsilon = \alpha  \to \infty$ much slower 
than~$ \sqrt {\log \log \log N}$}, the distribution~$f_N^{(1)}(\alpha \tau )$ converges in~$L^2 (\D)$ 
norm to~$M_\beta  g (\tau) $ with
$$
g  (\tau,x,v):=  \sqrt \beta \,  u (\tau,x) \cdot v + \frac{\beta |v|^2 -  4 }2 \theta (\tau,x) \, ,
$$
where~$(u,\theta)$ satisfies the Stokes-Fourier equations
\begin{equation}
\label{stokes}
\left\{\begin{aligned}
&\partial_\tau u - \frac1{ \sqrt \beta} \,  \mu_\beta  \Delta_x u = 0   \\
&\nabla_x \cdot u = 0\\
&\partial_\tau \theta - \frac1{ \sqrt \beta} \,  \kappa_\beta  \Delta_x \theta = 0 \\
\end{aligned}\right.
\end{equation}
with initial datum $( u_0,\theta_0)$,
and
$$ 
\begin{aligned}
\mu_\beta :=\frac14\int \Phi _\beta \cL_\beta^{-1} \Phi_\beta  M_\beta(v) dv \quad \hbox{ with } \quad\Phi_\beta (v):= \beta^2( v\otimes v - {|v|^2\over 2 } {\rm{Id}})\, ,\\
\kappa_\beta :=\frac14 \int \Psi_\beta  \cL_\beta^{-1} \Psi_\beta  M_\beta(v) dv\quad \hbox{ with }\quad \Psi_\beta (v):= \sqrt \beta \,  v\left( \beta{|v|^2\over 4} - 1\right) \, ,
\end{aligned}
$$
where the operator $\cL_\beta$ was introduced in \eqref{lBoltz}. 
\end{Cor}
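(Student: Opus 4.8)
The plan is to deduce Corollary \ref{stokes-cor} from Theorem \ref{long-time} by composing two convergences: the Boltzmann-Grad convergence $f_N^{(1)} \to M_\beta g_\alpha$ and the diffusive (Stokes-Fourier) limit $g_\alpha(\alpha\tau) \to g(\tau)$ of the linearized Boltzmann equation. More precisely, I would write, for fixed $\tau$,
\begin{equation*}
\big\| f_N^{(1)}(\alpha\tau) - M_\beta g(\tau) \big\|_{L^2(\D)} \leq \big\| f_N^{(1)}(\alpha\tau) - M_\beta g_\alpha(\alpha\tau) \big\|_{L^2(\D)} + \big\| M_\beta\big(g_\alpha(\alpha\tau) - g(\tau)\big)\big\|_{L^2(\D)} \, ,
\end{equation*}
where $g_\alpha$ solves the linearized Boltzmann equation \eqref{lBoltz} with datum $g_{\alpha,0}$. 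The first term is controlled by \eqref{eq: approx temps gd} with $T = \alpha\tau$: it is bounded by $(\alpha\tau)^2 e^{C\alpha^2}/\sqrt{\log\log N}$, which tends to $0$ provided $e^{C\alpha^2}\alpha^2 = o(\sqrt{\log\log N})$, i.e. provided $\alpha \to \infty$ slower than $\sqrt{\log\log\log N}$ (up to adjusting constants); this is exactly the hypothesis in the statement. Note also that the $W^{1,\infty}$ bound \eqref{eq: Linfty initial} required by Theorem \ref{long-time} must hold along the sequence $(g_{\alpha,0})$, which is part of the assumptions.

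The second term is handled by the classical hydrodynamic limit of the linearized Boltzmann equation in the diffusive scaling, which I would cite from Appendix A (as the statement suggests). The key points here are: (i) the well-prepared datum $g_0 = \sqrt\beta\, u_0\cdot v + \tfrac{\beta|v|^2-4}{2}\theta_0$ with $\nabla_x\cdot u_0 = 0$ lies in the kernel of $\cL_\beta$ and has no density component, so the fast acoustic oscillations are absent and no initial layer forms; (ii) under $t = \alpha\tau$, the equation $(\d_t + v\cdot\nabla_x)g_\alpha = -\alpha\cL_\beta g_\alpha$ becomes $\d_\tau g_\alpha + \alpha\, v\cdot\nabla_x g_\alpha = -\alpha^2\cL_\beta g_\alpha$; a Hilbert/Chapman-Enskog expansion $g_\alpha = g + \alpha^{-1}g^{(1)} + \cdots$ with $g$ of the stated hydrodynamic form and $g^{(1)} = -\cL_\beta^{-1}(v\cdot\nabla_x g)$ projected appropriately yields, at the relevant order, the Stokes-Fourier system \eqref{stokes} with the Green-Kubo viscosity and heat conductivity $\mu_\beta$, $\kappa_\beta$ expressed via $\cL_\beta^{-1}\Phi_\beta$ and $\cL_\beta^{-1}\Psi_\beta$; the spectral gap of $\cL_\beta$ on $(\ker\cL_\beta)^\perp$ in $L^2_\beta$ (Appendix \ref{appendixBoltzlin}) makes this rigorous and gives $\|g_\alpha(\alpha\tau) - g(\tau)\|_{L^2_\beta} \to 0$ as $\alpha\to\infty$, for each fixed $\tau$. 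Combining the two estimates and letting $N\to\infty$ then $\alpha\to\infty$ (coupled so that $\alpha \ll \sqrt{\log\log\log N}$) gives the claim.

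The main obstacle I anticipate is a bookkeeping one rather than a deep one: making the two limits compatible. Theorem \ref{long-time} gives an error that \emph{grows} polynomially in $T = \alpha\tau$ and exponentially in $\alpha^2$, while the Stokes-Fourier approximation error decays in $\alpha$ only for fixed $\tau$; one must check that the rate of divergence of $\alpha$ with $N$ imposed by the first term ($\alpha = o(\sqrt{\log\log\log N})$) is consistent with, and does not interfere with, the $\alpha\to\infty$ asymptotics needed for the second term. Since the hypotheses are stated precisely so that $T^2 e^{C\alpha^2}/\sqrt{\log\log N} \to 0$, this is just a matter of taking a diagonal sequence $\alpha = \alpha(N)$ diverging slowly enough. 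A secondary point to be careful about is that Theorem \ref{long-time} controls only the first marginal and only in $L^2(\D)$ (with the flat Lebesgue measure, not $L^2_\beta$), so I would note that on the bounded-in-$v$ Maxwellian weight the two norms are comparable on the relevant class of functions, or simply reweight: $\|M_\beta(g_\alpha - g)\|_{L^2(\D)} = \|M_\beta^{1/2}(g_\alpha - g)\|_{L^2_\beta(\D)}^{\phantom{1}}$ up to the bounded factor $M_\beta^{1/2}$, which is harmless. Finally, one should record that the constant $4$ (rather than $2$) in $\tfrac{\beta|v|^2-4}{2}$ reflects the two-dimensional normalization, so that $\theta_0$ is genuinely orthogonal to $1$ and $v$ in $L^2_\beta$; this is purely a dimensional computation.
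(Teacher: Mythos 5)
Your argument is exactly the paper's: Corollary \ref{stokes-cor} is obtained by the triangle inequality, applying Theorem \ref{long-time} on the time scale $T=\alpha\tau$ (whence the constraint $\alpha \ll \sqrt{\log\log\log N}$) and invoking the classical diffusive Stokes--Fourier limit of the linearized Boltzmann equation for well-prepared data from Appendix~\ref{appendixBoltzlin}, with the harmless reweighting between $L^2(\D)$ and $L^2_\beta$. The only cosmetic points left implicit (replacing the $\alpha$-dependent datum $g_{\alpha,0}$ by $g_0$ via the $L^2_\beta$ contraction property of the semigroup, and rescaling Appendix A from $\beta=1$ to general $\beta$) are one-line observations, so the proposal is correct and essentially identical to the paper's proof.
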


\begin{Rmk}  In the case of  general, ill-prepared initial data, the asymptotics is also well known \cite{golse-levermore}. Details are provided in Appendix~{\rm\ref{appendixBoltzlin}}. 
\end{Rmk}

\bigskip

\noindent
{\bf Acknowledgements.}
We would like to thank Herbert Spohn and Sergio Simonella for their careful reading of our paper and very useful suggestions.

\section{Strategy of the proof}

In  what follows, we   focus on the proof of Theorem  \ref{long-time}, as it is the new contribution of this work.
Even though it follows some ideas introduced in \cite{BGSR1},   it represents a real improvement of what has been done up to now:
\begin{itemize}
\item First of all, we are able  to capture a fluctuation of order $O(1/N)$  around an equilibrium~\eqref{eq: perturbation 1/N}, and in particular there is no more positivity.
\item Second, we deal with a much weaker functional setting than the $L^\infty$ framework of Lanford's strategy \cite{lanford}, which leads to major difficulties to give sense to the collision operator 
(defined as an integral over a singular set).
\item The strategy developed here to bypass this obstacle uses crucially the exchangeability to get  a weak version of chaos independently of the precise structure of the initial datum. This seems to be an  important conceptual progress.
\end{itemize}

  Let us  recall that, up to now, all the results regarding the low density limit of deterministic systems of particles have been established following Lanford's strategy \cite{lanford}.  In this section, we  describe the main objects involved in the proof, and the pruning procedure introduced in~\cite{BGSR1}. We then show the main differences between our setting and that of~\cite{BGSR1} and finally explain how to adapt the pruning procedure to our setting.

\subsection{The series expansion}
\label{seriesexpansion}

The starting point is the series expansion obtained by iterating Duhamel's formula for the BBGKY hierarchy (\ref{eq: BBGKY})
  \begin{equation}
 \label{duhamel1}
 \begin{aligned}
 f^{(s)} _N(t) =\sum_{n=0}^{N-s}  \alpha^n  \int_0^t \int_0^{t_{s+1}}\dots  \int_0^{t_{s+n-1}}  {\bf S}_s(t-t_{s+1}) C_{s,s+1}  {\bf S}_{s+1}(t_{s+1}-t_{s+2}) C_{s+1,s+2}   \\
\dots  {\bf S}_{s+n}(t_{s+n})     f^{(s+n)}_{N,0} \: dt_{s+n} \dots d t_{s+1} \, ,
\end{aligned}
\end{equation}
where~${\bf S}_s$ denotes the group associated with free transport in $\cD_\eps^s$ with specular reflection on the boundary. 
By abuse of notation, the term $n=0$ in \eqref{duhamel1} should be interpreted as ${\bf S}_{s}(t)     f^{(s)}_{N,0}$ as $n$ records the number of collision operators up to time 0.
Denoting by~$  {\bf S}_s^0$ the free flow, one can derive formally the limiting Boltzmann hierarchy
\begin{equation}
 \label{duhamel1boltzmann}
 \begin{aligned}
 f^{(s)}  (t) =\sum_{n\geq 0}   \alpha^n  \int_0^t \int_0^{t_{s+1}}\dots  \int_0^{t_{s+n-1}}  {\bf S}^0_s(t-t_{s+1}) \bar C_{s,s+1}  {\bf S}^0_{s+1}(t_{s+1}-t_{s+2}) \bar C_{s+1,s+2}   \\
\dots  {\bf S}^0_{s+n}(t_{s+n})     f^{(s+n)}_{0} \: dt_{s+n} \dots d t_{s+1} \, ,
\end{aligned}
\end{equation}
and one aims  at proving the convergence of one hierarchy to the other. 
 
\bigskip
These series expansions have graphical representations which play a key role in the analysis as explained  first in~\cite{lanford,CIP,Simonella,GSRT,PSS,PS}.
%, is that in this formulation the characteristics associated with the operators~$ {\bf S}_{i} (t_{i}-t_{i+1}) $ are followed {backwards} in time between two consecutive times~$t_{i+1}$ and~$t_{i}$, and the collision terms (associated with~${C}_{i,i+1}$) are seen as source terms in which  ``additional particles" are ``adjoined" to the system.  
This interpretation in terms of collision trees is described below.

\medskip

Let us  extract   combinatorial information from the iterated Duhamel formula~(\ref{duhamel1}). We describe the adjunction of new particles (in the backward dynamics)   by  ordered trees.

 \begin{Def}[Collision trees]
 \label{trees-def}
 Let $s>1$ be fixed. An (ordered) collision tree  $a \in \cA_s$ is defined by a family $(a(i)) _{2\leq i \leq s}$ with $a(i) \in \{1,\dots, i-1\}$.
 \end{Def}
  Note that~~$|\cA_s| \leq (s-1) !$.

\medskip
  Once we have fixed a collision tree $a \in \cA_s$, we can reconstruct pseudo-dynamics starting from any point in the one-particle phase space $z_1= (x_1, v_1) \in \T^2\times \R^2$ at time $t$.

\begin{Def}[Pseudo-trajectory]
\label{pseudotrajectory}
Given~$z_1 \in \T^2\times \R^2$, $t>0$ and a collision tree~$a \in \cA_s$, consider a collection of times, angles and velocities~$(T_{2,s}, \Omega_{2,s}, V_{2,s}) = (t_i, \nu_i, v_i)_{2\leq i\leq s}$ with~$0\leq t_s\leq\dots\leq t_2\leq t$. We then define recursively the pseudo-trajectories in terms of the  backward BBGKY dynamics as follows
\begin{itemize}
\item in between the  collision times~$t_i$ and~$t_{i+1}$   the particles follow the~$i$-particle backward flow with specular reflection;
\item at time~$t_i^+$,  particle   $i$ is adjoined to particle $a(i)$ at position~$x_{a(i)}(t_i^+) + \eps \nu_i$ and 
with velocity~$v_i$, provided~$|x_i-x_j(t_i^+)| > \eps$ for all~$j < i$ with~$ j \neq a(i)$.  
If $(v_i - v_{a(i)} (t_i^+)) \cdot \nu_i >0$, velocities at time $t_i^-$ are given by the scattering laws
\begin{equation}
\label{V-def}
\begin{aligned}
v_{a(i)}(t^-_i) &= v_{a(i)}(t_i^+) - (v_{a(i)}(t_i^+)-v_i) \cdot \nu_i \, \nu_i  \, ,\\
v_i(t^-_i) &= v_i+ (v_{a(i)}(t_i^+)-v_i) \cdot \nu_i \,  \nu_i \, .
\end{aligned}
\end{equation}
\end{itemize}
 We denote  by~$z_i(a, T_{2,s}, \Omega_{2,s}, V_{2,s},\tau)$ the position and velocity of the particle labeled~$i$, at time~$\tau$ (provided~$\tau< t_i$). The  configuration obtained at the end of the tree, i.e. at time 0, is~$Z_s(a, T_{2,s}, \Omega_{2,s}, V_{2,s},0)$.

\medskip

Similarly, we define  the pseudo-trajectories associated with the Boltzmann hierarchy. 
These pseudo-trajectories evolve according to the backward Boltzmann dynamics as follows
\begin{itemize}
\item in between the  collision times~$t_i$ and~$t_{i+1}$   the particles follow the~$i$-particle backward free flow;
\item at time~$t_i^+$,  particle   $i$ is adjoined to particle $a(i)$  at exactly the same position $x_{a(i)}(t_i^+)$. Velocities are given by the laws {\rm(\ref{V-def})}.
\end{itemize}
We denote $\bar Z_s(a, T_{2,s}, \Omega_{2,s}, V_{2,s},0)$ the initial configuration.
\end{Def}
The definition of a pseudo-trajectory  in the BBGKY dynamics is   subject to the fact that particles cannot overlap. This is 
recorded in the next definition.
\begin{Def}[Non overlapping sets]
\label{def: overlap}
Given~$z_1 \in \T^2\times \R^2$ and a collision tree~$a \in \cA_s$,  the non-overlapping set is defined by
\begin{align*}
G_s(a) := \Big\{(T_{2,s}, \Omega_{2,s}, V_{2,s}) & \in  \cT_{2,s} \times {\mathbb S}^{s-1} \times \R^{2(s-1)} \, \Big| \\
&  \text{there exists \ }   Z_s(a, T_{2,s}, \Omega_{2,s}, V_{2,s},0) \, \text{ a pseudo-trajectory}\Big \} \, ,
\end{align*}
denoting
$$
\cT_{2,s} :=\big\{ (t_i)_{2\leq i\leq s} \in [0,t]^{s-1} \,/\, 0\leq t_s\leq\dots\leq t_2\leq t\big\}\,.
$$
\end{Def}
The following semantic distinction will be important later on.
\begin{Def}[Collisions/Recollisions]
\label{def: recollisions}
In the BBGKY hierarchy, the term \emph{collision} will be used only for the creation of a new particle, i.e. for a branching in the collision trees. 
A shock between two particles in the  backward BBGKY dynamics will be called a \emph{recollision}.
\end{Def}
 
Note that no recollision occurs in the Boltzmann hierarchy as the particles have zero diameter.

\medskip

With these notations, the iterated Duhamel formula (\ref{duhamel1}) for the first marginal ($s=1$) can be rewritten 
 \begin{equation}\label{duhamelpseudotraj}
 \begin{aligned}
 f^{(1)} _N(t) =\sum_{s=1}^{N}   (N-1) \dots\big(N-(s-1)\big) \eps^{s-1} \sum_{a \in \cA_s}   \int_{G_s(a)}& dT_{2,s} d\Omega_{2,s}    dV_{2,s}\Big( \prod_{i=2}^s \big( (v_i -v_{a(i)} (t_i)) \cdot \nu_i \Big)  
 \\
 &\quad  \times 
f_{N,0}^{(s)}\big (Z_s(a, T_{2,s}, \Omega_{2,s}, V_{2,s},0)\big)  \, ,
 \end{aligned}
\end{equation}
while in the limit
 \begin{equation}\label{duhamelpseudotrajlim}
 \begin{aligned}
 f^{(1)} (t) =\sum_{s=1}^{\infty}    \alpha^{s-1} \sum_{a \in \cA_s}   \int_{\cT_{2,s} \times {\mathbb S}^{s-1} \times \R^{2(s-1)}} &dT_{2,s}   d\Omega_{2,s}   dV_{2,s}  \Big( \prod_{i=2}^s  (v_i -v_{a(i)} (t_i)) \cdot \nu_i \Big)  \\
 &\quad \times f_{0}^{(s)}\big (\bar Z_s(a, T_{2,s}, \Omega_{2,s}, V_{2,s},0)\big)  \, .
 \end{aligned}
\end{equation}

\subsection{Lanford's strategy}\label{lanfordstrategy}%\subsubsection{Iterated Duhamel formula}

Lanford's proof relies   then on two steps~:
\begin{itemize}
\item[(i)] proving a short time bound for the series~(\ref{duhamelpseudotraj})  expressing the correlations of the system of $N$ particles  and a similar bound for  the corresponding quantities associated with the Boltzmann hierarchy;
\item[(ii)] proving the convergence of each term of the series, i.e. proving that the BBGKY and Boltzmann pseudo-trajectories $Z_s(a, T_{2,s}, \Omega_{2,s}, V_{2,s},0)$ and $\bar Z_s(a, T_{2,s}, \Omega_{2,s}, V_{2,s},0)$ stay close to each other,  outside a set of parameters~$(t_i, \nu_i, v_i)_{2\leq i \leq s}$ of vanishing measure.
 \end{itemize}
Note that step~(i) alone is responsible for the fact that the low density limit is only known to hold for short times (of the order of~$1/\alpha$). This is  due to the fact that the uniform bound is essentially obtained by  replacing the hierarchy by the one related to an equation of the type~$\partial_t f = \alpha f^2$, neglecting all cancellations  present in the collision term.

More precisely, defining the operator associated with the series \eqref{duhamel1}
\begin{equation} \begin{aligned}
Q_{s,s+n} (t) :=   \alpha^n  \int_0^t \int_0^{t_{s+1}}\! \! \! \! \! \dots \! \!  \int_0^{t_{s+n-1}}  {\bf S}_s(t - t_{s+1}) C_{s,s+1}  {\bf S}_{s+1}(t_{s+1}-t_{s+2})  
  C_{s+1,s+2}\dots    \\
  \dots  {\bf S}_{s+n}(t_{s+n})   \: dt_{{s+n}} \dots dt_{s+1} 
\label{eq: iterated collision operator}
\end{aligned}
\end{equation}
we   overestimate all contributions by considering rather the operators~$|Q_{s,s+n}|$ defined by
\begin{equation} \begin{aligned}
|Q_{s,s+n}| (t) :=   \alpha^n  \int_0^t \int_0^{t_{s+1}}\! \! \! \!  \! \dots \! \!  \int_0^{t_{s+n-1}}  {\bf S}_s( t-t_{s+1}) |C_{s,s+1}|  {\bf S}_{s+1}(t_{s+1}-t_{s+2})  
|  C_{s+1,s+2}|\dots    \\
  \dots  {\bf S}_{s+n}(t_{s+n})   \: dt_{{s+n}} \dots dt_{s+1} 
\label{eq: iterated collision operator abs}
\end{aligned}
\end{equation}
where $C_{s,s+1}$ in~\eqref{eq: collision Cs} is replaced by
$$
| C_{s,s+1} |f_{s+1}:=  \sum_{i=1}^s  (C_{s,s+1}^{i,+} +C_{s,s+1}^{i,-})|f_{s+1}|    \, .
$$
In the same way for the Boltzmann hierarchy, the iterated collision operator is denoted by
\begin{equation}
\label{barQ}
 \begin{aligned}
\bar Q_{s,s+n} (t)  :=  \alpha^n  \int_0^t \int_0^{t_{s+1}}\! \! \! \! \! \dots \! \!   \int_0^{t_{s+n-1}}  {\bf S}^0_{s}(t-t_{s+1})   \bar C_{s,s+1}  {\bf S}^0_{s+1}(t_{s+1}-t_{s+2})  
 \bar  C_{s+1,s+2}\dots  \\
   \dots  {\bf S}^0_{s+n}(t_{s+n})   \: dt_{{s+n}} \dots dt_{s+1}
\end{aligned}
\end{equation}
which is bounded from above by
\begin{equation} 
\begin{aligned}
|\bar Q_{s,s+n}| (t) :=   \alpha^n  \int_0^t \int_0^{t_{s+1}}\! \! \! \! \! \dots \! \!   \int_0^{t_{s+n-1}}  {\bf S}^0_s( t-t_{s+1}) |\bar C_{s,s+1}|  {\bf S}^0_{s+1}(t_{s+1}-t_{s+2})  
|  \bar C_{s+1,s+2}|\dots   \nonumber\\
  \dots  {\bf S}_{s+n}(t_{s+n})   \: dt_{{s+n}} \dots dt_{s+1} \, ,
\end{aligned}
\end{equation}
where~$ |\bar C_{s,s+1}|$ is defined as~$| C_{s,s+1} |$ above.

\bigskip

\noindent
  {\bf Notation.} $ $ From now on, we shall denote by~$C$ a constant which may change from line to line, and which may depend on~$\beta$, but not on $N$ and~$\alpha$. 
We will also write~$A \ll B$ for~$A \leq CB$ if the constant $C$ is small enough, and similarly~$A \gg B$ if~$A \geq CB$ and the constant $C$ is large enough (uniformly in all the relevant parameters). Finally we write~$B_R^s$ for the ball of~$\R^{2s}$ of radius~$R$, and~$B_R=B_R^1$.
  
  \bigskip

We   have the following continuity estimates  (see~\cite{CIP,GSRT, BGSR1}). 
\begin{Prop}
\label{estimatelemmacontinuity} 
There is a constant~$C$  such that for all $s,n\in \N^*$ and all~$h,t\geq 0$, the operator~$|Q|$   satisfies the following continuity estimates: if~$g_s,g_{s+n}$ belong to~$L^\infty(\D^s)$ and~$L^\infty(\D^{s+n})$ respectively, then
	$$	\forall z_1 \in \D \, , \,  
		\begin{aligned}
	& \big( |Q_{1,s}| (t)M_{s,\beta}g_s\big) (z_1) \leq (C\alpha t) ^{s-1} M_{3\beta/4} (z_1) \|g_{s}\|_{L^\infty(\D^{s})}\\
	& \big(|Q_{1,s }| (t) \,  |Q_{s,s+n} | (h)M_{s+n,\beta}g_{s+n} \big)(z_1) \! \leq \!   (C\alpha)^{s+n - 1} t^{s-1}h^n
	M_{3\beta/4} (z_1)\|g_{s+n}\|_{L^\infty(\D^{s+n})} \, .
	\end{aligned}
$$
Similar estimates hold for~$|\bar Q|$.
\end{Prop}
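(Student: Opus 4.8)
The plan is to prove the two continuity estimates by induction on the number of collision operators, reducing everything to a single elementary bound on one application of $|C_{s,s+1}|$ against a Maxwellian-weighted $L^\infty$ function, and then summing geometric series after integrating over the ordered simplex of collision times. The key pointwise lemma is that for a velocity $v_i$ appearing at a branching, one has, after the change of variables involved in the collision operator,
\begin{equation*}
\big( |C_{s,s+1}^{i,\pm}| M_\beta^{\otimes(s+1)} g_{s+1}\big)(Z_s) \leq C (N-s)\eps\alpha^{-1} M_\beta^{\otimes s}(V_s)\, \big(1+|v_i|\big)\, \|g_{s+1}\|_{L^\infty_{x,v}} \exp\!\Big(-\tfrac\beta8 |v_i|^2\Big)\, ,
\end{equation*}
which comes from the fact that $M_\beta(v_i') M_\beta(v_{s+1}')=M_\beta(v_i)M_\beta(v_{s+1})$ (energy conservation in the scattering), the Gaussian integrability of $M_\beta(v_{s+1})\big((v_{s+1}-v_i)\cdot\nu\big)_\pm$ in $(v_{s+1},\nu)$, and absorbing the loss of a factor $(1+|v_i|)$ into a slightly worse Gaussian, at the price of degrading $\beta$ to $3\beta/4$ over the course of the whole tree. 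Since $(N-s)\eps\alpha^{-1}\leq 1$ in the Boltzmann--Grad scaling, the prefactor $(N-s)\eps\alpha^{-1}$ is harmless and the operator $\alpha|C_{s,s+1}|$ effectively contributes one power of $\alpha$ and one factor $C(1+|v_i|)$ per node.

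First I would set up the induction: write $|Q_{1,s}|(t)$ as $\int_{\cT_{2,s}} \mathbf{S}_1|C_{1,2}|\mathbf{S}_2\cdots|C_{s-1,s}|\mathbf{S}_s \, dT_{2,s}$, noting that the free-flow-with-reflection operators $\mathbf{S}_k$ are isometries on $L^\infty$ and commute with the Maxwellian weight (since $|v|$ is conserved along trajectories), so they can be discarded for the purpose of the estimate. Applying the pointwise lemma $s-1$ times from the innermost operator outward, one picks up $(C)^{s-1}\prod_{i=2}^s(1+|v_{a(i)}(t_i)|)$ times a product of Gaussians in the velocities $v_2,\dots,v_s$; integrating these velocities out (the Gaussians beat the polynomial factors, uniformly, which is exactly where the margin between $\beta$ and $3\beta/4$ is spent) leaves a clean bound $(C)^{s-1} M_{3\beta/4}(z_1)\|g_s\|_{L^\infty}$ times the volume of the time simplex $\int_{\cT_{2,s}}dT_{2,s}=t^{s-1}/(s-1)!$. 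Multiplying by the $\alpha^{s-1}$ in the definition of $Q$ and by $1/(s-1)!\leq 1$ gives $(C\alpha t)^{s-1}$, which is the first inequality. The second inequality is proved the same way, splitting the $s+n-1$ nodes into the first $s-1$ (integrated over $\cT_{2,s}$, producing $t^{s-1}$) and the last $n$ (integrated over the time interval of length $h$, producing $h^n$), and collecting $\alpha^{s+n-1}$; the intermediate marginal at time $t_s$ is handled by noting that the output of $|Q_{s,s+n}|(h)M_{s+n,\beta}g_{s+n}$ is itself bounded by $(C\alpha h)^n M_{3\beta/4}^{\otimes s}(V_s)\|g_{s+n}\|_{L^\infty}$, i.e. again a Maxwellian-weighted $L^\infty$ function to which the first estimate applies (after slightly readjusting the Gaussian exponents, which is why one works with a whole scale of $M_{\beta'}$, $3\beta/4\leq\beta'\leq\beta$).

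The Boltzmann-side estimates for $|\bar Q|$ are identical, in fact simpler: $\bar C_{s,s+1}$ is the $\eps\to 0$ limit of $C_{s,s+1}$, the prefactor is exactly $\alpha$ rather than $(N-s)\eps$, there are no recollisions, and the same scattering-invariance of the Maxwellian and Gaussian integrability give the same pointwise lemma verbatim.

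The main obstacle is purely bookkeeping rather than conceptual: one must organize the iterated change of variables so that at each node the ``new'' velocity $v_i$ is integrated against a genuinely integrable kernel while the factor $(1+|v_{a(i)}(t_i)|)$ it generates is charged to the parent's Gaussian, and verify that after $s-1$ such steps the total polynomial growth in each of $v_2,\dots,v_s$ is still dominated by the accumulated Gaussian — this is what forces the fixed, once-and-for-all loss from $M_\beta$ down to $M_{3\beta/4}$ (any finite fraction strictly less than $1$ would do; the constant $C$ depends on how much room one leaves). A secondary point is that $|C_{s,s+1}|$ is an integral over the sphere times a half-space of velocities, so one has to check that the $\big((v_{s+1}-v_i)\cdot\nu\big)_\pm$ weight, together with $dv_{s+1}d\nu$ and the Gaussian $M_\beta(v_{s+1})$, integrates to something like $C(1+|v_i|)$ — a one-line Gaussian computation — and that the specular-reflection part of $\mathbf{S}_k$ genuinely preserves both $L^\infty$ norms and the weight, which follows because reflections are measure-preserving bijections of $\cD_\eps^k$ that do not change $|v_j|$ for any $j$.
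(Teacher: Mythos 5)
Your overall architecture (transport preserves the Maxwellian weight, energy conservation in the scattering, Gaussian integrability of the hard-sphere kernel, splitting the time integration into $t^{s-1}$ and $h^n$) is the same as the paper's, but there is a genuine gap at the central step: you claim that after iterating the one-node bound, the accumulated factor $\prod_{i=2}^s\big(1+|v_{a(i)}(t_i)|\big)$ is absorbed by the Gaussians at a cost of only $C^{s-1}$, ``uniformly'', after which you discard the simplex volume via $1/(s-1)!\le 1$. The per-collision loss for the hard-sphere cross-section is not $O(1)$: it is of size $k\beta^{-1/2}+\sum_{i\le k}|v_i|$, i.e.\ it grows with the number of particles, and a given parent can be charged many times --- consider the star tree in which particle $1$ is the parent at every branching, so a single Gaussian must absorb $s-1$ factors $\big(1+|v_1(t_i)|\big)$. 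Since the total Gaussian margin available is the fixed gap between $\beta$ and $3\beta/4$, each absorption can only spend $O(\beta/s)$ of exponential weight, and $\sup_r\,(1+r)\exp(-cr^2/s)\sim\sqrt{s}$, so the accumulated loss is of order $s^{s/2}$ (or $(s+n)^n$ for the second estimate), which is super-exponential, not $C^{s-1}$. This is precisely what the factorials you throw away are needed for: in the paper's proof each collision is allowed to lose a factor $C(s+n)$ (after spending a slice $e^{-\beta|V_k|^2/8n}$ of the weight, via Cauchy--Schwarz), and the resulting $(s+n)^n$ is compensated by the $h^n/n!$ coming from the time simplex through Stirling's formula, $(s+n)^n/n!\le e^{s+n}$. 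Without reinstating the factorial (or an equivalent mechanism), the ``bookkeeping'' you describe as the main obstacle genuinely fails, so this is a missing idea, not a routine verification.

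A secondary, fixable point: your displayed one-node lemma is mis-stated. The right-hand side keeps the full $M_\beta^{\otimes s}(V_s)$ and, in addition, an extra factor $\exp\big(-\tfrac\beta8|v_i|^2\big)$; this makes the inequality false for large $|v_i|$ (take $g_{s+1}\equiv1$: the left-hand side is bounded below by $c\,|v_i|\,M_\beta^{\otimes s}(V_s)$ up to the harmless prefactor, which exceeds your right-hand side). What is true, and what the paper uses, is $|C_{k,k+1}|\,M_{k+1,\beta}\le C\big(k\beta^{-1/2}+\sum_{i\le k}|v_i|\big)M_{k,\beta}$ on $\cD_\eps^k$, the degradation of the Gaussian being performed globally on the total energy, in conjunction with the time integration as above.
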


\begin{proof}[Sketch of proof]
The estimate is simply obtained from the fact that the transport operators preserve the Gibbs measures, along with the continuity of the elementary collision operators~:
\begin{itemize}
\item  the transport operators satisfy the identities
$$
	 {\mathbf  S}_k (t) M_{k,\beta} = 	M_{k,\beta}
	$$
\item  the collision operators satisfy the following bounds in the Boltzmann-Grad 
scaling~$N \e = \alpha $  (see \cite{GSRT})
$$
|{C}_{k,k+1} | 
%\exp\left(- \beta H_{k+1} (Z_{k+1})\right)
M_{k+1,\beta } (Z_k)
  \leq C
  % {\beta^{-1} }
   \Big( k\beta^{-\frac12} + \sum_{1 \leq i \leq k} |v_i|\Big) M_{k,\beta } (Z_k) \, ,
$$
almost everywhere on $\cD_\eps^k$.
\end{itemize}

Estimating the operator $|Q_{s,s+n}|(h)$ follows from piling together those inequalities (distributing the  exponential weight evenly on each occurence of a collision term). We  notice indeed  that by the Cauchy-Schwarz inequality
\begin{equation}
\label{CS}
\begin{aligned}
\sum_{1 \leq i \leq k} |v_i| \exp\Big(- \frac \beta{8n} |V_k|^2\Big) &\leq \left( k\frac {4n} \beta\right) ^{\frac12} \left( \sum_{1 \leq i \leq k} \frac\beta {4n} |v_i|^2 \exp\Big(- \frac \beta{4n} |V_k|^2\Big)\right)^{1/2} \\
&\leq \Big( \frac{4nk}{e\beta}\Big)^{1/2} \leq \frac{2}{\sqrt{ e\beta}} (s+n)  \, ,
\end{aligned}
\end{equation}
where the last inequality comes from the fact that  $k \leq s+n$. 
Each collision operator gives therefore a loss of $ C \beta ^{-1/2}  (s+n) $ together with a loss on the exponential weight, while 
the integration with respect to time provides  a factor $h^n/n!$. By Stirling's formula, we have
$${ (s+n)^n\over n!} \leq  \exp \left(  n \log {n+s \over n }  + n\right) \leq \exp ( s+n) \,.$$
As a consequence
$$
|Q_{s,s+n}| (h) M_{s +n,\beta} (Z_s) \leq C^{s+n} \, (\alpha h) ^{n} M_{s,3\beta/4} (Z_s) \, .
$$
The proof of 
Proposition \ref{estimatelemmacontinuity} follows from this upper bound.
\end{proof}

%\begin{Rmk}\label{CS-rmk}
%Note that the same estimates are still true if the collision cross-section depends quadratically on the relative velocity.
%We just have to skip the Cauchy-Schwarz estimate in (\ref{CS}).
%\end{Rmk}

 The iteration of the first estimate in Proposition~\ref{estimatelemmacontinuity} is the key to the local wellposedness of the hierarchy (see~\cite{CIP,GSRT}) : we indeed prove that,
 if the initial datum satisfies
 $$ |f_{N,0}^{(s)} | \leq \exp (\mu s) M_{s,\beta}$$
the series expansion (\ref{duhamel1}) converges (uniformly in $N$)
  on a time such that~$t \alpha \ll 1$.

\subsection{The pruning procedure introduced   in~\cite{BGSR1}}
\label{pruning}

We recall now a strategy devised in \cite{BGSR1} in order  to control the growth of collision trees.
The idea is to introduce some sampling in time with a (small) parameter $h>0$.
Let~$\{n_k\}_{k \geq 1}$ be a  sequence of integers, typically~$n_k = 2^k$.
We  then study the dynamics up to time  $t := K h$ for some large integer $K$, by splitting the time interval~$[0,t]$  into~$K$ intervals of size~$h$,  and controlling the number of collisions  on each interval. 
In order to discard  trajectories with a large number of collisions in the iterated Duhamel formula, we define collision trees ``of controlled size" by the condition that they have strictly less than $n_k$ branch points on the  interval~$[t-kh ,t-(k-1) h]$.
Note that by construction, the trees are actually  followed ``backwards", from time~$t$ (large)   to time~$0$. So
we decompose the iterated Duhamel formula~(\ref{duhamel1}), in the case~$s=1$, by writing
\begin{equation}
\label{series-expansion}
\begin{aligned}
&f_N^{(1)} (t)    : =   \sum_{j_1=0}^{n_1-1}\! \!   \dots \!  \! \sum_{j_K=0}^{n_K-1} Q_{1,J_1} (h )Q_{J_1,J_2} (h )
 \dots  Q_{J_{K-1},J_K} (h )       f^{(J_K)}_{N,0}   \\
&\qquad  +\sum_{k=1}^K \; \sum_{j_1=0}^{n_1-1} \! \! \dots \! \! \sum_{j_{k-1}=0}^{n_{k-1}-1}\sum_{j_k \geq n_k} \; 
 Q_{1,J_1} (h ) \dots  Q_{J_{k-2},J_{k-1}} (h ) \,  Q_{J_{k-1},J_{k}} (h)     f^{(J_{k})}_N(t-kh) 
    \, ,
% \, f^{(J_K)}_{N,0}  ,
 \end{aligned}
\end{equation}
with~$J_0:=1$,~$J_k :=1+ j_1 + \dots +j_k$. The first term on the right-hand side corresponds to the smallest trees,  and the second term is the remainder:  it represents trees with super exponential branching, i.e. having at least~$n_k$ collisions during the last time lapse, of size~$h$. One proceeds in a similar way for the Boltzmann hierarchy~(\ref{duhamel1boltzmann}).

\medskip
   The main argument of~\cite{BGSR1} consists in 
proving that  the remainder is small, even for large~$t$ (but small~$h$).
This was achieved in~\cite{BGSR1} to derive the linear Boltzmann equation  with initial datum of the form (\ref{linear-initialdata}).
In that case, the maximum principle  ensures that the $L^\infty$ norm of the marginals are bounded at all times 
\begin{equation}
\label{estimatelinearcase}
\big | f_{N}^{(s)} (t,Z_s) \big | \leq        C^s  M_{N,\beta}^{(s)} (Z_s) \, .
\end{equation}
Combining this uniform bound with the $L^\infty$ estimate on the collision operator given in Proposition~\ref{estimatelemmacontinuity}, one 
can  gain smallness thanks to the factor~$h^{j_k}$ which controls the occurence of 
$j_k$ collisions in the last time interval.

\medskip
   The conclusion of the proof in the linear case (see~\cite{BGSR1}) then comes from a comparison of the BBGKY  and the Boltzmann pseudo-trajectories, through a geometric argument showing that recollisions  are events with small probability (compared to the~$O(1)$ norm of the datum in~$L^\infty$), once~$K$ is fixed.

\subsection{A priori estimates}
One of the main differences here with~\cite{BGSR1}  is that the initial datum is no longer~$O(1)$ in~$L^\infty$.
We summarize below the estimates at our disposal for the initial datum~$ f_{N,0} $  defined in~(\ref{initial data}) and the associate solution~$f_N$ to the Liouville equation~(\ref{Liouville}), compared with~\cite{BGSR1}.

\medskip

{\it $L^\infty$-estimates.}
First, one has clearly
\begin{equation}
\label{Linfty-init}
\big | f_{N,0} (Z_N)\big | \leq N \| g_{\alpha,0}\|_{L^\infty(\D)}  \,  M_{N,\beta}(Z_N)  \,.
\end{equation}
 From the maximum principle, we deduce from~(\ref{Linfty-init}) that
 for all~$t \in \R$,
 \begin{equation}
\label{Linfty}
\big | f_{N} (t,Z_N) \big | \leq N \| g_{\alpha,0}\|_{L^\infty(\D)} \,   M_{N,\beta}(Z_N) \, .
\end{equation}
A classical result on the exclusion  (see Lemma 6.1.2 in \cite{GSRT}) shows the following control on the 
partition function introduced in \eqref{eq: fonction partition}
\begin{equation}
\label{eq: ZNZN-k}
\forall 1 \leq s \leq N, \qquad 
{\mathcal Z}_N^{-1} {\mathcal Z}_{N-s}  \leq C(1-C\alpha\eps)^{-s}  \leq C \exp (Cs\alpha\eps) \,  ,
\end{equation}
so   from \eqref{Linfty}, the marginals satisfy 
\begin{align}
\big | f_{N}^{(s)} (t,Z_s) \big |\leq   
 N  M_{N, \beta}^{(s)}(Z_s)   \; \|g_{\alpha,0}\|_{L^\infty(\D)} 
 \leq      N C^s\exp (Cs\alpha \eps)  M_{\beta}^{\otimes s}(Z_s)   
\; \|g_{\alpha,0}\|_{L^\infty(\D)} \, . 
\label{Linfty-marginals}
\end{align}
This should be compared with the counterpart in the linear case, given in~(\ref{estimatelinearcase}) : there is a factor~$N$ difference between the two estimates.

\smallskip

Much better estimates can be obtained at   initial time by using the explicit structure 
of the measure $f_{N,0 }$ defined by~\eqref{initial data}. In particular the
discrepancy between the marginals~$f_{N,0}^{(s)}$ and 
$f^{(s)} _0$ defined in~(\ref{eq: initial boltz})
can be evaluated.
\begin{Prop}
\label{exclusion-prop2}
There exists~$C>1$ such that  as $N\to \infty $ in the scaling $N \e = \alpha \ll 1 /\eps $
$$
\forall s \leq N, \qquad 
\Big| \Big(f_{N,0 }^{(s)}  - f^{(s)} _0 \Big) (Z_s)
\indc_{{\mathcal D}_{\eps}^s}(X_s)
\Big|  \leq C ^s   \alpha^3%\exp (C\alpha^2) 
\eps      M_\beta ^{\otimes s}(V_s)  \|g_{\alpha,0} \|_{L^\infty}  \,.
$$
As a consequence, if~$\alpha^3 \eps \ll 1$ then the initial data are bounded by
\begin{equation}
\label{eq: linfty initial}
\forall s \leq N, \qquad 
\big| f_{N,0}^{(s)}  (Z_s) \big|  \leq C^s   %\exp (C\alpha^2)   
M_\beta ^{\otimes s}(V_s)  \|g_{\alpha,0} \|_{L^\infty} \, .
\end{equation}
\end{Prop}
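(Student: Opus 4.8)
The plan is to compute the marginals $f_{N,0}^{(s)}$ explicitly from the definition \eqref{initial data} by integrating out the $N-s$ extra variables, and then to carefully compare the resulting expression with $f^{(s)}_0 = M_\beta^{\otimes s}(V_s)\sum_{i=1}^s g_{\alpha,0}(z_i)$ using the exclusion estimate \eqref{eq: ZNZN-k}. First I would write, for $Z_s\in\mathcal D_\eps^s$,
\begin{equation*}
f_{N,0}^{(s)}(Z_s) = \frac{1}{\mathcal Z_N}M_\beta^{\otimes s}(V_s)\int_{\T^{2(N-s)}}\prod_{\substack{1\le i\neq j\le N}}\indc_{|x_i-x_j|>\eps}\;M_\beta^{\otimes(N-s)}(V_{s+1,N})\Big(\sum_{i=1}^N g_{\alpha,0}(z_i)\Big)\,dz_{s+1}\dots dz_N\,.
\end{equation*}
Splitting the sum $\sum_{i=1}^N$ into the $s$ terms with $i\le s$ and the $N-s$ terms with $i>s$, and using that $g_{\alpha,0}$ has zero average against $M_\beta\,dz$, the second group of terms would vanish if the non-overlap constraint were absent; what survives is the correlation it creates. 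For the first group, the spatial integral is $\int_{\T^{2(N-s)}}\prod \indc_{|x_i-x_j|>\eps}\,dX_{s+1,N}$, which depends on $X_s$ only through the exclusion it imposes on the remaining particles, and one compares it to $\mathcal Z_{N-s}$ (the constraint with $X_s$ removed); the discrepancy is controlled by the probability that one of the $N-s$ background particles sits within distance $\eps$ of some $x_i$, $i\le s$, which is $O(s(N-s)\eps^2) = O(s\alpha\eps)$.

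Next I would treat the second group of $N-s$ terms. By exchangeability the total contribution is $(N-s)$ times the integral $\int M_\beta^{\otimes(N-s)}g_{\alpha,0}(z_{s+1})\prod\indc_{|x_i-x_j|>\eps}\,dX_{s+1,N}dV_{s+1,N}$. Writing $\indc_{|x_i-x_j|>\eps}=1-\indc_{|x_i-x_j|\le\eps}$ for the pairs involving particle $s+1$ and expanding, the leading term $\int M_\beta g_{\alpha,0}(z_{s+1})\,dz_{s+1}=0$ kills the main contribution; what remains is again an excluded-volume correction: each substituted indicator $\indc_{|x_{s+1}-x_k|\le\eps}$ costs a factor $\eps^2\|g_{\alpha,0}\|_{L^\infty}$, and summing over the $N-1$ choices of $k$ and the $N-s$ choices of the distinguished particle one collects $(N-s)(N-1)\eps^2 = O(\alpha^2)$, with one more factor $\eps$ to spare from $(N-s)\eps\lesssim\alpha$, $N\eps^2\lesssim\alpha\eps$—this is where the power $\alpha^3\eps$ will emerge, together with the $C^s$ coming from \eqref{eq: ZNZN-k} applied to the ratios $\mathcal Z_N^{-1}\mathcal Z_{N-s}$ (and, where one substitutes an indicator, $\mathcal Z_N^{-1}\mathcal Z_{N-s-1}$). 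Throughout, higher-order terms in the indicator expansion are of smaller order in $\eps$ and are absorbed once $\alpha^3\eps\ll1$. The second statement \eqref{eq: linfty initial} is then immediate: $|f_{N,0}^{(s)}|\le|f^{(s)}_0|+C^s\alpha^3\eps M_\beta^{\otimes s}\|g_{\alpha,0}\|_{L^\infty}$, and $|f^{(s)}_0|\le s\,M_\beta^{\otimes s}\|g_{\alpha,0}\|_{L^\infty}\le C^sM_\beta^{\otimes s}\|g_{\alpha,0}\|_{L^\infty}$.

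The main obstacle I anticipate is the bookkeeping in the second group of terms: the cancellation of the $O(1)$ contribution is exact only because $\int M_\beta g_{\alpha,0}=0$, so one must expand all the non-overlap indicators touching the integrated background particle without losing this zero-average cancellation, and then show that every surviving term carries at least two powers of $\eps$ times a combinatorial factor bounded by $C^s\alpha^3$ (rather than, say, $C^s\alpha^4$ or a factor growing in $N$). Keeping the constants uniform in $N$ requires using \eqref{eq: ZNZN-k} at each level of the expansion and checking that the geometric series in the number of substituted indicators converges under $\alpha^3\eps\ll1$; the cleanest route is to organize the expansion as an inclusion–exclusion over the set of background particles lying in an $\eps$-neighbourhood of $X_s\cup\{x_{s+1}\}$, exactly as in the proof of Lemma~6.1.2 in \cite{GSRT} on which \eqref{eq: ZNZN-k} rests.
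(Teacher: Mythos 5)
Your overall strategy (integrate out the background, use the exclusion estimate \eqref{eq: ZNZN-k}, and invoke the zero average of $g_{\alpha,0}$ to kill the leading contribution of the $N-s$ background terms) is the same as the paper's, and your treatment of the terms $i\le s$ and of the case where $x_{s+1}$ lands near $X_s$ (each of size $O(s\,C^s\alpha\eps)$) is fine. But there is a genuine gap in the bookkeeping for the terms $i\ge s+1$, and it is exactly at the point you flag as ``the main obstacle''. After you use $\int M_\beta g_{\alpha,0}\,dz=0$ once, the first-order terms of your indicator expansion in which particle $s+1$ overlaps a \emph{background} particle $x_k$, $k\ge s+2$, are of size $(N-s)(N-s-1)\eps^2\,C^s\|g_{\alpha,0}\|_{L^\infty}\sim C^s\alpha^2\|g_{\alpha,0}\|_{L^\infty}$ by your own counting. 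There is no ``extra factor $\eps$ to spare'': $(N-s)\eps$ and $(N-s-1)\eps$ are both already spent to produce the two powers of $\alpha$, and $\alpha^2$ is not small (indeed $\alpha\to\infty$ and $\alpha^2\gg\alpha^3\eps$). So a crude expansion with absolute values cannot reach $C^s\alpha^3\eps$.

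The missing idea, which is the heart of the paper's proof, is a \emph{second exact cancellation}: the contribution in which $x_{s+1}$ is constrained only relative to the background (and no background particle is constrained to be near $X_s$) vanishes identically, because after integrating out $X_{s+2,N}$ the resulting function of $x_{s+1}$ is constant by translation invariance on the torus, so integrating it against $M_\beta(v_{s+1})g_{\alpha,0}(z_{s+1})$ gives zero by the mean-zero assumption (this is the term $I_{2,2}=0$ in the paper). What survives is only the cross term carrying \emph{two simultaneous overlap constraints} --- $x_{s+1}$ within $\eps$ of some background particle \emph{and} some background particle within $\eps$ of $X_s$ --- which costs $N\cdot(N-s-1)\eps^2\cdot s(N-s-1)\eps^2\,\cZ_{N-s-3}/\cZ_N\lesssim s\,C^s\alpha^3\eps$; this is where $\alpha^3\eps$ really comes from. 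To repair your argument you must organize the expansion so that the zero-average of $g_{\alpha,0}$ is used a second time on every term that does not couple the background to $X_s$, rather than hoping each substituted indicator alone provides enough smallness. Your deduction of \eqref{eq: linfty initial} from the first estimate is correct as stated.
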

The proof of this Proposition can be found in Appendix \ref{sec: appendix Initial data estimates}. 
A similar statement was derived in \cite{BLLS}. Note that contrary to estimate \eqref{estimatelinearcase} in the linear case, we are unable to propagate
the initial estimate~(\ref{eq: linfty initial}) in time and to improve \eqref{Linfty-marginals}.

\medskip

{\it $L^2$-estimates.} In our setting the~$L^2_\beta$-norm  (defined in \eqref{eq: Lp}) is better behaved than the~$L^\infty$ norm. 
One of the specificities of  dimension 2 is the fact that the normalizing factor ${\mathcal Z}_N^{-1}$ is uniformly bounded in $N$.
From (\ref{eq: ZNZN-k}), we indeed deduce that under the Boltzmann-Grad scaling~$N\eps =\alpha$, one has 
\begin{equation}
\label{eq: ZN}
{\mathcal Z}_N^{-1} \leq C \exp (C \alpha^2) \, . 
\end{equation}
This upper bound and the definition of~$f_{N,0}$ in~\eqref{initial data} lead to
\begin{equation}
\label{L2-init}
\begin{aligned}
\int {f_{N,0} ^2 \over M_{N,\beta} }(Z_N) dZ_N 
& \leq   C\exp (C \alpha^2)  \int  M_{\beta}^{\otimes N} (Z_N)
\left( \sum_{i=1}^N   g_{\alpha,0}(z_i) \right)^2  dZ_N  \\
& \leq   CN\exp (C \alpha^2)\| g_{\alpha,0}\| _{L^2_\beta(\D)}^2    \, ,
\end{aligned}
\end{equation}		
where we used in the last inequality that~$g_{\alpha,0}$ is mean free with respect to the measure~$M_\beta dz$ due to~\eqref{initial data}. 
The weighted~$L^2$ norm is therefore~$O(\sqrt N)$. Since the Liouville equation is conservative, we obtain from~(\ref{L2-init}) that
\begin{equation}
\label{L2}
\int {f_{N} ^2 \over M_{N,\beta} }(t,Z_N) dZ_N \leq   CN\exp (C \alpha^2) \| g_{\alpha,0}\| _{L^2_\beta(\D)}^2 \,.
\end{equation}	
The $L^2$ bound  (\ref{L2}) is in some sense more accurate than (\ref{Linfty}) since it comes from the  orthogonality at time~0 inherited from the structure of the initial datum.
In particular, if the function~$ f_{N} (t,Z_N)$ was of the same form as the initial datum for all times, meaning if
\begin{equation}
\label{wish} 
f_{N} (t,Z_N) = M_{N,\beta}(Z_N)  \sum_{i=1}^N   g_\alpha (t,z_i)  \hbox{  with }
\int M_\beta g_\alpha(t,z) dz =0\,,\end{equation}
 we would deduce a uniform $L^2$ estimate
on $ f_{N}^{(s)}(t)$. Unfortunately this structure is not preserved by the flow. However one inherits a trace of this structure, as will be  shown in  Proposition \ref{lemfNL2sym 0}.

\subsection{Estimate of the collision operators in~$L^2$}\label{introproblemL2}
Proving an analogue of Proposition~\ref{estimatelemmacontinuity} in an~$L^2$ setting is not an easy task, since  one cannot compute the trace of an~$L^2$ function on a hypersurface. However (and that is actually the way to get around a similar difficulty in~$L^\infty$, see~\cite{Simonella,GSRT}) composing the collision integral with free transport and integrating over time is a way of replacing the integral over the unit sphere by an integral over a volume using a change of variables of the type
\begin{equation}\label{changevarintro}
(Z_s,\nu_{s+1},v_{s+1},t) \mapsto Z_{s+1} = (Z_s-V_s t,x_s + \eps \nu_{s+1} - v_{s+1}t,v_{s+1})
\end{equation}
(with scattering if need be).
Using this idea one can hope to prove some kind of continuity estimate of~$Q_{s,s+n}$ in~$L^2$, but two additional difficulties arise:

\begin{enumerate}
\item\label{introtransportnotfree} the transport operators appearing in~$Q_{s,s+n}$ are not free transport operators since recollisions are possible, so the change of variables~(\ref{changevarintro}) cannot be used directly.  If there is a fixed number of recollisions then 
one can still use a similar argument but  if there is no control on the number of collisions then this method fails.

\item\label{introlossepsilon} Computing an~$L^\infty$  bound on the collision operator~$C_{s,s+1}$ gives rise to the size of the circular boundary, hence~$\eps$,  which compensates exactly (up to a factor~$\alpha$) the factor~$(N-s)$; but in~$L^2$ one only can recover~$\eps^\frac12$, so there remains a factor~$N^\frac12$. Typically one can expect in general an estimate of the type
$$
 \big\| |Q_{1,s}| (t) g_s\big\|_{L^2_\beta}\leq (C\alpha  t) ^{s-1}  \|g_{s}\|_{L^2_\beta} N^{\frac{s-1}2}
$$
so this power of~$N$ will need to be compensated (see Section \ref{decompositionL2andCss+1}).
\end{enumerate}

\subsection{Decomposition of the BBGKY solution}
Starting from decomposition~(\ref{series-expansion}),  we need to analyze differently the trajectories with more or less than  1 recollision in order to control the remainder.
 This is due to the fact that as explained in Paragraph~\ref{introproblemL2} (Point~(\ref{introtransportnotfree})), the estimates  in~$L^2_\beta$ of the collision operators~$Q_{s,s+n}$ require  a precise control on the number of recollisions.

\medskip

Our strategy consists in adapting~(\ref{series-expansion}) in two ways: first we   truncate energies by defining 
\begin{equation}
\label{eq: truncated velocities}
\forall s \geq 1, \qquad 
{\mathcal V}_s := \big  \{ V_s \in \R^{2s} \; \big| \; \;  
| V_s |^2 \leq C_0 |\log \eps| \big\} ,
\end{equation}
for some constant~$C_0$ to be specified later in Proposition \ref{lem: truncation}.
Second we
decompose
\begin{equation}\label{decompositionmarginal}
  f_N^{(1)} (t) = f^{(1,K)}_N(t) +  R_N^{K} (t)
\end{equation}
with the leading contribution
$$\begin{aligned} 
  f^{(1,K)}_N(t)     : =   \sum_{j_1=0}^{n_1-1}\! \!   \dots \!  \! \sum_{j_K=0}^{n_K-1} Q_{1,J_1} (h )Q_{J_1,J_2} (h )
 \dots  Q_{J_{K-1},J_K} (h ) \, \big(  f^{(J_K)}_{N,0}    
 \indc_{{\mathcal V}_{J_K} }\big),
 \end{aligned}
$$
with~$n_k = 2^k n_0$ for some~$n_0$ to be specified, and where~$J_0:=1$,~$J_k :=1+ j_1 + \dots +j_k$. 
 The decomposition above is reminiscent of  \eqref{series-expansion}, except that the velocities have been truncated in the dominant term $f^{(1,K)}_N$.

We then split the remainder into three parts according to the number of recollisions in the pseudo-trajectories (see Definition \ref{def: recollisions}) and a fourth part to take into account   large velocities
\begin{equation}\label{reste}
 R_N^{K} (t)=R_N^{K,0} (t)+R_N^{K,1} (t)+R_N^{K,>} (t) + R_N^{K,vel} (t) \, .
\end{equation}

\medskip

$\bullet$ $ $ We first  introduce a truncated transport operator up to the first collision. Let us rewrite Liouville's equation \eqref{Liouville} for $s$ particles with a different boundary condition
\begin{equation*}
\d_t \varphi_s +V_s \cdot \nabla_{X_s} \varphi_s =0
\qquad \text{with} \quad \varphi_s (t, Z_s ) = 0 
\quad \text{for} \quad Z_s \in \bigcup_{i,j \leq s} \d {\mathcal D}_{\eps}^{s+}(i,j) \, .
\end{equation*}
The corresponding semi-group is denoted by 
$\widehat  {\mathbf S}_{s}^0$ and it coincides with the free flow~${\mathbf S}^0_s$ up to the first recollision
\begin{eqnarray*}
\left( \widehat  {\mathbf S}_{s}^0 (\tau) \varphi_s  \right) (Z_s) = 
\begin{cases}
\left( {\mathbf S}^0_s (\tau) \varphi_s \right) (Z_s)  \qquad  & \text{if no recollision occurs in $[0,\tau]$} \, , \\
0  & \text{otherwise} \, .
\end{cases}
\end{eqnarray*}
We define the operator $Q^0_{s,s+n} (t)$ by replacing ${\mathbf S}_{s}$ by $\widehat  {\mathbf S}_{s}^0$ in the iterated collision operator~$Q_{s,s+n} (t)$ given in~\eqref{eq: iterated collision operator}
\begin{equation}\begin{aligned}
Q^0_{s,s+n} (t) :=   \alpha^n  \int_0^t \int_0^{t_{s+1}}\dots  \int_0^{t_{s+n-1}}  \widehat  {\mathbf S}_{s}^0 (t-t_{s+1}) C_{s,s+1}  \widehat  {\mathbf S}_{s+1}^0 (t_{s+1}-t_{s+2})  
  C_{s+1,s+2}\dots    \\
  \dots  \widehat  {\mathbf S}_{s+n}^0   (t_{s+n})   \: dt_{{s+n}} \dots dt_{s+1} \, .
\label{eq: iterated collision operator free}
\end{aligned}
\end{equation}
With this definition, we set
\begin{equation}
\label{reste0}
  R_N^{K,0} (t)  := \sum_{k=1}^K \; \sum_{j_1=0}^{n_1-1} \! \! \dots \! \! \sum_{j_{k-1}=0}^{n_{k-1}-1}\sum_{j_k \geq n_k} \; 
 Q^0_{1,J_1} (h ) \dots  Q^0_{J_{k-1},J_{k}} (h )  \big( f_N^{(J_k)}(t-kh ) \indc_{{\mathcal V}_{J_k} }\big)\, .
\end{equation}

\medskip
$\bullet $ $ $ In a similar way, we  define  pseudo-dynamics involving exactly one recollision.
\begin{eqnarray*}
\left( \widehat  {\mathbf S}_{s}^1 (\tau) \varphi_s  \right) (Z_s) = 
\begin{cases}
\left( {\mathbf S}_s (\tau) \varphi_s \right) (Z_s)  \qquad  & \text{if exactly one recollision occurs in $[0,\tau]$}\, , \\
0  & \text{otherwise} \, .
\end{cases}
\end{eqnarray*}
Note that, contrary to $\widehat  {\mathbf S}_{s}^0 (\tau)$, the operator $ \widehat  {\mathbf S}_{s}^1 (\tau)$ is not a semi-group, as the dynamics keeps memory  of   past events.
In particular, there is no infinitesimal generator.

We then define the operator $Q^1_{s,s+n} (t)$ by replacing ${\mathbf S}_{s}$ by $\widehat  {\mathbf S}_{s}^0$ in the iterated collision operator $Q_{s,s+n} (t)$, except for one iteration
$$
\begin{aligned}
Q^1_{s,s+n} (t) :=   \alpha^n  \sum_{j= 0}^n \int_0^t \int_0^{t_{s+1}}\dots  \int_0^{t_{s+n-1}}  \widehat  {\mathbf S}_{s}^0 (t-t_{s+1}) C_{s,s+1}  \widehat  {\mathbf S}_{s+1}^0 (t_{s+1}-t_{s+2})  
  C_{s+1,s+2}\dots   \nonumber\\
  \dots C_{s+j-1, s+j} \widehat  {\mathbf S}_{s+j}^1   (t_{s+j} - t_{s+j-1})  \dots   \widehat  {\mathbf S}_{s+n}^0   (t_{s+n})   \: dt_{{s+n}} \dots dt_{s+1} \, .
\label{eq: iterated collision operator 1rec}
\end{aligned}
$$

With this definition, we set
\begin{equation}
\label{reste1}
\begin{aligned}
  R_N^{K,1} (t)  := \sum_{k=1}^K\sum_{ \ell =1}^k  \; \sum_{j_1=0}^{n_1-1} \! \! \dots \! \! \sum_{j_{k-1}=0}^{n_{k-1}-1}\sum_{j_k \geq n_k} \; 
 Q^0_{1,J_1} (h ) \dots Q^1_{J_{\ell-1}, J_\ell} (h) \\
 \dots  Q^0_{J_{k-1},J_{k}} (h )     \big( f_N^{(J_k)}(t-kh ) \indc_{{\mathcal V}_{J_k} }\big) \, .
\end{aligned}
\end{equation}

$\bullet$ $ $ The contribution of large velocities, i.e. those which are not in  ${\mathcal V}_{J_K}$, is 
\begin{align}
  R_N^{K, vel} (t) &  :=
   \sum_{j_1=0}^{n_1-1}\! \!   \dots \!  \! \sum_{j_K=0}^{n_K-1} Q_{1,J_1} (h )Q_{J_1,J_2} (h )
 \dots  Q_{J_{K-1},J_K} (h ) \, \left(  f^{(J_K)}_{N,0}    
 \indc_{{\mathcal V}_{J_K}^c }\right)  \nonumber \\
+ &   \sum_{k=1}^K \; \sum_{j_1=0}^{n_1-1} \! \! \dots \! \! \sum_{j_{k-1}=0}^{n_{k-1}-1}\sum_{j_k \geq n_k} \; 
 Q _{1,J_1} (h ) \dots  Q _{J_{k-1},J_{k}} (h ) 
 \left( f_N^{(J_k)}(t-kh )  \indc_{ {\mathcal V}_{J_k}^c }\right) .
\label{reste cut}
\end{align}

$\bullet$ $ $ We finally define
\begin{equation}
\label{reste>}
R_N^{K,>} (t)  :=  R_N^{K} (t) -R_N^{K,0} (t)  -R_N^{K,1} (t) - R_N^{K, vel} (t)  \,  ,
\end{equation}
which by definition corresponds to pseudo-dynamics involving at least two recollisions, with truncated velocities.
 
 \bigskip

Using the notation (\ref{barQ}), the counterpart of $f^{(1,K)} _N(t) $ for the Boltzmann hierarchy is 
$$ 
\bar f^{(1,K)}(t)   :=\sum_{j_1=0}^{n_1-1}\! \!   \dots \!  \! \sum_{j_K=0}^{n_K-1} \bar Q_{1,J_1} (h )\bar Q_{J_1,J_2} (h )
 \dots  \bar Q_{J_{K-1},J_K} (h ) \,     \big(  f^{(J_K)}_{0}\indc_{{\mathcal V}_{J_K} }\big) \, ,
$$
and we define also
$$ 
\bar R ^K (t) =  \sum_{k=1}^K \; \sum_{j_1=0}^{n_1-1} \! \! \dots \! \! \sum_{j_{k-1}=0}^{n_{k-1}-1}\sum_{j_k \geq n_k} \; 
 \bar Q_{1,J_1} (h ) \dots \bar  Q_{J_{k-1},J_{k}} (h )   \big( f^{(J_k)}(t-kh ) \indc_{{\mathcal V}_{J_k} }\big)
 $$
and
$$\begin{aligned}
\bar R ^{K, vel} (t)   :=
   \sum_{j_1=0}^{n_1-1}\! \!   \dots \!  \! \sum_{j_K=0}^{n_K-1} 
\bar Q_{1,J_1} (h )  \, \bar Q_{J_1,J_2} (h )
 \dots  \bar  Q_{J_{K-1},J_K} (h ) \, \left(  f^{(J_K)}_{0}    
 \indc_{ {\mathcal V}_{J_K}^c  }\right)    \\
+     \sum_{k=1}^K \; \sum_{j_1=0}^{n_1-1} \! \! \dots \! \! \sum_{j_{k-1}=0}^{n_{k-1}-1}\sum_{j_k \geq n_k} \; 
 \bar Q_{1,J_1} (h ) \dots  \bar Q_{J_{k-1},J_{k}} (h ) 
 \left( f^{(J_k)}(t-kh )  \indc_{ {\mathcal V}_{J_k}^c  }\right) .
\end{aligned}
$$
 \bigskip

Section~\ref{convergencepartieprincipales}  deals with the convergence of the main part $ f^{(1,K)}_N(t)$ defined in~\eqref{series-expansion}. 
Since the initial datum is well behaved (see Proposition \ref{exclusion-prop2}), the proof of this convergence essentially follows the same lines as in \cite{BGSR1}. 
In the proof of Proposition \ref{main-prop}, we shall however improve the estimates of \cite{BGSR1} on the measure of trajectories having at least one recollision, as they will be the first step to control multiple recollisions.

\medskip

Section~\ref{decompositionL2andCss+1} is the main breakthrough of this paper, as it shows how exchangeability combined with the $L^2$ estimate provides a very weak chaos property (see Proposition \ref{lemfNL2sym 0}). 
We   then explain, in Proposition \ref{L2-est}, how to use this structure to compensate the expected loss explained in Paragraph~\ref{introproblemL2} (Point~(\ref{introlossepsilon})), and to obtain  an estimate on $R_N^{K,0}$, corresponding to pseudo-trajectories with super exponential branching but without recollision. This $L^2$ continuity estimate uses crucially the integration with respect to time of the free  transport (see Paragraph~\ref{introproblemL2}, Point~(\ref{introtransportnotfree})). 
Section~\ref{lanfordL2} is a refinement of this argument to estimate the remainder $R_N^{K,1}$ when there is one recollision.
In fact, the same argument holds with any finite number of recollisions.  \medskip

Section~\ref{recollisions}  deals with $R_N^{K>}$, which corresponds to multiple recollisions (Proposition \ref{propR>}). In this case, the extra smallness coming from the geometric control of multiple recollisions  compensates exactly the $O(N)$ divergence of the $L^\infty
$-bound \eqref{Linfty}. 
The proof  relies on delicate geometric estimates which are detailed in Appendix~\ref{geometricallemmasappendix}.
This allows one to control the remainder~$R_N^{K>}$ 
by using $L^\infty$ estimates from Proposition \ref{estimatelemmacontinuity}. Note that the critical number of recollisions depends on the dimension, it is 1 only in the simple case of dimension $d=2$.  
The~$L^\infty$-bound \eqref{Linfty} is also used in Section~\ref{sec: large velocities} to control $R_N^{K, vel}$, i.e. the large velocities.

\medskip

Finally, we conclude the proof in Section~\ref{sec: conclusion} and state some open problems.

\bigskip

The parameters~$\alpha$ and~$K$ will be tuned at the very end of the proof (see Section~\ref{sec: conclusion}) but one may keep in mind that
$$
K = {T\over h} \ll \log | \log \eps | \quad \mbox{and} \quad \alpha \ll  \sqrt{\log | \log \eps |}\,.
$$

%%%%%%%%%%%%%%%%%%%%%%%%%%%%%%%%%%%%%%%%%%%%%%%%%%%%%%%%%%%%%%%%%%%%%%%%%%%%%%%%%%%%%%%%%%%%%%
%%%%%%%%%%%%%%%%%%%%%%%%%
%%%%%%%%%%%%%%%%%%%%%%%%%%%%%%%%%%%%%%%%%%%%%%%%%%%%%%%%%%%%%%%%%%%%%%%%%%%%%%%%%%%%%%%%%%%%%%
\section{Convergence of the principal parts}
\label{convergencepartieprincipales}
%
%The key idea of all subsequent proofs is to use the combinatorial structure of collision trees together with local geometric arguments to control pathological dynamics.
%
%\subsection{Collision trees and pseudo-trajectories}$ $
%
% 
%$\bullet$ Let us first extract the combinatorial informations from the iterated Duhamel formula (\ref{duhamel1}). The adjunction of new particles (in the backward dynamics) is described by  ordered trees, with an additional parameter to encode whether collisions are ingoing or outgoing~:
%
% \begin{Def}[Collision trees]\label{trees-def}
% Let $s>1$ be fixed. An ordered tree  $a \in \cA_s$ is defined by a family $(a(i)) _{1\leq i \leq s}$ with $a(i) \in \{1,\dots, i-1\}$ and a family $(\sigma_i)_{1\leq i \leq s}$ with $\sigma_i =\pm$.
% \end{Def}

We recall that  the principal part of the iterated Duhamel formula (\ref{duhamel1}) for the first marginal is given by (\ref{series-expansion})
$$ f^{(1,K)}_N(t)     : =   \sum_{j_1=0}^{n_1-1}\! \!   \dots \!  \! \sum_{j_K=0}^{n_K-1} Q_{1,J_1} (h )Q_{J_1,J_2} (h )
 \dots  Q_{J_{K-1},J_K} (h ) \, 
    \big(  f^{(J_K)}_{N,0}\indc_{{\mathcal V}_{J_K} }\big)
   \, ,
$$
and its counterpart  for the Boltzmann hierarchy is 
$$ 
\bar f ^{(1,K)}(t)   :=\sum_{j_1=0}^{n_1-1}\! \!   \dots \!  \! \sum_{j_K=0}^{n_K-1} \bar Q_{1,J_1} (h )\bar Q_{J_1,J_2} (h )
 \dots  \bar Q_{J_{K-1},J_K} (h ) \,     \big(  f^{(J_K)}_{0}\indc_{{\mathcal V}_{J_K} }\big) \, .
$$
From now on, the exponential growth of the collision trees will be controlled by the sequence
$$
n_k :=    2^k n_0\, ,
$$
for some large integer $n_0$   to be tuned later (see Section~\ref{conclusion of the proof superexpmultiple}). 

\medskip

The error $f^{(1,K)} _N- \bar f ^{(1,K)}  $ can be estimated as follows.
\begin{Prop}
\label{main-prop}
Assume that $g_{\alpha,0}$ satisfies the Lipschitz bound  \eqref{eq: Linfty initial} then,  under the Boltzmann-Grad scaling~$N\eps = \alpha \gg1$, we have for all~$T>1$ and~$t\in [0,T]$, \begin{equation}
\label{mainpart-est}
\Big\|   f^{(1,K)} _N(t)-\bar f ^{(1,K)} (t) \Big\|_{L^2(\D)} 
\leq   
\exp(C\alpha^2)  (C\alpha  T)^{   2^{K+1} n_0}  \left(     \eps | \log\eps|^{10}  + {\eps \over \alpha }  
 \right)   \, .
\end{equation}
\end{Prop}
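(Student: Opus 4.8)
The strategy is to compare the BBGKY and Boltzmann contributions term by term in the truncated series expansion defining $f^{(1,K)}_N$ and $\bar f^{(1,K)}$, following the scheme of \cite{BGSR1} but tracking carefully the dependence on $\alpha$ and on $K = 2^{K+1}n_0$ collision operators. First I would fix the collision tree structure: using the pseudo-trajectory representation \eqref{duhamelpseudotraj}--\eqref{duhamelpseudotrajlim}, both $f^{(1,K)}_N(t,z_1)$ and $\bar f^{(1,K)}(t,z_1)$ are sums over trees $a$ and integrals over collision parameters $(T,\Omega,V)$ of the initial data evaluated along the respective pseudo-trajectories, weighted by the collision cross-sections. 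I would split the difference into three pieces: (a) the difference of initial data $f^{(J_K)}_{N,0} - f^{(J_K)}_0$ evaluated along the \emph{same} (Boltzmann) pseudo-trajectory, controlled by Proposition \ref{exclusion-prop2}, which gives a gain of $\alpha^3\eps$ per occurrence but costs $C^{s}$ and a factor $\|g_{\alpha,0}\|_{L^\infty}$; (b) the difference coming from the divergence of the two pseudo-trajectories when no recollision occurs — here a Lipschitz estimate on $g_{\alpha,0}$ together with the fact that the BBGKY and Boltzmann trajectories differ by $O(\eps)$ in position (as long as there is no recollision) yields a bound proportional to $\eps$ times the Lipschitz norm; (c) the contribution of trajectories that \emph{do} undergo a recollision, where the two dynamics are genuinely different, and which must be shown to have small measure.

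The key quantitative input is a continuity estimate for the iterated operators $|Q_{1,J_1}|(h)\cdots|Q_{J_{K-1},J_K}|(h)$ in the $L^\infty_\beta$ framework, which is exactly Proposition \ref{estimatelemmacontinuity}: piling together $K$ layers of collision operators, each summed over $j_k < n_k = 2^k n_0$ branchings, produces the factor $(C\alpha h)^{\sum j_k} \leq (C\alpha h)^{2^{K+1}n_0} = (C\alpha T)^{2^{K+1}n_0}$ after using $Kh = T$ and $\sum_{k=1}^K n_k \le 2^{K+1}n_0$, together with the Maxwellian weight $M_{3\beta/4}(z_1)$ which is integrable in $L^2(\D)$. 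The factor $\exp(C\alpha^2)$ enters from two sources: the partition-function bound \eqref{eq: ZN} ${\mathcal Z}_N^{-1} \le C\exp(C\alpha^2)$ which appears when converting the marginal bound \eqref{eq: linfty initial} into something usable, and the initial-data bound \eqref{eq: Linfty initial} $\|g_{\alpha,0}\|_{W^{1,\infty}} \le C_1 \exp(C_1\alpha^2)$. So the skeleton of the bound \eqref{mainpart-est} is: $\exp(C\alpha^2)$ from the measure/data normalization, $(C\alpha T)^{2^{K+1}n_0}$ from the combinatorics of the $K$-layer expansion, and the small factor $\eps|\log\eps|^{10} + \eps/\alpha$ from the convergence estimates (a)--(c), where the $|\log\eps|^{10}$ comes from the velocity truncation ${\mathcal V}_{J_K}$ in \eqref{eq: truncated velocities} — integrating out the truncated velocities up to $|V|^2 \lesssim |\log\eps|$ costs polynomial powers of $|\log\eps|$ — and the $\eps/\alpha$ is the size of the genuine geometric error term after the recollision analysis.

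For piece (c), the recollision contribution, I would use the geometric argument of \cite{BGSR1} (sharpened, per the remark in the paper, in the proof of Proposition \ref{main-prop} itself): for a fixed tree and fixed values of all parameters except the scattering data at the two colliding particles, the set of parameters leading to a recollision within the trajectory is contained in a union of cylinders of total measure $O(\eps |\log\eps|^{c})$, the $|\log\eps|$ powers arising from the velocity and time truncations. Multiplying by the $L^\infty$-type bound on the rest of the tree and summing over trees reproduces the $\eps|\log\eps|^{10}$ term. The main obstacle, and the step requiring the most care, is the \textbf{bookkeeping of the constants through the $K$ nested layers}: one must verify that distributing the Maxwellian weight $M_{N,\beta}$ evenly across all $\sum j_k \le 2^{K+1}n_0$ collision operators (as in the proof of Proposition \ref{estimatelemmacontinuity}, via the Cauchy--Schwarz bound \eqref{CS}) produces only a constant to the power $2^{K+1}n_0$ and not something worse, and that the velocity-truncation indicator $\indc_{{\mathcal V}_{J_K}}$ is compatible with this weight redistribution — i.e. that one can afford the loss $(s+n)^n/n! \le \exp(s+n)$ at each layer. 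One then absorbs everything into $(C\alpha T)^{2^{K+1}n_0}\exp(C\alpha^2)$, noting that the number of trees $|\cA_{J_K}| \le (J_K-1)! \le (2^{K+1}n_0)!$ is likewise controlled by the Stirling bound already built into the continuity estimate. The geometric recollision estimate is technically delicate but is quoted from Appendix \ref{geometricallemmasappendix} and is not genuinely new here; the conceptual work in this proposition is the careful propagation of the exponential-in-$\alpha^2$ and exponential-in-$K$ factors while extracting the single power of $\eps$.
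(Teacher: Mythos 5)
Your overall architecture --- splitting off the pseudo-trajectories with at least one recollision, comparing the initial data along the remaining trajectories via Proposition \ref{exclusion-prop2}, using the Lipschitz bound \eqref{eq: Linfty initial} to absorb the $O(s\eps)$ positional shift between the BBGKY and Boltzmann pseudo-trajectories, and controlling everything through the $L^\infty$ continuity estimates of Proposition \ref{estimatelemmacontinuity} restricted to the bad sets --- is essentially the paper's proof. But as written it does not close, for a concrete reason: you never compare the combinatorial prefactors of the two series. The BBGKY expansion \eqref{duhamelpseudotraj} carries $(N-1)\cdots\big(N-(s-1)\big)\eps^{s-1}$ while the Boltzmann expansion \eqref{duhamelpseudotrajlim} carries $\alpha^{s-1}$, and replacing one by the other costs $\big|1-(N-1)\cdots(N-s+1)/N^{s-1}\big|\leq Cs^2/N=Cs^2\,\eps/\alpha$ per tree of size $s$. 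This is precisely the origin of the $\eps/\alpha$ term in \eqref{mainpart-est}. You instead attribute $\eps/\alpha$ to ``the genuine geometric error term after the recollision analysis'', but the recollision measure estimate (Proposition \ref{recoll1-prop} fed into the tree summation, with $R^2,t\leq C_0|\log\eps|$) produces the $\eps|\log\eps|^{10}$ term, not $\eps/\alpha$; so your three pieces (a), (b), (c) do not exhaust the difference $f^{(1,K)}_N-\bar f^{(1,K)}$, and the second term of the stated bound is unaccounted for. A related imprecision: the $|\log\eps|$ powers do not come primarily from ``integrating out the truncated velocities''; they come from the recollision constraint itself (time integration over the first parent and removal of the $1/|v_i-v_j|$ singularity via the second parent each cost $|\log\eps|$, cf.\ \eqref{cP-est}), and then from substituting $R^2,t\lesssim|\log\eps|$ into the $R^7t^3$ factors.

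A second, smaller gap is the domain mismatch between the two hierarchies. The BBGKY pseudo-trajectories only exist on the non-overlapping sets $G_s(a)$ of Definition \ref{def: overlap}, so the portion of the Boltzmann series carried by $^c{}G_s(a)$ has no BBGKY counterpart and must be shown small on its own; the paper does this by interpreting an overlap as a ``recollision at distance $2\eps$'' and re-running the same geometric estimate, giving \eqref{eq: overlap Boltzmann hierarchy}. Your proposal compares the integrands along matched trajectories but never addresses this set where only one of the two terms is present. Both gaps are repairable within your framework, but they are genuine missing steps rather than bookkeeping.
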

 The key step of the proof is {Proposition \ref{prop: 1 recoll}
 where the contribution of   recollisions in the pseudo-trajectories associated with 
$f^{(1,K)} _N$ is shown to be negligible.}
Once the recollisions have been neglected and overlaps have been removed, the pseudo-trajectories in both hierarchies are comparable  and the rest of the proof is rather straightforward (see Section~\ref{subsec: Convergence of pseudo-trajectories}). 

\smallskip

In the rest of this section, we assume that $g_{\alpha,0}$ satisfies the Lipschitz bound  \eqref{eq: Linfty initial}.

\subsection{Geometric control of recollisions}
\label{sec: Geometric control of recollisions}

We are   going to prove that pseudo-trajectories involving recollisions contribute very little to~$f^{(1,K)}_{N}$ so that  $ {\bf S}_{s}$ can be replaced by the free transport~$\widehat {\bf S}^0_{s}$, up to a small error. 
With the notation~\eqref{eq: iterated collision operator free}, $f^{(1,K)}_{N}$ can be  decomposed as follows: 
$$
f^{(1,K)}_N = f^{(1,K), 0}_N +f^{(1,K), \geq}_N    
$$
with
\begin{align}
\label{eq: fN,cut 0}
f^{(1,K), 0}_{N}(t)  : =
\sum_{j_1=0}^{n_1-1}\! \!   \dots \!  \! \sum_{j_K=0}^{n_K-1}  Q^0_{1,J_1} (h )Q^0_{J_1,J_2} (h )
\dots  Q^0_{J_{K-1},J_K} (h ) \,     \big(  f^{(J_K)}_{N,0}\indc_{{\mathcal V}_{J_K} }\big)\end{align}
and the remainder   encodes the occurence of at least one recollision
\begin{align}
\label{eq: fN,cut 1}
f^{(1,K), \geq}_{N}   := f^{(1,K)}_{N} - f^{(1,K), 0}_{N}  \, .
\end{align}

\begin{Prop} 
\label{prop: 1 recoll}
The contribution of pseudo-dynamics involving (at least) a recollision is bounded by 
$$
\forall t\in [0,T] \, , \quad  | f^{(1,K), \geq}_N (t,z_1)  |  \leq
 \exp (C\alpha^2)  \big( CT \alpha \big)^{  2^{K+1} n_0} 
 \eps \big| \log \eps \big|^{10} M_{\beta/2}(v_1) \, .
$$ 
\end{Prop}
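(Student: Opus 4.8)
The plan is to bound $f^{(1,K),\geq}_N$ by organizing the pseudo-trajectory associated with each collision tree $a\in\cA_{J_K}$ according to the first recollision that occurs, and to show via a geometric argument that the set of parameters $(T_{2,J_K},\Omega_{2,J_K},V_{2,J_K})$ producing a recollision has measure of order $\eps|\log\eps|^{10}$. Concretely, I would start from the pseudo-trajectory representation analogous to \eqref{duhamelpseudotraj}, with the velocity truncation $\indc_{{\mathcal V}_{J_K}}$ in force so that all velocities appearing in the tree are bounded by $C|\log\eps|^{1/2}$, and all times are bounded by $T=Kh$. Using the $L^\infty$ bound \eqref{eq: linfty initial} on $f^{(J_K)}_{N,0}$ together with the continuity estimate from Proposition~\ref{estimatelemmacontinuity} (with the weight $M_{3\beta/4}$, leaving room to absorb into $M_{\beta/2}$), the whole sum over $s=J_K$ and over trees $a$ produces the factor $\exp(C\alpha^2)(CT\alpha)^{2^{K+1}n_0}$, since $J_K\le 2^{K+1}n_0$ and $|\cA_{J_K}|\le (J_K-1)!$ is compensated by the time integrations giving $T^{J_K-1}/(J_K-1)!$. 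The remaining task is purely geometric: to gain the extra small factor $\eps|\log\eps|^{10}$ from the constraint that at least one recollision occurs.

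For the geometric heart of the argument I would fix the collision tree $a$ and all parameters except those governing the ``creation'' of the particle, say particle $\ell$, that is involved in the first recollision, together with the recollision partner. The key point, already used in \cite{BGSR1,GSRT}, is that at the creation time $t_\ell$ particle $\ell$ is added at position $x_{a(\ell)}(t_\ell^+)+\eps\nu_\ell$ with velocity $v_\ell$; after creation, particles move freely (in the $\widehat{\mathbf S}^0$ dynamics, up to the recollision) so the relative position of particle $\ell$ and the recollision partner $m$ is an affine function of time, $x_\ell(\tau)-x_m(\tau)=d_0+(\tau-t_\ell)w$ for suitable $d_0$ (depending on $\eps\nu_\ell$) and relative velocity $w$. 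Requiring $|x_\ell(\tau)-x_m(\tau)|=\eps$ for some $\tau$ in the relevant window forces $(\nu_\ell,v_\ell)$ (or, alternatively, the deflection angle at a prior branching) to lie in a set whose measure, after integrating over the scattering cross-section $((v_\ell-v_{a(\ell)})\cdot\nu_\ell)$ and over $v_\ell\in{\mathcal V}$, is $O(\eps)$ times a polynomial-in-$|\log\eps|$ loss coming from the large velocities (which enlarge the ``tube'' swept by a particle) and from the lower bound on relative velocities needed to avoid a vanishing denominator — this is where the exponent $10$ is produced. I would borrow this estimate wholesale from the geometric lemmas in Appendix~\ref{geometricallemmasappendix}, phrasing it as: for fixed tree and fixed indices $(\ell,m)$, the measure of parameters leading to a first recollision between $\ell$ and $m$ (with no earlier recollision) is $\le C\eps|\log\eps|^{10}$ uniformly, and then summing over the at most $J_K^2\le (2^{K+1}n_0)^2$ choices of $(\ell,m)$, which is harmlessly absorbed into the $(CT\alpha)^{2^{K+1}n_0}$ prefactor.

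Assembling the two pieces: the $L^\infty$ machinery gives the amplitude $\exp(C\alpha^2)(CT\alpha)^{2^{K+1}n_0}M_{3\beta/4}(v_1)$, and restricting to trajectories with a recollision contributes the extra factor $\eps|\log\eps|^{10}$; downgrading $M_{3\beta/4}(v_1)$ to $M_{\beta/2}(v_1)$ absorbs any remaining bounded losses, yielding the claimed bound. The main obstacle, and the step requiring genuine care, is the geometric estimate on the measure of the recollision set when velocities are only bounded by $C|\log\eps|^{1/2}$ rather than by an $O(1)$ constant as in the $L^\infty$ theory of \cite{GSRT}: one must check that the dangerous configurations (grazing recollisions, nearly collinear relative velocities, recollisions with a particle ``created'' long before) only cost a fixed power of $|\log\eps|$ and never an inverse power of $\eps$, and that these losses do not accumulate uncontrollably over the $K$ time slabs — this is precisely the refined geometric analysis of recollisions advertised in the introduction, and it is also the reason the argument is restricted to dimension $d=2$, where the critical number of recollisions is $1$.
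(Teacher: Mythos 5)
Your overall architecture coincides with the paper's: isolate the recollision constraint as a small-measure condition on a few collision integrals, multiply by the Lanford-type $L^\infty$ amplitude coming from Proposition~\ref{estimatelemmacontinuity} and the initial-data bound \eqref{eq: linfty initial}, let the time simplex kill the $(J_K-1)!$ tree combinatorics, and sum over $j_1,\dots,j_K$ using $J_K\le 2^{K+1}n_0$; the truncation $\indc_{{\mathcal V}_{J_K}}$ converting powers of $R$ and $t$ into powers of $|\log\eps|^{1/2}$ and $|\log\eps|$ is exactly how the exponent $10$ arises (the paper gets $CR^7st^3\eps|\log\eps|^3\le Cs\,\eps|\log\eps|^{19/2}$).

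The one step that would fail as you formulate it is the uniform geometric claim. You freeze everything except the creation parameters $(t_\ell,\nu_\ell,v_\ell)$ of the recolliding particle and assert that the measure of the recollision set in those variables is $\le C\eps|\log\eps|^{10}$ uniformly in the frozen data. A single collision integral only yields $CR^5t^2\min\big(\eps|\log\eps|^2/|v_i-v_j|,1\big)$ (see \eqref{estimate1recol1/vi-vj}): when the relative velocity of the recolliding pair is small, the pair can remain $\eps$-close over a long time span and no uniform $O(\eps\,\mathrm{polylog})$ bound holds; the ``lower bound on relative velocities'' you invoke cannot simply be imposed, since the complementary event must itself be paid for. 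This is why Proposition~\ref{recoll1-prop} works with the first \emph{two} parents $1^*,2^*$ of the recolliding pseudo-particles: the integral over $1^*$ produces the rectangle constraint with the $1/|v_i-v_j|$ singularity, and the integral over $2^*$ removes that singularity through the Carleman scattering estimates of Lemma~\ref{scattering-lem}, each scattering and the time integration costing one $|\log\eps|$. One also needs the classification into scenarios $p=0,1,2$ (periodic self-recollisions, and the degenerate case where $2^*$ is itself the parent of $1^*$), and, crucially, the structural property that the bad set $\cP_1(a,p,\sigma)$ is parametrized only by variables with labels in $\sigma$ or below $\min\sigma$, uniformly in all the others — this is what legitimizes the Fubini decoupling of Proposition~\ref{restrictedtrees}, which your ``freeze the rest and multiply'' step implicitly uses. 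Finally, note that this one-recollision estimate is established in Section~\ref{sec: Geometric control of recollisions} (via Appendix~\ref{sectionappendixcarleman}), not in Appendix~\ref{geometricallemmasappendix}, which treats multiple recollisions; with that input corrected, your assembly and the absorption of $M_{3\beta/4}$ into $M_{\beta/2}$ reproduce the paper's proof of Proposition~\ref{prop: 1 recoll}.
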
 
The core of the proof is based on a careful analysis of   recollisions detailed in Section~\ref{sectionlocalcondition} below.
The proof of  Proposition \ref{prop: 1 recoll} is   completed in Section
\ref{par4.2}. Thanks to the energy cut-off ${\mathcal V}_{J_K}$, we  assume, in the rest of this section, that all energies are bounded by~$C_0 |\log \eps|$.

\subsubsection{A local condition for a recollision}
\label{sectionlocalcondition}

We start by writing a geometric condition for a recollision which involves only two collision integrals: this corresponds to writing a local condition, which will then be incorporated to the other collision integral estimates in Section~\ref{par4.2}. 
The following notions of {\it pseudo-particles} and {\it parents} will be useful. These notions are depicted in Figures~\ref{fig: pseudo-particle} and~\ref{fig: parent}.
 \begin{Def}[Pseudo-particles]
 \label{pseudoparticle} 
 Given a tree $a \in \cA_s$ and $i\leq s$, we define recursively, moving towards the root, the pseudo-particle $\i$  associated with the particle $i$ to be
\begin{itemize}
 \item $\i = i$ as long as $i$ exists,
 \item $\i = a(i)$ when $i$ disappears, and as long $a(i)$ exists,
 \item $\i = a\big (a(i)\big)$ when $a(i) $ disappears, and as long as this latter   exists,
  \item...
\end{itemize}
When there is no  possible confusion, we shall denote abusively by $i$ the pseudo-particle. 
\end{Def} 
  
\medskip 
 Contrary to the case of a particle in a collision tree, whose trajectory stops at its creation time, the trajectory of a pseudo-particle exists for all times. At each collision time the pseudo-particle is liable to be deviated through a scattering operator, and may jump of a distance $\eps$ in space  (see Figure \ref{fig: pseudo-particle}).

\begin{figure} [h] %  figure placement: here, top, bottom, or page
\centering
\includegraphics[width=5.5cm]{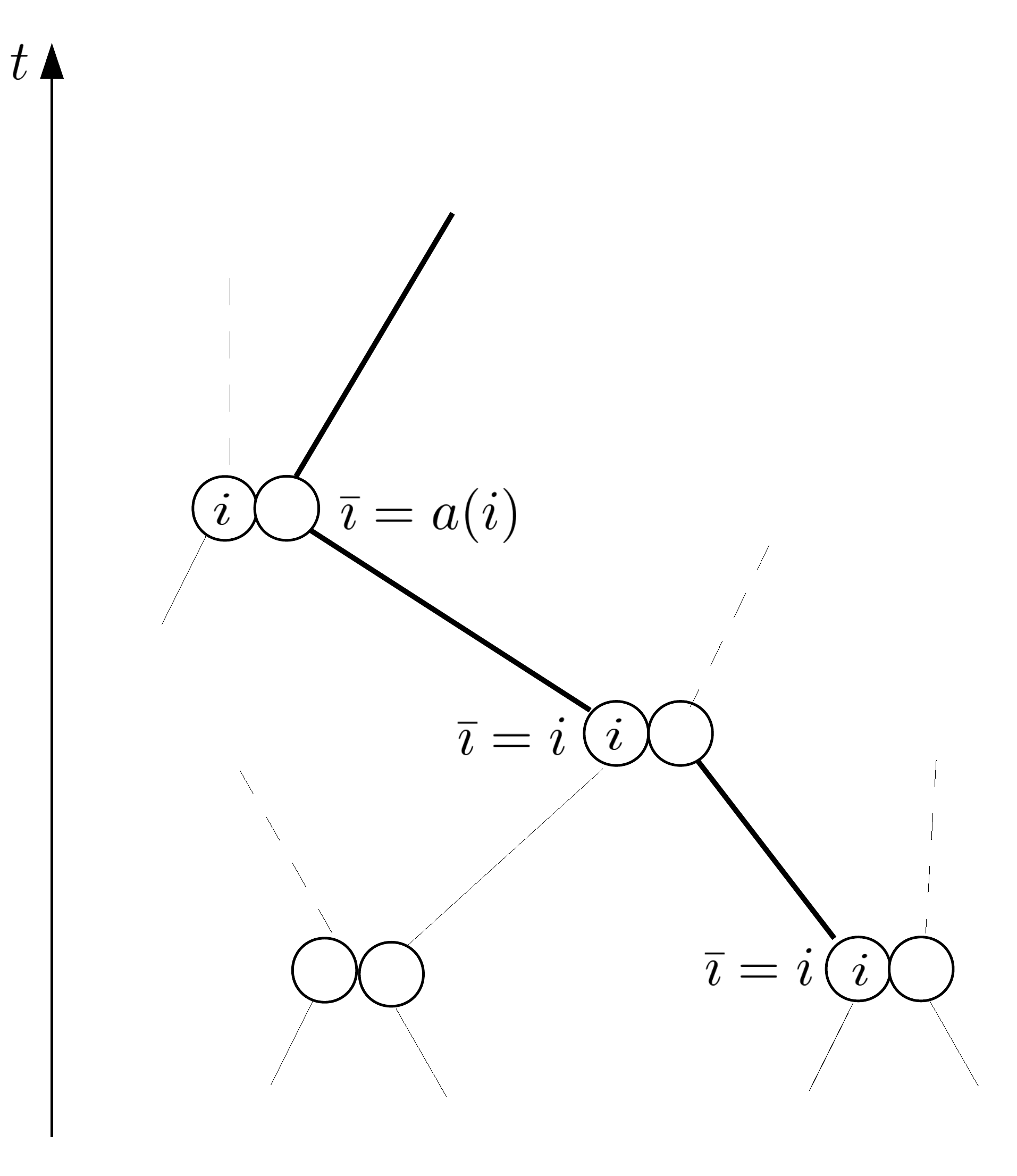} 
\caption{\small 
A collision tree is depicted with the trajectory of the pseudo-particle~$\i$   thickened.  The pseudo-particle $\i$ coincides with $i$ up to the creation time of $i$, moving up to the root, it then coincides with $a(i)$ and so on. Each change of label induces a shift by~$\eps$ of the pseudo-particle $\i$. 
}
\label{fig: pseudo-particle}
\end{figure}

Each collision leading to the deviation of a pseudo-particle brings a new degree of freedom which will be essential to control the trajectories later on. This degree of freedom is associated with a new particle which we call {\it parent}.

\begin{figure} [h] %  figure placement: here, top, bottom, or page
\centering
\includegraphics[width=6cm]{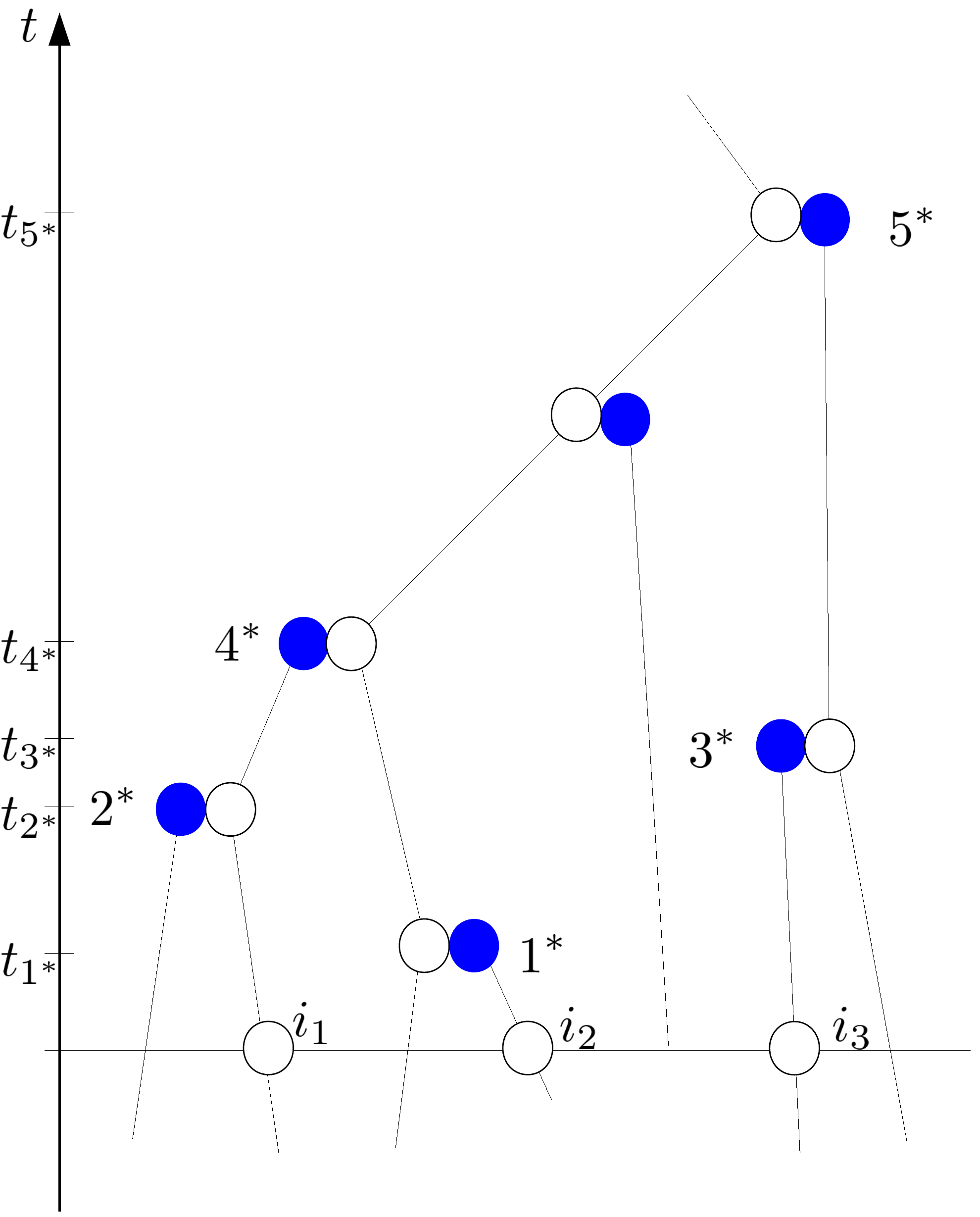} 
\caption{\small 
The set~${\mathcal I}$ consists in~$\{i_1,i_2,i_3\}$. The parents are~$1^*,\dots,5^*\dots$ Note that between times~$t_{4^*}$ and~$t_{5^*}$ a particle has been created but with no scattering so it is not a parent.
}
\label{fig: parent}
\end{figure}

\begin{Def}[Parent]
 \label{parent} 
 Given a collision tree $a \in \cA_s$  and a height in this tree, we consider a subset~${\mathcal I}$ of particles at that height. We define~$({n^*})_{n\in \N}$ the sequence {of branching points in $a$} at which one of the pseudo-particles associated with the particles in ${\mathcal I}$  is deviated. The family~$1^*,2^*,\dots$ of   particles created in these collisions are the parents of the set~${\mathcal I}$. 
Note that the particles $1^*,2^*,\dots$ may coincide with the pseudo-particles  (see Figure {\rm\ref{fig: parent}}).
\end{Def} 
 Note that we disregard times~$t_k$ at which the pseudo-particles  encounter a new particle~$k$ with no scattering (see Figure \ref{fig: parent}).

\medskip
  
A recollision between two particles~$i$ and $j$  imposes  strong constraints  on the history of these particles, especially on the last two collisions at times  $t_{1^*}$ and $t_{2^*}$ with the particles~$1^*$ and~$2^*$ which are the first parents of $i,j$ (see Figure~\ref{Figiiiaiib}(i)). These constraints can be expressed by different equations according to the recollision scenario (each scenario  will be indexed by a number $p$).   We can then   prove the smallness of  the  collision integral associated with particle~$1^*$ (with the measure~$ |(v_{1^*}-v_{a(1^*)} ( t_{{1^*}} ) )\cdot \nu_{{1^*}}|d  t_{{1^*}} d \nu_{{1^*}}dv_{{1^*}}$), with a singularity at small relative velocities which can be integrated out using  the collision integral with respect to particle~$ 2^*$.  The final result is the following.

\begin{Prop}
\label{recoll1-prop}
Fix a final configuration of bounded energy~$z_1 \in \T^2 \times  B_R$ with~$1 \leq R^2 \leq C_0 |\log \eps|$,  a time $1\leq t \leq C_0|\log \eps|$ and  a collision tree~$a \in \cA_s$ with~$s \geq 2$.

For all types of recollisions $p=0,1,2$, and all sets of parents $\sigma \subset \{2,\dots, s\}$ with $|\sigma| = 1 $ if $p =0$ and $|\sigma| = 2 $ if $p=1,2$, there exist sets of bad parameters
$\cP_1(a, p,\sigma)\subset \cT_{2,s} \times {\mathbb S}^{s-1} \times \R^{2(s-1)}$ such that
\begin{itemize}
\item
$\cP_1 (a, p,\sigma)$ is parametrized only in terms of   $(t_m, v_m, \nu_m)$ for $m \in \sigma$ and  $m < \min \sigma$;
\item  its measure is  small in $( t_m, v_m, \nu_m)_{m\in \sigma}$ uniformly with respect to the other parameters
\begin{equation}
 \label{cP-est}
 \int \indc _{\cP_1(a,p,\sigma)  } \displaystyle  \prod_{m\in \sigma} \,
 \big | \big(v_{m}-v_{a(m)} ( t_{{m}} ) )\cdot \nu_{{m}} \big| d  t_{{m}} d \nu_{{m}}dv_{{m}}   
  \leq CR^{7}s  t^3\eps \, |\log \eps|^3\,;
\end{equation}
\item  any pseudo-trajectory starting from $z_1$ at $t$, with total energy bounded by $R^2$ and involving at least one recollision is parametrized by 
$$( t_n, \nu_n, v_n)_{2\leq n\leq s }\in  \bigcup _{p=0}^2 \bigcup _\sigma  \cP_1(a, p,\sigma)\,.$$
\end{itemize}
\end{Prop}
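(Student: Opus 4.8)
\emph{Plan of proof.} The underlying principle is that, above the recollision, each pseudo-particle travels on straight segments — only punctuated by the branchings of the tree, and wrapping around $\T^2$ at most a number of times bounded by $Rt$, hence by a power of $|\log\eps|$ in the regime $R^2,t\le C_0|\log\eps|$ — so that a recollision is a codimension-one event that can be written down explicitly in terms of the last deflections of the two pseudo-particles involved. I would begin with a pseudo-trajectory that recollides, record the recollision occurring at the largest time $\tau$ and the pair of labels $i,j$ whose pseudo-particles collide there, and then follow these two pseudo-particles towards the root until I meet the first and (if necessary) the second branching point at which one of them is actually scattered; the particles created there are the parents $1^*$ and $2^*$, and $\sigma=\{1^*\}$ or $\sigma=\{1^*,2^*\}$. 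Because $\tau$ is the last recollision and because the two pseudo-particles share the same lineage above the branching at $t_{2^*}$, the relative position $x_i(\tau)-x_j(\tau)$ is an affine function of the parent parameters $(t_m,\nu_m,v_m)_{m\in\sigma}$ whose coefficients depend on $z_1$, on $t$, and only on the parameters $(t_m,\nu_m,v_m)$ with $m<\min\sigma$; this is exactly the locality stated in the first bullet, and it is what will later allow \eqref{cP-est} to be inserted inside the remaining collision integrals in Section~\ref{par4.2}.

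Next I would split the recollisions into the three types $p=0,1,2$ according to the mutual order of $\tau$, of the creation times of $i$ and $j$, and of $t_{1^*}$ and $t_{2^*}$, and according to whether the recolliding pair already contains the particle created at the branching adjacent to $\tau$. In the degenerate type $p=0$ the recollision takes place before any further branching, so it is governed by the single deflection producing $1^*$; the relevant relative velocity is then, up to scattering, $v_{1^*}-v_{a(1^*)}(t_{1^*})$, which is bounded below on the support of the collision measure, so no small divisor appears and one parent suffices, $|\sigma|=1$. In the generic types $p=1,2$ the deflection of the relative trajectory is produced jointly by $1^*$ and $2^*$, whence $|\sigma|=2$.

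For a fixed type $p$ and a fixed $\sigma$ the recollision condition reads: a vector $Y\in\T^2$, affine in the parent parameters, lies within distance $C\eps$ of a point (for $p=0$) or of a segment (for $p=1,2$). I would then apply the change of variables of the type~\eqref{changevarintro} to the parent $1^*$ — turning $(\nu_{1^*},v_{1^*},t_{1^*})$ into the position of the created particle and using the time-integrated free transport as in Paragraph~\ref{introproblemL2}, Point~(\ref{introtransportnotfree}) — to see that, at fixed remaining parameters, the set of bad $(t_{1^*},\nu_{1^*},v_{1^*})$ has measure at most $C\eps$ times a Jacobian of size $|w|^{-1}$, sharpened to $|w|^{-1}|\log\eps|$ near grazing angles, where $w$ denotes the relative velocity of the two pseudo-particles just after the second deflection; the sum over admissible torus images costs a further power of $|\log\eps|$, and the time integrations cost at most $t^3$. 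For $p=1,2$ the singular factor $|w|^{-1}$ is then absorbed by the \emph{second} collision integral: since $w$ depends on $v_{2^*}$ through the scattering law~\eqref{V-def}, the integral of $|w|^{-1}$ against $\big|(v_{2^*}-v_{a(2^*)}(t_{2^*}))\cdot\nu_{2^*}\big|\,dt_{2^*}\,d\nu_{2^*}\,dv_{2^*}$ over $\{|v_{2^*}|\le R\}$ is finite and bounded by a power of $R$ times $|\log\eps|$. Collecting the two integrations and all polynomial losses in $R$, $t$ and $|\log\eps|$, and inserting the factor $s$ for the choice of the recolliding partner among the pseudo-particles present at time $\tau$, yields \eqref{cP-est}. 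Since any pseudo-trajectory with at least one recollision gives rise to \emph{some} pair $(i,j)$, \emph{some} type $p\in\{0,1,2\}$ and \emph{some} admissible $\sigma$, the union over $p$ and $\sigma$ exhausts all such trajectories, which is the third bullet.

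The core difficulty — and the step I expect to be the main obstacle — is the geometric case analysis: writing the exact recollision condition on $\T^2$ when the pseudo-particles may have bounced and wrapped several times, checking that $p\in\{0,1,2\}$ is an \emph{exhaustive} list (it is precisely here that $d=2$ is used; in higher dimension further configurations, hence further parents, are needed), and, most delicately, verifying that the offending Jacobian is no worse than $|w|^{-1}|\log\eps|$ and that it is genuinely controlled by the second collision weight rather than producing a logarithmic blow-up. A subsidiary but necessary point is to check honestly that the coefficients of the affine recollision condition depend only on the parameters with $m<\min\sigma$, so that \eqref{cP-est} is the local estimate advertised in the first bullet.
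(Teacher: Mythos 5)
Your plan is essentially the paper's own proof: isolate the first recollision and the last two parents $1^*,2^*$ of the recolliding pseudo-particles, treat separately the degenerate case where the branching at $t_{1^*}$ already involves both recolliding particles ($p=0$, one parent), and in the generic cases gain the factor $\eps$ from the rescaled time integration over $t_{1^*}$ (the outgoing relative velocity being forced into a rectangle of thickness $\sim 1/(|\tau_1|\,|v_i-v_j|)$), then integrate the resulting singularity $1/|v_i-v_j|$ against the collision measure of $2^*$ by Carleman-type scattering estimates as in Lemma~\ref{scattering-lem}, finally summing over torus images $q$, scenarios $p$ (including the distinction of whether $2^*$ is the parent of $1^*$, which is where the factor $s$ arises) and admissible $\sigma$ to reach \eqref{cP-est}. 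The one inaccuracy is your justification of the $p=0$ case: the relative velocity is not ``bounded below on the support of the collision measure''; the paper's mechanism is that a self-recollision can only occur through periodicity, so the post-collisional relative velocity must lie in a cone of opening $O(\eps)$ around a nonzero lattice vector $q$, which gives the $O(\eps)$ cost directly with a single parent --- the same conclusion you state.
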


\begin{proof} 
Consider a pseudo-trajectory starting from $z_1$ at $t$, with total energy bounded by~ $R^2$ and involving at least one recollision.
Let  $i$ and $ j$ be the particles involved in the first recollision. Denote by $\theta$ the label of the time interval $] t_{\theta+1},t_\theta[$ where this recollision occurs, and  by  $1^*, 2^*$  the indices in $\{2,\dots , s\}$ of the first two parents of  the set~$\{i,j\}$ starting at height $\theta$.

\begin{Rmk}
\label{rem: vide} 
Notice  that if the recollision takes place  between the two first particles at play before any other collision, then there is actually no such parameter~$2^*$, but in this case only the first scenario (involving just one parent) will be possible.  From now on we shall always assume that there are enough degrees of freedom as needed for the computations, since if that is not the case the result will follow simply by integrating over less variables.
\end{Rmk}

\medskip
 \noindent
\underbar{Self-recollision (case $p=0$).}\label{selfrecollpage}
If the collision at time $t_{1^*}$  involves $i$ and~$j$, a recollision may occur due to the periodicity (see Figure~\ref{coccinellerecoll}). In this case, the parent $1^*$ is $i$ or $j$.

\begin{figure} [h] %  figure placement: here, top, bottom, or page
   \centering
   \includegraphics[width=6cm]{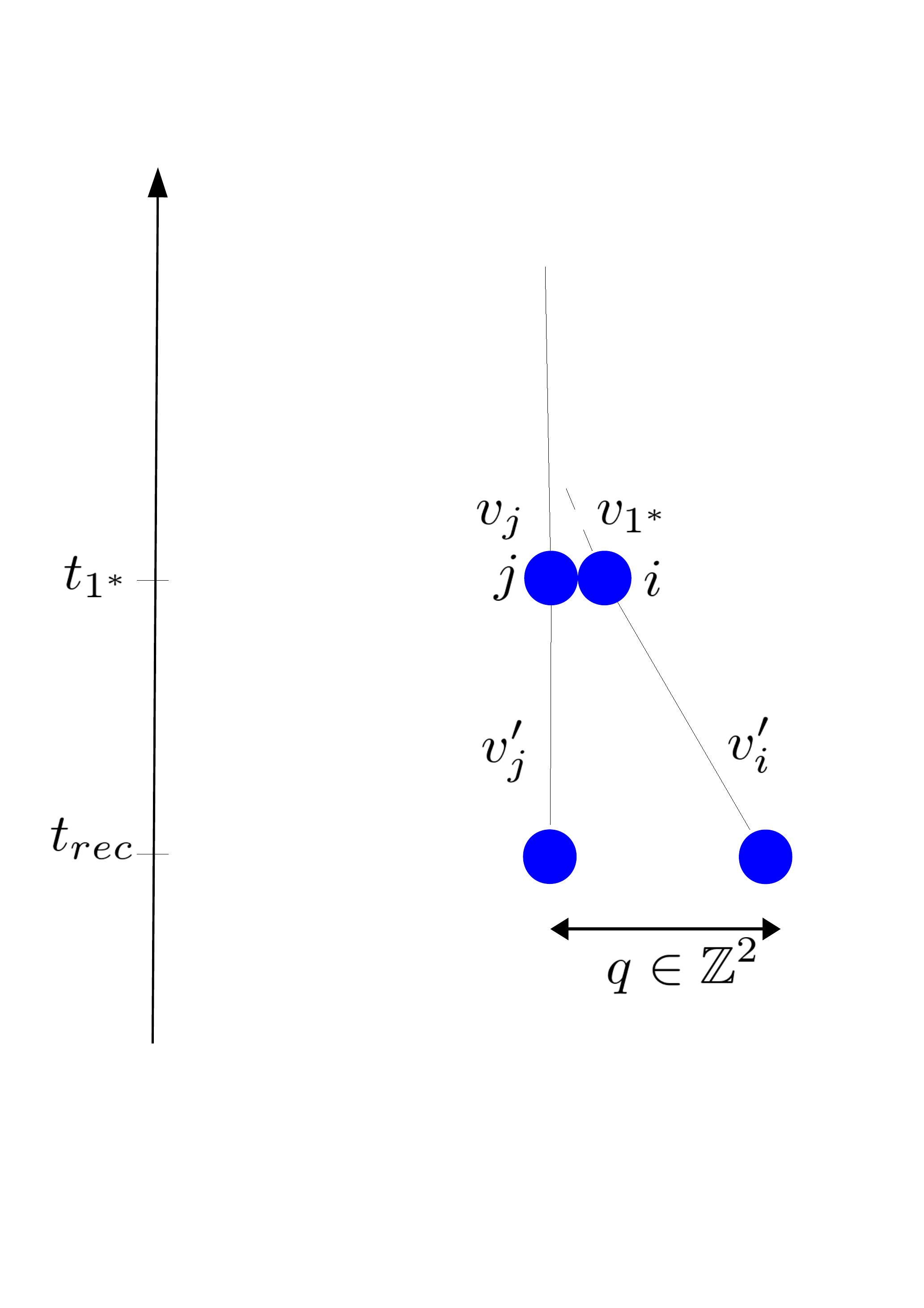}  \qquad\qquad  \includegraphics[width=6cm]{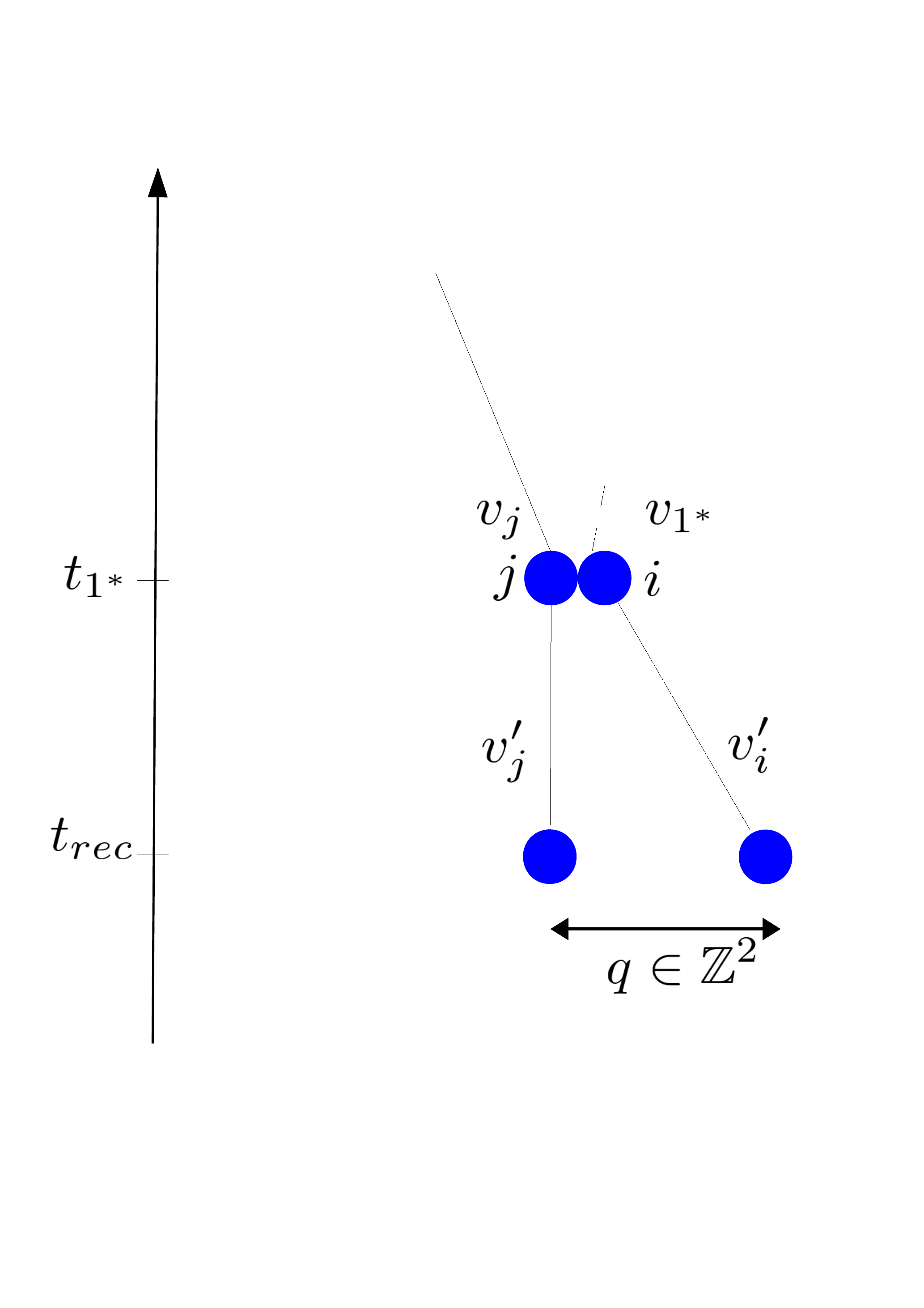} 
   \caption{ A self-recollision between $i,j$ ($p =0$) is due to periodicity; on the left the collision at time~$t_{1^*} $ is without scattering, on the right it is with scattering.
   }
\label{coccinellerecoll}
\end{figure}

 This has a very small cost, we indeed have for some recollision time $t_{rec} \geq 0$ and~$\nu_{rec}$ in ${\mathbb S}$
\begin{equation}
\label{self-recoll}
\eps \nu_{1^*} + (v'_i - v'_j)  (t_{rec} - t_{1^*}) = \eps \nu_{rec} + q \hbox{ for some } q\in \Z^2\setminus \{0\}
\end{equation}
assuming  for instance that particle $j$ has been created at time $t_{1^*}$ with velocity $v_{1^*}$, and denoting by $v'_i, v'_j$ the velocities after the collision.

\medskip

$ \bullet $ $ $   In the absence of scattering at time~$t_{1^*}$, we have  $v'_i= v_i $ and $v'_j = v_{1^*}$, and the equation~(\ref{self-recoll}) for   self recollision implies that $v_{1^*}-v_i$ has to belong to a cone $C(q, 2\eps)$ of opening $\eps$.
Because of the assumption that the   total energy is bounded by~$R^2$,  
\begin{align*} 
\int \indc _{ \{ v_{1^*}-v_i \in C(q,2\eps) \cap B_{2R} \} }
 \, \, \big | \big(v_{1^*}-v_{a(1^*)} ( t_{{1^*}} ) )\cdot \nu_{{1^*}}  \big |  d  t_{1^*} d \nu_{{1^*}}dv_{{1^*}}
 \leq C\eps R^3 t \, ,
\end{align*}
where $a(1^*) = i$.

\medskip

$ \bullet $ $ $  In the case with scattering, recall that
$$ v'_i-v'_j   = (v_i- v_{1^*}) - 2(v_i-v_{1^*}) \cdot \nu_{1^*} \nu_{1^*} .$$
Equation (\ref{self-recoll}) for the self recollision implies that~$ v'_i-v'_j $
  has to belong to $C(q,2\eps)$. For each fixed $\nu_{1^*}$, we conclude that  $v_i - v_{1^*}$ is in the  cone $S_{\nu_{1^*}}C(q,2\eps)$  (obtained from $C(q,2\eps)$ by symmetry with respect to $\nu_{1^*}$).
Because of the assumption that the   total energy is bounded by $R^2$, we have as in the previous case
\begin{align*} 
 \int \indc _{ \{ v_{1^*}-v_i \in S_{\nu_1^*} C(q,2\eps) \cap B_{2R}  \} }
 \big | \big(v_{1^*}-v_{a(1^*)} ( t_{{1^*}} ) )\cdot \nu_{{1^*}} \big|  d  t_{1^*} d \nu_{{1^*}}dv_{{1^*}} 
\leq C\eps R^3 t \,  .
\end{align*}

\medskip

 Note that, since the total energy is assumed to be bounded by $R^2 $ and we consider a finite time interval $[0,t]$ with $t\geq 1$, the  number of $q$'s for which the set 
is not empty is at most~$O\big(R^2t^2 \big)$.

 In order to obtain a bad set which depends only on the upper structure of the tree $ (a(i))_{i <1^*}$ and of the parameters  $ (t_i, v_i, \nu_i)_{i <1^*}$, we define  $\cP_1(a, 0, \{ 1^*\})$ as  the union of the previous sets over all possible $|q | \leq Rt +1$ and  all possible  $a(1^*) <1^*$.
 Summing over all these contributions, we end up with an upper bound for the scenario $p =0$
 \begin{equation}
 \label{self-recoll-est}
 \int \indc _{\cP_1(a, 0, \{ 1^*\})} \,  \,  \big | \big(v_{1^*}-v_{a(1^*)} ( t_{{1^*}} ) )\cdot \nu_{{1^*}} \big | d  t_{1^*} d \nu_{{1^*}}dv_{{1^*}} \leq C\eps  s R^5 t^3 \, .
\end{equation}

\bigskip
 \noindent
\underbar{Geometry of the first recollision.}
Without loss of generality,  we  may now assume  that  time~$ t_{1^*}$ corresponds to the deviation/creation of  the pseudo-particle $i$ and  that at $ t_{1^*}$ the collision does not involve both $i$ and~$j$. From now on, we denote by~$i$ and~$j$ the pseudo-particles, even if the actual particles may have disappeared through a collision (see Definition~\ref{pseudoparticle}).
 
Denote by $z_i$ and $z_j$ the (pre-collisional) configuration of pseudo-particles $i$ and~ $j$ at time~$t_{2^*}$.

 \begin{figure} [h] %  figure placement: here, top, bottom, or page
   \centering
 \includegraphics[width=5.7cm]{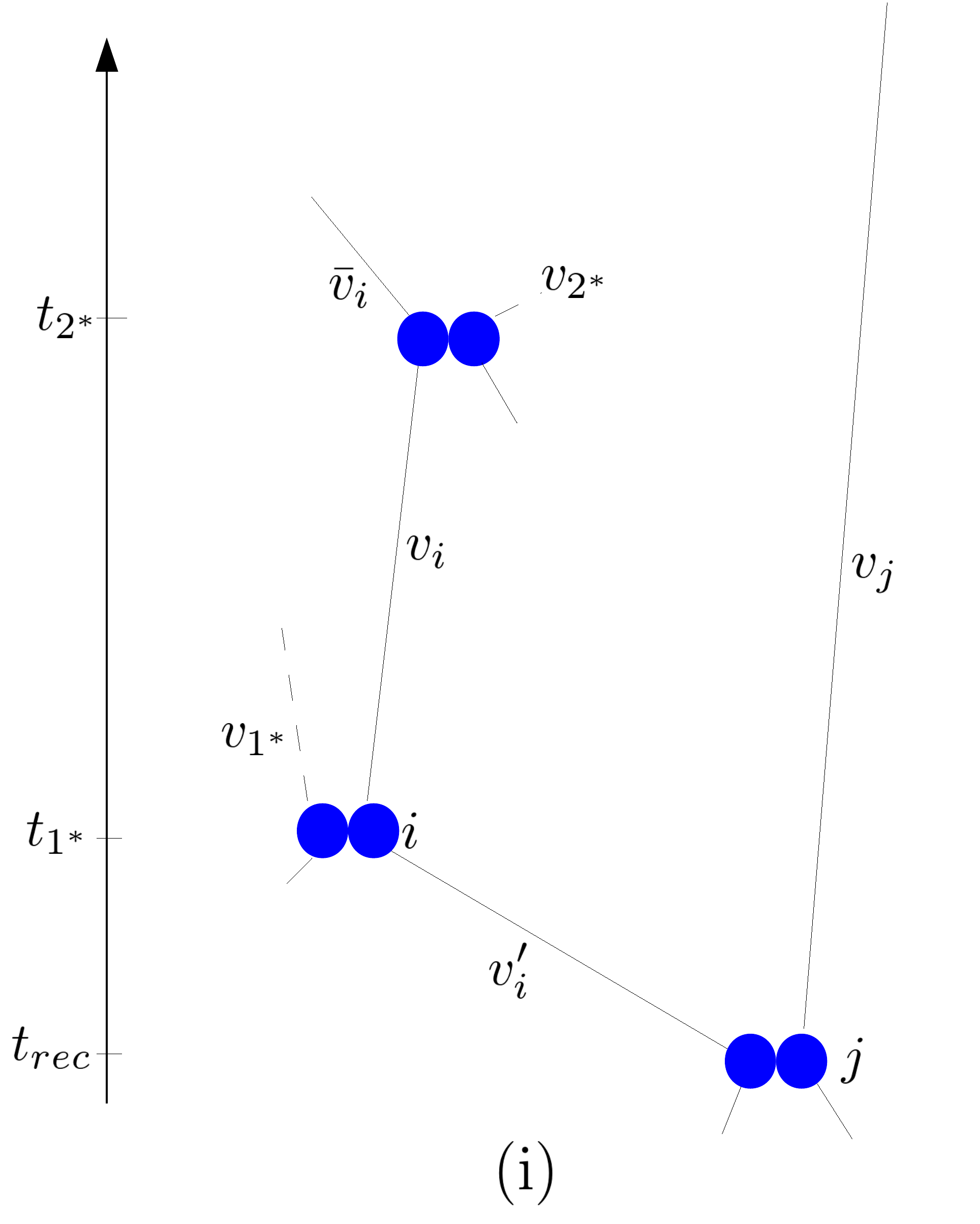}   \qquad  \quad\quad  \quad   \quad\includegraphics[width=5cm]{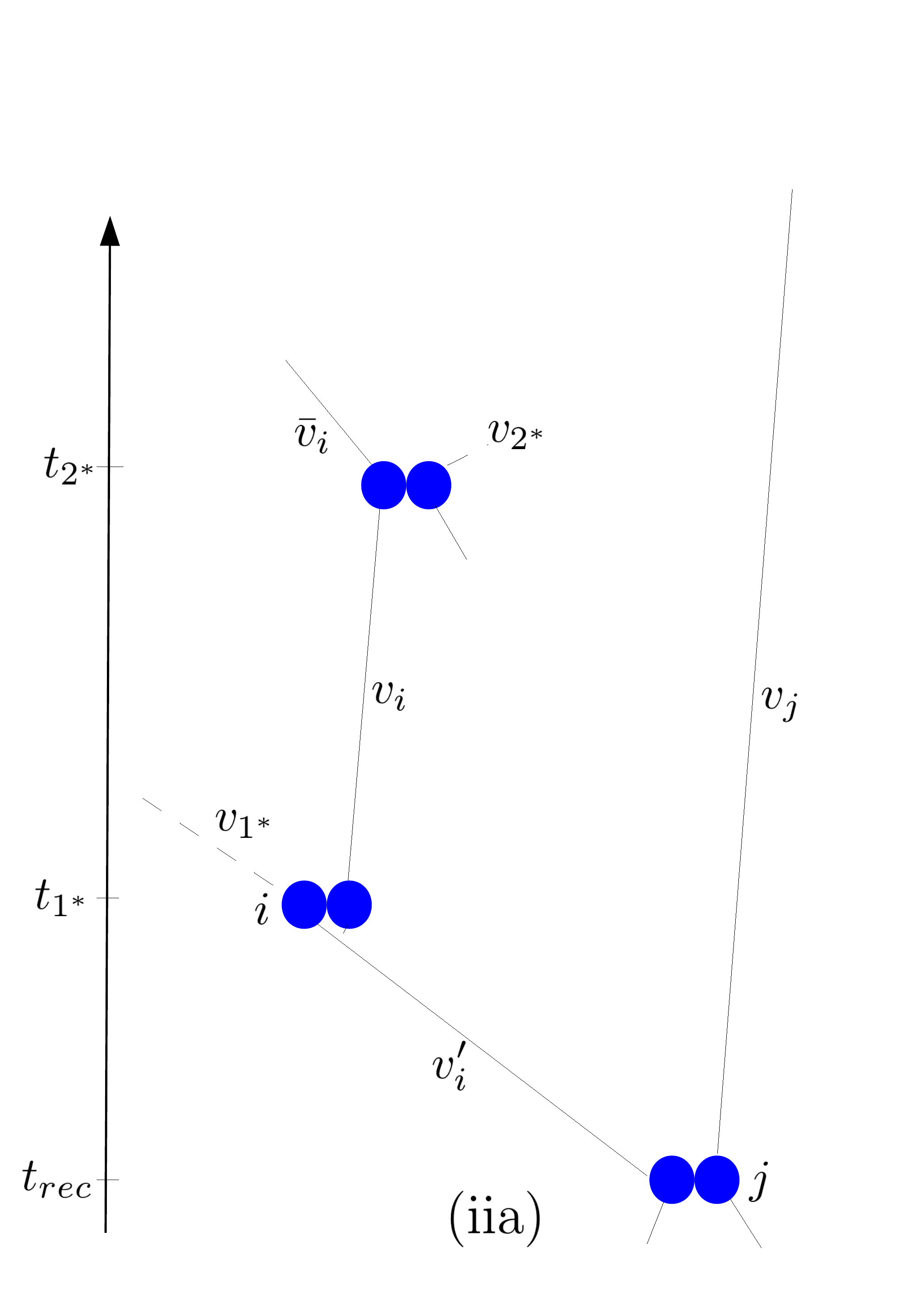}  
 \qquad  \quad     \includegraphics[width=5cm]{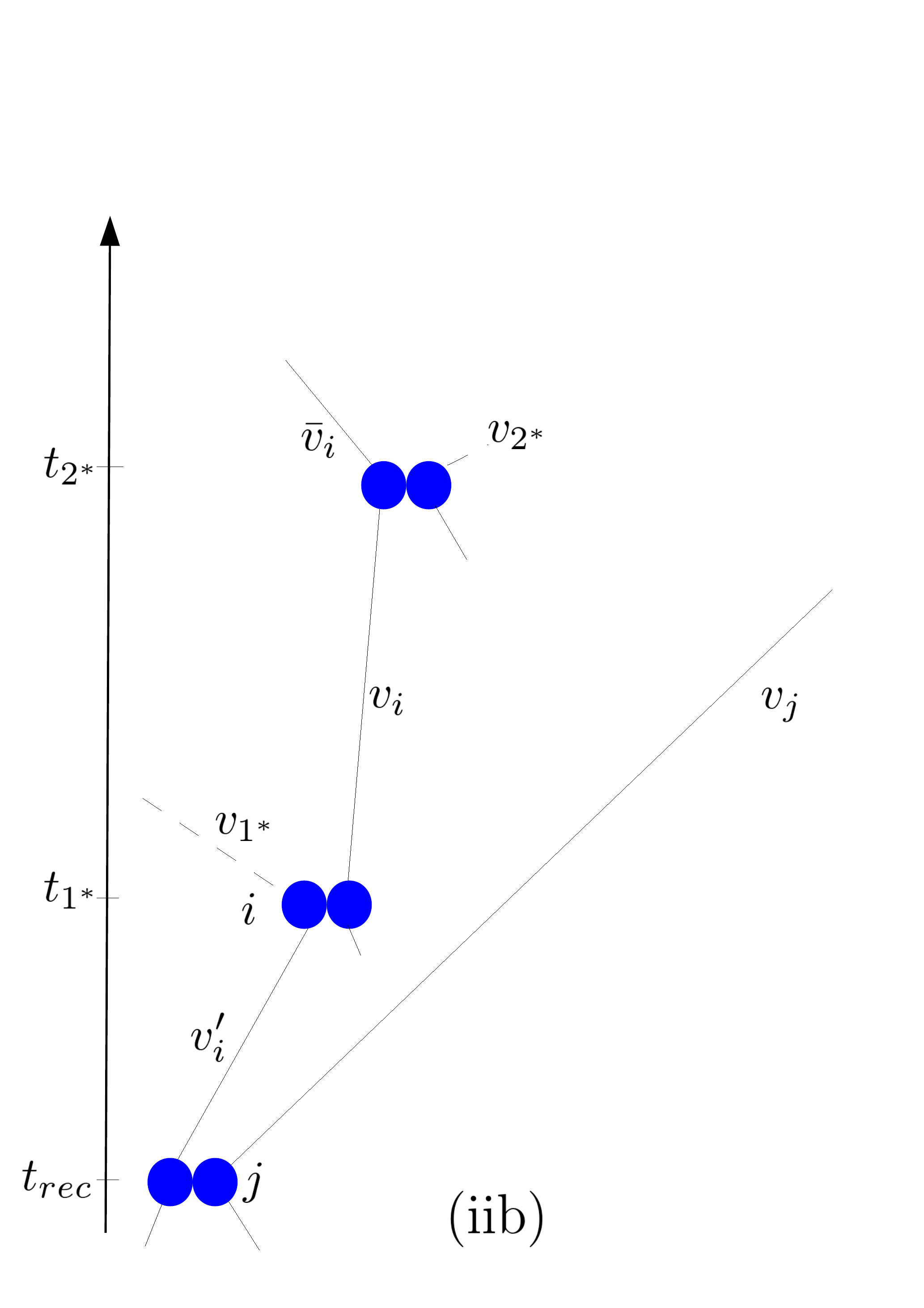}
   \caption{The two collisions at times $t_{1^*}$ and $t_{2^*}$ leading to the recollision between the pseudo particles $i$ and $j$ are depicted. Three different cases can occur if the first collision involves $i$ :  the particle $i$ can be deflected (i), or created without scattering (iia) or with scattering (iib). 
These three cases can also occur for the recollision at $2^*$ but only one is depicted each time. } \label{Figiiiaiib}
\end{figure}

\medskip
The condition for the recollision to hold in the backward dynamics at a time $t_{rec} \geq 0$
 then states
\begin{equation}
\label{eq: cond 1ere recoll}
(x_{i} - x_{j}) + (t_{1^*} - t_{2^*}) (v_{i} - v_{j})  + (t_{rec} - t_{1^*}) (  v'_i- v_{j}) =\eps \nu_{rec} +q\,,
\end{equation}
for some  $\nu_{rec} \in {\mathbb S}$, and $q \in \Z^2$.
As noticed previously, since the total energy is assumed to be bounded by $R^2$ and we consider a finite time interval $[0,t]$ with $t\geq 1$, the  number of $q$'s for which the set 
is not empty is at most $O\big(R^2 t^2 \big)$. Let us now fix $q$ and prove that the condition implies that $(t_{1^*},v_{1^*}, \nu_{1^*})$  is in a small domain depending only on $x_i-x_j$, $v_i$, $v_j$ and $q$.

As previously we   consider separately 
\begin{itemize}
\item   the case when the particle $i$  already exists before $t_{1^*}$ (as depicted in Figure~\ref{Figiiiaiib}(i)) : the velocity of particle $i$ after $t_{1^*}$ (in the backward dynamics) is then
$$ 
v'_i = v_i -  \big( (v_i- v_{1^*})\cdot \nu_{1^*} \big)  \, \nu_{1^*}\, . 
\eqno {\rm (i) }
$$

\item   the case when the particle $i$  was created at $t_{1^*}$ :  we then get\label{iia}
$$ v'_i = v_{1^*},   \eqno  {\rm(iia)  }$$
if $(v_{1^*}, \nu_{1^*}, v_i)$ is a precollisional configuration as on Figure~\ref{Figiiiaiib}(iia), and
$$
v'_i = v_{1^*} + \big( (v_i- v_{1^*})\cdot \nu_{1^*} \big) \,  \nu_{1^*}\,,  
\eqno {\rm(iib)}  
$$
if $(v_{1^*}, \nu_{1^*}, v_i)$ is a post-collisional configuration as on Figure~\ref{Figiiiaiib}(iib).  
\end{itemize}

\medskip
 
We   denote
$$\delta x := \frac1\eps ( x_{i} - x_{j}-q) \quad \hbox{ in case (i),  and}\quad  \delta x := \frac1\eps ( x_{i} - x_{j}-q)+\nu_{1^*}\quad  \hbox{  in case (ii)}\, .$$

Next we  decompose~$\delta x $ into a component along~$v_{i} - v_{j}$ and an orthogonal component, by writing 
$$
\delta x =  {\lambda \over \eps} (v_{i} - v_{j})  + \delta x_\perp \quad \hbox{ with } \quad\delta x_\perp \cdot (v_i - v_{j}) =0  \,  ,
$$
and we further rescale time as
\begin{equation}
\label{eq: tau1}
\tau_1 :=- \frac1\eps (t_{1^*} - t_{2^*}+\lambda ) \, ,\qquad  \tau_{rec} :=-\frac1\eps (t_{rec} - t_{1^*})\,.
\end{equation}
Note that we have used the hyperbolic scaling invariance (by scaling the space and time variables by~$\eps$), and that only the bounds on $\tau_1$ depend now on $\eps$
$$ |v_i-v_j|  \, |\tau_1| \leq \frac1\eps |v_i-v_j| t+ |\delta x| \leq {CRt\over \eps}\,\cdotp$$
We shall gain a factor $\eps$ on the integral in  time, thanks to the change of variable~$t_{1^*}  \mapsto \tau_1$.

\medskip
  In these new variables, the equation for the recollision can   be restated as follows
\begin{equation}
\label{rec-equation}
v'_{i} - v_{j} = \frac1{\tau_{rec} }  \delta x_\perp - \frac{\tau_1}{\tau_{rec}} (v_{i} - v_{j})-  \frac1{\tau_{rec} } \nu_{rec} \, .
\end{equation}
By using \eqref{eq: small distance cut off prime} with $M = R$, we can restrict to the case 
$|\tau_1| \, |v_{i} - v_{j}| \geq R$ so that 
$$
\big| \delta x_\perp - \tau_1  (v_{i} - v_{j}) \big| \gg 1,
$$ 
as $ \delta x_\perp \perp ( v_{i} - v_{j} )$.
 Since the total energy is bounded by $R^2$, the left-hand side of \eqref{rec-equation}  is bounded by $2R$, and we get that
\begin{equation}
\label{sizetaurec}
\frac1{|\tau_{rec} |} \leq  {4R\over |\tau_1| | v_{i} - v_{j}|} \, \cdotp
\end{equation}

Given $ \delta x_\perp$ and  $\tau_1 (v_{i} - v_{j})$, the relation \eqref{rec-equation}  forces  $v'_{i} - v_{j}$ to belong to a rectangle~${\mathcal R}(\delta x_\perp, v_i-v_j, \tau_1, q) $ of main axis $\delta x_\perp - \tau_1 (v_{i} - v_{j})$ and  of size~$2R \times \left( 2 R \min \left( \frac{4}{|\tau_1| | v_{i} - v_{j}|}, 1 \right) \right)$. The length~$2R$ is a consequence of the cut-off on the velocities.
The following lemma provides an upper bound on this constraint.
\begin{Lem}
\label{rec-eq} 
Fix~$t \geq 1$,  $\delta x_\perp \in \R^2 $, $v_{i} ,v_{j} \in B_R$ with~$1\leq R^2 \leq C_0 |\log \eps | $, and~$1\leq t \leq C_0 |\log \eps |$. Then 
$$
\int_{B_R \times {\mathbb S} \times [-  Ct/\eps,  Ct/\eps] } 
\indc_{  \{ v'_{i} - v_{j} \in {\mathcal R}(\delta x_\perp, v_i-v_j, \tau_1, q)  \} }  \;  \big |  (v_{1^*}-v_{i} ) \cdot  \nu_{{1^*}}\big|   d \tau_1d\nu_{1^*}   dv_{1^*} 
 \leq \frac{C R^3   ( \log \eps)^2}{ |v_i-v_j| } \,\cdotp
$$
%the measure of the set  
%$$\Big\{ (v_{1^*}, \nu_{1^*}, |v_{i} - v_{j}| \tau_1) \in B_R \times {\mathbb S} \times [- Rt/\eps, Rt/\eps] \hbox{  such that {\rm (\ref{rec-equation})} has a solution}\Big\} $$
% is at most
%$ O(R^3 |\log \eps|^2)\,.$
\end{Lem}
\begin{proof}[Proof of Lemma {\rm\ref{rec-eq}}]
Applying \eqref{rectangle0} of Lemma~\ref{scattering-lem 2} page~\pageref{scattering-lem 2}, we deduce that 
\begin{align*}
\int\! \!  \indc_{  \{ v'_{i} - v_{j} \in {\mathcal R}(\delta x_\perp, v_i-v_j, \tau_1, q)  \} }  
& 
\big |  (v_{1^*}-v_{i} ) \cdot  \nu_{{1^*}}\big|    \,  dv_{1^*} d\nu_{1^*}  \\
& \leq C R^3    \min \left( \frac{4}{|\tau_1| | v_{i} - v_{j}|}, 1 \right)\Big ( |\log (|\tau_1| | v_{i} - v_{j}| )| + \log R \Big)\\
& \leq C R^3  |\log \eps |  \min \left( \frac{4}{|\tau_1| | v_{i} - v_{j}|}, 1 \right),
\end{align*}
   recalling that~$R^2+t \ll |\log \eps|$.
Integrating with respect to $| v_{i} - v_{j}| \, |\tau_1|$ up to $Rt/\eps $, we obtain that 
$$
\int  \indc_{  \{ v'_{i} - v_{j} \in {\mathcal R}(\delta x_\perp , v_i-v_j, \tau_1, q)  \} }  \; 
  \, \,  \big |  (v_{1^*}-v_{i} ) \cdot  \nu_{{1^*}}\big|   |v_i-v_j| d \tau_1  dv_{1^*} d\nu_{1^*} 
 \leq C R^3   ( \log \eps)^2 .
$$
This completes Lemma~{\rm\ref{rec-eq}}. 
\end{proof}

In Lemma \ref{rec-eq}, the measure of the set leading to a recollision is evaluated in terms of the variable~$\tau_1$.
Going back to the variables $(v_{1^*}, \nu_{1^*}, t_{1^*})$ and summing over all possible~$q$, we therefore obtain  
\begin{equation}
\label{proofProp4.3} 
\int \indc_{\{ v'_{i} - v_{j} \in \cup_q {\mathcal R}(\delta x_\perp, v_i-v_j, \tau_1, q) \} } \,  \,   \big |  (v_{1^*}-v_{i} ) \cdot  \nu_{{1^*}}\big|   d t_{1^*}  dv_{1^*} d\nu_{1^*} \leq CR^5t^2 {\eps  |\log \eps|^2\over |v_i-v_j|}\,\cdotp
\end{equation}
 On the other hand, a direct computation shows that
$$
  \int  \, \big |  (v_{1^*}-v_{i} ) \cdot  \nu_{{1^*}}\big|  d t_{1^*}  dv_{1^*} d\nu_{1^*} \leq CR^3 t \, ,
  $$
  so using the fact that $R\geq 1 $, $t\geq 1$, we find
\begin{equation}\label{estimate1recol1/vi-vj} 
\begin{aligned}
   \int \indc_{ \{ v'_{i} - v_{j} \in \cup_q {\mathcal R}(\delta x_\perp, v_i-v_j, \tau_1, q) \} } \,   \,  \big |  (v_{1^*}-v_{i} ) \cdot  \nu_{{1^*}}\big|  d t_{1^*}  dv_{1^*} d\nu_{1^*}
   \\ \leq CR^5 t^2  
   \min \Big( {\eps |\log \eps|^2 \over |v_i-v_j|} \,  , 1 \Big)\,\cdotp
\end{aligned}
\end{equation}

\bigskip
 \noindent
\underbar{Integration of the singularity.}\label{integrationsingulatity}

Now we need to integrate out the singularity $1/ |v_{i} - v_{j}|  $, 
%which turns out to be ``almost" integrable (in the sense that it produces an additional logarithmic divergence in~$\eps$), 
when the parameters of the preceding collision~$(t_{2^*}, v_{2^*}, \nu_{2^*})$ range over $[0,t] \times B_R \times {\mathbb S}$.
Denote by $\bar v_i$ the velocity of particle $i$ before the collision with $2^*$ (see Figure \ref{Figiiiaiib}).
From \eqref{carleman3 joint} in Lemma~\ref{joint-scattering-lem} page~\pageref{joint-scattering-lem}, we know that the singularity $1/ |v_{i} - v_{j}|$ is integrable if particles $i,j$ are related through the same collision. Otherwise Inequality~(\ref{carleman2}), from Lemma~\ref{scattering-lem}, implies that 
$$
 \int  \min \Big(
 \frac{ \eps|\log \eps|^2  }{|v_j-  v_i|},1
 \Big) \,  \,   \big |  (v_{2^*}-  \bar v_{i} ) \cdot  \nu_{{2^*}}\big|  d t_{2^*}  dv_{2^*} d\nu_{2^*} \leq C t
 R^2\eps | \log \eps | ^3\,,
 $$
and together with~(\ref{estimate1recol1/vi-vj})   this implies that 
$$\begin{aligned}
 \int \indc _{\{ v'_{i} - v_{j} \in \cup_q {\mathcal R}(\delta x_\perp,v_i-v_j, \tau_1, q) \} } 
 \displaystyle  \prod_{m=1^*,2^*} \,\big | \big(v_{m}-v_{a(m)} ( t_{{m}} ) )\cdot \nu_{{m}} \big| 
 d  t_{{m}} d \nu_{{m}}dv_{{m}}   
  \leq CR^{7} t^3\eps \, |\log \eps|^3\,.
\end{aligned}
$$

\medskip
Now we would like to define  bad sets which are parametrized only by $( t_m, v_m, \nu_m)$ for~$m=1^* $ or $m \leq 2^*$.

\smallskip

\noindent
- Suppose that $2^*$ is not the parent of $1^*$ (which we will refer to as scenario $p=1$). Then by construction $1^*$ will branch on one of the labels less than $2^*$. There are exactly two particles~ $a(1^*)$ and~$a(2^*)$ associated with the parents of $1^*, 2^*$ and the recollision will  take place  among these four  particles. By construction,  the choice of parameters for $1^*, 2^* $  leading to a recollision of type $p=1$ can be determined only from the configurations of the particles~$a(1^*),a(2^*)$ at height $2^* -1$.

The bad set associated with the previous scenario (labelled $p=1$) is denoted  $\cP_1(a, 1, \{1^*, 2^*\})$ and defined as the union  of the previous sets.
We end up with the estimate
 \begin{equation}
 \label{recoll1-est}
 \int \indc _{\cP_1(a, 1, \{1^*, 2^*\}) }\,  \,  \prod_{m=1^*, 2^*} 
 \,\big | \big(v_{m}-v_{a(m)} ( t_{{m}} ) )\cdot \nu_{{m}} \big | d  t_{{m}} d \nu_{{m}}dv_{{m}}   
  \leq CR^{7}   t^3\eps \, |\log \eps|^3\,.
\end{equation}

\noindent \label{pagewiths}
- If $2^*$ is the parent of $1^*$, we  have --  by definition --  a recollision of type $p=2$. Only one particle involved in the recollision is fixed (it can be either~$1^*$ or~$a(1^*)$) and the second    recolliding particle $j$ is just an obstacle which  has to be chosen among the particles with label less than~$2^*$. Note that this obstacle is just transported freely between time~$t_{2^*}$ and the time of the recollision.

We then define $\cP_1(a, 2, \{1^*, 2^*\})$ as the union over all possible choices of  $j<2^*$ of the previous sets.
This leads to the   estimate
 \begin{equation}
 \label{recoll2-est}
 \int \indc _{\cP_1(a, 2, \{1^*, 2^*\})} \,  \,  \prod_{m=1^*, 2^*} 
 \,\big | \big(v_{m}-v_{a(m)} ( t_{{m}} ) )\cdot \nu_{{m}} \big | d  t_{{m}} d \nu_{{m}}dv_{{m}}   
  \leq CR^{7} s  t^3\eps \, |\log \eps|^3\,.
\end{equation}

\medskip

Note that $\cP_1(a, p, \{1^*, 2^*\})$ is empty if the parent of $1^*$ has a label greater than $2^*$.
This ends the proof of the proposition.\end{proof}

\begin{Rmk}
Estimate~{\rm(\ref{cP-est})} involves a loss with respect to~$\eps$ of the order~$|\log \eps|^3$. The above proof shows that 
the integration in time over the first parent produces a first loss in~$|\log \eps|^2$ (one of which is linked to the scattering operator), while the other power is due to a possible singularity in relative velocities, which needs to be integrated out thanks to the second parent, and the scattering operator again induces a ~$|\log \eps|$ loss. 
 \end{Rmk}

\subsubsection{Global estimate}
\label{par4.2}

To estimate the global error due to recollisions, we have to incorporate  the  estimate  provided in  Proposition \ref{recoll1-prop}
%(which is uniform with respect to all parameters~$(t_i, \nu_i, v_i)_{i \notin \sigma} $) 
with all the other collision integrals.  
We   use the fact that we have now a tree with $s-2$ or $s-3$  branching points, neglecting the constraints
 that~$(t_j)_{j\in \sigma}$ have to be  properly chosen in between other collision times, and also the constraint on the distribution of collision times on the different time intervals~$[t-kh, t-(k-1)h]$.
\begin{Prop}
\label{restrictedtrees}
We fix~$z_1 \in \T^2 \times  \R^2$,  
~$p \in \{0,1,2\}$ and 
 a set $\sigma \subset \{1, \dots, s\}$  of  at most~{\rm 2} indices.
We consider the sets $\cP_1 (a,p, \sigma)$ introduced in Proposition~{\rm\ref{recoll1-prop}} and we denote by~$\eta := C s R^7 t^3 \eps |\log \eps|^3$ the right-hand side of \eqref{cP-est}.
Then for $t\geq 1$, one has 
\begin{equation}
\label{eq: L1 estimate petit}
\begin{aligned} 
 \sum_{a \in \cA_s}   \int \indc _{\cT_{2,s}}    
 \indc_{\cP_1(a, p,\sigma)} \,  \Big( \prod_{i=2}^s \,  \big| (v_i -v_{a(i)} (t_i))\cdot \nu_i \big| \Big)  & M_\beta^{\otimes s} \, 
  dT_{2,s}  d\Omega_{2,s}   dV_{2,s} \\
 &   \leq   (Ct)^{s-3}  s^2 \eta M_{\beta/2}(v_1) \,.
  \end{aligned}
\end{equation}
If we further specify that the last $n$ collision times have to be in an interval of length $h\leq 1$
(this constraint is denoted by $\cT_{s-n+1, s}^h$)
\begin{equation}
\label{eq: L1 estimate petit h}
\begin{aligned}
 \sum_{a \in \cA_{s}}   \int \indc _{\cT_{2,s}}
 \indc_{\cT_{s-n+1, s}^h}   \indc_{\cP_1(a, p,\sigma)}
 \; \Big( \prod_{i=2}^{s}   \,   \big| (v_i -v_{a(i)} (t_i))\cdot \nu_i \big|\Big)  M_\beta^{\otimes s } \, dT_{2,s}  d\Omega_{2,s}   dV_{2,s}    \\
\leq   (Ct)^{s-n-1}  (Ch)^{n-2} s^2 \eta  M_{\beta/2}(v_1) \, .
\end{aligned}
\end{equation}
\end{Prop}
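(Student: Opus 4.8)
The plan is to reduce \eqref{eq: L1 estimate petit} and \eqref{eq: L1 estimate petit h} to the standard ``tree integration'' bound (the one underlying Proposition~\ref{estimatelemmacontinuity}), with the single improvement that two of the collision integrals — those associated with the parents $m\in\sigma$ — are no longer estimated by the crude pointwise bound but by the small factor $\eta$ coming from Proposition~\ref{recoll1-prop}. Concretely, I would first integrate the indicator $\indc_{\cP_1(a,p,\sigma)}$ against the measure $\prod_{m\in\sigma}|(v_m-v_{a(m)}(t_m))\cdot\nu_m|\,dt_m\,d\nu_m\,dv_m$, which by \eqref{cP-est} produces exactly $\eta=CsR^7t^3\eps|\log\eps|^3$; the crucial point, already recorded in Proposition~\ref{recoll1-prop}, is that $\cP_1(a,p,\sigma)$ is parametrized only by $(t_m,v_m,\nu_m)$ for $m\in\sigma$ and $m<\min\sigma$, so this integration can be carried out \emph{innermost}, with all other variables frozen, and the bound $\eta$ is uniform in those other variables.

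Next I would bound the remaining collision integrals — there are $s-1-|\sigma|$ of them, i.e. $s-2$ or $s-3$ — by the continuity estimate for the elementary collision operator already used in the sketch of Proposition~\ref{estimatelemmacontinuity}: each factor $|(v_i-v_{a(i)}(t_i))\cdot\nu_i|$, integrated against $d\nu_i\,dv_i$ with the Gaussian weight, costs $C(\,\cdot\,)M_{\beta'}$ with a slightly degraded inverse temperature, and the time integrations over $\cT_{2,s}$ give the factor $t^{s-3}/(s-3)!$ (or, if the last $n$ times are constrained to an interval of length $h$, the factor $t^{s-n-1}h^{n-2}/((s-n-1)!(n-2)!)$, since two of the last $n$ branchings are the parents whose integrals have already been spent on $\eta$). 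Summing over the $|\cA_s|\le(s-1)!$ trees, the factorials in the denominators are absorbed — this is precisely the Stirling computation $(s)^{s}/s!\le e^s$ already invoked in the proof of Proposition~\ref{estimatelemmacontinuity} — leaving constants $(Ct)^{s-3}$, respectively $(Ct)^{s-n-1}(Ch)^{n-2}$, and the combinatorial factor $s^2$ accounts for the choice of the (at most two) indices in $\sigma$ among $\{1,\dots,s\}$. Collecting the Gaussian weights that have been shed at each step onto the root variable gives the final $M_{\beta/2}(v_1)$, exactly as in Proposition~\ref{estimatelemmacontinuity}.

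The one delicate point is the \emph{ordering} of the integrations: the sets $\cP_1(a,p,\sigma)$ constrain the parent parameters $(t_m,v_m,\nu_m)_{m\in\sigma}$ in terms of configurations of particles $a(1^*),a(2^*)$ at the relevant height, which themselves depend on earlier collision parameters; so one cannot literally ``integrate the parents first'' in the nested Duhamel ordering. The fix — and I expect this to be the main obstacle to write cleanly — is exactly what the statement concedes before \eqref{eq: L1 estimate petit}: one \emph{relaxes} the time-ordering, dropping the constraints that the $(t_j)_{j\in\sigma}$ lie in their prescribed slots of $\cT_{2,s}$ and that the collision times distribute correctly over the intervals $[t-kh,t-(k-1)h]$. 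After this relaxation the integral factorizes as a product over the tree of one-variable integrals (each of the form ``$\int |(v-w)\cdot\nu|\,(\text{Gaussian})\,d\nu\,dv$'' for the non-parent branchings, and ``$\int\indc_{\cP_1}\prod_{m\in\sigma}|(v_m-v_{a(m)})\cdot\nu_m|\,dt_m\,d\nu_m\,dv_m\le\eta$'' for the parents), and Fubini lets us perform the parent integrations with the remaining variables as frozen parameters, using that $\eta$ is a \emph{uniform} bound. Relaxing the time-ordering only inflates the time integral — replacing a simplex of volume $t^{s-3}/(s-3)!$ by a cube, which is recovered by the same Stirling bound — so nothing is lost at the level of the final estimate. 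The constrained version \eqref{eq: L1 estimate petit h} is handled identically: the extra indicator $\indc_{\cT^h_{s-n+1,s}}$ simply replaces $t$ by $h$ in the last $n-2$ (non-parent) time integrations, yielding the stated $(Ch)^{n-2}$.
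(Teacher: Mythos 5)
Your overall strategy is the paper's: spend the two parent integrals on the bound \eqref{cP-est}, estimate the remaining collision integrals with the machinery of Proposition~\ref{estimatelemmacontinuity}, drop the ordering constraints on the times labelled by $\sigma$, and conclude with Stirling's formula. The gap is in the decoupling step. You integrate the parent variables \emph{innermost}, ``with all other variables frozen'', and assert that after relaxing the time-ordering the integrand ``factorizes as a product over the tree of one-variable integrals''. Neither holds as stated: the factors $|(v_i-v_{a(i)}(t_i))\cdot\nu_i|$ for $i>2^*$, $i\neq 1^*$ --- in particular for any particle that branches onto $1^*$ or $2^*$, or whose pseudo-trajectory has been deflected by them --- depend on $(t_m,v_m,\nu_m)_{m\in\sigma}$ through $v_{a(i)}(t_i)$, so they are not constants when the parent variables are integrated; and relaxing the time-ordering only decouples the time simplex, it does nothing to remove this dynamical coupling. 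The uniformity attached to \eqref{cP-est} is uniformity in the parameters with labels $<2^*$, on which the set $\cP_1(a,p,\sigma)$ depends; it does not license integrating $\indc_{\cP_1}$ against the parent measure while ignoring the rest of the integrand. The paper's proof uses the opposite nesting precisely for this reason: first sum and integrate over all velocities and deflection angles with labels larger than $2^*$ except $1^*$, which by the Proposition~\ref{estimatelemmacontinuity} estimates produces $(Cs)^{s-2^*-2}M_{3\beta/4}^{\otimes 2^*}(V_{2^*})$, a quantity genuinely independent of the parent parameters; then integrate the parents via \eqref{cP-est}, summing over the attachments $a(1^*),a(2^*)$ (this, not the choice of the indices of $\sigma$, is the origin of the factor $s^2$ --- $\sigma$ is fixed in the statement, and the sum over $\sigma$ is only performed later, in the proof of Proposition~\ref{prop: 1 recoll}); only then does one treat the part of the tree with labels $<2^*$ and the remaining time integrals, for which your accounting ($t^{s-3}/(s-3)!$, resp.\ $t^{s-1-n}h^{n-2}/\big((s-1-n)!\,(n-2)!\big)$ since the two parent times are already spent in $\eta$) is correct.

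So the ingredients you list are the right ones and the final bookkeeping matches the paper, but the Fubini step on which everything rests is not justified as written; it becomes so once you reverse the order of integration as above (or otherwise pre-estimate the collision factors with labels beyond $2^*$ uniformly in the parent parameters before invoking \eqref{cP-est}).
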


\begin{proof}

We only consider the cases $p = 1,2$ which are the most delicate.
Proposition \ref{restrictedtrees} is a consequence of the estimates on the collision operators (see Proposition 
\ref{estimatelemmacontinuity})  for the particles which are not in $\sigma$ and the smallness estimate
\eqref{cP-est} for the particles in $\sigma$. 
These estimates can be decoupled by using Fubini's theorem and the fact that the sets $\cP_1(a, z_1,\sigma)$ do not depend on the whole trajectory but only on the parameters with labels less than $2^*$ as well as on the parameters associated with $1^*$.

In order to evaluate \eqref{eq: L1 estimate petit}, we 
  first perform the integration with respect to all the velocities and angles with labels larger than $2^*$ except those of the particle $1^*$.
Recall that~$\cP_1(a,z_1, \sigma)$ is independent of these parameters.
We can use the same estimates as in the proof of Proposition~\ref{estimatelemmacontinuity} 
$$\sum_{( a(j))_{j > 2^* \atop j \not \in 1^*} }\left( \prod_{i > 2^*, \atop i \not = 1^*}  \,   \big| (v_i -v_{a(i)} (t_i))\cdot \nu_i \big| \right)  
M_{ \beta/4}^{\otimes s } (V_s)\leq  (C s)^{s- 2^* -2}   \,.
$$
and integrate over each label in $\tilde \sigma = \{ i >2^*, \quad i \not= 1^*\}$
\begin{equation}
\label{eq: feuilles arbre}
\sum_{( a(j))_{j\in \tilde \sigma} }\int \left( \prod_{i\in \tilde \sigma }  \,   \big| (v_i -v_{a(i)} (t_i))\cdot \nu_i \big| \right)  
M_{ \beta}^{\otimes s } (V_s)dV_{\tilde \sigma} d\Omega_{\tilde \sigma}  \leq  (C s)^{s- 2^* -2}  M_{ 3\beta/4 }^{\otimes {2^*} }(V_{2^*}) \,.
\end{equation}
This bound takes into account the combinatorics of the trees up to $2^*$.
Note that the upper bound \eqref{eq: feuilles arbre}
overestimates \eqref{eq: L1 estimate petit} as we are also counting trees for which 
the branchings in between $2^*$ and $1^*$ may not be compatible with the conditions imposed by a recollision. This does not matter as the constraint on the recollision has already been  encoded in $\cP_1(a, z_1,\sigma)$ which we will use next.

The previous step removed all the dependency on the collision trees below the level $2^*$ and we can now use estimate \eqref{cP-est} and integrate over $1^*, 2^*$ (keeping frozen the parameters of the labels before $2^*$)
$$
\sum_{a(1^*), a(2^*)  }
 \int \indc _{\cP_1(a,p,\sigma)  }  \left( \prod_{i \in \sigma  }  \,   \big| (v_i -v_{a(i)} (t_i))\cdot \nu_i \big| \right)  
dT_\sigma dV_\sigma d\Omega_\sigma \leq  s^2 \eta 
 \,,$$
 uniformly with respect to all parameters $( t_i,v_i,\nu_i)_{i<2^*}$.
The factor $s^2$ in the inequality comes from the choices of $a(1^*), a(2^*)$.

Once the constraint on the recollision has been taken into account, the remaining part of the tree before $2^*$ can be estimated 
 by using the estimates from Proposition \ref{estimatelemmacontinuity}.
This leads to an extra factor $(C s)^{2^*-1}$.

It remains to  integrate over the times $(t_i)_{i \not \in \sigma}$ and we can simply remove the constraint on the times labelled by $\sigma$.
We distinguish two cases :

\begin{itemize}
\item
In \eqref{eq: L1 estimate petit}, the time constraint $\cT_{2,s}$
boils down to integrating over a simplex of dimension~$(s-1)-2$, the volume of which is
$${t^{s-3}\over (s-3)!} \leq C^{s}  { t^{s-3}\over s^{s-3}}  $$
by Stirling's formula.
\item 
In \eqref{eq: L1 estimate petit h}, we have to add the condition that the last $n$ times are in an interval of length $h\leq 1$. For $t \geq 1$, the worst situation is when all times $(t_i)_{i\in \sigma}$ are in this small time interval, as we loose the corresponding smallness. More precisely, we get
$${t^{s-1-n}\over (s-1-n)!}{h^{n-2}\over (n-2)!} \leq C^{s}  { t^{s-1-n} h^{n-2} \over s^{s-1-2}} \,\cdotp$$
\end{itemize}
This completes the proof of Proposition \ref{restrictedtrees}.
\end{proof}

\begin{proof}[Proof of Proposition {\rm\ref{prop: 1 recoll}}]

Given   $z_1 \in \T^2 \times B_R$, the set of parameters leading to pseudo-trajectories with at least one recollision is partitioned 
into subsets $\cP_1(a,p,{\sigma})$ 
(see Proposition \ref{recoll1-prop}). We therefore have
\begin{equation}
\label{eq: ensemble pathologique}
\begin{aligned}
\Big |& f^{(1,K), \geq}_{N}(t,z_1) \Big| 
\leq   \sum_{j_1=0}^{n_1-1}\! \!   \dots \!  \! \sum_{j_K=0}^{n_K-1} \alpha^{J_K-1} 
\sum_{ a\in \cA_{J_K} } \sum_{ p, \sigma} 
   \int  \indc _{\cT_{2,J_k}}
  \indc_{\cP_1(a,p, \sigma)} 
 \\
&   \quad \times  \left( \prod_{i=2}^{J_K}  \,   \big| (v_i -v_{a(i)} (t_i))\cdot \nu_i \big|\right)      \big(  f^{(J_K)}_{N,0}\indc_{{\mathcal V}_{J_K} }\big)
\,    dT_{2,J_K}  d\Omega_{2,J_K}   dV_{2,J_K} \, .
%M_\beta^{\otimes J_K} 
\end{aligned}
\end{equation}
We have seen in \eqref{eq: linfty initial} that the marginals of the  initial datum 
are dominated by a Maxwellian
$$
\big| f_{N,0}^{(J_K)}  (Z_{J_K}) \big|  \leq C^{J_K}   M_\beta^{\otimes J_K}(V_{J_K})  
\|g_{\alpha,0} \|_{L^\infty} \, .
$$
Thus  \eqref{eq: L1 estimate petit}  can be applied to estimate $f^{(1,K), \geq}_{N}$
\begin{align*}
\Big | f^{(1,K), \geq}_{N}(t,z_1) \Big| 
\leq 
\|g_{\alpha,0} \|_{L^\infty} 
\sum_{j_1=0}^{n_1-1}\! \!   \dots \!  \! \sum_{j_K=0}^{n_K-1}
C^{J_K}    \alpha^{J_K-1} J_K^5 t^{J_K} \eps \big| \log \eps \big|^\frac{19}2 M_{\beta/2}(v_1)\, ,
\end{align*}
where the parameter $\eta$ in \eqref{eq: L1 estimate petit}   has been estimated by using that~$1 \leq  R^2\leq C_0 |\log \eps|$ and~$1 \leq  t \leq C_0 |\log \eps|$.
Note that compared to \eqref{eq: L1 estimate petit}, an extra factor $J_K^2$ was added to take into account  the sum over the possible choices for $\sigma$.

Now recalling that~$n_k =  2 ^kn_0$ we have
\begin{equation}
\label{eq: nk 2k}
J_K \leq     2^{K+1} n_0
\quad \text{and} \quad
\sum_{j_1=0}^{n_1-1}\! \!   \dots \!  \! \sum_{j_K=0}^{n_K-1}
\leq \prod_{i = 1}^K n_i \leq n_0^K 2^{K^2}\, ,
\end{equation}
so thanks to Assumption \eqref{eq: Linfty initial} on the initial datum~$g_{\alpha,0}$, we conclude
\begin{align*}
\Big | f^{(1,K), \geq}_{N}(t,z_1) \Big| 
\leq  \exp (C\alpha^2) 2^{4K^2}n_0^{5+K}  \big( CT \alpha \big)^{  2^{K+1}n_0} 
 \eps \big| \log \eps \big|^\frac{19}2 M_{\beta/2}(v_1)\, .
\end{align*}
Since~$2^{K^2}n_0^{5+K}  \ll C^{2^K}$, this completes the proof of Proposition \ref{prop: 1 recoll} (bounding~$ \big| \log \eps \big|^\frac{19}2$ by~$ \big| \log \eps \big|^{10}$ to simplify).
\end{proof}

\subsection{Proof of Proposition \ref{main-prop}}
%Convergence of pseudo-trajectories}
\label{subsec: Convergence of pseudo-trajectories}

Each term in the decomposition \eqref{eq: fN,cut 1}
$$
f^{(1,K)}_{N}(t) = f^{(1,K), 0}_{N}(t) + f^{(1,K), \geq}_{N}(t) 
$$
can be interpreted as a restriction of the domain of integration  of the times, velocities and   deflection angles. For $f^{(1,K), \geq}_{N}$, the pseudo-trajectories associated with a tree $a$ are integrated over the sets~$\cP_1(a, p, \sigma)$ as in \eqref{eq: ensemble pathologique}, instead they are integrated outside these sets in $f^{(1,K), 0}_N$. As a consequence the pseudo-trajectories in $f^{(1,K), 0}_N$ have no recollision.

\medskip

A similar decomposition holds for the Boltzmann hierarchy:  we distinguish whether the     pseudo-trajectories     lie on  the non-overlapping sets~$G_s(a)$ or not (see Definition~\ref{def: overlap}), and whether they lie on the pathological sets~$\cP_1(a, p, \sigma)$ or not (this splitting is artificial as there are no recollisions in the Boltzmann hierarchy, however it will be useful to 
compare the different contributions). Recalling
\begin{equation*}
\bar f ^{(1,K)}(t)  : =
\sum_{j_1=0}^{n_1-1}\! \!   \dots \!  \! \sum_{j_K=0}^{n_K-1}  \bar Q_{1,J_1} (h ) \bar Q_{J_1,J_2} (h )
\dots  \bar Q_{J_{K-1},J_K} (h ) \,     \big(  f^{(J_K)}_{0}\indc_{{\mathcal V}_{J_K} }\big) \, ,
\end{equation*}
let us write  
$$
\bar f ^{(1,K)} =\bar f ^{(1,K), 0}   + \bar f ^{(1,K), \geq}   +\bar f^{(1,K), \rm{overlap}}  \,,
$$
where~$\bar f ^{(1,K), 0} (t) $ corresponds to   restricting the pseudo-trajectories  to the sets of parameters~$^c{}\cP_1(a, p,\sigma) \cap G_s(a)$, while~$ \bar f ^{(1,K), \geq} (t) $ corresponds to the restriction to~$ \cP_1(a, p,\sigma) \cap G_s(a)$, and finally~$\bar f ^{(1,K), \rm{overlap}} (t)$ corresponds to the restriction to~$^c{} G_s(a)$.
 As a consequence of Proposition \ref{prop: 1 recoll}, 
the term $\bar f ^{(1,K), \geq} $ is negligible
\begin{align}
\label{eq: recoll Boltzmann hierarchy}
\Big | \bar f ^{(1,K), \geq} (t,z_1) \Big| 
\leq  \exp (C\alpha^2)   \big( CT \alpha \big)^{    2^{K+1}n_0 } 
 \eps \big| \log \eps \big|^{10}M_{\beta/2}(v_1)\, .
\end{align}
Similarly we claim that
\begin{align}
\label{eq: overlap Boltzmann hierarchy}
\Big | \bar f ^{(1,K),  \rm{overlap}} (t,z_1) \Big| 
\leq  \exp (C\alpha^2)   \big( CT \alpha \big)^{    2^{K+1}n_0 } 
 \eps \big| \log \eps \big|^{10}M_{\beta/2}(v_1)\, .
\end{align}
Indeed we notice that by definition
$$
^c{}G_s \subset  \widetilde{^c{}G_s} :=\Big\{(T_{2,s}, \Omega_{2,s}, V_{2,s}) \in  \cT_{2,s} \times {\mathbb S}^{s-1} \times \R^{2(s-1)} \, / \, \exists  i \,  , \, \exists k,\ell < i \, / \, |x_k(t_i) - x_\ell (t_i)| \leq 2 \eps\Big \} \, .
$$
If~$(T_{2,s}, \Omega_{2,s}, V_{2,s}) $ belongs to~$  \widetilde{^c{}G_s}$ and if~$i$ is the smallest integer such that
\begin{equation}\label{likearecoll2eps}
\exists k,\ell < i \, / \, |x_k(t_i) - x_\ell (t_i)| \leq 2 \eps \, ,
\end{equation}
then either the corresponding pseudotrajectory before time~$t_i$ (which exists by definition of~$i$) has suffered at least one recollision, and the result is a consequence of the proof of Proposition~ \ref{prop: 1 recoll}; or the condition~(\ref{likearecoll2eps}) can itself be interpreted as a ``recollision" (with~$\eps$ replaced by~$2\eps$) and the computations leading to  Proposition~\ref{prop: 1 recoll} may   again be reproduced exactly.  So~(\ref{eq: overlap Boltzmann hierarchy}) follows.

The last step to conclude Proposition \ref{main-prop} is to evaluate the difference $f^{(1,K), 0}_{N}(t) - \bar f ^{(1,K), 0} (t)$.
Once   recollisions and overlaps have been excluded, the only discrepancies between the BBGKY and the Boltzmann  pseudo-trajectories come from the micro-translations due to the diameter~$\eps$ of the colliding 
particles (see Definition \ref{pseudotrajectory}). 
At the initial time, the error between the two configurations is at most $O(s\eps)$ after $s$ collisions (see \cite{GSRT, BGSR1})
\begin{equation}
\label{eq: shift donnees initiales}
\big |\bar X_{s}(a, T_{2,s}, \Omega_{2,s}, V_{2,s},0) - X_{s}(a, T_{2,s}, \Omega_{2,s}, V_{2,s},0) \big|
\leq Cs \eps \, .
\end{equation}
The discrepancies are only for   positions, as   velocities remain equal in both hierarchies.
These configurations are then evaluated either on the marginals of the 
 initial datum~$f_{N,0}^{(s)} $ or of $f^{(s)} _0$ which are close to each other thanks to Proposition
 \ref{exclusion-prop2}.

\medskip

The main discrepancy between $f^{(1,K), 0}_{N}$ and $\bar f^{(1,K), 0} $ depends on 
%the differences for $s \leq 2^{K+1}$
\begin{align*}
%s \leq  n_0 \cdot2^{K+1}, \qquad 
& \Big| f_{0 }^{(s)} \Big( \bar Z_{s}(a, T_{2,s}, \Omega_{2,s}, V_{2,s},0) \Big)
- 
f_{N,0 }^{(s)} \Big( Z_{s}(a, T_{2,s}, \Omega_{2,s}, V_{2,s},0) \Big) \Big|\\
& \qquad \leq 
\Big| f_{0 }^{(s)} \Big( \bar Z_{s}(a, T_{2,s}, \Omega_{2,s}, V_{2,s},0) \Big)
- 
f_{0 }^{(s)} \Big( Z_{s}(a, T_{2,s}, \Omega_{2,s}, V_{2,s},0) \Big) \Big|\\
& \qquad  \qquad +
\Big| f_{0 }^{(s)} \Big( Z_{s}(a, T_{2,s}, \Omega_{2,s}, V_{2,s},0) \Big)
- 
f_{N,0 }^{(s)} \Big( Z_{s}(a, T_{2,s}, \Omega_{2,s}, V_{2,s},0) \Big) \Big| \, .
\end{align*}
By the assumption \eqref{eq: Linfty initial}, $g_{\alpha,0}$ has a Lipschitz bound
$\exp ( C \alpha^2)$, thus combining \eqref{eq: shift donnees initiales} and 
the estimate of Proposition \ref{exclusion-prop2}, we get
\begin{align*}
\Big| f_{0 }^{(s)} \Big( \bar Z_{s}(a, T_{2,s}, \Omega_{2,s}, V_{2,s},0) \Big)
- 
f_{N,0 }^{(s)} \Big( Z_{s}(a, T_{2,s}, \Omega_{2,s}, V_{2,s},0) \Big) \Big|
\leq C^s \exp ( C \alpha^2) s \eps M_\beta ^{\otimes s}(V_s) \, .
\end{align*}

The last source of discrepancy between the formulas defining $f^{(1,K), 0}_{N}$ and $\bar f_N^{(1,K), 0} $
comes from the prefactor~$(N-1)\dots (N-s+1) \eps^{s-1}$ which has been replaced by $\alpha^{s-1}$.
For fixed $s$, the corresponding error is 
$$ \Big( 1- {(N-1) \dots (N- s+1) \over N^{s-1}}\Big)  \leq C {s^2 \over N}\leq Cs^2 {\eps \over \alpha} $$
which, combined with the bound on the collision operators,  leads to an error of the form 
\begin{equation}
\label{error3}
(C  \alpha t)^{s-1} s^2{\eps \over \alpha}   \,\cdotp
\end{equation}
Summing the previous bounds gives
\begin{equation}
\begin{aligned}
\label{eq: erreur 0N cut}
\Big | f^{(1,K), 0}_{N}(t,z_1) -\bar f ^{(1,K), 0} (t,z_1) \Big|
%\\& \qquad 
& \leq  \exp ( C \alpha^2)M_\beta(v_1) 
\sum_{j_1=0}^{n_1-1}\! \!   \dots \!  \! \sum_{j_K=0}^{n_K-1}
(C  \alpha t)^{J_K-1} \left( J_K^2 \, {\eps \over \alpha} + 
  J_K \eps \right) \\
& \leq  \exp (C\alpha^2) M_\beta(v_1) 
  \big( CT \alpha \big)^{    2^{K+1}n_0}
 \left( 2^{2 (K+1)} \, {\eps \over \alpha} + 
  2^{K+1} \eps \right) ,  
\end{aligned}
\end{equation}
where we used the bounds \eqref{eq: nk 2k}  for the sequence $n_k = 2^kn_0$ .

\medskip

Finally Proposition~\ref{main-prop} follows by combining  
\begin{itemize}
%\item  Proposition \ref{lem: truncation} to control the large velocities, 
\item Proposition \ref{prop: 1 recoll} and \eqref{eq: recoll Boltzmann hierarchy} to control the recollisions,
\item (\ref{eq: overlap Boltzmann hierarchy}) to control overlaps in the pseudo-trajectories,
\item (\ref{eq: erreur 0N cut}) to control  the difference in the parts without recollisions.
\end{itemize}
The result is proved. \qed

\section{Symmetry and $L^2$ bounds}
\label{decompositionL2andCss+1}

In this section, we  prove an upper bound on the contribution of   
super exponential collision trees without recollisions introduced in \eqref{reste0}
\begin{equation*}
 R_N^{K, 0} (t) :=  \sum_{k=1}^K  \sum_{j_i <n_i \atop i\leq k-1} \sum_{j_k \geq n_k} Q^0_{1,J_1} (h ) \dots  Q^0_{J_{k-1},J_{k}} (h ) \big( f^{(J_K)}_N (t-kh)\indc_{{\mathcal V}_{J_K} } \big)\, .
 \end{equation*}
%where we denote   $J_k= 1+\sum_{i=1}^{k} j_i$. 
\begin{Prop}
\label{RNK0} 
Given~$T>1$,  $\gamma\ll 1$ and~$C$ a large enough constant (independent of~$\gamma$ and~$T$),  the parameters are tuned as follows 
\begin{equation}
\label{eq: parameters RNK0}
h \leq {\gamma^2 \over  \exp (C\alpha^2) T^3 } \, ,
%\qquad \alpha^2 t h \ll 1, 
\qquad n_k =  2^k n_0   \, .
\end{equation}
Then, under the Boltzmann-Grad scaling~$N\eps = \alpha \gg1$, we have for $t \in [0,T]$
\begin{equation}
\label{R0-est}
\Big\| R_N^{K, 0} (t) \Big\|_{L^2(\D)}  \leq \gamma  \, .
\end{equation}
\end{Prop}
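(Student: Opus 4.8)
\medskip

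\noindent\emph{Plan.} My plan rests on three ingredients: the conserved weighted $L^2$ bound \eqref{L2}, which controls each $f_N^{(J_k)}(t-kh)$ in $L^2_\beta$ by $\sqrt N\, e^{C\alpha^2}\|g_{\alpha,0}\|_{L^2_\beta}$ (recall that by \eqref{eq: Linfty initial} this norm is itself at most $e^{C\alpha^2}$); the weak chaos property of Proposition~\ref{lemfNL2sym 0}, which uses only the exchangeability of the particles together with \eqref{L2}; and the $L^2$ continuity estimate for the recollision-free iterated collision operators of Proposition~\ref{L2-est}. As explained in Paragraph~\ref{introproblemL2}, a crude $L^2$ bound on $Q^0$ loses a factor $\sqrt{N\alpha}$ per collision (Point~\eqref{introlossepsilon}); together with the $\sqrt N$ of \eqref{L2}, the $J_k-1$ collisions of $Q^0_{1,J_1}(h)\cdots Q^0_{J_{k-1},J_k}(h)$ would then produce a diverging power of $N$. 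The cumulant-type decomposition of Proposition~\ref{lemfNL2sym 0} is designed precisely so that, when inserted into the iterated collision operators, these powers of $N$ cancel term by term.

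\medskip

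\noindent\emph{First step.} First I would insert the decomposition of Proposition~\ref{lemfNL2sym 0} into each summand of \eqref{reste0}. Since $Q^0_{s,s+n}$ is built with the truncated transport $\widehat {\mathbf S}_{s}^0$, which coincides with the free flow until the first recollision, the reconstructed pseudo-trajectories carry no recollision, so the obstruction of Point~\eqref{introtransportnotfree} disappears and the change of variables \eqref{changevarintro} --- composing an elementary collision operator with free transport and integrating in time, thereby replacing the integral over the singular sphere by an absolutely continuous volume integral --- is legitimate. This is the mechanism, established in Proposition~\ref{L2-est}, that gives meaning to the collision operators in $L^2$ at the price of recovering only $\eps^{1/2}$, rather than $\eps$, per collision. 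Carrying it out cluster by cluster, the $N$-powers produced by the collision operators are matched exactly by the $N$-gains of the cumulants; one is left, for $t\le T$, with a bound of the schematic form (Stirling's formula, $|\cA_{J_k}|\le J_k!$, and the $|\log\eps|$-losses from the energy cut-off $\indc_{{\mathcal V}_{J_k}}$ of \eqref{eq: truncated velocities} all absorbed into $C$)
\[
\big\| Q^0_{1,J_1}(h)\cdots Q^0_{J_{k-1},J_k}(h)\big(f_N^{(J_k)}(t-kh)\,\indc_{{\mathcal V}_{J_k}}\big)\big\|_{L^2(\D)}
\;\le\; e^{C\alpha^2}\,\prod_{\ell=1}^{k}(C\alpha h)^{\,j_\ell}\,,
\]
the crucial point being that no power of $N$ survives and that the last block contributes the genuinely small factor $(C\alpha h)^{j_k}$ with $j_k\ge n_k$.

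\medskip

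\noindent\emph{Summation.} Then I would sum over $j_1,\dots,j_{k-1}$ (subject to $j_i<n_i$) and over $k=1,\dots,K$. The parameter choice \eqref{eq: parameters RNK0}, together with $T>1$ and $\alpha>1$, gives $C\alpha h\le 1/2$, so each geometric sum satisfies $\sum_{j_i<n_i}(C\alpha h)^{j_i}\le 2$ and $\sum_{j_k\ge n_k}(C\alpha h)^{j_k}\le 2(C\alpha h)^{n_k}$; hence the $k$-th term of $\|R_N^{K,0}(t)\|_{L^2(\D)}$ is at most $2^{k}e^{C\alpha^2}(C\alpha h)^{n_k}$. Again by \eqref{eq: parameters RNK0} one has $C\alpha h\le \gamma^2e^{-C\alpha^2}$, so with $n_k=2^kn_0$ this is bounded by $2^{k}e^{C\alpha^2}\gamma^{2^{k+1}n_0}e^{-C\alpha^2 2^kn_0}$; since $2^kn_0\ge 2$ the factor $2^{k}e^{C\alpha^2}e^{-C\alpha^2 2^kn_0}$ is at most $1$ once $C$ is large enough, and $\sum_{k\ge 1}\gamma^{2^{k+1}n_0}\le 2\gamma^{4n_0}\le\gamma$ for $\gamma$ small, the doubling $n_k=2^kn_0$ making the series doubly-exponentially convergent, uniformly in $K=T/h$. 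This yields $\|R_N^{K,0}(t)\|_{L^2(\D)}\le\gamma$, which is the claim, the constant $C$ in \eqref{eq: parameters RNK0} being fixed at the end large enough to absorb all the prefactors.

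\medskip

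\noindent\emph{Main obstacle.} The step I expect to be hardest is the weak chaos of Proposition~\ref{lemfNL2sym 0}: one must extract from nothing more than exchangeability and the single $L^2$ a priori bound \eqref{L2} a cumulant decomposition of $f_N^{(s)}(t)$ with the correct $N$-scaling, valid at all --- in particular positive --- times, where the ansatz \eqref{wish} fails and the explicit Maxwellian control of Proposition~\ref{exclusion-prop2} (available only at $t=0$) is useless. The second delicate point, handled in Proposition~\ref{L2-est}, is to organise the change of variables \eqref{changevarintro} so that it interacts cleanly with this cluster structure and the $N$-powers cancel \emph{exactly} rather than merely being controlled --- which is precisely what fails if one tries to run the argument directly in $L^\infty$, since the $L^\infty$ bound \eqref{Linfty} on the marginals diverges like $N$.
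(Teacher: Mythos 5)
Your overall route is the paper's own: decompose $f_N^{(J_k)}(t-kh)$ with Proposition~\ref{lemfNL2sym 0}, apply the recollision-free $L^2$ continuity estimate of Proposition~\ref{L2-est} so that the $\eps^{-(m-1)/2}$ loss is cancelled by the $N^{-(m-1)/2}$ decay of the cumulants, then resum over the blocks. The gap is in the intermediate bound you claim, namely $\|Q^0_{1,J_1}(h)\cdots Q^0_{J_{k-1},J_k}(h)(f_N^{(J_k)}(t-kh)\indc_{{\mathcal V}_{J_k}})\|_{L^2(\D)}\le e^{C\alpha^2}\prod_{\ell\le k}(C\alpha h)^{j_\ell}$. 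Proposition~\ref{L2-est} does not deliver $h$-smallness per collision in every block: its right-hand side is $(C\alpha)^{J+n-1}\,t^{J+n/2-1}\,h^{n/2}$, i.e.\ only half a power of $h$ per collision, and only for the last block, with powers of $t$ for all the preceding ones. This is not a cosmetic restatement issue but intrinsic to the method: in the proof of Proposition~\ref{L2-est} the Cauchy--Schwarz splits $|Q^0|$ into a quadratic-kernel factor (which does retain the Lanford-type $h$-factors) and a bounded-kernel factor estimated in $L^1$ via Lemma~\ref{lem: elementary-step}, where the collapse term $\Phi^{(m)}_{m-1}$ trades the entire time integral for the factor $1/\eps$ and keeps no time smallness at all; accordingly the time-sampling constraints must be relaxed there ($h$ traded for $t$). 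After summing over $m$, the correct bound for one summand of \eqref{reste0} is of the form $(C\alpha)^{J_k}e^{C\alpha^2}\,t^{J_{k-1}+j_k/2}\,h^{j_k/2}$, not a product of small factors over all blocks.

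With the true bound your geometric sums $\sum_{j_i<n_i}(C\alpha h)^{j_i}\le 2$ are unavailable: the early blocks contribute growing factors of order $(C\alpha t)^{j_i}$, and the whole point of the doubling $n_k=2^kn_0$ is that $J_{k-1}\le n_k$, so the accumulated growth $(C\alpha t)^{J_{k-1}}$ is beaten by the last block's smallness, yielding a $k$-th term of order $2^{k^2}e^{C\alpha^2}(C\alpha^4 t^3h)^{n_k/2}$; this is exactly where the $T^3$ in \eqref{eq: parameters RNK0} is needed. In your accounting that $T^3$ is never used (only $C\alpha h\le 1/2$ and $C\alpha h\le\gamma^2e^{-C\alpha^2}$), which is the telltale sign that your intermediate estimate is stronger than what the cited machinery provides. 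Once you replace the schematic product bound by the actual output of Proposition~\ref{L2-est}, add the observation $J_{k-1}\le n_k$, and invoke the full strength $h\le\gamma^2 e^{-C\alpha^2}T^{-3}$, your resummation closes and reproduces the paper's proof.
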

The main step to derive Proposition \ref{RNK0} is to replace the $L^\infty$ estimates on the collision kernel (Proposition~\ref{estimatelemmacontinuity}) by $L^2$ estimates. 
To do this, we   first establish an $L^2_\beta$ decomposition of the marginals $f_N^{(s)}(t)$ (Proposition~\ref{lemfNL2sym 0} in Section~\ref{decompositionL2})
and then an $L^2$ counterpart of Proposition \ref{estimatelemmacontinuity} (Proposition~\ref{L2-est} in Section~\ref{sectionstabcss+1}).
%The proof of Proposition~\ref{RNK0} is postponed to Section \ref{subsec: Proposition RNK0}. Finally in Section~\ref{subsec: super exp boltz} the counterpart of Proposition~\ref{RNK0} for the Boltzmann hierarchy is stated and proved.
%

\subsection{Structure of symmetric functions in $L^2$}
\label{decompositionL2}
  
We   prove in Proposition \ref{lemfNL2sym 0} that a structure similar to~(\ref{wish}) is intrinsic 
to   symmetric functions with suitable $L^2$ bounds (the argument does not involve dynamics).
As the density $f_N(t)$ of the particle system is symmetric and admits $L^2$ bounds uniform in time, we  can then  deduce that the higher order correlations of the marginals~$f_N^{(s)}(t, Z_s) $ are small in $L^2$ for any time.
This is a key ingredient in the proof of the main theorem. 
\medskip

The following proposition states a general decomposition of symmetric functions in $L^2_\beta$.
\begin{Prop}
\label{lemfNL2sym 0}
Let~$f_N$ be a mean free, symmetric function such that $f_N/ M_\beta^{\otimes N} \in L^2_\beta(\D^N)$. 
There exist symmetric functions $g_N^m$  on $\D^m$ for $1\leq m\leq N$  such that
for all $s \leq N$,  the marginal of order~$s$ satisfies
\begin{equation}
\label{fNs-exp}
f_N^{(s)}( Z_s) = M_\beta^{\otimes s} (V_s) \sum_{m=1}^s  \sum _{\sigma\in {\mathfrak S}^m_s } g_N^{{m}} (Z_\sigma)\,,
\end{equation}
where ${\mathfrak S}^m_s$ denotes the set of all parts of $\{1,\dots ,s\}$ with $m$ elements, and $ \binom{N}{m}  $  is its cardinal. Moreover 
$$
\| g_N^{m}  \|^2_{ L^2_\beta (\D^m)} 
\leq {  { 1 \over  \binom{N}{m}}  }
  \|f_N/M_\beta^{\otimes N}\|^2_{L^2_\beta (\D^N)} \,.
$$
%In particular, if~{\rm{(\ref{L2})}} holds then 
%\begin{equation}
%\label{gi-est}
%\| g_N^{m}(t) \|^2_{ L^2_\beta (\D^m)} \leq {CN\exp(C\alpha^2) \over C^m_N} 
%\|g_{\alpha,0} \|_{  L^2_\beta  (\D)}^2 \, .
%\end{equation} 
\end{Prop}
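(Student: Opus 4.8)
The plan is to construct the functions $g_N^m$ by a combinatorial (Möbius-type) inversion, exploiting the symmetry of $f_N$, and then to read off the $L^2$ bound from an orthogonality relation. First I would fix the right candidate for the decomposition: since $f_N$ is symmetric and mean free, write $f_N = M_\beta^{\otimes N} F_N$ with $F_N \in L^2_\beta(\D^N)$ symmetric, and decompose $F_N$ into its "chaos components''. Concretely, define for each $m$ and each part $\sigma \subset \{1,\dots,N\}$ with $|\sigma| = m$ a function depending only on the variables $Z_\sigma$ by iteratively subtracting lower-order pieces: set $g_N^1(z_1) := \int F_N \, M_\beta^{\otimes(N-1)}(V_{\{2,\dots,N\}}) \, dz_2\cdots dz_N$ (which is mean free by hypothesis), and recursively
$$
g_N^m(Z_{\{1,\dots,m\}}) := \int F_N \, M_\beta^{\otimes(N-m)} \, dz_{m+1}\cdots dz_N \; - \; \sum_{\ell=1}^{m-1} \sum_{\sigma \in {\mathfrak S}^\ell_m} g_N^\ell(Z_\sigma) \, ,
$$
extended to arbitrary $m$-subsets by symmetry. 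By construction each $g_N^m$ is symmetric, and each is mean free in every one of its variables: integrating $g_N^m$ against $M_\beta(v_i)\,dz_i$ for any $i$ kills the leading term's dependence on $z_i$ and, inductively, all the subtracted terms, so $\int M_\beta(v_i) g_N^m(Z_\sigma)\, dz_i = 0$. This "mean-free in each variable'' property is the Hoeffding/ANOVA decomposition of $F_N$, and it is exactly what makes the components orthogonal in $L^2_\beta(\D^N)$.

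Next I would verify the marginal formula \eqref{fNs-exp}. Integrating the defining recursion and using that $F_N$ is symmetric, one checks by induction on $s$ that
$$
f_N^{(s)}(Z_s) = M_\beta^{\otimes s}(V_s) \int F_N \, M_\beta^{\otimes(N-s)}\, dz_{s+1}\cdots dz_N = M_\beta^{\otimes s}(V_s) \sum_{m=1}^s \sum_{\sigma \in {\mathfrak S}^m_s} g_N^m(Z_\sigma)\, ,
$$
since the recursion says precisely that the $s$-variable partial integral of $F_N$ telescopes into the sum of all $g_N^m$ over $m$-subsets of $\{1,\dots,s\}$ — this is the inversion of the triangular combinatorial system. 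The key quantitative step is the $L^2$ bound. Expand $\|F_N\|_{L^2_\beta(\D^N)}^2 = \int M_\beta^{\otimes N} F_N^2$; writing $F_N = \sum_{m=1}^N \sum_{\sigma \in {\mathfrak S}^m_N} g_N^m(Z_\sigma)$ and squaring, all cross terms $\int M_\beta^{\otimes N} g_N^m(Z_\sigma) g_N^{m'}(Z_{\sigma'})$ with $\sigma \neq \sigma'$ vanish: pick an index in the symmetric difference $\sigma \triangle \sigma'$, say $i \in \sigma \setminus \sigma'$; then integrating in $z_i$ first annihilates $g_N^m(Z_\sigma)$ by the mean-free property while $g_N^{m'}(Z_{\sigma'})$ is constant in $z_i$, so the integral is zero. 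Hence
$$
\|F_N\|^2_{L^2_\beta(\D^N)} = \sum_{m=1}^N \binom{N}{m} \|g_N^m\|^2_{L^2_\beta(\D^m)} \, ,
$$
using that all $\binom{N}{m}$ subsets of a given size contribute equally by symmetry. In particular $\binom{N}{m}\|g_N^m\|^2_{L^2_\beta(\D^m)} \leq \|F_N\|^2_{L^2_\beta(\D^N)} = \|f_N/M_\beta^{\otimes N}\|^2_{L^2_\beta(\D^N)}$, which is the claimed estimate.

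The main obstacle is not any single deep estimate but rather carefully setting up the triangular combinatorial inversion and checking that the "mean-free in each variable'' property is genuinely propagated by the recursion and is stable under the partial integrations — this is what simultaneously yields the marginal identity and the orthogonality. A secondary point requiring care is measurability/integrability: one must justify that the partial integrals $\int F_N\, M_\beta^{\otimes(N-m)}\,dz_{m+1}\cdots dz_N$ are well-defined elements of $L^2_\beta(\D^m)$, which follows from Jensen's (or Cauchy--Schwarz's) inequality against the probability measure $M_\beta^{\otimes(N-m)}\,dz_{m+1}\cdots dz_N$, so that in fact $\|g_N^m\|$ is finite before we even invoke the orthogonality identity. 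Once these bookkeeping issues are handled, the proof is essentially the classical Hoeffding decomposition adapted to the Gaussian-weighted $L^2$ space, and the stated bound drops out of Parseval's identity for that decomposition.
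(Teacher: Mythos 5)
Your argument is correct and is essentially the paper's proof: your recursively defined $g_N^m$ are exactly the functions the paper writes down in closed form via inclusion--exclusion (Möbius inversion of your triangular system), and both proofs rest on the same two pillars, namely mean-freeness of each $g_N^m$ in every variable (using the hypothesis $\int f_N\,dZ_N=0$ for the lowest-order term) and the resulting orthogonality/Parseval identity $\|f_N/M_\beta^{\otimes N}\|^2_{L^2_\beta}=\sum_m\binom{N}{m}\|g_N^m\|^2_{L^2_\beta(\D^m)}$. The only cosmetic caveat is that integrating in $z_i$ does not literally ``kill'' the subtracted terms with $i\notin\sigma$; they survive and cancel against the integrated leading term via the level-$(m-1)$ identity, which is what your induction implicitly uses.
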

Combining  (\ref{eq: ZN})  and \eqref{L2}, we see that at any time $t \geq 0$
\begin{align}
\label{eq: L2 upper bound}
\int {f_{N} ^2 \over M_\beta^{\otimes N} }(t,Z_N) dZ_N 
\leq \frac{1}{{\mathcal Z}_N}
\int {f_{N} ^2 \over M_{N,\beta} }(t,Z_N) dZ_N \leq 
CN\exp (C \alpha^2) \| g_{\alpha,0}\| _{L^2_\beta(\D)}^2 \, .
\end{align}
Thus Proposition \ref{lemfNL2sym 0} applies to the solution $f_N(t)$ of the Liouville equation and
for all $s \leq N$,  the marginal of order~$s$ satisfies
\begin{equation}
\label{eq: decomposition a t}
f_N^{(s)}(t, Z_s) = M_\beta^{\otimes s} (V_s) \sum_{m=1}^s  \sum _{\sigma\in {\mathfrak S}^m_s } g_N^{{m}} (t,Z_\sigma)\,,
\end{equation}
with 
\begin{equation}
\label{gi-est}
\forall t \geq 0, \qquad 
\| g_N^{m}(t) \|^2_{ L^2_\beta (\D^m)} \leq {CN\exp(C\alpha^2) \over \binom{N}{m} } 
\|g_{\alpha,0} \|_{  L^2_\beta  (\D)}^2 \, .
\end{equation} 
Although the definition is not exactly the usual one (due to the linear setting), we will call cumulant of order $m$  the function $g_N^{m}$ as it encodes the correlations of order $m$. It is indeed defined by some exhaustion procedure (which is somehow comparable to the Calder\' on-Zygmund decomposition), which ensures that the average of $g_N^{m}$ with respect to any of its coordinate is zero. In other words, all correlations of order less than $m-1$ have been removed.

Note that  the size of the correlations between several particles has been quantified by Pulvirenti, Simonella  \cite{PS} for chaotic initial data.  As in \eqref{gi-est}, the bounds obtained in \cite{PS} decrease with the degree of the correlations, however these estimates hold only for short times and moderate $m$ as they are valid even far from equilibrium.

The decomposition \eqref{eq: decomposition a t} can be understood as a projection of 
$f_N$ onto the reference measure~$M_\beta^{\otimes N}$ and the terms in 
\eqref{gi-est} are small because $f_N$ is close to $M_\beta^{\otimes N}$ in the $L^2$ sense
\eqref{eq: L2 upper bound}. In $d \geq 3$, the estimate \eqref{eq: L2 upper bound} no longer holds
(even for $f_N = M_{N,\beta}$) as the corrections induced by the exclusion are too large.
Thus to generalize the previous decomposition in $d \geq 3$, one would need to replace the reference 
measure $M_\beta^{\otimes N}$ by a more suitable one.

\begin{proof}[ Proof of Proposition~{\rm\ref{lemfNL2sym 0}}]

Define
$$
g_N^{m} (Z_m) := \sum_{k=1}^m (-1)^{m-k} \sum_{ {\sigma} \in {\mathfrak S}_m^k}
{f_N^{(k)} \over M_\beta^{\otimes k}} (Z_{\sigma}) \,.
$$

\medskip

{\bf Step 1.}   The identity
\begin{equation}
\label{fN-dec}
 {f_N\over M_\beta^{\otimes N}} (Z_N)  
 =   \sum_{m=1}^N  \sum_{ {\sigma} \in {\mathfrak S}_N^m} g_N^{m} (Z_{\sigma})
 \end{equation}
comes from a simple application of Fubini's theorem.
We indeed have
$$
\begin{aligned}
\sum_{m=1}^N  \sum_{ {\sigma} \in {\mathfrak S}_N^m} g_N^{m} (Z_{\sigma})
&= \sum_{m=1}^N  \sum_{ {\sigma} \in {\mathfrak S}_N^m}
\sum_{k=1}^m (-1)^{m-k} \sum_{ {\tilde \sigma} \in {\mathfrak S}_m^k}
{f_N^{(k)} \over M_\beta^{\otimes k}} (Z_{\tilde \sigma})\\
&= \sum _{k =1}^N  \sum_{ {\tilde \sigma} \in {\mathfrak S}_N^k} {f_N^{(k)} \over M_\beta^{\otimes k}} (Z_{\tilde \sigma}) \sum_{m=k}^N (-1)^{m-k}   \binom{N-k}{m-k},
\end{aligned}
$$
since the number of possible $\sigma$ with $m$ elements having $\tilde \sigma$ as a subset is $ \binom{N-k}{m-k}$.

\medskip
  For $k<N$, we have
$$\sum_{m=k}^N (-1)^{m-k}  \binom{N-k}{m-k} = \sum_{m=0}^{N-k} (-1)^m \binom{N-k}{m} = 0^{N-k} = 0 \,  ,$$
while for $k=N$ we just obtain 1. We therefore get (\ref{fN-dec}).

\medskip

{\bf Step 2.} 
 We prove now that 
 \begin{equation}\label{orthogonality}
\int g_N^{m} (Z_m) M_\beta(v_\ell)\, dz_\ell  = 0 \, , \quad  1 \leq \ell \leq m\,. 
\end{equation}
Given~$1 \leq \ell \leq m$, one can split the sum over~$ {\sigma} \in {\mathfrak S}_m^k$ into two pieces, depending on whether~$\ell $ belongs to~$   {\sigma}$ or not
$$ 
\begin{aligned}
&\int g_N^{m} (Z_m) M_\beta(v_\ell)\, dz_\ell  \\
&\quad = \sum_{k=1}^m (-1)^{m-k} \sumetage{ {\sigma} \in {\mathfrak S}_m^k}{\ell \in {\sigma}}
\int {f_N^{(k)} \over M_\beta^{\otimes k}} (Z_{\sigma})  M_\beta (v_\ell) dz_\ell 
+  
\sum_{k=1}^{m-1} (-1)^{m-k} \sumetage{ {\sigma} \in {\mathfrak S}_m^k}{\ell \notin {\sigma}}
\int {f_N^{(k)} \over M_\beta^{\otimes k}} (Z_{\sigma})  M_\beta (v_\ell) dz_\ell \\
&\quad = \sum_{k'=0}^{m-1} (-1)^{m-k'+1} \sum_{ {\sigma} \in {\mathfrak S}_{m-1}^{k'} \atop \ell \notin {\sigma}}
{f_N^{(k')} \over M_\beta^{\otimes k'}} (Z_{\sigma})   
+  \sum_{k=1}^{m-1} (-1)^{m-k} 
\sumetage{ {\sigma} \in {\mathfrak S}_m^k}{\ell \notin {\sigma}}
 {f_N^{(k)} \over M_\beta^{\otimes k}} (Z_{\sigma})  \, .
\end{aligned}
$$
The conclusion follows from the fact that the case~$k' = 0$ corresponds to 
$$
\int {f_N^{(1)} \over M_\beta} (z_\ell)  M_\beta (v_\ell) dz_\ell 
= \int f_N  (Z_N) d Z_N = 0 \, .
$$
Hence we obtain
$$
\int g_N^{m}  (Z_m) M_\beta(v_\ell)\, dz_\ell  = 0 \, .
$$  
The identity \eqref{fNs-exp} follows by  integrating (\ref{fN-dec}) with respect to $M_\beta^{\otimes (N-s)} dz_{s+1}\dots dz_{N}$
$$
f_N^{(s)}(Z_s) = M_\beta^{\otimes s} \sum_{m=1}^s  \sum _{\sigma\in {\mathfrak S}^m_s } g_N^{m} (Z_\sigma)\,.
$$

\medskip

{\bf Step 3.}  It remains to establish estimate (\ref{gi-est}).
From (\ref{fN-dec})  and the orthogonality condition~(\ref{orthogonality}), we also deduce that
$$
\begin{aligned}
\int {f_N^2 \over M^{\otimes N}_\beta} dZ_N &= \int M_\beta^{\otimes N}  
\left(  \sum_{m=1}^N \sum _{ {\sigma} \in {\mathfrak S}_N^m} g_N^{m} (Z_{\sigma})\right)^2 dZ_N 
= \sum_{m=1}^N \sum _{ {\sigma} \in {\mathfrak S}_N^m}\int M_\beta^{\otimes N}  \left(  g_N^{m} (Z_{\sigma})\right)^2dZ_N\\
& =  \sum_{m=1}^N  \binom{N}{m} \| g_N^{m} \|^2_{L^2_\beta (\D^m)}\,.
\end{aligned}
$$
%Estimate (\ref{gi-est}) then comes from the control of the exclusion in dimension 2.
%Using (\ref{eq: ZN})  and \eqref{L2}, we recover 
%$$\int {f_N^2 (t) \over M^{\otimes N}_\beta} dZ_N \leq C \exp (C\alpha^2)  \int {f_N^2 (t) \over M_{N,\beta}} dZ_N \leq C_0 N  \exp (C\alpha^2) \,.$$
This ends the proof of Proposition \ref{lemfNL2sym 0}.
\end{proof}

\begin{Rmk}
The decomposition \eqref{fNs-exp} shows that the higher order correlations decrease in~$L^2$-norm according to the number of particles. 
This is a step towards proving local equilibrium, but these estimates are not strong enough to deduce directly that the equation on the first marginal can be closed because  the collision operator is too   singular.  
\end{Rmk}

\subsection{$L^2$ continuity estimates for the iterated collision operators}
\label{sectionstabcss+1}

We will now establish an $L^2$ estimate for  $Q^0_{1,J} (t)$ (see Proposition~\ref{L2-est}). As explained in the introduction (see Paragraph \ref{introproblemL2}), it involves a loss in $\eps$, which will be exactly compensated by the 
decay of the $L^2_\beta$-norm (\ref{gi-est}) in the expansion (\ref{fNs-exp}).
This shows that the  structure (\ref{wish}) is partly preserved by the collision-transport operators, as long as there is no recollision.

\subsubsection{Statement of the result and strategy of the proof}
Let us first introduce some notation.
As in \eqref{eq: iterated collision operator abs}
 for $| Q_{s,s+n}| (t)$, the operator $| Q^0_{s,s+n}| (t)$ is obtained by considering 
the sum $C^+_{s,s+1} + C^-_{s,s+1}$ instead of the difference. 
Let $g_m \in L^2_\beta (\D^m)$, we set for ${\sigma} \in {\mathfrak S}_s^m$
%For any subset $\sigma\in {\mathfrak S}^m_{s}$ with $m$ elements and any function~$g_m$ defined on~$\D^m$
\begin{equation}
\label{notationsigma}
g_{m,\sigma} (Z_s) = g_m (Z_\sigma) \, .
\end{equation}
The key estimate is given by the following proposition. Note that the bound provided in~(\ref{Q-est1}) is not the best one can prove (in terms of the way the powers of~$t$ and~$h$ are divided) but suffices for our purposes.
\begin{Prop}
\label{L2-est}
There is a constant~$C$ (depending only on $\beta$) such that for all $J,n\in \N^*$ and all~$ t\geq 1, h\in [0,t] $, the operator~$|Q^0|$   satisfies the following continuity estimate 
	\begin{equation}
	\label{Q-est1}
		\begin{aligned}
	&
	\Big \|   |Q_{1,J}^0| (t) \, |Q^0_{J,J+n}|(h) \, \sum _{\sigma\in {\mathfrak S}^m_{J+n} } {  \indc_{{\mathcal V}_{J+n} }  }M_\beta^{\otimes (J +n)} \big| g_{m, \sigma} \big| \Big\|_{L^2(\D)}\\
	& \qquad \qquad \qquad \qquad \qquad 
	\leq   (C\alpha)^{J+n-1} t^{J+n/2 -1}h^{n/2}{ \| g_m\|_{L^2_\beta ( \D^m)}  \over \sqrt{ \eps^{m-1} m!} }\,\cdotp
	%M_{3\beta/8}(v_1) \, .
\end{aligned}
\end{equation}
\end{Prop}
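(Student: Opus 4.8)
The plan is to adapt the classical $L^\infty$ continuity argument behind Proposition~\ref{estimatelemmacontinuity} to the $L^2$ setting, exploiting the fact that we only work with $\widehat{\mathbf S}^0$, i.e. \emph{free} transport with the recollision-free condition. The crucial device, as anticipated in Paragraph~\ref{introproblemL2}, is that although one cannot take traces of an $L^2$ function on the collisional sphere, composing a collision operator with free transport and integrating over the intermediate time converts the singular surface integral into a volume integral via the change of variables~\eqref{changevarintro},
$$
(Z_s,\nu_{s+1},v_{s+1},t_{s+1}) \mapsto Z_{s+1} = \big(Z_s - V_s(t-t_{s+1}),\, x_s + \eps\nu_{s+1} - v_{s+1}t_{s+1},\, v_{s+1}\big),
$$
(with scattering inserted when the configuration is post-collisional). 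First I would set up this change of variables carefully for a single block $\widehat{\mathbf S}^0_s(t-t_{s+1})\,C^{\pm}_{s,s+1}\,\widehat{\mathbf S}^0_{s+1}(t_{s+1})$: the Jacobian is bounded below by a universal constant away from grazing collisions, and the factor $(N-s)\eps\alpha^{-1}\sim\eps^0$ in $C_{s,s+1}$ combined with the surface-to-volume conversion produces precisely one factor $\eps^{1/2}$ of loss per collision when measured in $L^2$ rather than the full $\eps$ one would get in $L^\infty$. Iterating over all $J+n-1$ collisions gives the global factor $\eps^{-(J+n-1)/2}$; the point of the statement is that after dividing by $\sqrt{\eps^{m-1}m!}$ only $\eps^{-(J+n-m)/2}$ remains, which is harmless since $J+n-m\geq 0$ and each such power is compensated by an $\alpha$ hidden in $N\eps=\alpha$ — indeed $\eps^{-1/2}=N^{1/2}\alpha^{-1/2}$, and the excess $N$-powers will be absorbed in Section~\ref{sectionstabcss+1} against the decay of $\|g_m\|$ from \eqref{gi-est}.

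Second, I would handle the summation over $\sigma\in{\mathfrak S}^m_{J+n}$ and the factor $m!$. The subtle point is that $g_{m,\sigma}(Z_{J+n})=g_m(Z_\sigma)$ depends only on $m$ of the $J+n$ final variables; when we run the backward pseudo-trajectory from $z_1$, the $J+n-1$ created particles are integrated against the collision kernels $|(v_i-v_{a(i)}(t_i))\cdot\nu_i|$ together with truncated Maxwellian weights, and only the $m$ ``surviving'' labels in $\sigma$ see the function $g_m$. For the $J+n-m$ labels outside $\sigma$ one applies the pointwise bound on $|C_{k,k+1}|M_{k+1,\beta}$ exactly as in Proposition~\ref{estimatelemmacontinuity}, producing the usual combinatorial factors $(Cs)$ per collision, the time-simplex volume $t^{J+n-1}/(J+n-1)!$ split as $t^{J+n/2-1}h^{n/2}$ after accounting for the last $n$ collisions being confined to an interval of length $h$, and the Stirling bound $(s+n)^n/n!\leq e^{s+n}$; these collectively give the prefactor $(C\alpha)^{J+n-1}t^{J+n/2-1}h^{n/2}$. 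For the $m$ labels inside $\sigma$, instead of a pointwise bound we keep an $L^2$ norm: by Cauchy--Schwarz in the $Z_\sigma$ variables and the change-of-variables Jacobian bound, the contribution of these is controlled by $\|g_m\|_{L^2_\beta(\D^m)}$ times $\eps^{-(m-1)/2}$ (only $m-1$ of the $m$ variables cost an $\eps^{-1/2}$, since the first surviving particle is the ``root'' whose position is already determined), and the symmetrization over which $m$ of the labels carry $g_m$ — i.e. the choice of $\sigma$ — yields the $\binom{J+n}{m}$-type count, of which $m!$ is pulled out and the rest reabsorbed into $(C\alpha)^{J+n-1}$. Taking the square root of the resulting $L^2$-squared estimate gives the $1/\sqrt{\eps^{m-1}m!}$ on the right-hand side.

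The main obstacle I expect is \textbf{point~(\ref{introtransportnotfree}) of Paragraph~\ref{introproblemL2}, made tractable here only because of the $\widehat{\mathbf S}^0$ truncation}: the change of variables~\eqref{changevarintro} is valid \emph{only} when the transport between two consecutive collisions is genuinely free, which is exactly what $\widehat{\mathbf S}^0$ enforces (it kills any pseudo-trajectory with a recollision). So one must be careful that the recollision-free indicator does not destroy the measure-preserving structure — it only restricts the domain of integration, which can only \emph{decrease} the $L^2$ norm, so the estimate survives; but writing this cleanly, keeping track of the nested time-ordering $0\le t_{J+n}\le\dots\le t_2\le t$ across the two grouped operators $|Q^0_{1,J}|(t)$ and $|Q^0_{J,J+n}|(h)$, and making sure the truncation $\indc_{{\mathcal V}_{J+n}}$ on velocities is propagated consistently through the scattering maps (which preserve energy, hence preserve ${\mathcal V}$), is where the bookkeeping is heaviest. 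A secondary technical nuisance is the grazing set where the Jacobian of~\eqref{changevarintro} degenerates; this is handled by the standard observation that the measure of near-grazing angles is small and that the collision kernel $|(v-v_1)\cdot\nu|$ vanishes there, so it contributes nothing singular — but it must be stated. Once these points are in place, the estimate~\eqref{Q-est1} follows by assembling the per-collision bounds exactly as in the proof of Proposition~\ref{estimatelemmacontinuity}, with ``$L^\infty$ pointwise bound'' replaced by ``$L^2$ bound with one $\eps^{1/2}$ loss'' on each of the collisions, and the final division by $\sqrt{\eps^{m-1}m!}$ recording that the $m$ surviving variables are measured in $L^2_\beta(\D^m)$ rather than pointwise.
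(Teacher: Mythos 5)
There is a genuine gap, and it sits exactly at the heart of the proposition. In your first paragraph you accept a loss of $\eps^{1/2}$ at \emph{every} collision, i.e.\ a global factor $\eps^{-(J+n-1)/2}$, and claim the excess $\eps^{-(J+n-m)/2}$ is harmless because each such power is ``compensated by an $\alpha$ hidden in $N\eps=\alpha$''. But $\eps^{-1/2}=\sqrt{N/\alpha}$ diverges with $N$, and the decay \eqref{gi-est} of $\|g_N^m\|_{L^2_\beta}$ supplies only $N^{-(m-1)/2}$, which compensates exactly $\eps^{-(m-1)/2}$ and nothing more; the leftover $N^{(J+n-m)/2}$ cannot be absorbed into $(C\alpha)^{J+n-1}$ and would make the estimate useless precisely in the regime $J+n\gg m$ needed for $R_N^{K,0}$ (super-exponential branching acting on low-order cumulants). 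Indeed Paragraph~\ref{introproblemL2} warns that the generic $L^2\to L^2$ bound loses $N^{1/2}$ per collision; the entire content of Proposition~\ref{L2-est} is that for inputs of the special form $\sum_\sigma M_\beta^{\otimes(J+n)}|g_{m,\sigma}|$ the loss is $\eps^{-(m-1)/2}$, tied to the number $m$ of distinguished variables and \emph{independent} of the number of collisions. Your second paragraph asserts this count (only the labels in $\sigma$ cost $\eps^{-1/2}$, and only $m-1$ of them), but offers no mechanism for it, and that mechanism is the hard part.

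What the paper actually does: by a Cauchy--Schwarz splitting of the cross-section (into $C^{b}$ with bounded kernel and $C^{q}$ with quadratic kernel), the $L^2$ estimate is reduced to (i) an $L^\infty$ bound for the quadratic-kernel operators (Proposition~\ref{prop: estimatelemmacontinuity quad}) and (ii) an $L^1$ continuity estimate for the bounded-kernel operators acting on $\sum_\sigma M_\beta^{\otimes J}\varphi_{m,\sigma}$ with $\varphi_m=g_m^2$ (Proposition~\ref{L1-est}). The latter is proved by iterating Lemma~\ref{lem: elementary-step}, whose case analysis distinguishes whether the colliding pair consists of two background particles, one background and one distinguished particle, or two distinguished particles: in the first two cases energy conservation and the isometry of the scattering map give a bound with \emph{no} power of $\eps$ at all, and only in the last case is a variable dropped, at cost $C/(\eps(m-1))$, via the change of variables $\Gamma^{i,m}$ whose injectivity relies on the $\widehat{\mathbf S}{}^0$ truncation; iterating the drops from $m$ variables down to $1$ yields exactly $\eps^{-(m-1)}/(m-1)!$, hence $1/\sqrt{\eps^{m-1}m!}$ after taking square roots. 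Your sketch contains neither the reduction to an $L^1$ estimate for $g_m^2$ nor the background/distinguished case analysis, and without them a ``Cauchy--Schwarz in the $Z_\sigma$ variables'' along the pseudo-trajectory expansion does not close to the stated bound. (The concern about a degenerating Jacobian near grazing collisions is a side issue: the relevant map sends the measure $\big((v_i-v_m)\cdot\nu\big)_+\,\eps\,d\nu\,dv_m\,d\tau\,dZ_{m-1}$ to $dZ_m$ exactly, so no such degeneracy needs handling.)
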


\begin{proof} 
To simplify  the analysis, especially the treatment of large velocities, we define  modified collision operators
\begin{equation}\begin{aligned}
\label{Cb-def}
 \big(C^{b,\pm}_{s,s+1}h^{s+1}\big)(Z_s)  := 
\frac{(N-s) \eps}{\alpha}  \sum_{i=1}^s & \int_{{\mathbb S} \times \R^2} h^{s+1} 
(Z_{s+1}^{\pm,i,s+1}) 
%(Z_s^{\pm,i,s+1}, x_i\pm \e \nu , v_{s+1}^{\pm,i,s+1}) 
{\big((v_i-v_{s+1}) \cdot \nu \big)_+\over 1+ |v_i-v_{s+1}|}  \, d\nu dv_{s+1} \, , \\
\big(C^{q,\pm}_{s,s+1}h^{s+1}\big)(Z_s)  := 
\frac{(N-s) \eps}{\alpha} \sum_{i=1}^s &  \int_{{\mathbb S} \times \R^2} h^{s+1} (Z_{s+1}^{\pm,i,s+1}) 
%(Z_s^{\pm,i,s+1}, x_i\pm \e \nu , v_{s+1}^{\pm,i,s+1})  
 \\
 &  \qquad  \times (1+ |v_i- v_{s+1}|) {\big((v_i-v_{s+1}) \cdot \nu \big)_+}  \, d\nu dv_{s+1}  ,
\end{aligned}
\end{equation}
where $Z_{s+1}^{\pm,i,s+1}$ denotes the configuration after the collision between $i$ and $s+1$ as in \eqref{BBGKYcollision}
\begin{align*}
Z_{s+1}^{-,i,s+1} &: = (x_1, v_1, \dots, x_i, v_i, \dots , x_i - \e \nu , v_{s+1}) \, ,\\
Z_{s+1}^{+,i,s+1} &: = (x_1, v_1, \dots, x_i, v_i', \dots , x_i + \e \nu , v_{s+1}') \, .
\end{align*} 
By construction, $C^{b,\pm}_{s,s+1}$ has a bounded collision cross-section and $C^{q,\pm}_{s,s+1}$ has  a collision cross-section with quadratic growth in $v$. Defining accordingly $|Q^{b,0}_{1,J}|$ and 
$|Q^{q,0}_{1,J}|$, we have by the Cauchy-Schwarz inequality
$$
\begin{aligned}
 \Big| \, |Q_{1,J}^0| (t) \, |Q^0_{J,J+n}|(h) \,&\sum _{\sigma\in {\mathfrak S}^m_{J+n} } M_\beta ^{\otimes (J +n) }  {  \indc_{{\mathcal V}_{J+n} }  } \big| g_{m, \sigma} \big| \Big| \\
&  \leq \Big(\sum _{\sigma\in {\mathfrak S}^m_{J+n} } | Q_{1,J}^{q,0}| (t) \,|Q^{q,0}_{J,J+n}|(h) \,  
M_\beta ^{\otimes (J +n) }  %{  \indc_{{\mathcal V}_{J+n} }  }  
\Big)^{1 /2} \\
& \qquad \times
\Big( | Q_{1,J}^{b,0} | (t) \, |Q^{b,0}_{J,J+n}|(h) \sum _{\sigma\in {\mathfrak S}^m_{J+n} } 
M_\beta ^{\otimes (J +n) }  g_{m, \sigma}^2 \Big)^{1 /2},
\end{aligned}
$$
 where the velocity cut-off ${\mathcal V}_{J+n}$ has been dropped. Thus we find directly
\begin{equation}
\begin{aligned}
\label{nosmeilleursamis}
 \Big| \, |Q_{1,J}^0| (t) \, |Q^0_{J,J+n}|(h) \,&\sum _{\sigma\in {\mathfrak S}^m_{J+n} } 
 M_\beta ^{\otimes (J +n) }  
 %{  \indc_{{\mathcal V}_{J+n} }  } 
 \big| g_{m, \sigma} \big| \Big| \\
&  \leq   2^{\frac{J+n}{2}}  \Big(  |Q_{1,J}^{q,0}|(t)  \,|Q^{q,0}_{J,J+n}|(h) \, 
M_\beta ^{\otimes (J +n) }   \Big)^{1 /2}\\
&\qquad \times
\Big( |Q_{1,J}^{b,0}| (t)  \, |Q^{b,0}_{J,J+n}|(h) \, \sum _{\sigma\in {\mathfrak S}^m_{J+n} } 
M_\beta ^{\otimes (J +n) } g_{m, \sigma}^2  \Big)^{1 /2}   \, .
\end{aligned}
\end{equation}

$ \bullet $ $ $  
The first factor can be bounded in $L^\infty$ as in Proposition \ref{estimatelemmacontinuity}.
\begin{Prop}
\label{prop: estimatelemmacontinuity quad} 
There is a constant~$C$ (depending only on $\beta$) such that for all $J,n\in \N^*$ and all~$h,t\geq 0$, the operator~$|Q^{q,0}|$   satisfies the following continuity estimates 
	\begin{equation}
	\label{Qquad-estinfty}
	\forall z_1 \in \D \, , \qquad 
%	& |Q_{1,s}^{q,0}| (t)M_\beta^{\otimes s} (z_1) \leq (C\alpha t) ^{s-1} M_{\beta/4} (z_1)\\
|Q_{1,J }^{q,0}| (t) \,  |Q_{J,J+n}^{q,0} | (h)M_\beta^{\otimes (J+n)} (z_1) 
\leq   (C\alpha t)^{J - 1} (C\alpha h)^{n} 
	M_{3\beta/4} (z_1) \, .
	\end{equation}
\end{Prop}
The proof is omitted as it is similar to the one of Proposition \ref{estimatelemmacontinuity}  (we just have to skip the Cauchy-Schwarz estimate in (\ref{CS})).
Note that the quadratic growth in the collision cross-section is  critical in the sense that it is the highest possible power  giving an admissible loss estimate.

\medskip

Thus \eqref{nosmeilleursamis} can be bounded as  follows
\begin{equation}
\begin{aligned}
\label{nosmeilleursamis bis}
& \int_\D \Big( \, |Q_{1,J}^0| (t) \, |Q^0_{J,J+n}|(h) \, \sum _{\sigma\in {\mathfrak S}^m_{J+n} } M_\beta^{\otimes (J +n)} {  \indc_{{\mathcal V}_{J+n} }  } \big| g_{m, \sigma} \big| \Big)^2  \, 
dz_1 \\
&  \quad \leq (C\alpha t)^{J-1}(C\alpha h)^{n} \int_\D  |Q_{1,J}^{b,0}| (t) \, |Q^{b,0}_{J,J+n}|(h)  \sum _{\sigma\in {\mathfrak S}^m_{J+n} } M_\beta^{\otimes (J+n)} g_{m, \sigma}^2  \, dz_1  
\, .  
\end{aligned}
\end{equation}

$ \bullet $ $ $  
The second factor can be bounded from above by relaxing the conditions on the distribution of times 
to retain only  that the collision times  have to satisfy
$$
0\leq t_{J+n-1} \leq  \dots \leq t_J \leq \dots \leq  t_2 \leq t+h \leq 2t \, .
$$
In other words, we have
$$|Q^{b,0}_{1,J }| (t)  \, |Q^{b,0}_{J,J+n}|(h)  \leq  |Q^{b,0}_{1,J+n }| (2t)   \, .$$
This is suboptimal in the sense that it implies that powers of~$h$ will be traded for powers of~$t$ but the smallness thanks to~$h$   already present on the right-hand side of~(\ref{nosmeilleursamis bis}) will be enough for our purposes.
To establish Proposition \ref{L2-est}, it is then enough to prove the following
proposition which will be applied to $g_m^2$. 
\begin{Prop}
\label{L1-est}
Let $\varphi_m(Z_m)$ be a nonnegative symmetric function in ~$L^1_\beta (\D^m)$.
For $J \geq m$, we have for any time $t \geq 1$
\begin{equation}
\label{L1-cont}
\int_\D dz\,  |Q_{1,J}^{b,0}| (t) \,  \sum _{\sigma\in {\mathfrak S}^m_{J} } M_\beta ^{\otimes J } 
\varphi_{m,\sigma}   \leq {(C\alpha t) ^{J -1}  \over m! \eps^{m-1} }  
\| \varphi_m \|_{L^1_{\beta}(\D^m)} \, .
\end{equation} 
\end{Prop} 
Thus this completes the derivation of Proposition \ref{L2-est}.
\end{proof}

\medskip

The idea of the proof of Proposition~\ref{L1-est} is to proceed by iteration: Lemma~\ref{lem: elementary-step}
in Paragraph~\ref{stabstructure} shows that the structure is preserved through an integrated in time transport-collision operator, the proof of Proposition \ref{L1-est} 
is then completed in Paragraph~\ref{subsec: Iterated L1 continuity estimates}.

\subsubsection{Evolution of the structure {\rm(\ref{fNs-exp})} under the BBGKY dynamics}\label{stabstructure}

In order to prove Proposition \ref{L1-est}, we first state and prove a key lemma on the collision kernel which will be used recursively in Section \ref{subsec: Iterated L1 continuity estimates}
to prove Proposition~\ref{L1-est}.  In order to decouple the time integrals, we introduce an exponential weight (which will play essentially  the same role as the Laplace transform).

\begin{lem}
\label{lem: elementary-step}
Fix~$t>0$ and~$1 \leq m \leq s +1 \leq J$, and let $\varphi_m$ be a nonnegative symmetric function in $L^1_\beta(\D^m)$.
Then there are two symmetric functions $\Phi^{(m)}_m$ and $\Phi^{(m)}_{m-1}$ defined on $\D^m$ and 
$\D^{m-1}$ such  that  with notation~{\rm(\ref{notationsigma})}
$$\begin{aligned}
\int_0^{+\infty}     d\tau  \,   & e^{-{J \tau\over t} } |C^{b, \pm}_{s,s+1}| 
\widehat {\mathbf S}_{s+1}^0 (\tau)  \, \Big( M_\beta ^{\otimes (s+1)} 
\sum_{\sigma\in {\mathfrak S}^m_{s+1} } \varphi_{m, \sigma} \Big)   \\
& \qquad 
\leq M_{\beta} ^{\otimes s} (V_{s})
\Big( \sum_{\sigma\in {\mathfrak S}^m_{s} }  \Phi^{(m)}_{m,\sigma}
+ \sum_{\sigma\in {\mathfrak S}^{m-1} _{s} }  \Phi^{(m)}_{m-1,\sigma}\Big)  \, .
\end{aligned}
$$
Furthermore, they satisfy 
\begin{align}
\label{eq: Phim}
\| \Phi^{(m)}_m \| _{L^1_{\beta}(\D^m)} &  \leq C  t  \| \varphi_m \| _{L^1_{\beta}(\D^m)} \\
\label{eq: Phim-1}
\| \Phi^{(m)}_{m-1} \| _{L^1_{\beta}(\D^{m-1})}  & \leq {C \over \eps (m - 1) }  \| \varphi_m \| _{L^1_{\beta}(\D^m)} 
\end{align}
and $ \Phi_{s+1}^{(s+1)} =\Phi_0^{(1)} = 0$. 
\end{lem}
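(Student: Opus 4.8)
The plan is to analyze a single integrated transport-collision step acting on a function of the special form $M_\beta^{\otimes(s+1)}\sum_{\sigma\in\mathfrak S^m_{s+1}}\varphi_{m,\sigma}$, and to show it can be bounded by a function of the same shape with one fewer free particle, at the cost of splitting into a piece where the new particle $s+1$ lies inside $\sigma$ and a piece where it does not. The quantity $|C^{b,\pm}_{s,s+1}|\widehat{\mathbf S}^0_{s+1}(\tau)(\cdots)$ involves, for each $i\leq s$, an integral over $\nu\in\mathbb S$ and $v_{s+1}\in\R^2$ of the transported function, with the bounded cross-section weight $((v_i-v_{s+1})\cdot\nu)_+/(1+|v_i-v_{s+1}|)$. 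Because $\widehat{\mathbf S}^0_{s+1}$ is free transport truncated at the first recollision, and a Maxwellian is invariant under free transport, the factor $M_\beta^{\otimes(s+1)}$ is essentially preserved (up to the overlap truncation, which only decreases the integrand), so the real content is to track what happens to $\sum_{\sigma}\varphi_{m,\sigma}$ under transport and under the $\nu,v_{s+1},\tau$ integrations.

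First I would split $\mathfrak S^m_{s+1}$ according to whether $s+1\in\sigma$. For the terms with $s+1\notin\sigma$: the factor $\varphi_{m,\sigma}$ depends only on particles in $\sigma\subset\{1,\dots,s\}$, so after integrating out $v_{s+1}$ against $M_\beta(v_{s+1})$ and the bounded kernel — using the crude bound $\int_{\mathbb S\times\R^2}M_\beta(v_{s+1})\,((v_i-v_{s+1})\cdot\nu)_+/(1+|v_i-v_{s+1}|)\,d\nu\,dv_{s+1}\leq C$ — and integrating $\tau$ against $e^{-J\tau/t}$ which gives a factor $t/J\leq t$, and summing the $s$ choices of $i$ (a factor $s\leq J$), the time-and-velocity integrated result is pointwise $\leq C\,t\,M_\beta^{\otimes s}(V_s)\sum_{\sigma\in\mathfrak S^m_s}\varphi_{m,\sigma}$ plus lower combinatorial terms; these collected terms define $\Phi^{(m)}_m$, with $\|\Phi^{(m)}_m\|_{L^1_\beta(\D^m)}\leq C\,t\,\|\varphi_m\|_{L^1_\beta(\D^m)}$ after absorbing the combinatorial prefactor into the constant $C$ (recalling $m\leq J$, and that the transport only permutes/translates the spatial arguments, preserving the $L^1_\beta$ norm of $\varphi_m$ since velocities are unchanged and $M_\beta$ is transport-invariant). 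For the terms with $s+1\in\sigma$: here $\varphi_{m,\sigma}$ genuinely depends on $z_{s+1}$, so the $v_{s+1}$ integration does not simply decouple; instead I would use the Carleman-type change of variables hinted at in \eqref{changevarintro} — composing the collision with the truncated free transport of $z_{s+1}$ and integrating over time $\tau$ turns the singular $d\nu\,dv_{s+1}\,d\tau$ measure on the collision sphere into an integration over a full phase-space volume for $z_{s+1}$, which is what produces the gain of $\eps^{-1}$ (only $\eps^{-1}$, not a better power, because the cylinder swept has cross-section $\sim\eps$). After integrating $z_{s+1}$ out completely, what remains depends on the $m-1$ particles $\sigma\setminus\{s+1\}\subset\{1,\dots,s\}$, giving $\Phi^{(m)}_{m-1}$ with $\|\Phi^{(m)}_{m-1}\|_{L^1_\beta(\D^{m-1})}\leq \frac{C}{\eps(m-1)}\|\varphi_m\|_{L^1_\beta(\D^m)}$; the factor $1/(m-1)$ arises because among the $\binom{s+1}{m}$ subsets $\sigma$ containing $s+1$, relative to the $\binom{s}{m-1}$ subsets of $\{1,\dots,s\}$ of size $m-1$ there is a ratio that, combined with the sum over $i$ and the symmetrization, yields this normalization (this is the bookkeeping that will make the iterated product telescope to $1/(m!\eps^{m-1})$ in Proposition \ref{L1-est}). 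The boundary cases $\Phi^{(s+1)}_{s+1}=0$ (no subset of $\{1,\dots,s\}$ has $s+1$ elements) and $\Phi^{(1)}_0=0$ (there is no ``$0$-particle'' term, equivalently the mean-zero structure kills it) are immediate from the definitions.

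The main obstacle is the second piece, $s+1\in\sigma$: one must carry out the change of variables $(\nu,v_{s+1},\tau)\mapsto x_{s+1}$-type substitution carefully, controlling its Jacobian and the effect of the truncation to pre-recollisional configurations, while keeping the bounded cross-section $((v_i-v_{s+1})\cdot\nu)_+/(1+|v_i-v_{s+1}|)$ and the Maxwellian weight so that the resulting $z_{s+1}$-integral of $M_\beta(v_{s+1})\varphi_m(\dots,z_{s+1})$ is genuinely $\int M_\beta(v)\varphi_m(\dots,z)\,dz$ up to the constant and the $\eps^{-1}$; the subtlety is precisely the one flagged in Paragraph \ref{introproblemL2}, namely that one recovers only $\eps^{1/2}$ at the level of a single $L^2$ collision estimate, which is why this lemma is deliberately phrased in $L^1$ (working with $\varphi_m=g_m^2$) where the full power $\eps^{-1}$ is available. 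I would also need to be careful that the exponential weight $e^{-J\tau/t}$ genuinely decouples the time integrals in the subsequent iteration (its role as a stand-in for the Laplace transform) without losing the eventual power $t^{J-1}$ — but that accounting is deferred to Paragraph \ref{subsec: Iterated L1 continuity estimates} and is routine once this one-step lemma is in hand. Everything else — the invariance ${\mathbf S}^0_k(\tau)M_{k,\beta}=M_{k,\beta}$, the elementary bound on $\int_{\mathbb S\times\R^2}$ of the bounded kernel against a Gaussian, and Stirling for the combinatorial factors — is standard and can be cited from Proposition \ref{estimatelemmacontinuity} and its proof.
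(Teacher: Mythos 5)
Your proposal is correct in its first half (the terms with $s+1\notin\sigma$, modulo the fact that the bound is not literally pointwise by $\varphi_{m,\sigma}(Z_\sigma)$ — transport shifts the positions and, when $i\in\sigma$, scattering changes $v_i$, so one must define $\Phi_m$ as the time-integrated transported quantity and control its $L^1_\beta$ norm via the measure-preserving map $(\nu,v_i,v_{s+1})\mapsto(\nu,v_i',v_{s+1}')$, as the paper does). The genuine gap is in the splitting itself: you send \emph{all} terms with $s+1\in\sigma$ into $\Phi^{(m)}_{m-1}$, whereas the variable can only be dropped when \emph{both} colliding particles lie in $\sigma$. In the mixed case $s+1\in\sigma$, $i\notin\sigma$, the new particle is created at $x_i\pm\eps\nu$, so the evaluation point of $\varphi_m$ depends on $z_i$, which is not among the arguments of a would-be $\Phi_{m-1,\sigma\setminus\{s+1\}}$; since $\varphi_m$ is merely $L^1$, there is no pointwise-in-$z_i$ domination by an $(m-1)$-variable function with controlled $L^1_\beta$ norm (a $\sup_{z_i}$ of integrals of an $L^1$ function is not bounded by its norm), and even formally the sum over the $s-m+1$ admissible $i$'s would produce a factor of order $s$ incompatible with the crucial $\frac{C}{\eps(m-1)}$ in \eqref{eq: Phim-1}, which is what telescopes to $\frac{1}{\eps^{m-1}m!}$ in Proposition \ref{L1-est}. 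The paper keeps $z_i$ as an argument in this mixed case (the symmetrized function $\widehat\psi^\pm_m$, contributing to $\Phi^{(m)}_m$ with a factor $Ct$ from the $\tau$-integral and translation invariance in $x_i$), and reserves the $\eps^{-1}$ change of variables for the case $i,s+1\in\sigma$, where the map $(Z_{m-1},\nu,v_m,\tau)\mapsto{\bf \Psi}(-\tau)(Z_{m-1},x_i+\eps\nu,v_m)$ is injective precisely because of the truncation in $\widehat{\mathbf S}^0$ (the collision time is recovered as the first collision of the backward flow), and where the factor $\frac{1}{m-1}$ comes from the fact that the ranges $\cR^{i,j}$, $j\neq i$, are pairwise disjoint and cover $\D^m$ at most twice — not from the binomial bookkeeping you invoke.

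A concrete symptom that your assignment cannot be repaired: take $m=1$ and $\sigma=\{s+1\}$. Under your splitting this term would have to be absorbed into $\Phi^{(1)}_0$, which the lemma asserts is zero; your appeal to a mean-zero structure is unavailable since $\varphi_m$ is assumed nonnegative (it will be $g_m^2$ in the application). Under the correct splitting this term simply goes to $\Phi^{(1)}_1$ (the colliding partner $i\leq s$ is never in $\sigma$), and $\Phi^{(1)}_0=0$ holds because a variable is lost only if both collision partners are arguments of $\varphi_m$, which is impossible for a one-variable function.
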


\begin{proof}
To simplify the notation, we   drop the superscript $(m)$ throughout the proof.

Let~$\sigma := (i_1,\dots ,i_m)$ be a collection of ordered indices in~$\{1,\dots, s+1\}$. We   first analyze the term involving  
$\varphi_{m,\sigma}$
and then conclude by summing over all possible~$\sigma$'s. 

\medskip

In the following, we shall use the notation~$Z_{s}^{<i>} $ for the configuration in~$\D^{s-1}$ defined by
$$
Z_{s}^{<i>} := (z_1,\dots,z_{i-1},z_{i+1},\dots,z_s) \, .
$$

{

When applying  the collision operator $|C^{b, \pm}_{s,s+1}|$ %(\ref{Cb-def}) 
to~$\widehat  {\mathbf S}_{s+1}^0 (\tau)M_{\beta}^{\otimes (s+1)} \varphi_{m, \sigma}$, four different situations occur depending on whether the colliding particles $s+1$ and $i$  belong to $\sigma$ or not. 
Indeed recall that the collision operator  consists mainly in integrating one of the variables, namely~$x_{s+1}$, on a hypersurface~$|x_i-x_{s+1} |= \eps$ for some~$1 \leq i \leq s$.
Thus the collision  may add some dependency in the arguments of $g_{m,\sigma}$.

\medskip

\begin{itemize}
\item 
If~$z_{s+1}$ does not belong to $\sigma$, i.e. the variables of~$\varphi_{m,\sigma}$:
\begin{itemize}
\item 
either~$z_i$ does not belong to $\sigma$ and in that case essentially nothing happens as the collision does not affect the variables in $\sigma$ and the transport operator is an isometry in~$L^1$.
\item or~$z_i$ does belong to $\sigma$ and in that case~$v_i$ is modified by the scattering operator but that will be shown to be harmless thanks to the energy conservation and a change of variables by the scattering operator.
\end{itemize}
\item
If~$z_{s+1}$ does   belong to $\sigma$:
\begin{itemize}
\item either~$z_i$ does not belong  to $\sigma$  then this is quite similar to the second case above,
\item or~$z_i$ belongs to $\sigma$ then by integration on the hypersurface a variable is lost (and that case alone accounts for the term~$ \Phi^{(m)}_{m-1}$ in the lemma).
\end{itemize}
\end{itemize}

\medskip

We turn now to a detailed analysis of these cases.

}

\medskip

\noindent 
{\bf Case 1.}  $s+1 \notin \sigma$:

This case  corresponds to $\sigma \in {\mathfrak S}^m_{s}$ ($m \leq s$) and will contribute partly  to the function $\Phi_m$.
Recall that  $\varphi_{m,\sigma}$ depends only on the coordinates $Z_\sigma$ indexed by $\sigma$.

\noindent
$\bullet$
Define the contribution~$\Phi_{\sigma} ^{1,\pm} $ corresponding to collisions between two particles of the background~:
 $$
 \begin{aligned}
\Phi_{\sigma} ^{1,\pm} (Z_s)& := \int_0^{+\infty} d\tau \,  e^{-{J \tau\over t} }  
\widehat  {\mathbf S}_{s}^0 (\tau) 
\Big( \sumetage{i=1}{i \notin \sigma}^s 
M_{\beta} ^{\otimes (s-1)}  \, \varphi_{m,\sigma} \Big) (V_{s}^{<i>}, X_\sigma )\\
  & \quad \times 
  \int_{{\mathbb S} \times \R^2} M_{\beta} ^{\otimes 2}(v_i^{\pm,i,s+1}, v_{s+1}^{\pm,i,s+1})  {\big((v_i-v_{s+1}) \cdot \nu \big)_+\over 1+|v_i- v_{s+1}|}  \, d\nu dv_{s+1}  \, .
 \end{aligned} 
 $$
 Notice that by energy conservation
\begin{equation}
\label{eq: conservation energy}
M_{\beta} ^{\otimes 2}(v_i^{\pm,i,s+1}, v_{s+1}^{\pm,i,s+1})  = M_{\beta} ^{\otimes 2}(v_i , v_{s+1} ) \,.
\end{equation}
As the collision kernel is bounded, we deduce that
$$
 %\sum_{I_m \in {\mathfrak S}_{s}^m}    
 \Phi_{\sigma} ^{1,+}  (Z_s) +  \Phi_{\sigma} ^{1,-}  (Z_s)
 \leq C M_{\beta} ^{\otimes s} (V_{s} ) \Phi_m^1 (Z_\sigma) \, ,
$$
where $\Phi_m^1$ is the first contribution to $\Phi_m$
$$
\Phi_m^1 (Z_m) := 2
(s-m) \int_0^{+\infty} d\tau e^{-{J \tau\over t} } \;  \widehat  {\mathbf S}_{m}^0 (\tau)  \varphi_m(Z_m)   \, .
$$
%By definition of $\widehat{\mathbf S}^0$, we indeed have that
%$$
% \widehat  {\mathbf S}_{s}^0 (\tau) M_\beta^{\otimes s} \varphi_{m,\sigma} \leq  M_\beta^{\otimes s} \widehat  {\mathbf S}_{m}^0 (\tau)  \varphi_{m,\sigma} \, .
%$$
Let us compute the~$L^1_\beta$ norm of~$\Phi_m^1 $.  
Note that  $\widehat  {\mathbf S}_{m}^0$ assigns the value 0 if a configuration has a recollision in the time interval $[0,\tau]$, so
\begin{equation}
\label{Sm0leqSm}   
\widehat {\mathbf S}_{m}^0 (\tau) \leq  {\mathbf S}_{m} (\tau) .
\end{equation}
Since~$\varphi_m \geq 0$ and~${\mathbf S}_m$ assigns the value 0 to configurations which initially overlap, we find  for $\tau \geq 0$ 
$$
\begin{aligned}
\int M_{\beta} ^{\otimes m} (V_m ) \widehat  {\mathbf S}_{m}^0 (\tau) \varphi_m(Z_m) dZ_m 
&\leq 
\int M_{\beta} ^{\otimes m} (V_m )  {\mathbf S}_m (\tau) \varphi_m(Z_m) dZ_m \\
& \leq
\int M_{\beta} ^{\otimes m} (V_m ) \varphi_m(Z_m) dZ_m \, ,
\end{aligned}
$$
where we used that the transport preserves the Lebesgue measure.
Finally,  we deduce that 
\begin{align}
 \| \Phi_m^1 \| _{L^1_{\beta}(\D^m)} 
& =  2
(s-m)
\int_0^{+\infty } d\tau e^{-{J \tau\over t} } \,  
\int M_{\beta} ^{\otimes m} (V_m ) \widehat  {\mathbf S}_{m}^0 (\tau) 
\varphi_m(Z_m) dZ_m  \nonumber\\
&  \leq C   {(s-m) \over J} t \| \varphi_m \| _{L^1_{\beta}(\D^m)}
 \leq C    t \| \varphi_m \| _{L^1_{\beta}(\D^m)} \, ,
\label{eq: Phi_m^1}
\end{align}
where we used that $s \leq J$.

\bigskip
\noindent
$\bullet$
It remains to understand what happens when the collision involves one of the  particles in~$\sigma$, i.e. $i\in (i_1, \dots, i_m)$. From the energy conservation \eqref{eq: conservation energy} and 
the fact that the collision kernel is bounded, we have 
\begin{equation}\begin{aligned}
\label{eq: Phi2}
M_{\beta} ^{\otimes s} (V_{s})  
 &\sum_{\ell =1}^m    \int_0^{+\infty} d\tau e^{-{J \tau\over t} } \,  \int_{{\mathbb S} \times \R^2}  
 d\nu dv_{s+1}   \, M_{\beta} (v_{s+1}) \\
  &  \times
\Big(  \widehat  {\mathbf S}_{m}^0 (\tau)  \varphi_m\Big) (Z_\sigma^{<i_\ell>},x_{i_\ell}  , v_{i_\ell}^{\pm, i_\ell,s+1} )
 { \big((v_{i_\ell} -v_{s+1}) \cdot \nu \big)_+   \over   1+ |v_{i_\ell} - v_{s+1}|}   
\leq M_{\beta} ^{\otimes s} (V_{s})  \Phi_m^{2,\pm} (Z_\sigma) \,,
\end{aligned}
\end{equation}
where 
$$
\Phi_m^{2,\pm} (Z_m) 
:=  \int_0^{+\infty} d\tau e^{-{J \tau\over t} } \,  \tilde \Phi^{2,\pm}_m   (\tau, Z_m) \,,
$$
with
$$ 
\tilde \Phi^{2,\pm}_m   (\tau, Z_m) 
:= \sum_{\ell=1}^m \int_{{\mathbb S} \times \R^2}  dv_{s+1} d\nu M_\beta (v_{s+1} ) \,
\Big( \widehat  {\mathbf S}_{m}^0 (\tau)
\varphi_m \Big) (Z_m^{<\ell>}, x_{\ell}, v_{\ell}^{\pm, \ell, s+1})  \,.
$$
The function $\tilde \Phi^{2,\pm}_m$ is symmetric with respect to the coordinates $Z_m$.
{ Using again the conservation of energy,
we have
 $$\begin{aligned}
\int M_{\beta} ^{\otimes m} (V_m ) \tilde \Phi^{2,\pm}_m  (\tau, Z_m)dZ_m   &= \sum_{\ell=1}^m \int dZ_m M_{\beta} ^{\otimes m} (V_m )
\int_{{\mathbb S} \times \R^2}  dv_{s+1} d\nu M_\beta (v_{s+1} ) \\
& \qquad\qquad\qquad\qquad\qquad\qquad \Big( \widehat  {\mathbf S}_{m}^0 (\tau)
\varphi_m\Big) (Z_m^{<\ell>}, x_{\ell}, v_{\ell}^{\pm, \ell, s+1})\\
& =  \sum_{\ell=1}^m\int  \int_{{\mathbb S} \times \R^2}dZ_mdv_{s+1} d\nu M_{\beta} ^{\otimes (m-1)} (V_m^{< \ell >} )
M_\beta^{\otimes 2} (v_{\ell}^{\pm, \ell, s+1},v_{s+1}^{\pm, \ell, s+1}) \\
& \qquad\qquad\qquad\qquad\qquad\qquad \Big( \widehat {\mathbf S}_{m}^0 (\tau)
\varphi_m\Big)  (Z_m^{<\ell>}, x_{\ell}, v_{\ell}^{\pm, \ell, s+1})\,.
\end{aligned}
$$
Since the change of variables
\begin{equation}
\label{eq: isometrie}
( \nu,v_{\ell},v_{ s+1}) \mapsto ( \nu,v_{\ell}^{\pm, \ell,s+1},v_{ s+1}^{\pm, \ell,s+1})
\end{equation}
is an isometry and using~(\ref{Sm0leqSm}), we deduce that for any $\tau \ge 0$,  }
\begin{equation}
\label{eq: integrale isometrie}
\int M_{\beta} ^{\otimes m} (V_m ) \tilde \Phi^{2,\pm}_m  (\tau, Z_m)dZ_m 
 \leq C   m \int M_{\beta} ^{\otimes m} (V_m ) \,  \varphi_m(Z_{m}) dZ_m\,.
\end{equation}
Then, integrating with respect to time and using that $m \leq J$, we get
\begin{equation}
\begin{aligned}
\label{eq: Phi_m^2}
\| \Phi_m^{2,\pm} \| _{L^1_{\beta}(\D^m)}
&=
\int_0^{+\infty} d\tau e^{-{J \tau\over t} }
\int  M_{\beta} ^{\otimes m} (V_m) \tilde \Phi_m^{2,\pm} (\tau, Z_m) dZ_m   \\
& \leq C  {mt\over J} \| \varphi_m \| _{L^1_{\beta}(\D^m)}  \leq C  t \| \varphi_m \| _{L^1_{\beta}(\D^m)}  \, .
\end{aligned}
\end{equation}
From \eqref{eq: Phi2}, this gives a  second contribution to $\Phi_m$ for any $\sigma \in {\mathfrak S}^m_{s}$. 
%This estimate combined with \eqref{eq: Phi_m^1}
%leads to the upper bound  \eqref{eq: Phim} on the $L^1$-norm of $\Phi_m$.

\bigskip

\noindent  
{\bf Case 2.} $s+1 \in \sigma$ :
 
 As previously, we have to distinguish if the collision with~$s+1$ involves a particle $i \notin \sigma$ or $i \in \sigma $. The first case will lead to a third contribution to~$\Phi_m$ and the second case to the term $\Phi_{m-1}$.

\medskip

\noindent
$\bullet$
We define the contribution of the collisions with particles outside $\sigma$ as
\begin{equation}
\begin{aligned}
\label{eq: Psi_m-1^1}
\Psi_{\sigma}^{1,\pm} (Z_s) &:= 
  \sumetage{i=1}{i \notin \sigma }^s  M_{\beta}^{\otimes (s-1)} (V_{s}^{<i>}) \,  
\int_0^{+\infty} e^{-{J \tau\over t} } d\tau  \,  \int_{{\mathbb S} \times \R^2} M_{\beta}  ^{\otimes 2}(v_i^{\pm,i,s+1}, v_{s+1}^{\pm,i,s+1 })  \\
  & \qquad  \times
 \widehat  {\mathbf S}_{m}^0 (\tau) \varphi_m (Z_{\sigma}^{<s+1>}  , x_i \pm \e \nu , v_{s+1}^{\pm, i,s+1}){\big((v_i-v_{s+1}) \cdot \nu \big)_+\over  1+ |v_i - v_{s+1}|}  \, d\nu dv_{s+1}   \, .  
\end{aligned}
\end{equation}
As the collision kernel is bounded and using the energy conservation \eqref{eq: conservation energy}, 
we get  
\begin{align*}
\Psi_{\sigma}^{1,\pm} (Z_s) 
\leq  M_{\beta}^{\otimes s} (V_{s}) \,
\sumetage{i=1}{i \notin \sigma }^s  \psi_m^\pm   (Z_{\sigma}^{<s+1>}, z_i) \, ,
\end{align*}
with 
$$
\psi_m^\pm   (Z_{m-1}, z_i) 
:= \int_0^{+\infty} e^{-{J \tau\over t} } d\tau  \, \int_{{\mathbb S} \times \R^2}
 \widehat  {\mathbf S}_{m}^0 (\tau) \varphi_m (Z_{m-1}, x_i\pm \eps \nu  , v_i^{\pm, i, s+1}) M_\beta (v_{s+1} ) dv_{s+1} d\nu \,.
$$

We follow now the same arguments as in \eqref{eq: integrale isometrie} to  
compute the~$L^1_\beta$ norm of~$\psi_m^\pm$.
Using first the space translation invariance,   then the isometry \eqref{eq: isometrie} and finally~(\ref{Sm0leqSm})  and the fact that the transport preserves the Lebesgue measure, we get
\begin{align*}
& \int dZ_m \, M_{\beta} ^{\otimes m} (V_m )
\int_{{\mathbb S} \times \R^2}
\Big(  \widehat  {\mathbf S}_{m}^0 (\tau) \varphi_m \Big)(Z_{m-1}, x_m \pm \eps \nu  , v_{s+1}^{\pm, m, s+1}) M_\beta (v_{s+1} ) dv_{s+1} d\nu\\
& \qquad = \int dZ_m \, M_{\beta} ^{\otimes m} (V_m )
\int_{{\mathbb S} \times \R^2}
\Big(  \widehat  {\mathbf S}_{m}^0 (\tau) \varphi_m \Big)(Z_{m-1}, x_m  , v_{s+1} ^{\pm, m, s+1}) M_\beta (v_{s+1} ) dv_{s+1} d\nu\\
&  \qquad  = \int dZ_m \, M_{\beta} ^{\otimes m} (V_m )
\int_{{\mathbb S} \times \R^2}
\Big(  \widehat  {\mathbf S}_{m}^0 (\tau) \varphi_m \Big)(Z_{m-1}, x_m  , v_{s+1}) M_\beta (v_{s+1} ) dv_{s+1} d\nu\\  
&  \qquad  \leq   \int dZ_m \, M_{\beta}^{\otimes m} (V_m )\int \varphi_m (Z_{m-1}, x_m  , v_{s+1}) M_\beta (v_{s+1} ) dv_{s+1} d\nu  \leq C  \| \varphi_m \| _{L^1_{\beta}(\D^m)} \, .
\end{align*}
Finally the time integral leads to 
\begin{align*}
 \| \psi_m^\pm   \| _{L^1_{\beta}(\D^m)}  \leq C  \frac{t}{J} \| \varphi_m \| _{L^1_{\beta}(\D^m)} \, .
\end{align*}

Note that $\psi_m^\pm   (Z_{m-1}, z_i) $ is only  symmetric over the variables $Z_{m-1}$ and not as a function on $\D^m$. 
However the function 
$$
Z_s \to \sum_{\sigma' \in {\mathfrak S}_{s}^{m-1}} \sum_{i \notin\sigma' }   \psi_m^\pm   (Z_{\sigma'  }, z_i) 
$$
is symmetric. Thus 
one can check that 
%However summing over all possible~$I_{m-1}$ in~$ {\mathfrak S}_{s}^{m-1}$ and all possible $i \in \{1,\dots, s\} \setminus I_{m_1}$  gives  a symmetric function.
$$
\sum_{\sigma'  \in {\mathfrak S}_{s}^{m-1}} \sum_{i \notin \sigma' }   \psi_m^\pm   (Z_{\sigma'  }, z_i) 
\leq m \sum_{\sigma \in {\mathfrak S}_{s}^{m}} \widehat \psi_m^\pm  (Z_{\sigma}) \,,
$$
where $\widehat \psi_m^\pm$ is the symmetric version of $\psi_m^\pm$~:
$$ \widehat \psi_m^\pm (Z_m) =\frac1m \sum_{i=1}^m \psi_m^\pm (Z_m^{<i>}, z_i)\,.$$

Finally, the function $\Phi_m^{3,\pm} (Z_m) := m \widehat \psi_m^\pm  (Z_m)$ provides an upper bound for 
\eqref{eq: Psi_m-1^1}
$$
\sumetage{\sigma\in {\mathfrak S}^m_{s+1}}{s+1 \in \sigma }
\Psi_{\sigma}^{1,\pm} (Z_s) 
\leq \sum_{\sigma\in {\mathfrak S}^m_{s}}
\Phi_m^{3,\pm} (Z_\sigma)
$$
with 
\begin{equation}
\label{eq: Phi_m^3}
 \| \Phi_m^{3,\pm}   \| _{L^1_{\beta}(\D^m)}  \leq C  \frac{m}{J} t \| \varphi_m \| _{L^1_{\beta}(\D^m)}
  \leq C   t \| \varphi_m \| _{L^1_{\beta}(\D^m)} \, .
\end{equation}
This defines the third contribution to $\Phi_m := \Phi_m^1 + \Phi_m^{2,\pm} + \Phi_m^{3,\pm}$.
Thus the upper bound~\eqref{eq: Phim} on the $L^1_\beta$-norm of $\Phi_m$ follows from the estimates~\eqref{eq: Phi_m^1}, \eqref{eq: Phi_m^2}  and \eqref{eq: Phi_m^3}.

\bigskip

\noindent
$\bullet$
It remains to understand what happens when the collision involves two particles in $\sigma$, 
i.e. when~$i, s+1\in \sigma$.
This is a more delicate situation, as we need to take a trace on the function~$\varphi_m$. The transport operator will be the key to using nevertheless an~$L^1$ bound on~$\varphi_m$. We set
\begin{align}
\Psi_{\sigma}^{2,\pm} (Z_\sigma^{<s+1>})& := 
\sum_{i \in I_{m-1}}  M_{\beta}^{\otimes (s-1)} (V_{s}^{<i>}) \, \int_0^{+\infty} 
d\tau e^{-{J \tau\over t} } \,
\int_{{\mathbb S} \times \R^2}   M_{\beta}^{\otimes 2}(v_{i}^{\pm,i,s+1}, v_{s+1}^{\pm,i,s+1 })
\nonumber \\
& \qquad \times
   \Big(  \widehat  {\mathbf S}_{m}^0 (\tau) \varphi_m \Big) (Z_\sigma^{<i,s+1>}  ,x_{i}, v_{i}^{\pm, i ,s+1}, x_{i}  \pm \e \nu , v_{s+1}^{\pm, i,s+1}){\big((v_{i} -v_{s+1}) \cdot \nu \big)_+\over 1+ |v_{i} - v_{s+1}|}  \, d\nu dv_{s+1}  \nonumber \\
& \leq  M_{\beta}^{\otimes s} (V_{s}) \Phi_{m-1} (Z_\sigma^{<s+1>})\,,    
\label{Psi_m-1,2,pm}
\end{align}
where
\begin{equation*}
\Phi_{m-1} (Z_{m-1}) := \sum_{i =1}^{m-1}   \psi^{i,\pm}_{m-1} (Z_{m-1}) \,,
\end{equation*}
with 
\begin{align*}
\psi^{i,\pm}_{m-1} (Z_{m-1})  := 
\int_0^{+\infty} 
d\tau  &
\int_{{\mathbb S} \times \R^2}   d\nu dv_{m}   \, M_{\beta}(v_{m}) 
{\big((v_{i} -v_{m}) \cdot \nu \big)_+} %\over |v_{i} - v_{s+1}|}  
\\
& \qquad \times
 \Big(    \widehat  {\mathbf S}_{m}^0 (\tau) \varphi_m\Big)  (Z_{m-1}^{<i>}   ,x_{i}, v_{i}^{\pm, i ,m}, x_{i}  \pm \e \nu , v_{m}^{\pm, i,m})\, .
\end{align*}
The function $\Phi_{m-1} $ is symmetric but not the functions $\psi^{i,\pm}_{m-1}$.
The inequality \eqref{Psi_m-1,2,pm} comes from the fact that the denominator $(1+ |v_{i} - v_{m}|)$ has been removed and the exponential factor $e^{-{J \tau\over t} }$ bounded by 1. 
As we shall see, the time integral is still converging thanks to the cut-off on the transport operator $\widehat  {\mathbf S}_{m}^0$.

\medskip

We compute now the $L^1_\beta$-norm of $\Phi_{m-1}$.
Since the scattering transform
$$ (v_i, v_{m}, \nu) \mapsto (v'_i, v'_{m}, \nu)$$
is bijective and has unit Jacobian, it is  enough to study the simple case
\begin{equation}
\begin{aligned}
\label{eq: psi l,n,+}
\psi^{i,+}_{m-1} (Z_{m-1})  = 
\int_0^{+\infty}  d\tau \, &
\int_{{\mathbb S} \times \R^2}   d\nu dv_{m}   \, M_{\beta}(v_{m}) 
{\big((v_{i} -v_{m}) \cdot \nu \big)_+}\\
& \qquad \times \Big(  \widehat  {\mathbf S}_{m}^0 (\tau) \varphi_m \Big) (Z_{m-1}^{<i>}   ,x_{i}, v_{i}, x_{i}  \pm \e \nu , v_{m})\, ,
\end{aligned}
    \end{equation}
  where we have used again the conservation of energy.
Define the maximal subset $\mathcal{S}^{i,m}$ of the space~$\mathbb{D}^{m-1}  \times {\mathbb S} \times 
\R^2 \times \R$ such that 
for any initial datum~$(Z_{m-1},  x_{i}  + \e \nu, v_{m} )$ in~$\mathcal{S}^{i,m}$ no recollision takes place in the time interval $[0,\tau]$. On the domain~$\mathcal{S}^{i,m}$, the map 
\begin{eqnarray}
\label{changevariables}
\Gamma^{i,m}: \qquad \qquad \mathcal{S}^{i,m} \qquad & \mapsto & \qquad \mathbb{D}^m\\
(Z_{m-1},  \nu, v_{m}, \tau ) & \mapsto &  {\bf \Psi}(-\tau)
(Z_{m-1},  x_{i}  + \e \nu, v_{m} ) \nonumber
\end{eqnarray}
is injective. This would not be true for the transport map without the restriction to $\mathcal{S}^{i,m}$ due to the periodic structure of $\mathbb{D}^m$. However, for any $Z_m$ in the range $\mathcal{R}^{i,m}$ of the map $\Gamma^{i,m}$, the time $\tau$ is uniquely determined as  the first collision time in the flow starting from $Z_m$. 
This collision will take place between $i$ and $m$ because the possibility of any other collision has been excluded.  All the other parameters can be determined from ${\bf \Psi}(\tau) (Z_m)$.

Given $j \in \{ 1, \dots, m\} \setminus \{i\}$, we denote by $\omega^{j,m}$ the permutation which swaps the coordinates~$z_j,z_m$ of $Z_m$. Then~$\Gamma^{i,j}= \omega^{j,m} \circ \Gamma^{i,m}$.  
These maps are of the same nature, however the ranges $\mathcal{R}^{i,j}$, $\mathcal{R}^{i',j'}$ are disjoint as soon as~$\{i,j\} \neq \{i',j'\}$. 
Indeed for any configuration~$Z_m$ in~$\bigcup_{j \not = i} \mathcal{R}^{i,j}$, one can recover the associated map, as the first collision in the flow starting from~$Z_m$ will take place between $i$ and $j$. Once again this is possible because we considered the truncated transport dynamics associated with the flow 
$\widehat  {\mathbf S}^0$.
%In fact, the sets $\{ \mathcal{R}^{i}_{j} \}_{i,j}$ are almost disjoints, in a sense that any configuration $Z_m$ can belong at most to two sets of this form. As there is no recollisions, the first two particles $i,j$ of $Z_m$ which collide determine the sets $\mathcal{R}^{i}_{j}$ and $\mathcal{R}_{i}^{j}$.
The last important feature is that the change of variables $\Gamma^{i,m}$ maps the  measure~$\big((v_{i} -v_{m}) \cdot \nu \big)_+ \, \eps d\nu dv_{m}d\tau dZ_{m-1}$ to~$dZ_m$. 
Thus we can rewrite \eqref{eq: psi l,n,+} as
\begin{align*}
 \| \Phi_{m-1}  \|_{L^1_\beta (\mathbb{D}^{m-1})}  
 &= \sum_{i =1}^{m-1}  \| \psi^{i,\pm}_{m-1}  \|_{L^1_\beta (\mathbb{D}^{m-1})} \\
&   = 
 \sum_{i =1}^{m-1} \int_{\cS^{i,m}} d Z_{m-1}
 d \tau    d\nu dv_{m}   \,  M_{\beta}^{\otimes (m)} (V_m)
{\big((v_{i} -v_{m}) \cdot \nu \big)_+} \\
&   \qquad \qquad 
\times  \varphi_m \Big( \Gamma^{i,m} ( Z_{m-1}^{<i>}   ,x_{i}, v_{i}, \nu , v_{m}, \tau) \Big)\\
&    
=  \frac{1}{\eps}  \sum_{i =1}^{m-1}\int_{\cR^{i,m}} d Z_{m} M_{\beta}^{\otimes m} (V_m) \varphi_m \big(  Z_m \big) \\
&    
=  \frac{1}{\eps}  \sum_{i =1}^{m-1} \frac{1}{m-1} \sum_{j \not = i} \int_{\cR^{i,j}} d Z_{m} M_{\beta}^{\otimes m} (V_m) \varphi_m \big(  Z_m \big)\\
&    \leq \frac{1}{\eps} \frac{2}{m-1}   \| \varphi_m \|_{L^1_\beta (\mathbb{D}^m)} \, ,
\end{align*}
where we used that the sets $( \cR^{i,j} )_{i\neq j}$ cover at most twice   $\mathbb{D}^m$.

\smallskip

Finally we notice that~$\Phi_{m}^{m} = 0$ because there is no loss in the number of particles only if one of the particles~$z_i$ and~$z_{m}$  corresponding to the collision integral is not part of the variables of~$\Phi_{m}$, which is impossible since it is defined on~$\D^{m}$. Similarly~$\Phi_{0}^{1} = 0$
because there is a loss in the number of variables only if the two variables of the collision kernel are part of  the variables of the function considered, which is impossible if the function only depends on one variable.

\smallskip

This completes the bound \eqref{eq: Phim-1} and ends the proof of Lemma \ref{lem: elementary-step}. 
\end{proof}

\subsubsection{Iterated $L^1$ continuity estimates}
\label{subsec: Iterated L1 continuity estimates}

To evaluate the norm of $|Q^{b,0} _{1,J}| (t)$ and prove Proposition \ref{L1-est}, we   use recursively  Lemma \ref{lem: elementary-step}.

\begin{proof}[End of the proof of Proposition {\rm\ref{L1-est}}]

The quantity to be controlled  is of the form
\begin{align*}
&\int_{\D} dz\,  |Q_{1,J}^{b,0}| (t)  M_\beta ^{\otimes J} \varphi_{m,\sigma} (z)   \\
&  = 
 \alpha^{J-1} \int_{\D} dz\!  \int_0^t \int_0^{t_{2}}\! \dots\!   \int_0^{t_{J-1}} \! \! \!   dt_{{J}} \dots dt_{2}
\widehat  {\mathbf S}^0_1(t-t_{2}) |C^b_{1,2}|  \widehat  {\mathbf S}^0 _{2}(t_{2}-t_{3})   |C^b_{2,3}|  
\dots  \widehat  {\mathbf S}^0 _{J}(t_{J}) M_\beta ^{\otimes J} \varphi_{m,\sigma} (z) \\
&  = 
 \alpha^{J-1} \int_{\D} dz\!  \int_0^t \int_0^{t_{2}}\! \dots\!   \int_0^{t_{J-1}}  \! \! \! dt_{{J}} \dots dt_{2}
 |C^b_{1,2}|  \widehat  {\mathbf S}^0 _{2}(t_{2}-t_{3})   |C^b_{2,3}|  
\dots  \widehat  {\mathbf S}^0 _{J}(t_{J}) M_\beta ^{\otimes J} \varphi_{m,\sigma}  (z) \, .
\end{align*}
%where in the first inequality, we bounded $\widehat  {\mathbf S}^0_s(t-t_{s+1})$ by  ${\mathbf S}_s(t-t_{s+1})$ and then used the fact that the transport preserves the $L^1$-norm.
Rewriting the   time integrals in terms of the time increments $\tau_i = t_i - t_{i+1}$ with the constraint~$ \tau_{2} + \dots + \tau_{J} \leq t $, we get
\begin{align*}
&\int_{\D} dz\,  |Q_{1,J}^{b,0}|(t) M_\beta ^{\otimes J} \varphi_{m,\sigma} (z)   \\
& = 
 \alpha^{J-1} \int_{\D} dz\!   \int_0^\infty \int_0^\infty\! \dots \!  \int_0^\infty\! \! \!  d\tau_{{J}} \dots d\tau_{2} \indc_{ \{ \tau_{2} +\dots +\tau_{J } \leq t  \}}  |C^b_{1,2}|  \widehat  {\mathbf S}^0 _{2}(\tau_2)   |C^b_{2,3}|  
\dots  \widehat  {\mathbf S}^0 _{J}(\tau_{J}) M_\beta ^{\otimes J} \varphi_{m,\sigma} (z) \, .
\end{align*}
This constraint can be removed by using the inequality
$$
\indc_{ \{ \tau_{2} +\dots +\tau_{J } \leq t  \}}
 \leq \exp \Big( J \big(1 - {\tau_{2} +\dots +\tau_{J} \over t} \big) \Big)
$$
which allows one to decouple the time integrals and  to deal  with the elementary operators
$$ 
\int_0^{+\infty} e^{-J{\tau_{s+1}  \over t}} |C^b_{s,s+1}| S_{s+1} (\tau_{s+1} )d\tau_{s+1} 
$$
separately. A factor $e^J$ is lost in this decoupling procedure.

\medskip

We proceed now by applying $J-1$ times the estimates of Lemma \ref{lem: elementary-step}.
One iteration transforms a symmetric sum of functions $\varphi_\ell$ depending on $\ell$ variables into 
similar sum of functions $\Phi^{(\ell)}_\ell, \Phi^{(\ell)}_{\ell-1}$ depending on $\ell$ or $\ell-1$ variables with the following exceptions
\begin{itemize}
\item $\Phi^{(\ell)}_\ell = 0$ if $\ell = s+1$,
\item $\Phi^{(\ell)}_{\ell - 1} = 0$ if $\ell = 1$.
\end{itemize}

We recall the bounds \eqref{eq: Phim} and \eqref{eq: Phim-1}
\begin{align*}
\| \Phi^{(\ell)}_\ell \| _{L^1_{\beta}(\D^\ell)} &  \leq C  t  \| \varphi_\ell \| _{L^1_{\beta}(\D^\ell)} \, , \qquad
\| \Phi^{(\ell)}_{\ell-1} \| _{L^1_{\beta}(\D^{\ell-1})}  \leq {C \over \eps (\ell - 1) }  \| \varphi_\ell \| _{L^1_{\beta}(\D^\ell)} \, .
\end{align*}
As the number of variables has to be  dropped exactly by $m-1$, the $J-1$ iterations will lead to a sum of at most $\binom{J-1}{m-1} \leq 2^J$ terms. We therefore end up with
\begin{align*}
\int_{\D} dz\,  |Q_{1,J}^{b,0}| (t) \, 
\Big( \sum _{\sigma\in {\mathfrak S}^m_{J} } M_\beta^{\otimes J} \varphi_{m,\sigma} \Big)(z)    
& \leq 
(C\alpha)^{J-1} \, t^{J-m} \frac{1}{\eps^{m-1} (m-1)!}  
 \| \varphi_m \|_{L^1_{\beta}(\D^m)}   \, ,
\end{align*}
which is the expected estimate (bounding~$t^{J-m}$ by~$t^{J-1}$ and changing the constant~$C$).
\end{proof}

\subsection{Proof of Proposition \ref{RNK0}}
\label{subsec: Proposition RNK0}

%Estimate on $R_N^{K,0}$ (super exponential branching without recollision)
%
%In this paragraph we prove an upper bound on
%\begin{equation*}
%%\label{RN0-def}
% R_N^{K, 0} (t) =  \sum_{k=1}^K  \sum_{j_i <n_i \atop i\leq k-1} \sum_{j_k \geq n_k} Q^0_{1,J_1} (h ) \dots  Q^0_{J_{k-1},J_{k}} (h ) f^{(J_K)}_N (t-kh) \, ,
% \end{equation*}
%where we denote   $J_k= 1+\sum_{i=1}^{k} j_i$. 
%
%\begin{Prop}
%\label{RNK0} 
%Choose $h \leq \gamma (\exp (C\alpha^2) t)^{-3}$ for some large enough constant $C$, as well as
%$\alpha^2 t h \ll 1$, and $n_k = 2^k$. Then, with the previous notations and under the Boltzmann-Grad scaling~$N\eps = \alpha \gg1$, we have for $t \geq 1$
%\begin{equation}
%\label{R0-est}
%\Big\| R_N^{K, 0} (t) \Big\|_{L^1(\D)}  \leq \gamma  \, \cdotp
%\end{equation}
%\end{Prop}

This Proposition is a straightforward consequence of Propositions \ref{lemfNL2sym 0} and  \ref{L2-est}. We have only to sum over all elementary contributions.

\medskip
 
$\bullet$
Fix $k$, $j_i <n_i$ for each $ i\leq k-1$ and $j_k \geq n_k$. 

By relaxing the conditions on the distribution of times to retain only  the constraint on 
the time increments
$$
\begin{aligned}
\tau_2 +\dots +\tau_{J_{k-1}} \leq (k-1) h \leq t \, ,\\
\tau_{J_{k-1} +1} +\dots + \tau_{J_k} \leq h\,,
\end{aligned}
$$
it is enough to consider the upper bound
$$
| Q^0_{1,J_1} |(h ) \dots  |Q^0_{J_{k-1},J_{k}} |(h )  
\leq  | Q^0_{1,J_{k-1}} |(t) \,  | Q^0_{J_{k-1}, J_k} |(h) \, .
$$
From  the uniform $L^2$ estimates (\ref{gi-est}) following from Proposition \ref{lemfNL2sym 0} and Stirling's formula, we deduce that
$$
\| g_N^{m}( t-kh) \|_{L^2_\beta(\D^m)}^2 \leq {CN\exp (C\alpha^2)\over  \binom{N}{m}}  
\leq {C^m \, m ! \exp (C\alpha^2) \over N^{m-1}  }  \, \cdotp
$$
Then, by    Proposition \ref{L2-est}, we conclude that 
$$
\begin{aligned}
\Big(\int \Big( |Q^0_{1,J_1}| (h ) \dots  |Q^0_{J_{k-1},J_{k}} | (h )  \sum _{\sigma\in {\mathfrak S}^m_{J_k} } M_\beta ^{\otimes J_k} {  \indc_{{\mathcal V}_{J_K} }  } \big|g_{N,\sigma}^{m}  ( t- k h)\big| \Big)^2 dz_1 \Big) ^\frac12\\
\leq (C\alpha)^{J_k} \exp (C\alpha^2) t^{J_{k-1}+j_k/2  } h^{j_k/2} \, ,
\end{aligned}
$$
with the notation $g_{N,\sigma}^{m}  (t',Z_{J_k}) = g_N^{m}  (t', Z_\sigma)$.
We then sum over all $m\in \{1,\dots,  J_k\}$  to get
$$\Big(\int \Big( |Q^0_{1,J_1} | (h ) \dots  |Q^0_{J_{k-1},J_{k}}| (h ) \, | f^{(J_K)}_N (t-kh) | {  \indc_{{\mathcal V}_{J_K} }  }\Big)^2
dz_1 \Big)^\frac12 \leq (C\alpha)^{J_k} \exp (C\alpha^2) t^{J_{k-1}+\frac{j_k}2  } h^{\frac{j_k}2  }\,.
$$

\medskip
 
$\bullet$ For $\gamma$ small, the  scaling assumption \eqref{eq: parameters RNK0} implies in particular that $\alpha^2 th \ll 1$ and that $\alpha ^2t^{3/2} h^{1/2}  \ll 1$, recalling that~$t \geq 1$. Thus summing over all $j_k \geq n_k$ leads to 
\begin{equation}
\begin{aligned}
\sum_{j_k \geq n_k}\Big(\int \big( |Q^0_{1,J_1} | (h ) &\dots  |Q^0_{J_{k-1},J_{k}}| (h ) \; | f^{(J_K)}_N (t-kh) | {  {  \indc_{{\mathcal V}_{J_K} }  } }\big)^2 dz_1 \Big)^\frac12 \\
& \, \leq  \exp (C\alpha^2)  (C\alpha)^{J_{k-1}+n_k } t^{J_{k-1} +n_k/2} h^{n_k/2}
\\
%& \leq  \exp (C\alpha^2) (C\alpha)^{N_{k-1}+n_k } t^{N_{k-1} +n_k/2} h^{n_k/2} \\
& \leq  \exp (C\alpha^2) (C\alpha)^{2 n_k } t^{{3 \over 2} n_k} h^{\frac{1}{2}n_k}, 
\label{eq: sommation RNK0 1}
\end{aligned}
\end{equation}
where we used that $J_{k-1} \leq n_k$ as $j_\ell \leq n_\ell = 2^\ell n_0$. 

Taking the sum over all possible $j_i$ as in \eqref{eq: nk 2k}, we get at most $C^k 2^{k^2}$ such terms.
From the scaling assumption \eqref{eq: parameters RNK0} and the fact that $\alpha \geq 1$, one can choose  $h \leq  {\gamma^2}/{8 C \exp (C\alpha^2)  \alpha^4  T^3}$. This implies that
\begin{equation}
\label{eq: sommation RNK0 2}
\Big(\int_\D dz_1  \big|  R_N^{K, 0} (t,z_1) \big|  ^2  \Big)^\frac12
\leq   e^{C\alpha^2} \sum_{k=1}^K 2^{k^2} \big(C \alpha^4 t^3 h)^{\frac{1}{2}n_k} 
% (C \alpha t )^{N_{k-1}+n_k/2}   (C\alpha h) ^{n_k/2} 
 \leq \gamma  \, ,
\end{equation}
and  Proposition \ref{RNK0} follows.    \qed

\subsection{Super exponential branching for the Boltzmann  pseudo-dynamics}
\label{subsec: super exp boltz}

It remains then to estimate similarly the contribution of the super-exponential branching collision trees in the 
Boltzmann  pseudo-dynamics  
$$ 
\bar R^K (t) :=  \sum_{k=1}^K \; \sum_{j_1=0}^{n_1-1} \! \! \dots \! \! \sum_{j_{k-1}=0}^{n_{k-1}-1}\sum_{j_k \geq n_k} \; 
 \bar Q_{1,J_1} (h ) \dots \bar  Q_{J_{k-1},J_{k}} (h ) \big( f^{(J_K)}  (t-kh){  \indc_{{\mathcal V}_{J_K} }  }\big)  \, . 
%M_\beta^{\otimes J_k}\sum_{i=1}^{J_k} g_\alpha (t-(k-1)h, z_i ) \,.
$$
We can state a result analogous to Proposition \ref{RNK0} 
\begin{Prop}
\label{RNK0 boltz} 
Given~$T>1$,  $\gamma\ll 1$ and~$C$ a large enough constant (independent of~$\gamma$ and~$T$),  the parameters are tuned as follows 
\begin{equation}
\label{eq: parameters RNK0 boltz}
h \leq \frac{\gamma^2}{C \alpha^4  T^3} \, ,
\qquad n_k =  2^k n_0 \,.
\end{equation}
Then, we have for $t\in [0,T]$
\begin{equation}
\label{R0-est boltz}
\Big\| \bar R ^K (t) \Big\|_{L^2(\D)}  \leq \gamma  \, .
\end{equation}
\end{Prop}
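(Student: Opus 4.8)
The plan is to transfer the strategy of Proposition~\ref{RNK0} directly to the Boltzmann hierarchy, which is in fact strictly simpler because no recollisions can occur and the transport operators ${\mathbf S}^0_s$ are genuinely free. The starting point is that the Boltzmann hierarchy also enjoys a decomposition of the type \eqref{eq: decomposition a t}: since the solution $f^{(s)}(t)$ of the Boltzmann hierarchy evolves a datum of the form \eqref{eq: initial boltz}, and more generally since the same Fubini/exhaustion argument of Proposition~\ref{lemfNL2sym 0} applies to any symmetric $L^2_\beta$ family of marginals, one obtains symmetric cumulants $g^{m}(t)$ on $\D^m$ with $f^{(s)}(t,Z_s) = M_\beta^{\otimes s}(V_s)\sum_{m=1}^s\sum_{\sigma\in{\mathfrak S}^m_s} g^{m}(t,Z_\sigma)$. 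The required $L^2_\beta$ bound on $g^{m}(t)$ follows from the conservation of the $L^2_\beta$ norm along the Boltzmann flow (the linearized collision operator $\cL_\beta$ being dissipative, see Appendix~\ref{appendixBoltzlin}) together with the initial bound $\|g_{\alpha,0}\|_{L^2_\beta}\le C_1\exp(C_1\alpha^2)$; this yields $\|g^{m}(t-kh)\|^2_{L^2_\beta(\D^m)}\le C^m m!\,\exp(C\alpha^2)/N^{m-1}$ exactly as in \eqref{gi-est}. Actually, since in the Boltzmann hierarchy there is no $N$, one should instead think of $f^{(J_K)}$ directly and note that an $L^2$ continuity estimate for $\bar Q^0$ in which each cumulant degree $m$ carries a factor $\eps^{-(m-1)}/\sqrt{m!}$ is matched by the decay $\sqrt{N^{-(m-1)}m!}=\sqrt{(\alpha/\eps)^{-(m-1)}m!}$; the point is purely that the same bookkeeping works.

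The second ingredient is the $L^2$ analogue of Proposition~\ref{L2-est} for the Boltzmann iterated collision operator $|\bar Q_{s,s+n}|$. Here the proof is easier: one splits each $\bar C_{s,s+1}$ into a bounded-cross-section part $\bar C^{b,\pm}$ and a quadratic part $\bar C^{q,\pm}$ exactly as in \eqref{Cb-def}, applies Cauchy--Schwarz as in \eqref{nosmeilleursamis}, bounds the $\bar Q^{q,0}$ factor in $L^\infty$ by the analogue of Proposition~\ref{prop: estimatelemmacontinuity quad} (which for the Boltzmann hierarchy is just Proposition~\ref{estimatelemmacontinuity} without Cauchy--Schwarz), and for the $\bar Q^{b,0}$ factor applies the elementary-step Lemma~\ref{lem: elementary-step}. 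That lemma goes through verbatim with ${\mathbf S}^0_s$ in place of $\widehat{\mathbf S}^0_s$; in fact the only subtle point in the original proof, namely the injectivity of the change of variables $\Gamma^{i,m}$ on the no-recollision set $\cS^{i,m}$, becomes trivial because for free transport the first collision time is still uniquely determined and the periodic-image ambiguity is handled exactly as there. One then iterates as in Paragraph~\ref{subsec: Iterated L1 continuity estimates} and concludes a bound of the form $\| |\bar Q^0_{1,J}|(t)\,|\bar Q^0_{J,J+n}|(h)\sum_{\sigma\in{\mathfrak S}^m_{J+n}}\indc_{{\mathcal V}_{J+n}} M_\beta^{\otimes(J+n)}|g_{m,\sigma}|\|_{L^2(\D)} \le (C\alpha)^{J+n-1} t^{J+n/2-1} h^{n/2}\|g_m\|_{L^2_\beta(\D^m)}/\sqrt{\eps^{m-1}m!}$.

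With these two estimates in hand the conclusion is the same summation as in Section~\ref{subsec: Proposition RNK0}. One fixes $k$, indices $j_i<n_i$ for $i\le k-1$ and $j_k\ge n_k$, relaxes the time constraints to $\tau_2+\dots+\tau_{J_{k-1}}\le(k-1)h\le t$ and $\tau_{J_{k-1}+1}+\dots+\tau_{J_k}\le h$, so that $|\bar Q^0_{1,J_1}|(h)\cdots|\bar Q^0_{J_{k-1},J_k}|(h)\le |\bar Q^0_{1,J_{k-1}}|(t)\,|\bar Q^0_{J_{k-1},J_k}|(h)$. Inserting the cumulant decomposition of $f^{(J_k)}(t-kh)$, applying the $L^2$ continuity estimate term by term in $m$, and using $\|g^{m}(t-kh)\|^2_{L^2_\beta}\le C^m m!\exp(C\alpha^2)/N^{m-1}$ kills the $\eps^{-(m-1)}$ loss and gives, after summing over $m\in\{1,\dots,J_k\}$,
$$
\Big(\int\big(|\bar Q^0_{1,J_1}|(h)\cdots|\bar Q^0_{J_{k-1},J_k}|(h)\,|f^{(J_k)}(t-kh)|\,\indc_{{\mathcal V}_{J_K}}\big)^2 dz_1\Big)^{1/2}\le (C\alpha)^{J_k}\exp(C\alpha^2)\,t^{J_{k-1}+j_k/2}h^{j_k/2}.
$$
Summing over $j_k\ge n_k$ (a geometric series, convergent since the scaling \eqref{eq: parameters RNK0 boltz} forces $\alpha^2 t^{3/2}h^{1/2}\ll1$), then over $j_1,\dots,j_{k-1}$ (at most $C^k 2^{k^2}$ terms, as in \eqref{eq: nk 2k}), and over $k\le K$, yields $\|\bar R^K(t)\|_{L^2(\D)}\le e^{C\alpha^2}\sum_{k=1}^K 2^{k^2}(C\alpha^4 t^3 h)^{n_k/2}\le\gamma$ once $h\le\gamma^2/(C\alpha^4 T^3)$ with $C$ large enough, which is precisely \eqref{eq: parameters RNK0 boltz}.

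The main obstacle, such as it is, is verifying that the cumulant decomposition of Proposition~\ref{lemfNL2sym 0} together with the right $L^2_\beta$ bound genuinely applies to $f^{(s)}(t)$: one must check that the $L^2_\beta$ norm of the solution of the Boltzmann hierarchy with datum of the form \eqref{eq: initial boltz} does not grow (this is where the dissipativity of $\cL_\beta$ from Appendix~\ref{appendixBoltzlin} is used) and that the mean-zero condition $\int M_\beta g_{\alpha,0}\,dz=0$ is propagated, so that the $m=0$ term vanishes and the orthogonality \eqref{orthogonality} holds. Everything else is a routine rerun of Sections~\ref{decompositionL2}--\ref{subsec: Proposition RNK0} with the simplification that recollisions and the factor $(N-s)\eps/\alpha$ versus $\alpha$ discrepancy are absent.
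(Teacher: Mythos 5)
Your skeleton (Cauchy--Schwarz splitting of $\bar C_{s,s+1}$ into a bounded and a quadratic cross-section, a Lanford-type $L^\infty$ bound for the quadratic part, an $L^1_\beta$-type bound for the bounded part, then the resummation under $h\le\gamma^2/(C\alpha^4T^3)$) is the same as the paper's, but the input you feed into it contains a genuine gap. You produce cumulants of $f^{(s)}(t)$ by invoking Proposition~\ref{lemfNL2sym 0} and claim the bound \eqref{gi-est} ``exactly as in the BBGKY case''; but that proposition and that bound are statements about an underlying $N$-particle density, and in the Boltzmann hierarchy there is neither $N$ nor $\eps$, so the compensation you invoke between an $\eps^{-(m-1)}$ loss and an $N^{-(m-1)}$ decay is not even defined. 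More seriously, the claim that Lemma~\ref{lem: elementary-step} ``goes through verbatim'' for the Boltzmann operators fails precisely at its crucial step: the $m\to m-1$ estimate \eqref{eq: Phim-1} is obtained by composing the collision with the transport and using the change of variables $\Gamma^{i,m}$, whose Jacobian carries the factor $\eps\,\big((v_i-v_m)\cdot\nu\big)_+$; this surface-to-volume trick is exactly what produces the $1/\eps$. For $\bar C_{s,s+1}$ the adjoined particle sits exactly at $x_i$, so the corresponding push-forward measure is concentrated on the zero-measure set of configurations whose backward free trajectories cross exactly, and no $L^1_\beta$ (a fortiori $L^2_\beta$) norm of $\varphi_m$ controls such a trace. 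Hence there is no $L^2$ continuity estimate for $|\bar Q^0_{1,J}|$ acting on general $m$-particle cumulants, and your bookkeeping cannot be made to work for $m\ge 2$.

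The missing idea --- and the reason the Boltzmann case is in fact much easier --- is that $m\ge 2$ never occurs: the form \eqref{eq: initial boltz} is propagated exactly by the limit dynamics, so the solution is explicitly $f^{(s)}(t,Z_s)=M_\beta^{\otimes s}(V_s)\sum_i g_\alpha(t,z_i)$ as in \eqref{solboltz}, with $g_\alpha$ solving the linearized Boltzmann equation. This is your decomposition with only first-order terms, so only the harmless case of Lemma~\ref{lem: elementary-step} (at most one distinguished particle per collision) is ever needed: $\bar C^{b,\pm}_{s,s+1}$ maps $M_\beta^{\otimes(s+1)}\sum_i\varphi(z_i)$ to $s\,M_\beta^{\otimes s}\sum_i\tilde\varphi(z_i)$ with $\|\tilde\varphi\|_{L^1_\beta}\le C\|\varphi\|_{L^1_\beta}$, and iterating gives $\int|\bar Q^{b}_{1,J}|(t)\big(M_\beta^{\otimes J}\sum_i\varphi(z_i)\big)\,dz_1\le(C\alpha t)^{J-1}\|\varphi\|_{L^1_\beta}$. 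Combining with the $L^\infty$ bound for the quadratic part by Cauchy--Schwarz as in \eqref{nosmeilleursamis}, and using the Lyapunov property \eqref{eq: lyapunov} of $\|g_\alpha(t)\|_{L^2_\beta}$ (this is where your correct remark on the dissipativity of $\cL_\beta$ enters; no mean-zero propagation is required), one obtains exactly the bound you display, and your final resummation is then correct. In short: replace the abstract cumulant machinery by the explicit structure \eqref{solboltz}; as written, the appeal to \eqref{gi-est} and to a verbatim Lemma~\ref{lem: elementary-step} is a gap.
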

 
\begin{proof}
{  At this stage, the constraint ${\mathcal V}_{J_K}$ is purely cosmetic and it can be removed.} 
We   use the fact that the solution \eqref{solboltz} of the Boltzmann hierarchy is explicit 
\begin{equation*}
f^{(s)} (t,Z_s) = M_\beta^{\otimes s} (V_s)\sum_{i=1}^s  g_\alpha (t,z_i) \, ,
\end{equation*}
where $g_\alpha$ solves the linear Boltzmann equation \eqref{lBoltz}  and is smooth.
In particular, the weighted $L^2$ norm is a Lyapunov functional for the linearized Boltzmann equation, so  
\begin{equation}
\label{eq: lyapunov}
 \forall t \geq 0, \quad \int M_\beta  g_\alpha^2(t,z) dz 
 \leq \int M_\beta  g_{\alpha,0}^2(z) dz\,.
\end{equation}
The collision operators are decomposed into  $\bar  C_{s,s+1}^{b, \pm}$ and  $\bar  C_{s,s+1}^{q, \pm}$
as in \eqref{Cb-def}.
Then, following the same arguments as in the proof of Lemma \ref{lem: elementary-step} (case 1), we get
for any continuous nonnegative function~$\varphi$ in $L^1_\beta (\D)$
 $$  
 \bar  C_{s,s+1}^{b, \pm} M_\beta^{\otimes (s+1)}  \sum_{i=1}^{s+1} \varphi ( z_i) 
 = s M_\beta^{\otimes s}  \sum_{i=1}^{s} \tilde \varphi ( z_i)  
 $$
where 
$$ 
\int M_\beta \tilde \varphi  ( z) dz \leq C   \int M_\beta  \varphi ( z) dz \, .
$$
%and  $\bar C^b_{s,s+1}$ is defined as $ C^{b}_{s,s+1}$  by truncating the collision cross-section for large relative velocities.
By iteration and integration with respect to time which leads to a factor
$\displaystyle  {t^{J-1}}/{(J-1)!}$, we deduce that 
$$
\begin{aligned}
%& \int  d z_1  |\bar Q^b_{1,J} | (t)\left( M_\beta^{\otimes J} \sum_{i=1}^{J} \varphi^2(z_i)\right) 
% \leq {(C\alpha t)^{J-1} \over (J-1)!}\int M_\beta \varphi^2 (z) dz \,,\\
& \int  d z_1  |\bar Q^b_{1,J}|(t) 
 \Big( M_\beta^{\otimes J} \sum_{i=1}^{J } \varphi (z_i)\Big) 
\leq (C\alpha t)^{J-1}  \int M_\beta \varphi  (z) dz \,.
 \end{aligned}
$$
The previous estimate can be applied to the explicit form of the Boltzmann hierarchy.
Combining this upper bound with Lanford's estimate for 
$|\bar Q^q_{1,J}|(t) \, |\bar Q^q_{J,J+n} | (h) M_\beta^{\otimes (J+n)}$, 
we get by the Cauchy-Schwarz inequality as in \eqref{nosmeilleursamis} 
\begin{align*}
& \Big\| \bar Q_{1,J_1} (h ) \dots \bar  Q_{J_{k-1},J_{k}} (h ) {   \indc_{{\mathcal V}_{J_K} } } f^{(J_K)}(t - (k-1) h)
\Big\|_{L^2(\D)} \\
& \qquad \qquad 
\leq 
(C\alpha t)^{J_k-1}(C\alpha h)^{j_k/2} 
\left( \int M_\beta g_\alpha ^2 (t-(k-1)h, z) dz\right)^{1/ 2}\\
& \qquad \qquad 
\leq 
(C\alpha t)^{J_{k-1} + j_k -1}(C\alpha h)^{j_k/2}  \| g_{\alpha,0} \|_{L^2_\beta ( \D) } \, ,
\end{align*}
where we used \eqref{eq: lyapunov} in the last inequality.

\medskip

We proceed as in \eqref{eq: sommation RNK0 1}, \eqref{eq: sommation RNK0 2}
and  sum over $j_k \geq n_k$, $j_i<n_i$ for $i \leq k-1$,
$$
\Big\| \bar R^K (t) \Big\|_{{ L^2(\D)}} 
\leq \sum_{k=1}^K 2^{k^2} \big(C \alpha^4 t^3 h)^{\frac{1}{2}n_k} 
\| g_{\alpha,0} \|_{L^2_\beta ( \D) } \leq \gamma \, ,
$$ 
where the last inequality follows from the condition $h \leq  {\gamma^2}/(8 C \alpha^4  T^3)$.
This completes the proof of Proposition \ref{RNK0 boltz}.
\end{proof}

%%%%%%%%%%%%%%%%%%%%%%%%%%%%%%%%%%%%%%%%%%
%%%%%%%%%%%%%%%%%%%%%%%%%%%%%%%%%%%%%%%%%%
%%%%%%%%%%%%%%%%%%%%%%%%%%%%%%%%%%%%%%%%%%

\section{Control of super exponential trees with one recollision}
\label{lanfordL2}

In this section, we   show how to modify the proof of Proposition \ref{RNK0} to take into account a finite number of recollisions (actually one here, but the argument could easily be extended to an arbitrary, finite number), and prove the following estimate for $R_N^{K,1}$.

\begin{Prop}
\label{RNK1} 
Under the Boltzmann-Grad scaling~$N\eps = \alpha \gg1$ and with the previous notation, we have for~$T>1$ and all~$t \in [0,T]$, assuming
$$h \leq \frac{\gamma^2}{C \alpha^4  T^3} $$
that
$$
\Big\|   R_N^{K, 1}(t) \Big\|_{L^2(\D)} 
\leq    \gamma     
{ \eps^{1/ 2}  | \log \eps |^{  6} \over h}  \,  \cdotp
$$

\end{Prop}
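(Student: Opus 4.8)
The idea is to combine the $L^2$ machinery developed for $R_N^{K,0}$ in Proposition \ref{RNK0} with the geometric smallness estimate for recollisions established in Proposition \ref{recoll1-prop}. Recall that in $R_N^{K,1}$ exactly one of the transport operators $\widehat{\mathbf S}_s^1$ records a single recollision, while all the others are the recollision-free truncated operators $\widehat{\mathbf S}_s^0$. The key observation is that the argument giving Proposition \ref{L2-est} — the Cauchy-Schwarz splitting \eqref{nosmeilleursamis} into a quadratic-cross-section factor estimated in $L^\infty$ by Proposition \ref{prop: estimatelemmacontinuity quad}, and a bounded-cross-section factor estimated in $L^1$ by Lemma \ref{lem: elementary-step} iterated as in Proposition \ref{L1-est} — only ever uses that the transport between consecutive collision operators is $\widehat{\mathbf S}^0$, i.e. that there is no recollision \emph{except} at the one distinguished step. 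At that one step, instead of the change of variables $\Gamma^{i,m}$ of Lemma \ref{lem: elementary-step}, one inserts the local recollision constraint: the pseudo-trajectory lies in one of the sets $\cP_1(a,p,\sigma)$, whose measure in the parameters of the parents $\sigma$ is bounded by $\eta = CsR^7t^3\eps|\log\eps|^3$ (estimate \eqref{cP-est}), with $R^2 \leq C_0|\log\eps|$ and $t\leq C_0|\log\eps|$ under the energy truncation ${\mathcal V}_{J_K}$.

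\textbf{Key steps.} First I would fix $k$, the indices $j_i<n_i$ for $i\leq k-1$ and $j_k\geq n_k$, together with the position $\ell$ of the recollision block, and relax the time constraints exactly as in Section \ref{subsec: Proposition RNK0} so that $|Q^0_{1,J_1}|(h)\dots Q^1_{J_{\ell-1},J_\ell}(h)\dots|Q^0_{J_{k-1},J_k}|(h)$ is bounded by a single iterated operator with the one recollision block inserted. Second, I would use the decomposition \eqref{eq: decomposition a t} of $f_N^{(J_k)}(t-kh)$ into cumulants $g_N^m$ with the $L^2_\beta$ bound \eqref{gi-est}, which supplies the factor $(C^m m!\exp(C\alpha^2)/N^{m-1})^{1/2}$. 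Third, running the Cauchy-Schwarz argument of \eqref{nosmeilleursamis}: the $L^\infty$ factor is controlled by $|Q^{q,0}|$ bounds exactly as before (Proposition \ref{prop: estimatelemmacontinuity quad} — the one recollision contributes only a bounded combinatorial and geometric factor), while the $L^1$ factor now runs the iteration of Lemma \ref{lem: elementary-step} everywhere except at the recollision step, where we instead integrate out the parents' parameters against $\indc_{\cP_1(a,p,\sigma)}$ using \eqref{cP-est}, picking up a factor $\eta/\eps^{m-1}$ essentially; more precisely we lose one power $1/\eps$ from the missing change of variables at that step but gain $\eps|\log\eps|^3$ from $\eta$, for a net $|\log\eps|^3$ together with a compensation of the $N^{-(m-1)}$ decay against the $\eps^{-(m-1)}$ as in Proposition \ref{L2-est}. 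Using $N\eps=\alpha$, the $\eps$-powers coming from the cumulant structure cancel as before, and the recollision block contributes $\eps^{1/2}|\log\eps|^{\text{const}}$ overall (the $\eps^{1/2}$ rather than $\eps$ because only one collision integral, not two, is being used to absorb the $1/|v_i-v_j|$ singularity here, or because of how the $h$-normalization is distributed — hence the $1/h$ prefactor, arising since one time-integration at the recollision step is spent on the $\eps/|v_i-v_j|$ estimate rather than producing a factor $h$). Fourth, summing over $j_k\geq n_k$ as in \eqref{eq: sommation RNK0 1}--\eqref{eq: sommation RNK0 2}: the geometric series in $(C\alpha^4 t^3 h)^{n_k/2}$ converges under $h\leq \gamma^2/(C\alpha^4 T^3)$ and gives $\gamma$, while the factor $\exp(C\alpha^2)$ from \eqref{gi-est} is absorbed; summing over $\ell\leq k\leq K$ costs only polynomial factors in $K$ absorbed into $C^{2^K}$, and the energy cut-off lets us replace $R$ and $t$ by $|\log\eps|$ so the geometric loss is $\eps^{1/2}|\log\eps|^6$.

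\textbf{Main obstacle.} The delicate point is the same structural one as in Lemma \ref{lem: elementary-step}: making the recollision step compatible with the cumulant decomposition and the isometry/change-of-variables bookkeeping. Specifically, one must check that inserting $\widehat{\mathbf S}^1$ at one step does not destroy the injectivity of the maps $\Gamma^{i,m}$ at the \emph{other} steps (it does not, since those still see only $\widehat{\mathbf S}^0$), and that the parents $\sigma$ of the recolliding pair can always be placed among the already-integrated variables so that \eqref{cP-est} applies uniformly over the frozen upper structure — this is exactly the point emphasized in the proof of Proposition \ref{recoll1-prop}, that $\cP_1(a,p,\sigma)$ depends only on labels $\leq 2^*$. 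Bookkeeping the precise power of $|\log\eps|$ and checking that the loss is $\eps^{1/2}|\log\eps|^6/h$ and not worse requires care but is routine once the structure is set up; I would expect the $\eps^{1/2}$ (as opposed to $\eps$) and the $1/h$ to be the features one must track most carefully, since they come from the interplay between the single time-integration consumed by the recollision geometry and the $h$-splitting of the remaining time integrals.
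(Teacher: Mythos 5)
Your overall architecture (Cauchy--Schwarz splitting into a quadratic-cross-section factor and a bounded-cross-section factor, cumulant decomposition of $f_N^{(J_k)}(t-kh)$, resummation as for $R_N^{K,0}$) is indeed the paper's, but the place where you extract the recollision smallness is misallocated, and that is precisely where the difficulty lies. You propose to treat the quadratic/$L^\infty$ factor ``exactly as before'' and instead, in the bounded/$L^1$ factor, to replace the change of variables $\Gamma^{i,m}$ at the recollision step by an integration of the parents' parameters against $\indc_{\cP_1(a,p,\sigma)}$ via \eqref{cP-est}. This cannot work as stated: \eqref{cP-est} is a bound on the \emph{measure} of a set of collision parameters, and it can only be exploited when the rest of the integrand is controlled in $L^\infty$ (this is how it is used in Proposition \ref{prop: 1 recoll} against the Maxwellian bound on the data, and in the paper's Step 2 against $M_\beta^{\otimes (J+n)}$). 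In the bounded-cross-section factor the integrand contains $g_{m,\sigma}^2$, which is only in $L^1_\beta$ and is evaluated at the endpoint of the pseudo-trajectory, i.e.\ it depends on the very parameters you want to restrict to $\cP_1$; without a sup bound on $g_m^2$ the smallness of the parameter set cannot be decoupled from its $L^1$ norm. In the paper the decisive $\eps$-gain is obtained on the quadratic/$L^\infty$ side, through the refined estimate \eqref{eq: L1 estimate petit h} and the Section~3 geometry, see \eqref{eq: gain gactor eps}; it becomes $\eps^{1/2}$ only because of the square root in the Cauchy--Schwarz step (not because ``one collision integral instead of two'' absorbs the velocity singularity), and the $1/h$ arises because that geometric estimate consumes two of the $h$-localized time integrations, turning $(C\alpha h)^n$ into $(C\alpha h)^{n-2}$, hence $h^{n/2-1}$ after the square root.

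You also underestimate the structural obstruction on the $L^1$ side, which is not injectivity of the maps $\Gamma^{i,m}$ at the other steps but the recollision itself: if the single recollision involves scattering between one distinguished particle (an argument of $g_m$) and one background particle, the factorized form $M_\beta^{\otimes(s+1)}\sum_\sigma \varphi_{m,\sigma}$ is destroyed and Lemma \ref{lem: elementary-step} cannot be iterated through that step. The paper's Lemma \ref{lem: elementary-step 2} resolves this by promoting the background particle into the distinguished set (passing from $m$ to $m+1$ variables) and paying the resulting $1/\eps$ with the $Ct\,\eps|\log\eps|$ smallness of the free-flow proximity constraint $\cP_{(i,j)}$ --- a constraint on the \emph{arguments} of the function, hence compatible with an $L^1$ bound --- at the price of the extra factor $J^3|\log\eps|$ in \eqref{Q1L1-est}, later compensated by the $\eps$ gained on the $L^\infty$ side. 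Without this ingredient (or an equivalent), your $L^1$ iteration breaks at the recollision step; and since your $L^\infty$ factor is treated with no $\eps$-gain at all, no factor $\eps^{1/2}$ is produced anywhere, so the bound $\gamma\,\eps^{1/2}|\log\eps|^6/h$ --- and with it the smallness of $R_N^{K,1}$ --- cannot be reached by the route you describe.
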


Given a function $g_N^{m}$, let us call {  distinguished} the particles which are in the argument of $g_N^{m}$ and the others are the background particles. 
Proposition \ref{L1-est} cannot be applied as a black box: 
indeed, the structure~(\ref{fNs-exp})  is not exactly preserved by the transport operator at the time of recollision if there is scattering between one distinguished particle and one particle of the background.
We have therefore to extend Lemma   \ref{lem: elementary-step}     to incorporate the case of one recollision.
The point is  to modify locally the decomposition (\ref{fNs-exp}) to ensure that the recollision will always involve either two particles of the background or two distinguished particles, in which case it is easy to adapt the   proof of  Proposition \ref{L1-est} .

\subsection{Extension of Lemma   \ref{lem: elementary-step} to the case of one recollision  }

Note that in the pseudo dynamics describing the operator       
$$ |C^{b, \pm}_{s,s+1}| 
\widehat {\mathbf S}_{s+1}^1 (\tau)$$
 there is exactly one collision occurring at the initial time and  the particles evolve in straight lines with the exception of the two recolliding particles.

\begin{lem}
\label{lem: elementary-step 2}
Fix $t>0$, $1 \leq m \leq s +1 $ and let $\varphi_m$ be a nonnegative symmetric function in ~$L^1_\beta (\D^m)$. % as in Proposition~{\rm\ref{L1-est}}.
Then there are three symmetric functions $\Phi_m^{(m)}$, $\Phi_{m-1}^{(m)}$ and $\Phi_{m+1}^{(m)}$ defined respectively on~$\D^m$, $\D^{m-1}$ and $\D^{m+1}$ such  that 
$$\begin{aligned}
\int_0^{t}  &   d\tau  \,   e^{-{J \tau\over t} } |C^{b, \pm}_{s,s+1}| 
\widehat {\mathbf S}_{s+1}^1 (\tau)  \, \Big( M_\beta ^{\otimes (s+1)} 
\, {  \indc_{{\mathcal V}_{s+1}} } \,
\sum_{\sigma\in {\mathfrak S}^m_{s+1} } \varphi_{m, \sigma} \Big)   \\
& \qquad \leq M_{\beta} ^{\otimes s} (V_{s})
\Big( \sum_{\sigma\in {\mathfrak S}^m_{s} }  \Phi_{m,\sigma}^{(m)}
+ \sum_{\sigma\in {\mathfrak S}^{m-1} _{s} }  \Phi_{m-1,\sigma}^{(m)}
+ \sum_{\sigma\in {\mathfrak S}^{m+1} _{s} }  \Phi_{m+1,\sigma}^{(m)}\Big)\,  , 
\end{aligned}
$$
where ${\mathcal V}_{s+1}$  was introduced in \eqref{eq: truncated velocities}.
Furthermore, they satisfy 
\begin{align}
\label{eq: Phim recol}
\| \Phi_m^{(m)} \| _{L^1_{\beta}(\D^m)} &  \leq C s^2 t \, \big| \log \eps \big| \| \varphi_m \| _{L^1_{\beta}(\D^m)} \\
\label{eq: Phim-1 recol}
\| \Phi_{m-1}^{(m)} \| _{L^1_{\beta}(\D^{m-1})}  & \leq {C  \over \eps (m - 1) }  \| \varphi_m \| _{L^1_{\beta}(\D^m)}\\
\label{eq: Phim+1 recol}
\| \Phi_{m+1}^{(m)} \| _{L^1_{\beta}(\D^{m+1})}  & \leq {C  s^3 t} \eps \, \big| \log \eps \big| \| \varphi_m \| _{L^1_{\beta}(\D^m)}  
\end{align}
with $\Phi_0^{(1)} = \Phi_{s+1}^{(s)}  = \Phi_{s+1}^{(s+1)} = \Phi_{s+2}^{(s+1)}  = 0$. 
\end{lem}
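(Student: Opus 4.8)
The plan is to mimic the proof of Lemma \ref{lem: elementary-step}, performing the same case analysis according to whether the two colliding particles $i$ and $s+1$ belong to $\sigma$ or not, but now tracking the extra recollision carried by $\widehat{\mathbf S}^1_{s+1}$. The key new point is a \emph{local redistribution} of the decomposition \eqref{fNs-exp} so that the recollision never occurs between one distinguished particle (an argument of $\varphi_m$) and one background particle: if the recollision were between a distinguished particle $\ell$ and a background particle $j$, we absorb $j$ into the argument of $\varphi_m$, producing a function of $m+1$ variables (hence the new term $\Phi^{(m)}_{m+1}$), after which the recollision is internal to the enlarged set of distinguished particles. Once this is done, the recollision is either entirely among background particles --- harmless, since the transport $\widehat{\mathbf S}^1$ restricted to the background is still measure preserving and contributes, via the geometric estimates of Section~\ref{sec: Geometric control of recollisions}, only a logarithmic loss in $\eps$ --- or entirely among distinguished particles, which affects only the velocities in $Z_\sigma$ and is controlled as in Case~1 of Lemma~\ref{lem: elementary-step} by energy conservation and the scattering change of variables \eqref{eq: isometrie}.

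Concretely, I would split $\widehat{\mathbf S}^1_{s+1}(\tau)$ according to the labels of the recolliding pair. First, when the recollision involves only background particles, one repeats verbatim the estimates leading to \eqref{eq: Phim} and \eqref{eq: Phim-1}, except that the time integral $\int_0^t e^{-J\tau/t}d\tau$ is now restricted to the set of configurations admitting exactly one recollision; using Proposition~\ref{recoll1-prop} (with the velocity truncation ${\mathcal V}_{s+1}$, which is exactly why it appears in the statement) this set has measure bounded by $Cs^2 t\,|\log\eps|$ in the relevant parameters, which is the origin of the factor $s^2 t|\log\eps|$ in \eqref{eq: Phim recol}. When the recollision involves one distinguished and one background particle, I enlarge $\sigma$ by the background index $j$: there are at most $s$ such choices and the corresponding bad set of recollision parameters has the small measure $\eps|\log\eps|$-type bound from Proposition~\ref{recoll1-prop}, giving the gain of one power of $\eps$ together with one extra $s$ (hence $s^3 t\eps|\log\eps|$ in \eqref{eq: Phim+1 recol}); after enlargement, the recollision is internal to the $(m+1)$-particle block and its contribution to $\Phi^{(m)}_{m+1}$ is estimated by the same change-of-variables argument as for $\Phi_m$ in Lemma~\ref{lem: elementary-step}. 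Finally, the term involving a trace (both $i$ and $s+1$ distinguished) is handled exactly as the $\Phi_{m-1}$ term of Lemma~\ref{lem: elementary-step}, using the injective map $\Gamma^{i,m}$ of \eqref{changevariables}, since a single recollision elsewhere does not interfere with that change of variables (one restricts $\widehat{\mathbf S}^0$ to $\widehat{\mathbf S}^1$ along the rest of the trajectory and the range sets $\mathcal R^{i,j}$ are still disjoint).

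The boundary cases $\Phi^{(1)}_0 = \Phi^{(s)}_{s+1} = \Phi^{(s+1)}_{s+1} = \Phi^{(s+1)}_{s+2} = 0$ follow for the same combinatorial reasons as in Lemma~\ref{lem: elementary-step}: one cannot lose a variable from a one-variable function, and one cannot gain a variable beyond $s$ when there are only $s$ background slots available, nor enlarge an $(s+1)$-particle set. The main obstacle, and where the real work lies, is the bookkeeping of the redistribution step: one must check that after moving a background particle into the distinguished block the resulting functions are genuinely symmetric (as was done for $\widehat\psi^\pm_m$ in Lemma~\ref{lem: elementary-step}) and that the recollision constraint, which a priori couples parameters across the tree, can be localized to depend only on the labels $\leq 2^*$ and on $1^*$ --- this is precisely what Proposition~\ref{recoll1-prop} guarantees, so the argument reduces to invoking that proposition with the correct bookkeeping of which particle plays the role of $1^*$ and $2^*$. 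The $|\log\eps|$ powers are then simply read off from \eqref{cP-est}, and the symmetrization and $L^1_\beta$ norm computations are routine repetitions of the corresponding steps in the proof of Lemma~\ref{lem: elementary-step}.
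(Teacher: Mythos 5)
Your structural skeleton is the right one and coincides with the paper's: when the single recollision pairs a distinguished particle with a background particle you absorb the background particle into the argument of $\varphi_m$, producing the $(m+1)$-variable term, while the two pure cases (recollision entirely inside the background or entirely inside the distinguished block, plus the trace case) are treated as in Lemma~\ref{lem: elementary-step}. The gap is in the quantitative mechanism. The gain of $\eps$ for the enlarged block cannot be obtained from Proposition~\ref{recoll1-prop}: that proposition measures bad sets of \emph{collision-tree parameters} $(t_m,\nu_m,v_m)$ of two parents, integrated against their collision kernels, and its bound is $\eps|\log\eps|^3$ as in \eqref{cP-est}; inside a single elementary step there is only the one collision integral $|C^{b,\pm}_{s,s+1}|$ and one time integral, so those parent parameters are simply not available, and invoking them would couple different iterations and destroy the factorized structure that the lemma must feed into the induction of Section~\ref{subsec: Iterated L1 continuity estimates}. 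What is needed is a \emph{static} constraint in the phase-space variables: any configuration transported by $\widehat{\mathbf S}^{1,(i,j)}_{s+1}(\tau)$ lies, for every $\tau\le t$, in the $\tau$-independent set $\cP_{(i,j)}$ of \eqref{eq: set P}, so the indicator $\indc_{\cP_{(i,j)}}(z_j,z_i)$ can be folded into the definition of the new $(m+1)$-variable function; its $z_i$-integral is an elementary tube estimate giving $Ct\eps|\log\eps|$ (the $|\log\eps|$ coming from the truncation ${\mathcal V}_{s+1}$, which is its actual role here), see \eqref{eq: bound psi m +1 -ij}, and symmetrizing costs a factor $m^2\le s^2$. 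The geometric analysis of Section~\ref{sec: Geometric control of recollisions} is used for the companion $L^\infty$ factor $|Q^{q,1}|$ in the Cauchy--Schwarz step, not in this $L^1$ lemma.

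Second, your bookkeeping for \eqref{eq: Phim recol} does not close. The background--background recollision costs nothing beyond the $Ct$ of Lemma~\ref{lem: elementary-step}: energy conservation leaves the Maxwellian invariant and the distinguished block just follows free flow, so there is no ``measure of one-recollision configurations'' of size $s^2t|\log\eps|$ to invoke (nor would such a set be small in the relevant sense). The factor $Cs^2t\,|\log\eps|$ in \eqref{eq: Phim recol} comes from a contribution your proposal omits: after absorption, the collision operator acting on the enlarged $(m+1)$-variable function can itself drop one variable through the trace mechanism, producing a term in $\Phi^{(m)}_m$ of size ${1\over\eps}\times Cs^2t\,\eps|\log\eps|$, the $1/\eps$ being exactly compensated by the smallness of $\psi_{m+1}$. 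Without this route, the stated bounds cannot all be derived by the mechanisms you describe. (Your handling of the trace case with two distinguished particles and of the vanishing boundary cases is acceptable, modulo the paper's remark that with exactly one recollision the injectivity of the change of variables is recovered by parametrizing through the recollision up to the next boundary crossing.)
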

Unlike Lemma \ref{lem: elementary-step} which is iterated, the previous lemma will be used only once, 
thus there is no need to establish sharp bounds.

\begin{proof}
To simplify notation we drop the superscript~$(m)$ in the proof. We follow the main steps of the proof of Lemma~\ref{lem: elementary-step}.

\medskip

{\bf Step 1.} {\it Localization of the transport operators.}

 Let us first fix $(i,j)$ the pair of recolliding particles and denote by 
$\widehat {\mathbf S}_{s+1} ^{1, (i,j)}(\tau)$ the corresponding transport operator. 
For a given $\sigma \in {\mathfrak S}^m_{s+1} $, we have to distinguish two cases.

\medskip

{\it Case 1.} {\it $(i,j)$ belongs to $\sigma$ or $\sigma^c$}.

If $i,j \notin \sigma$, we have
\begin{equation}
\label{eq: localisation 1}
\widehat {\bf S}_{s+1} ^{1, (i,j)}(\tau)  \, \indc_{{\mathcal V}_{s+1}} \, M_\beta^{\otimes (s+1) } \varphi_m (Z_\sigma) \leq  M_\beta^{\otimes (s+1)} \widehat {\bf S}^0_m (\tau) \varphi_m (Z_\sigma )\, ,
\end{equation}
where the transport $\widehat {\bf S}^0_m$ acts only on the $m$ particles in $\sigma$.
The distribution is therefore unchanged.

If $i,j \in \sigma$, we have
\begin{equation}
\label{eq: localisation 2}
\widehat {\bf S}_{s+1} ^{1, (i,j)} (\tau) M_\beta^{\otimes (s+1) } \, \indc_{{\mathcal V}_{s+1}} \, \varphi_m (Z_\sigma) \leq  M_\beta^{\otimes (s+1)} \widehat {\bf S}^1_m (\tau) \varphi_m (Z_\sigma )\,.
\end{equation}
In this case then  the recollision involves two distinguished particles, so the distribution is modified by the scattering. However since the scattering preserves the measure $ dvdv_1 d\nu$, both the $L^\infty$ and $L^1$ norms will be unchanged. Note that in both cases the velocity cut-off has been neglected.

Compared to the previous section, there is however one issue: if there is no recollision, then a point of the phase space cannot be in the image~${\bf S}^0_{m} (\tau) (\d {\mathcal D}_\eps^{m,\pm}) (i,j)$ for two different pairs~$(i,j)$, and that fact was  the key argument to get the suitable $L^1$ estimate for $\Phi_{m}^{( m-1)}$ previously
(without loosing a factor $m^2$). In the current situation as there is exactly one recollision,  for any point in $\cD_\eps^s$ there exists a unique parametrization by one point of the boundary $\cD_\eps^s$ and one time. It is obtained by  using the backward flow, going through the first collision (which is the recollision) and reaching  another point of the boundary with a different (longer) time.

So in the end in both cases the analysis is exactly like the one performed in the previous section.

%Going back to the proof, we see that the only points where we used the fact that there is no recollision are
%\begin{itemize}
%\item[(i)]  the  positions of velocities  of the tagged particles  (with labels in $\sigma$) are not affected by the configuration of the background (outside from collision times)
%$$ \widehat {\bf S}^0_{s} (\tau) M_\beta^{\otimes s} \varphi_m (Z_\sigma) \leq  M_\beta^{\otimes s} \widehat {\bf S}^0_m (\tau) \varphi_m (Z_\sigma )$$
%\item[(ii)]  %\end{itemize}
%
%
%Point $(ii)$ is actually not an issue. A
%
 
\medskip

{\it Case 2.} {\it $i$ belongs to $\sigma^c$ and $j$ to $\sigma$ (or the symmetric situation)}.

Note first that this situation can only occur when $m< s+1$.

The recollision in the transport $\widehat {\bf S}_{s+1} ^{1, (i,j)}(\tau)$ induces a correlation between the  particles $z_i, z_j$ so   the structure  with $m$ distinguished particles and $s+1-m$ particles at equilibrium  is not stable anymore. 
The idea is then to add particle $i$ to the set of distinguished particles. 
But in order to keep some of the structure, we then need to gain  additional smallness (since~$\| \varphi_m\|_{L^1_\beta}$ is expected to decay roughly as $\eps^{m-1}$, adding a variable requires gaining a power of~$\eps$).

 For any $\tau \leq t$, a configuration $Z_{s+1}$ obtained by backward transport $\widehat {\bf S}_{s+1} ^{1, (i,j)}(\tau)$ will necessarily belong to the set 
\begin{equation}
\label{eq: set P}
\cP_{(i,j)} := \big\{ Z_{s+1} \in \D^{s+1} \; \Big| \; \;  
\exists \, u \leq t, \; \;    d( x_i + u v_i  , x_j + u v_j)\leq \eps  \big\} \, ,
\end{equation}
where $d$ denotes the distance on the torus.
Note that this set does not depend on $\tau \leq t$.
We then define  a new function with $m+1$ variables which will encompass the constraint on the recollision
\begin{equation}
\label{eq: function  P}
\psi_{m +1,\sigma^{{ <j>}}}^{i,j} ( Z_{\sigma}, z_i ):= \varphi_m ( Z_\sigma ) \indc_{\cP_{(i,j)}} ( z_j, z_i )
\, \indc_{{\mathcal V}_{m+1}} ( Z_{\sigma}, z_i )\, ,
\end{equation}
with a velocity cut-off acting on the $m+1$ variables.

We are going to check that 
\begin{equation}
\label{eq: bound psi m +1 -ij}
\|\psi_{m +1,\sigma^{{ <j>}}}^{i,j}  \|_{L^1_{\beta}(\D^{m+1})} 
\leq Ct \eps \, \big| \log \eps \big|  \|  \varphi_m \|_{L^1_{\beta}(\D^m)} \, .
\end{equation}
Thus the extra factor $\eps \big| \log \eps \big|$ will compensate partly the factor $1/\eps$ corresponding to the shift from $m$ to $m+1$.
To prove \eqref{eq: bound psi m +1 -ij}, we first freeze the coordinates $Z_\sigma$. 
Integrating first over~$z_i$, we recover 
the factor $Ct\eps \, \big| \log \eps \big|$ from the constraint $\cP_{(i,j)}$ (as all energies are bounded by~$C_0 |\log \eps|$),
and then \eqref{eq: bound psi m +1 -ij} after integrating over the other coordinates.

The transport operator can be localized on~$m+1$ variables
\begin{align*}
\widehat {\bf S}_{s+1}^{1, (i,j)}(\tau)  \, \indc_{{\mathcal V}_{s+1}} \, M_\beta^{\otimes (s+1) } \varphi_m (Z_\sigma) 
& \leq  \widehat {\bf S}_{s+1} ^{1, (i,j)}(\tau)   \, M_\beta^{\otimes (s+1) } 
\varphi_m (Z_\sigma)  \indc_{\cP_{(i,j)}} ( z_j, z_i )
\, \indc_{{\mathcal V}_{m+1}}
\\
& \leq M_\beta^{\otimes (s+1)} \widehat {\bf S}^1_{m+1} (\tau)\psi_{m +1,\sigma^{{ <j>}}}^{i,j}  ( Z_{\sigma}, z_i ) \, ,
\end{align*}
where we used that $\varphi_m \geq 0$.

The function \eqref{eq: function  P} is not symmetric with respect  to the $i$ and $j$  variables. 
Thus to recover the symmetry, we   bound it from above by 
$$
\psi_{m+1} (Z_{m+1}) := \sum_{k,\ell \leq m+1\atop k\neq \ell}
 \varphi_m ( Z_{m+1} ^{<k>}) \, \indc_{\cP_{(k,\ell)}} ( z_k, z_\ell) \indc_{{\mathcal V}_{m+1}} \, .
$$
In this way, a factor  $m^2 \leq s^2$ has been lost compared to \eqref{eq: bound psi m +1 -ij}
\begin{equation}
\label{eq: bound psi m +1}
\|\psi_{m +1} \|_{L^1_{\beta}(\D^{m+1})} 
\leq Cts^2  \eps \, \big| \log \eps \big|  \|  \varphi_m \|_{L^1_{\beta}(\D^m)}  \, .
\end{equation}
Finally, we can write 
\begin{equation}
\label{eq: localisation 3}
\widehat {\bf S}_{s+1}^{1, (i,j)}(\tau)  \, \indc_{{\mathcal V}_{s+1}} \, M_\beta^{\otimes (s+1) } \varphi_m (Z_\sigma) 
\leq M_\beta^{\otimes (s+1)} \widehat {\bf S}^1_{m+1} (\tau) \, \psi_{m +1} ( Z_{\sigma}, z_i ).
\end{equation}

\medskip
 
{\bf Step 2.} {\it Reduction to the estimates of Lemma {\rm\ref{lem: elementary-step}}.}

Using the estimates \eqref{eq: localisation 1}, \eqref{eq: localisation 2} and \eqref{eq: localisation 3}, we get
$$
\begin{aligned}
 |C^{b, \pm}_{s,s+1}| & 
\widehat {\mathbf S}_{s+1}^1 (\tau)  \, \Big( M_\beta ^{\otimes (s+1)} 
\, \indc_{{\mathcal V}_{s+1}} \,
\sum_{\sigma\in {\mathfrak S}^m_{s+1} } \varphi_{m, \sigma} \Big) \\
& \leq   |C^{b, \pm}_{s,s+1}| \Big( 
 M_\beta ^{\otimes (s+1)} 
\sum_{\sigma\in {\mathfrak S}^m_{s+1} } (\widehat {\mathbf S}_{m}^0 (\tau) + \widehat {\mathbf S}_{m}^1 (\tau))   \varphi_{m, \sigma} \Big) \\
& \qquad +|C^{b, \pm}_{s,s+1}|  \Big(  M_\beta ^{\otimes (s+1)} 
\sum_{\tilde \sigma\in {\mathfrak S}^{m+1}_{s+1} }  \widehat {\mathbf S}_{m+1}^1 (\tau) \psi_{m+1, \tilde \sigma}\Big) \,.
\end{aligned}
$$
The global cut-off on the velocities has been removed and the transport operator localized so that  the  proof of Lemma \ref{lem: elementary-step} can be applied. Note that the first term in the right-hand side will contribute to $\Phi_m$ and $\Phi_{m-1}$, while the second term will   contribute to $\Phi_{m+1}$ and~$\Phi_{m}$. In the latter case, an argument of the function $\psi_{m+1}$ is dropped  and the factor $1/\eps$ is compensated (up to a logarithmic loss in $\eps$) thanks to
the estimate \eqref{eq: bound psi m +1}.
We therefore end up with 
$$\begin{aligned}
\int_0^{t}  &   d\tau  \,   e^{-{J \tau\over t} } |C^{b, \pm}_{s,s+1}| 
\widehat {\mathbf S}_{s+1}^1 (\tau)  \, \Big( M_\beta ^{\otimes (s+1)} \, \indc_{{\mathcal V}_{s+1}} \,
\sum_{\sigma\in {\mathfrak S}^m_{s+1} } \varphi_{m, \sigma} \Big)   \\
& \qquad \leq M_{\beta} ^{\otimes s} (V_{s})
\Big( \sum_{\sigma\in {\mathfrak S}^m_{s} }  \Phi_{m,\sigma}
+ \sum_{\sigma\in {\mathfrak S}^{m-1} _{s} }  \Phi_{m-1,\sigma}
+ \sum_{\sigma\in {\mathfrak S}^{m+1} _{s} }  \Phi_{m+1,\sigma}\Big)  \, , 
\end{aligned}
$$
with 
$$
\begin{aligned}
\| \Phi_{m-1} \| _{L^1_{\beta}(\D^{m-1})}  & \leq {C  \over \eps (m - 1) }  \| \varphi_m \| _{L^1_{\beta}(\D^m)}
\\
\| \Phi_m \| _{L^1_{\beta}(\D^m)} &  \leq C s^2 t \, \big| \log \eps \big| \| \varphi_m \| _{L^1_{\beta}(\D^m)} \\
\| \Phi_{m+1} \| _{L^1_{\beta}(\D^{m-1})}  & \leq {C  s^2 t} \eps \, \big| \log \eps \big| \| \varphi_m \| _{L^1_{\beta}(\D^m)} ,
\end{aligned}
$$
with $\Phi_0 =0$ if~$m=1$ and~$ \Phi_s = \Phi_{s+1} = 0$ if~$m=s$ or $m=s+1$.  
This is exactly the expected estimate.
\end{proof}

\subsection{Estimate of $R_N^{K,1}$  (super exponential branching with exactly one recollision)}

The proof of Proposition \ref{RNK1} follows   the same lines as the proof of Proposition \ref{RNK0}.
With the notation \eqref{Cb-def}, the iterated collision operators with quadratic and bounded collision kernels are denoted by $|Q^{q,1} _{1,J}|$, $|Q^{b,1} _{1,J}|$. The proof is split into three steps.

\medskip
 
{\bf Step 1.} {\it Evaluating  the norm of $|Q^{b,1} _{1,J}|(t)$ in $L^1_\beta$.}

We   use recursively   Lemma 
\ref{lem: elementary-step}, together with one iteration of Lemma \ref{lem: elementary-step 2}. 
Using as previously the exponential to get rid of the constraint on the time increments, we have to control
a quantity of the form
\begin{align*}
&\int_{\D} dz\,  |Q_{1,J}^{b,1}| (t)  M_\beta ^{\otimes J} \varphi_{m,\sigma} \, \indc_{{\mathcal V}_{J}}   \\
& \qquad \qquad  \leq 
 \alpha^{J-1}e^J \sum_{ \ell=2}^J \int_{\D} dz\,  \int_0^\infty \int_0^\infty\dots  \int_0^\infty d\tau_{{J}} \dots d\tau_{2}  e^{  - J{\tau_2\over t} }  |C^b_{1,2}|  \widehat  {\mathbf S}^0 _{2}(\tau_2)  \dots \\
& \qquad \qquad  \qquad \qquad \dots  e^{  - J{\tau_\ell\over t} } \indc_{\tau_\ell \leq t}  |C^b_{\ell-1,\ell}|   \widehat  {\mathbf S}^1 _{\ell}(\tau_{\ell}) \dots    e^{  - J{\tau_J\over t} } \widehat  {\mathbf S}^0 _{J}(\tau_{J}) M_\beta ^{\otimes J} \varphi_{m,\sigma} \, \indc_{{\mathcal V}_{J}} \\
&  \qquad \qquad  \leq 
 \alpha^{J-1}e^J \sum_{ \ell=2}^J \int_{\D} dz\,  \int_0^\infty \int_0^\infty\dots  \int_0^\infty d\tau_{{J}} \dots d\tau_{2}  e^{  - J{\tau_2\over t} }  |C^b_{1,2}|  \widehat  {\mathbf S}^0 _{2}(\tau_2)  \dots \\
&  \qquad \qquad  \qquad \qquad \dots  e^{  - J{\tau_\ell\over t} } \indc_{\tau_\ell \leq t}  |C^b_{\ell-1,\ell}|   \widehat  {\mathbf S}^1 _{\ell}(\tau_{\ell})  \, \indc_{{\mathcal V}_{\ell}}  \dots    e^{  - J{\tau_J\over t} } \widehat  {\mathbf S}^0 _{J}(\tau_{J}) M_\beta ^{\otimes J} \varphi_{m,\sigma}, 
\end{align*}
where the cut-off on the velocities  in the second inequality applies only to the operator with one recollision
(by using the fact that the energy is preserved by the transport operators).

\medskip

We proceed now by applying $J-2$ times the estimates of Lemma \ref{lem: elementary-step}, and once the estimate of Lemma \ref{lem: elementary-step 2}.
When applying Lemma \ref{lem: elementary-step 2}, the number of variables may shift from $m$ to $m+1$, but for all other iterations we either stay with the same number variables, or shift from $m$ to $m-1$. As the number of variables has to be  dropped to 1, the total number of possible combinations is less than  $2^J$. We therefore end up with
\begin{equation}
\label{Q1L1-est}
\int_{\D} dz\,  |Q_{1,J}^{b,1}(t)| \sum _{\sigma\in {\mathfrak S}^m_{J} } M_\beta ^{\otimes J} \varphi_{m,\sigma} (Z_\sigma)    \, \indc_{{\mathcal V}_{J}}
\leq 
(C\alpha)^{J-1} \, t^{J-m} \frac{J^3 |\log \eps| }{\eps^{m-1} m! } 
 \| \varphi_m \|_{L^1_{\beta}(\D^m)} \, .
\end{equation}
This estimate is similar to the one of Proposition \ref{L1-est} with an extra factor $J^3|\log \eps|$.
To compensate this logarithmic divergence, we are going to adapt the $L^\infty$ estimates of  Proposition~\ref{prop: estimatelemmacontinuity quad} in order to gain a factor $\eps$ from the recollision.

\medskip
 
{\bf Step 2.} {\it Evaluating  the norm of $|Q^{q,1}|$ in $L^\infty$.}

Noticing that the recollision  takes place either in the last time interval or before, we get the decomposition
\begin{align}
\label{eq: gain gactor eps}
 |Q_{1,J}^{q,0}| (t) \,  |Q_{J, J+n}^{q,1}|(h) \, M_\beta ^{\otimes (J+n) }
 {  \indc_{{\mathcal V}_{J+n}}}
 & + |Q_{1,J}^{q,1}| (t) \,  |Q_{J, J+n}^{q,0}| (h) \, M_\beta ^{\otimes (J+n) }
  {  \indc_{{\mathcal V}_{J+n}}}
 \\
  &\leq (C\alpha t)^{J-1} (C\alpha h)^{n-2}(J+n) ^3 \eps |\log \eps |^{10} M_{5 \beta/8} (v_1)\,, 
\nonumber
\end{align}
where we used the refined estimate~(\ref{eq: L1 estimate petit h}) 
and the geometric estimates of Section~\ref{sec: Geometric control of recollisions}
in order to recover the factor $\eps$ from the recollision.
%Indeed we start by noticing that thanks to the estimate obtained in , in particular, we have
Combined with \eqref{Q1L1-est} and a Cauchy-Schwarz estimate as in \eqref{nosmeilleursamis}, we get
\begin{equation}
	\label{Q1-estinfty}
		\begin{aligned}
& \big\|  |Q_{1,J}^1|(t) \,   |Q^0_{J,J+n}|(h) \, M_{J+n,\beta}  \, \indc_{{\mathcal V}_{J + n}}
\sum _{\sigma\in {\mathfrak S}^m_{J} } | g_{m,\sigma} | \big\|_{{ L^2(\D)}} \\
&\quad 	+\big\|  |Q_{1,J}^0|(t) \,  |Q^1_{J,J+n}|(h) \, M_{J+n,\beta}  \, \indc_{{\mathcal V}_{J + n}}
\sum _{\sigma\in {\mathfrak S}^m_{J} } |g_{m,\sigma}| \big\|_{{ L^2(\D)}} \\ 
&\qquad \qquad 
\leq   (C\alpha t)^{J+n/2 -1}(C\alpha h)^{n/2 - 1}(J+n)^\frac32 \eps^{1 /2} |\log \eps|^{  11/2} { \| g_m\|_{L^2_\beta}  \over \sqrt{ \eps^{m-1} m!} } \, \cdotp
	\end{aligned}
	\end{equation}
The logarithmic loss in $\eps$ is  compensated by the extra $\eps^{1 /2}$ factor from \eqref{eq: gain gactor eps}.
Thus, we have obtained a counterpart of Proposition \ref{L2-est}.

\medskip
 
{\bf Step 3.} {\it Resummation.}

 The last step is then to sum over all possible contributions $k$,  $j_i <n_i$ for  $ i\leq k-1$, $j_k \geq n_k$,  and $m\leq  J_k$.
Recall from \eqref{gi-est} that
$$
\| g_N^{m}( t-kh) \|_{L^2_\beta}^2 \leq {CN\exp (C\alpha^2)\over  \binom{N}{m}}  
\leq {C^m \, m ! \exp (C\alpha^2) \over N^{m-1}  } \, \cdotp 
$$
Then, by    (\ref{Q1-estinfty}), we have   (rounding off the power of $\log \eps$)
$$
\begin{aligned}
\big\| |Q^0_{1,J_1} | (h ) \dots  |Q^0_{J_{k-1},J_{k}} | (h )  \sum _{\sigma\in {\mathfrak S}^m_{J_k} } &M_\beta ^{\otimes J_k} \, \indc_{{\mathcal V}_{J_k}} |g_N^{m}  ( Z_{\sigma}) | \, 
 \big\|_{{ L^2(\D)}}\\
& \leq (C\alpha)^{J_k} \exp (C\alpha^2) t^{J_{k-1}+j_k/2  } h^{j_k/2 - 1}\eps^{1 /2} |\log \eps|^{  6} \,.
 \end{aligned}
 $$
We then sum over all $m\in \{1,\dots,  J_k\}$  to get
$$
\begin{aligned}
\big\|  |Q^0_{1,J_1} | (h) \dots & |Q^0_{J_{k-1},J_{k}} | (h) \,  | f^{(J_K)}_N (t-kh)|
\, \indc_{{\mathcal V}_{J_k}} \big\|_{{ L^2(\D)}}\\ 
&  \leq (C\alpha)^{J_k}  \exp (C\alpha^2) t^{J_{k-1}+j_k/2  } h^{j_k/2-1 } \eps^{1 /2} |\log \eps|^{  6} \,.
 \end{aligned}
$$

Provided that $\alpha ^2t^{3/2}h^{1/2}   \ll 1$, we can first sum over all $j_k \geq n_k$, which leads to 
$$
\begin{aligned}
\sum_{j_k \geq n_k} \big\|  |Q^0_{1,J_1} | (h) \dots  |Q^0_{J_{k-1},J_{k}} | (h) \,  | f^{(J_K)}_N (t-kh)|
\, \indc_{{\mathcal V}_{J_k}} \big\|_{{ L^2(\D)}} \\
\leq  (C\alpha^2 t^{3/2} h^{1/2} )^{J_{k-1} } \exp (C\alpha^2) \frac{ \eps^{1 /2}}{h} |\log \eps|^{  6}
\,.
\end{aligned}$$
Taking the sum over all possible $j_i <    2^in_0$ for $i\leq k-1$, we get $O(2^{k^2})$ such terms.
%Finally the large velocities in ${\mathcal V}_{J_k}^c$ are controlled thanks to 
%Lemma \ref{lem: truncation}.
We therefore end up with 
$$
\Big\|   R_N^{K, 1}(t) \Big\|_{L^2(\D)} 
\leq \gamma     \,  
{ \eps^{1/ 2}  |\log \eps|^{  6} \over h}  \,  \cdotp
$$
This concludes the proof of Proposition \ref{RNK1}. \qed

%%%%%%%%%%%%%%%%%%%%%%
%%%%%%%%%%%%%%%%%%%%%%
%%%%%%%%%%%%%%%%%%%%%%
%%%%%%%%%%%%%%%%%%%%%%

\section{Control of super-exponential trees with multiple recollisions}
\label{recollisions}

Recall that the remainder term~$R_N^K$ is a series expansion~(\ref{reste}) with elementary terms of the form 
$$\alpha^{J_k -1} Q_{1,J_1} (h ) \dots  Q_{J_{k-2},J_{k-1}} (h ) Q_{J_{k-1},J_k} (h)     f^{(J_k)}_N(t- kh) \, ,$$
which corresponds exactly  to collision trees having
\begin{itemize}
\item $j_i<n_i$ branching points on the first $k-1$ intervals ($i<k$);
\item $j_k \geq n_k$ branching points on the $k$-th interval;
\end{itemize}
and that $R_N^{K,>}$ is  the restriction of $R_N^K$ to pseudo-dynamics having more than one recollision, with energies bounded by~$C_0 |\log  \eps|$.

\medskip

The main result of this section is the following.
\begin{Prop}
\label{propR>} 
Let~$\gamma<1$ be given. Choose
$$
n_k =n_0\times  2^k, \qquad h \leq \frac{\gamma}{\exp (C\alpha^2) T^3} \, \cdotp
$$ 
Under the Boltzmann-Grad scaling~$N\eps = \alpha \gg1$, there holds for all~$t \in [0,T]$
$$
\Big\|   R_N^{K,>}(t) \Big\|_{{ L^2 (\D)}}   \leq \gamma  \, .
$$
\end{Prop}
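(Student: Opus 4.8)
The plan is to estimate $R_N^{K,>}(t)$, defined in \eqref{reste>} as the part of the remainder $R_N^K$ from \eqref{reste} carried by pseudo-dynamics with at least two recollisions and energies bounded by $C_0|\log\eps|$, by brute-force $L^\infty$ bounds on the marginals. The point is that such a bound costs a factor $N=\alpha/\eps$ (this is exactly why the $L^2$ machinery of Sections~\ref{decompositionL2andCss+1}--\ref{lanfordL2} was used for the $0$- and $1$-recollision pieces: one recollision gains only $\eps|\log\eps|^{c}$, and $N\eps|\log\eps|^{c}=\alpha|\log\eps|^{c}$ is \emph{not} small), whereas two recollisions gain $\eps^2|\log\eps|^{c}$ and $N\eps^2=\alpha\eps\to0$. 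Concretely, I would write each elementary contribution to $R_N^{K,>}$, indexed by $k\le K$, by $j_i<n_i$ ($i<k$) and $j_k\ge n_k$, and by a collision tree $a\in\cA_{J_k}$, as an integral of $\prod_{i=2}^{J_k}\big|(v_i-v_{a(i)}(t_i))\cdot\nu_i\big|$ against $|f_N^{(J_k)}(t-kh)|$ over the pseudo-trajectory parameters in $\cT_{2,J_k}\times{\mathbb S}^{J_k-1}\times\R^{2(J_k-1)}$, restricted to ${\mathcal V}_{J_k}$ (see \eqref{eq: truncated velocities}) and to the set of parameters whose BBGKY pseudo-trajectory issued from $z_1$ at time $t$ undergoes at least two recollisions. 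Since $f_N^{(J_k)}(t-kh)$ is an intermediate marginal, the only available pointwise bound is \eqref{Linfty-marginals}, which together with hypothesis \eqref{eq: Linfty initial} and $\alpha\eps=\alpha^2/N\ll1$ reads $|f_N^{(J_k)}(t-kh,Z_{J_k})|\le (\alpha/\eps)\,C^{J_k}\exp(C\alpha^2)\,M_\beta^{\otimes J_k}(V_{J_k})$. This reduces the proposition, pointwise in $z_1$, to an $L^1$-type estimate (in the pseudo-trajectory variables, against the collision weights) of the Maxwellian-weighted indicator of the ``at least two recollisions'' set, summed over trees.

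\textbf{Geometric control of two recollisions.} The core step extends Section~\ref{sec: Geometric control of recollisions} from one to two recollisions, and I expect this to be the main obstacle. For a pseudo-trajectory with at least two recollisions I would single out the first two. Each of them, by Proposition~\ref{recoll1-prop}, forces a bad-set constraint parametrized only by the parameters of its first one or two parents, with $L^1$-cost (against the parent collision weights $\prod_{m\in\sigma}|(v_m-v_{a(m)}(t_m))\cdot\nu_m|\,dt_m\,d\nu_m\,dv_m$) at most $\eta:=C\,sR^7t^3\eps|\log\eps|^3$, where $R^2\le C_0|\log\eps|$. When the two pairs of parents are disjoint, Fubini decouples the two constraints and one gains $\eta^2$. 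The delicate case is when the parents of the second recollision interfere with those of the first --- shared parents, a recollision chained through the periodicity of $\T^2$, or a second recollision occurring among particles already constrained by the first --- where the naive product estimate is unavailable and one must fuse the two local conditions into a single joint geometric estimate; this is precisely what the lemmas of Appendix~\ref{geometricallemmasappendix} supply. Using them, I would show that the set of parameters yielding at least two recollisions is contained in a union, over recollision types and parent sets $\sigma$ of at most four indices, of sets $\cP_2(a,\sigma)$ with $L^1$-cost against the corresponding parent weights $\le C\eta^2$, and, after inserting $R^2\le C_0|\log\eps|$ and $t\le T\le C_0|\log\eps|$, $\eta^2\le C\,s^2|\log\eps|^{c}\,\eps^2$ for a fixed exponent $c$.

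\textbf{Globalization over the tree.} Next I would plug this into a tree estimate patterned on Proposition~\ref{restrictedtrees}: integrate first the $J_k-4$ velocities, angles and sub-trees above the parents using the $L^\infty$ continuity estimates of Proposition~\ref{estimatelemmacontinuity}, producing a factor $(Cs)^{J_k}$ that absorbs the combinatorics $|\cA_{J_k}|\le(J_k-1)!$ against the Maxwellian decay; then use the joint estimate $\le C\eta^2$ for the four distinguished parents; then integrate the remaining collision times over the time simplex with the constraint that the last $j_k$ of them lie in an interval of length $h$, which yields the usual factors as in \eqref{eq: L1 estimate petit h}, trading a few powers of $h$ for powers of $t$. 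Collecting everything, using $J_k\le 2^{K+1}n_0$ and $J_{k-1}\le n_k\le j_k$, and noting that $(\alpha/\eps)\cdot\eps^2=\alpha\eps$ while the final $z_1$-integration costs only a finite Maxwellian factor $\|M_{\beta/2}\|_{L^2(\D)}$, each elementary term is bounded in $L^2(\D)$ by roughly
\[
\exp(C\alpha^2)\,(\alpha\eps)\,|\log\eps|^{c}\,\big(C\alpha^2 t^2 h\big)^{j_k}\,.
\]

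\textbf{Summation and conclusion.} Summing over $j_k\ge n_k$ is a convergent geometric series, since $C\alpha^2T^2h\ll1$ follows from $h\le\gamma/(\exp(C\alpha^2)T^3)$ together with $\alpha\ll\exp(C\alpha^2)$, and it leaves $(C\alpha^2T^2h)^{n_k}$; summing over the $j_i<n_i=2^in_0$ for $i<k$ contributes at most $2^{k^2}$ terms as in \eqref{eq: nk 2k}, which is crushed by the super-exponential $(C\alpha^2T^2h)^{n_k}=(C\alpha^2T^2h)^{2^kn_0}$; summing over $k\le K$ then gives $\|R_N^{K,>}(t)\|_{L^2(\D)}\le C\exp(C\alpha^2)\,\alpha\eps\,|\log\eps|^{c}$. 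Since $\eps=\alpha/N\to0$ and $|\log\eps|\sim\log N$ in the Boltzmann-Grad scaling $N\eps=\alpha$, the right-hand side tends to $0$, hence is $\le\gamma$, which is the desired bound; this is the step where the geometric smallness of a \emph{pair} of recollisions exactly defeats the $O(N)$ loss of \eqref{Linfty}, and it is also where the restriction to $d=2$ enters, the critical number of recollisions being $1$ only in that dimension.
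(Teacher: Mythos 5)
Your overall architecture matches the paper's: bound the remainder by the a priori $L^\infty$ estimate~\eqref{Linfty-marginals}, paying the factor $N$, compensate by a geometric estimate on the set of parameters producing two recollisions, globalize over collision trees in the style of Proposition~\ref{restrictedtrees2}, and close via the super-exponential sampling. But the central geometric claim in your proposal is wrong, and the way you close the argument at the end inherits this error.

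You assert that two recollisions can be decoupled by Fubini (when parents are disjoint), or fused by the Appendix~\ref{geometricallemmasappendix} lemmas (when they interfere), so as to yield in either case an $L^1$-cost $\le C\eta^2\sim\eps^2|\log\eps|^{c}$, whence $N\eps^2=\alpha\eps\to0$. This is not what Proposition~\ref{lemrecoll} delivers. In case~(i) the bound is $\cP_1$ (measure $\sim\eps|\log\eps|^3$) intersected with a \emph{single} further constraint $\cQ$ of measure only $\eps^{1/4}|\log\eps|^2$, using one extra parent, hence roughly $\eps^{5/4}|\log\eps|^5\le\eps$; in case~(ii) the bound is $C(Rt)^r s^2\eps$. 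The power is always essentially one, never two. The obstruction is precisely the coupling you try to finesse by Fubini: the position shift $\delta\tilde x_{k\ell}(t_{\tilde 1})$ in the second recollision equation~\eqref{reckl-equation} is affected by the first recollision (the trajectories of $k,\ell$ are deflected in the interval $[t_{\tilde 1},t_{\tilde 2}]$ by the shock whose parameters you have already spent), so the rectangle argument of Lemma~\ref{rec-eq} cannot be reused; the paper explicitly notes this and falls back to the weaker cone estimate, which produces $\eps^{1/4}$ rather than $\eps$. Consequently the real difference between one and two recollisions is \emph{not} an extra power of $\eps$; it is that the one-recollision measure $\eps|\log\eps|^3$ carries logarithmic losses (from time integration and the scattering singularity), whereas the two-recollision measure has the fractional extra power $\eps^{1/4}$ which swallows all logs, giving a clean factor $\eps$. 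Similarly the $R^r$ divergence in case~(ii) must not be bounded by $|\log\eps|^{r/2}$; it is absorbed by Maxwellian tails through a dyadic energy decomposition as in~\eqref{eq: control R}, which would otherwise reintroduce logs and ruin the cancellation.

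Because the gain is $\eps$ and not $\eps^2$, $N\eps=\alpha$ is \emph{not} small, and your final step ``$\|R_N^{K,>}(t)\|_{L^2}\le C\exp(C\alpha^2)\alpha\eps|\log\eps|^c\to0$'' is unsound as stated. The actual smallness comes entirely from the super-exponential factor after the $j_k$-summation, roughly $\exp(C\alpha^2)\frac{T^r}{h^5}(C\alpha^2 hT)^{n_0}$, under the scaling $h\le\gamma/(\exp(C\alpha^2)T^3)$ which makes $\alpha^2hT\ll\gamma$; the proposition is then closed by choosing $n_0$ large enough, and the paper points out that this is the only place in the whole argument where $n_0$ has to be tuned. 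Your proposal carries the factor $(C\alpha^2 t^2 h)^{j_k}$ through the resummation and then discards it in favor of $\alpha\eps\to0$, which is exactly backwards: that factor is the engine of the proof, and the $\eps$-gain only serves to keep the prefactor free of $|\log\eps|$ so that the engine is not overwhelmed.
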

The next two paragraphs are devoted to a quantitative estimate showing  that dynamics with more than one  recollision  are unlikely: the statement is given in Paragraph~\ref{geometric control statement}, and its proof is in Paragraphs~\ref{classificationsection}. Finally the proof of Proposition~\ref{propR>} appears in Paragraph~\ref{conclusion of the proof superexpmultiple}, combining the geometric argument with the time sampling.

\subsection{Geometric control of multiple recollisions: statement of the result}\label{geometric control statement}
Unlike in  Section~\ref{convergencepartieprincipales}, 
we   need very sharp estimates to compensate the divergence of order~$N$ 
of the~$L^\infty$ norm given in \eqref{Linfty}.
Thus we cannot afford to lose any power of $|\log \eps|$. In order to improve the bound obtained in Proposition~\ref{recoll1-prop} we shall estimate the size of trajectories having at least two recollisions rather than one. Indeed we recall that the three powers of~$|\log \eps|$
in the analysis of one recollision are due
to the integration in time of the constraint of having one recollision, as well as on the possibility of having small relative velocities
 (which are integrated out and create a loss of~$|\log \eps|$ through the scattering operator). Here we shall see that the presence of a second recollision leads to a finer geometric condition which produces in general a bound of the size~$\eps^\gamma$ with~$\gamma>1$, hence any power of~$|\log \eps|$ can be absorbed. However
in some degenerate situations this geometric condition is ineffective (for instance when some relative velocities are too small) and in that case some specific arguments must be used, which give just a power of~$\eps$ with no additional gain -- nor loss.

\medskip

 The  presence of multiple recollisions can be encoded in the domain of integration (collision times, impact parameter and velocity of the additional particles). 
 The analysis relies heavily on the computations leading to Proposition~\ref{recoll1-prop}, but the two recollisions may be intertwined so   more cases have to be considered.  
In the next paragraph we start by introducing a classification of the different situations that can lead to the first two recollisions in the dynamics.    The proof of the  technical aspects regarding each geometric case  is postponed to Appendix~\ref{geometricallemmasappendix}.

\medskip

 As in the case of Proposition~\ref{recoll1-prop}
we analyze all possible scenarios  leading to the occurrence of at least two recollisions. Each scenario is labeled by an index~$p$ and the total number of possible scenarios, whose exact value is irrelevant, is denoted in the following by~$p_0$.
 The next statement is the counterpart of Proposition~\ref{recoll1-prop}
in the case of two recollisions, and we use the notation
of that proposition. For any finite set of integers~$\sigma$ we denote by~$\sigma_-$ the smallest integer in~$\sigma$, and by~$\sigma_+$ the largest one. 
 \begin{Prop}
\label{lemrecoll}
Fix a final configuration of bounded energy~$z_1 \in \T^2 \times  B_R$ with~$1 \leq R^2 \leq C_0 |\log \eps|$,  a time $1\leq t \leq C_0|\log \eps|$ and  a collision tree~$a \in \cA_s$ with~$s \geq 2$.

For all types of recollisions $0 \leq p\leq p_0$, and all sets of parents $\sigma \subset \{2,\dots, s\}$ (where the length $|\sigma| \leq 5 $ depends only on $p$), there exist sets of bad parameters
$\cP_2(a, p,\sigma)\subset \cT_{2,s} \times {\mathbb S}^{s-1} \times \R^{2(s-1)}$  satisfying

\begin{itemize}
\item[(i)]  for $0 \leq p\leq 2$,
$$\cP_2(a, p,\sigma):=\Big \{(t_m, v_m, \nu_m) \in \cP_1(a,p, \sigma\setminus \{\sigma^+\} )\,/\, ( t_{\sigma^+}, v_{\sigma^+}, \nu_{\sigma^+}) \in  \cQ(a,p,\sigma) \Big\} $$
 where $\cQ(a,p,\sigma) $ is parametrized only in terms of   $(t_m, v_m, \nu_m)$ for   $m <  \sigma^+$ and satisfies
 $$\int \indc _{\cQ(a,p,\sigma)  } \displaystyle  \big| \big(v_{\sigma^+}-v_{a(\sigma^+)} ( t_{{\sigma^+}} ) )\cdot \nu_{{\sigma^+}}\big)\big | d  t_{{\sigma^+}} d \nu_{{\sigma^+}}dv_{{\sigma^+}}   
  \leq CR^5 s t^2 \eps^\frac14  |\log \eps|^2\,;
$$

 \item[(ii)]  for $2 < p \leq p_0$,
$\cP_2(a, p,\sigma)$ is parametrized only in terms of   $(  t_m, v_m, \nu_m)$ for $m \in \sigma$ and~$m <   \sigma_-$ and satisfies  \begin{equation}
 \label{cP2-est}
 \int \indc _{\cP_2(a,p,\sigma)  } \displaystyle  \prod_{m\in \sigma} \,\big | \big(v_{m}-v_{a(m)} ( t_{{m}} ) )\cdot \nu_{{m}}\big)\big | d  t_{{m}} d \nu_{{m}}dv_{{m}}   
  \leq C(Rt)^{r}s^2 \eps 
  \end{equation}
for some~$r>0$;
\end{itemize}
such that   any pseudo-trajectory starting from $z_1$ at $t$, with total energy bounded by $R^2$ and involving at least two recollisions, is parametrized by 
$$( t_n, \nu_n, v_n)_{2\leq n\leq s }\in  \bigcup _{p=0}^{p_0} \bigcup _\sigma  \cP_2(a, p,\sigma)\,.$$
\end{Prop}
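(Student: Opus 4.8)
The plan is to extend the geometric analysis of Proposition~\ref{recoll1-prop} to the situation of two recollisions, by performing a case distinction on the combinatorial/geometric scenario that can produce the first two shocks in the backward pseudo-dynamics. As in the proof of Proposition~\ref{recoll1-prop}, fix a pseudo-trajectory starting from~$z_1$ at time~$t$ with energy bounded by~$R^2$ and involving at least two recollisions; call them the ``first'' and ``second'' recollision in the order in which they are encountered going backwards in time. The first recollision is already controlled by the machinery of Section~\ref{sectionlocalcondition}: it forces the parameters of the first one or two parents~$1^*,2^*$ of the recolliding pair to lie in one of the sets~$\cP_1(a,p,\sigma\setminus\{\sigma^+\})$ for~$p\in\{0,1,2\}$, with the local measure bound~\eqref{cP-est}. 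The new ingredient is to encode the \emph{second} recollision as an additional constraint on one extra parent~$\sigma^+$, producing the nested structure described in item~(i), or, when the two recollisions are sufficiently ``entangled'' (sharing particles, or occurring in atypical configurations), a genuinely new geometric condition described in item~(ii).

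First I would set up the classification. For~$p\in\{0,1,2\}$ the first recollision is a self-recollision or a two-parent recollision exactly as before, and the second recollision is required to involve at least one \emph{new} degree of freedom — a further parent~$\sigma^+$ — whose creation parameters~$(t_{\sigma^+},\nu_{\sigma^+},v_{\sigma^+})$ are then constrained. Here the geometry of the second recollision is the geometry of a \emph{single} recollision between two pseudo-particles whose trajectories are already determined by the data with labels~$<\sigma^+$, so Lemma~\ref{rec-eq} and the scattering estimates of the Appendix apply verbatim: one rescales time near~$t_{\sigma^+}$ by~$\eps$, reduces the admissible region for~$v_{\sigma^+}'-v_j$ to a thin rectangle, and integrates. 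The key point is that one already paid for the smallness of the first recollision through~\eqref{cP-est}; the second recollision is \emph{free} to cost only~$\eps^{1/4}|\log\eps|^2$ (an extra~$\eps^{1/4}$ gain, with room to spare for the logarithmic losses) because we no longer need the full~$\eps$ — combining~$\eps|\log\eps|^3$ from~$\cP_1$ with~$\eps^{1/4}|\log\eps|^2$ from~$\cQ$ still beats~$\eps\cdot|\log\eps|^{\text{anything}}$, which is what is needed to kill the~$O(N)=O(\alpha/\eps)$ divergence of~\eqref{Linfty}. For~$2<p\le p_0$ I would enumerate the remaining topologically distinct ways two recollisions can interact — the second recollision reusing one or both particles of the first, a chain~$i$--$j$--$k$, simultaneous or interleaved scattering events, periodicity-induced self-recollisions of the second kind, etc. — and for each show, using the distance conditions \eqref{eq: cond 1ere recoll} written for \emph{both} recollisions, that the joint constraint on the parameters~$(t_m,\nu_m,v_m)_{m\in\sigma}$ (with~$|\sigma|\le 5$) has measure~$\le C(Rt)^r s^2\eps$ for a suitable power~$r$. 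In the non-degenerate sub-cases two independent recollision conditions give two factors of~$\eps$ (then one is spent on logarithmic losses), so the bound is actually~$\eps^{2-o(1)}$; the degenerate sub-cases — small relative velocities, near-alignment, near-tangential shocks — are exactly the situations where the second condition degenerates, and there one falls back to the single-recollision bound~$\eps$ with no further gain, but with no loss either (this is the content of the Appendix lemmas, which I would invoke as black boxes).

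The organizational steps are then: (1) formalize the notion of ``first two recollisions'' and the associated parents, handling the degenerate cases of Remark~\ref{rem: vide} where some parents do not exist (fewer variables, easier bound); (2) for~$p\le 2$, reuse~$\cP_1$ for the first recollision and define~$\cQ(a,p,\sigma)$ as the second-recollision constraint, proving the stated measure bound via Lemma~\ref{rec-eq} and the scattering estimates; (3) for~$p>2$, carry out the case enumeration, in each case extracting two (or, in degenerate cases, one) factors of~$\eps$ from the two recollision conditions, and absorbing the~$(Rt)^r$ polynomial losses using~$R^2+t\ll|\log\eps|$; (4) check that every pseudo-trajectory with~$\ge 2$ recollisions falls into at least one~$\cP_2(a,p,\sigma)$, by running the backward flow and recording which scenario its first two shocks realize; (5) verify the parametrization claims — that~$\cP_2(a,p,\sigma)$ in case~(i) depends only on labels~$\le\sigma^+$ and in case~(ii) only on labels in~$\sigma$ and below~$\sigma_-$ — which is what later allows Fubini decoupling as in Proposition~\ref{restrictedtrees}. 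I expect the main obstacle to be step~(3): the combinatorial explosion of ways two recollisions can be intertwined, and in particular the \emph{degenerate} sub-configurations where a naive ``two independent constraints'' count fails and one must argue more carefully — e.g. a recollision following another recollision between the same pair due to periodicity, or a configuration where~$\delta x_\perp$ is anomalously small — to still extract the single factor of~$\eps$ without any compensating loss. Controlling those degenerate cases uniformly, and checking the bookkeeping of~$|\log\eps|$ powers so that nothing worse than~$\eps^{1}$ survives, is where the real work lies; the non-degenerate cases are straightforward adaptations of Section~\ref{sectionlocalcondition}.
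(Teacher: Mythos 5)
Your skeleton matches the paper's: classify the first two backward recollisions by scenario, reuse $\cP_1$ for the first one and charge the second to one extra parent $\sigma^+$ (the nested structure of item (i), with the $\eps^{1/4}|\log\eps|^2$ bound coming from a threshold on the relative velocity of the second recolliding pair), treat the entangled/degenerate scenarios separately with a direct $O(\eps)$ bound, re-index by parents at the cost of a factor $s^2$, and check coverage and the parametrization claims. That is indeed the paper's plan (Section~\ref{classificationsection}). But there is a genuine gap: you do not prove item (ii), and your proposed way of closing it does not work. You assert that in the degenerate sub-cases one ``falls back to the single-recollision bound $\eps$ with no further gain, but with no loss either,'' invoking ``the Appendix lemmas as black boxes.'' The single-recollision estimate available at this stage is Proposition~\ref{recoll1-prop}, and it carries a loss of $|\log\eps|^3$; that loss is fatal here, because in the application (Proposition~\ref{propR>}) the factor $\eps$ from \eqref{cP2-est} is consumed \emph{exactly} by the $O(N)=O(\alpha/\eps)$ divergence of the $L^\infty$ bound \eqref{Linfty}, so any residual power of $|\log\eps|$ makes the remainder blow up. Obtaining $O(\eps)$ with no logarithm in precisely those regimes where the second geometric condition degenerates is the heart of the proposition, and it cannot be obtained as a ``fallback'': it requires new arguments, which in the paper occupy Appendix~\ref{geometricallemmasappendix} (the fixed-point/tube argument of Lemma~\ref{v-constraint} when the first recollision must output a velocity in an $\eps^{3/4}$-ball, including the delicate case $k=1^*$; the H\"older decoupling for parallel recollisions with a common first parent in Lemma~\ref{doublerecollision}; the chain case $t_{\tilde 1}=t_{1^*}$ in Lemma~\ref{Murphy} with the regularized $\arccos$ and Lipschitz fixed point; and the double self-recollision by periodicity in Lemma~\ref{self-recoll-periodic}). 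Treating these as black boxes is circular, since they are proved in the paper precisely to establish this proposition; the only legitimately pre-existing tools are the Carleman/scattering estimates of Lemma~\ref{scattering-lem}, and those alone do not yield the log-free $\eps$.

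Concretely, what is missing from your step (3) is the mechanism by which the logarithms are avoided: the splitting $t_{\tilde 1}<t_{1^*}$ versus $t_{\tilde 1}=t_{1^*}$, the thresholds of the type $|v_k-\bar v_\ell|\gtrless\eps^{3/4}$ and $|\tau_1||\bar v_i-v_j|\gtrless R$, and, in the degenerate branches, the replacement of the time integration (source of one $|\log\eps|$) by an integrable power $|\tau_1|^{-1-\delta}$ extracted from the second constraint, together with the use of \emph{two} extra parents to integrate relative-velocity singularities of exponent strictly between $1$ and $2$ via \eqref{alpha1}--\eqref{alpha2}, which is exactly what removes the remaining $|\log\eps|$ factors. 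Your heuristic ``two independent constraints give two factors of $\eps$'' also fails in the entangled cases (same first parent, chains, periodicity), which is why they are enumerated and treated one by one in the paper. As written, your proposal establishes (modulo routine bookkeeping) only the scenarios of item (i); item (ii), which is the part actually needed to beat the $O(N)$ divergence, is asserted rather than proved.
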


 \subsection{Classification of multiple recollisions}\label{classificationsection}
 In the case of one recollision (recall Proposition~\ref{recoll1-prop}), 
the key to  the proof was to identify  two collisions related to that recollision, i.e. two degrees of freedom, for which  the constraints  due to the recollision lead to a set of small measure.
We   proceed in the same way here: we consider a pseudotrajectory involving at least two recollisions and denote by $(i,j)$ and~$(k,\ell)$ the particles involved in  the first two recollisions in the backward dynamics and by~$t_{rec}\in ]t_{\theta+1}, t_\theta[$ and~$\tilde  t_{rec} \in ]t_{\tilde \theta+1} ,t_{\tilde \theta}[ $ the corresponding recollision times;  note that the labels are not necessarily distinct, and neither are   the associate pseudo-particles, using the terminology introduced in Definition~\ref{pseudoparticle}. We denote  the  first parent  {(starting at height~$\theta$) of the recolliding particles~$(i,j)$ 
by $1^*$, and by $\tilde 1$ the first parent  (starting at height~$\tilde \theta$) of the recolliding particles~$(k,\ell)$. We define similarly~$2^*,3^*,\tilde 2, \tilde 3$ the other parents moving up the tree  to the root (they might not all be distinct).

 \bigskip

Without loss of generality we may assume that~$t_{\tilde 1} \leq t_{1^*}$ and that~$\tilde 1$ is the parent of~$\ell$.   
To classify the dynamics, we  shall    consider separately the cases~$t_{\tilde 1}  < t_{1^*}$ and~$t_{\tilde 1}  = t_{1^*}$.

 \subsubsection{Case 1: $t_{\tilde 1}  < t_{1^*}$}  \label{firstcase1<1}
 
 We denote by~$(x_k( t_{\tilde 1}),  v_k)$, $(x_\ell( t_{\tilde 1}), \bar v_\ell)$ the configurations of the pseudo-particles~$k$ and~$\ell$ at time $  t_{\tilde 1}^-$, and by~$v_{\ell}$ the velocities of  pseudo-particles~$\ell$ at time~$t_{\tilde 1}^+$.
 With that notation, let us write the condition for the recollision $(k,\ell)$ to hold:
 $$
 x_{k}( t_{\tilde 1}) - x_\ell ( t_{\tilde 1}) % (-\eps \nu_{\tilde 1}) 
 + (v_{k} - v_\ell ) (\tilde t_{rec}- t_{\tilde 1})  = \eps  \tilde \nu_{rec} 
+\tilde q \, ,
$$
%where~$(-\eps \nu_{\tilde 1}) $ is only present if pseudo-particle~$\ell$ was created at time~$t_{\tilde 1}$. 
So defining as previously
$$
\delta \tilde x_{k \ell} ( t_{\tilde 1}) := \frac1\eps ( x_{k}( t_{\tilde 1})  - x_\ell( t_{\tilde 1}) -\tilde q)   
$$
and    
$$ 
\tilde \tau_{rec} :=-\frac1\eps (\tilde t_{rec} - t_{\tilde 1})\,  ,
$$
we find
 \begin{equation}
\label{reckl-equation}
v_{k} - v_\ell = \frac1{\tilde \tau_{rec}}  \delta \tilde x_{k \ell}  ( t_{\tilde 1}) -  \frac1{\tilde \tau_{rec} } \tilde \nu_{rec}  \, .
\end{equation}
To compute the dependency of $ \delta \tilde x_{k \ell}  ( t_{\tilde 1})$ in $ t_{\tilde 1}$, 
we will not use a decomposition as precise as~\eqref{rec-equation}, as the trajectories of $k, \ell$ may be modified by the first recollision in the time interval~$[t_{\tilde 1}, t_{\tilde 2}]$.
Since the trajectories of $k, \ell$ are piecewise linear, we retain only the information that 
\begin{equation}
\label{eq: derivative}
{d \over dt_{\tilde 1} } \delta \tilde x_{k \ell} ( t_{\tilde 1}) =\frac1\eps (v_k - \bar v_\ell)\,.
\end{equation}
We therefore consider two situations according to the size of the relative velocity~$| v_k-  \bar v_\ell|$.

\medskip

\noindent
\underbar{1.1 Relative velocities bounded from below.}  
Suppose that $| v_k-  \bar v_\ell| \geq \eps^\frac34$. 
The relation \eqref{reckl-equation} imposes that at time $t_{\tilde 1}$, the velocity 
$v_\ell$ has to belong to a cone $\cC (\delta \tilde x_{k \ell} ( t_{\tilde 1}))$ of axis $\delta \tilde x_{k \ell} ( t_{\tilde 1})$ with volume at most
$$
\min \left(  \frac{R}{ | \delta \tilde x_{k \ell} ( t_{\tilde 1})| }  , R^2 \right)  ,
$$
as the velocities are bounded by $R$.
Integrating first this constraint over $dv_{\tilde 1}d\nu_{\tilde 1}$ 
(as in the proof of Lemma \ref{rec-eq}) and then 
over $dt_{\tilde 1}$ by using the  derivative \eqref{eq: derivative},
we get 
\begin{align*}
& \int \indc_{\{ v_\ell \in \cC (\delta \tilde x_{k \ell} ( t_{\tilde 1})) \}}
\big | \big(v_{\tilde 1}- \bar v_{\ell} )\cdot \nu_{\tilde 1}\big)\big |
 dt_{\tilde 1} dv_{\tilde 1}d\nu_{\tilde 1}\\
& \qquad \qquad \leq 
R^3  \, \int  |\log | \delta \tilde x_{k \ell} ( t_{\tilde 1})| | \, 
\min \left(  \frac{1}{ | \delta \tilde x_{k \ell} ( t_{\tilde 1})|}  , R \right) 
\; \indc_{ \{ | v_k-  \bar v_\ell| \geq \eps^\frac34 \}} dt_{\tilde 1}\\
& \qquad \qquad \leq 
R^3 \eps |\log \eps |   \int_0^{Rt/\eps}
\min \left(  \frac{1}{ u}  , R \right) 
\frac{1}{| v_k - \bar v_\ell|}
\, \indc_{ \{ | v_k-  \bar v_\ell| \geq \eps^\frac34  \} } d u
\leq CR^3  \eps^\frac14  |\log \eps|^2.
\end{align*}
Summing over  all possible~$\tilde q$ gives a bound similar to (\ref{proofProp4.3}) 
$$
\int \indc_{\{ v_\ell \in \cC (\delta \tilde x_{k \ell} ( t_{\tilde 1})) \}} 
\big | \big(v_{\tilde 1}- \bar v_{\ell} )\cdot \nu_{\tilde 1}\big)\big |
dt_{\tilde 1} dv_{\tilde 1}d\nu_{\tilde 1}
\leq CR^5t^2 \eps^\frac14  |\log \eps|^2 \,  .
$$
Then we apply Proposition~\ref{recoll1-prop} to recollision~$(i,j)$ which involves   parents $1^*, 2^* < \tilde 1$ (if it is a self-recollision as depicted page~\pageref{selfrecollpage} then only one parent is involved). Note that this condition (characterized by $\cP_1(a,0,\{1^*\})$ or $\cP_1(a, p, \{1^*, 2^*\})$ with $p=1,2$) is independent from the previous one. We are therefore in the situation (i) of Proposition~\ref{lemrecoll}.

\medskip
\noindent
\underbar{1.2 Small relative velocities.}  In that case  we forget about     (\ref{reckl-equation}) and we   consider instead  the condition $| v_k-  \bar v_\ell| \leq \eps^\frac34$.
We then need to define to which degree the recollision between~$i$ and~$j$ affects the recollision between~$k$ and~$\ell$. There are four different possible situations:
 
\begin{itemize}
\item[(a.1)] $t_{rec} \in (\tilde t_{rec},  t_{\tilde 2})$ and~$k\in \{i,j\}$

\item[(a.2)] $t_{rec} \in (\tilde t_{rec},  t_{\tilde 2})$ and~$\ell \in \{i,j\}$

\item[(b)]$t_{rec} \in ( t_{\tilde 2},t_{\tilde 3})$ and $k\in \{i,j\}$ or $\ell \in \{i,j\}$ 

\item[(c)]$t_{rec} \notin (\tilde t_{rec},t_{\tilde 3})$, or $k,\ell \notin \{i,j\}$

 \end{itemize}
 examples of which are depicted in Figure~\ref{whereistrec}. 
   \begin{figure} [h] %  figure placement: here, top, bottom, or page
   \centering
\quad \includegraphics[width=5.7cm]{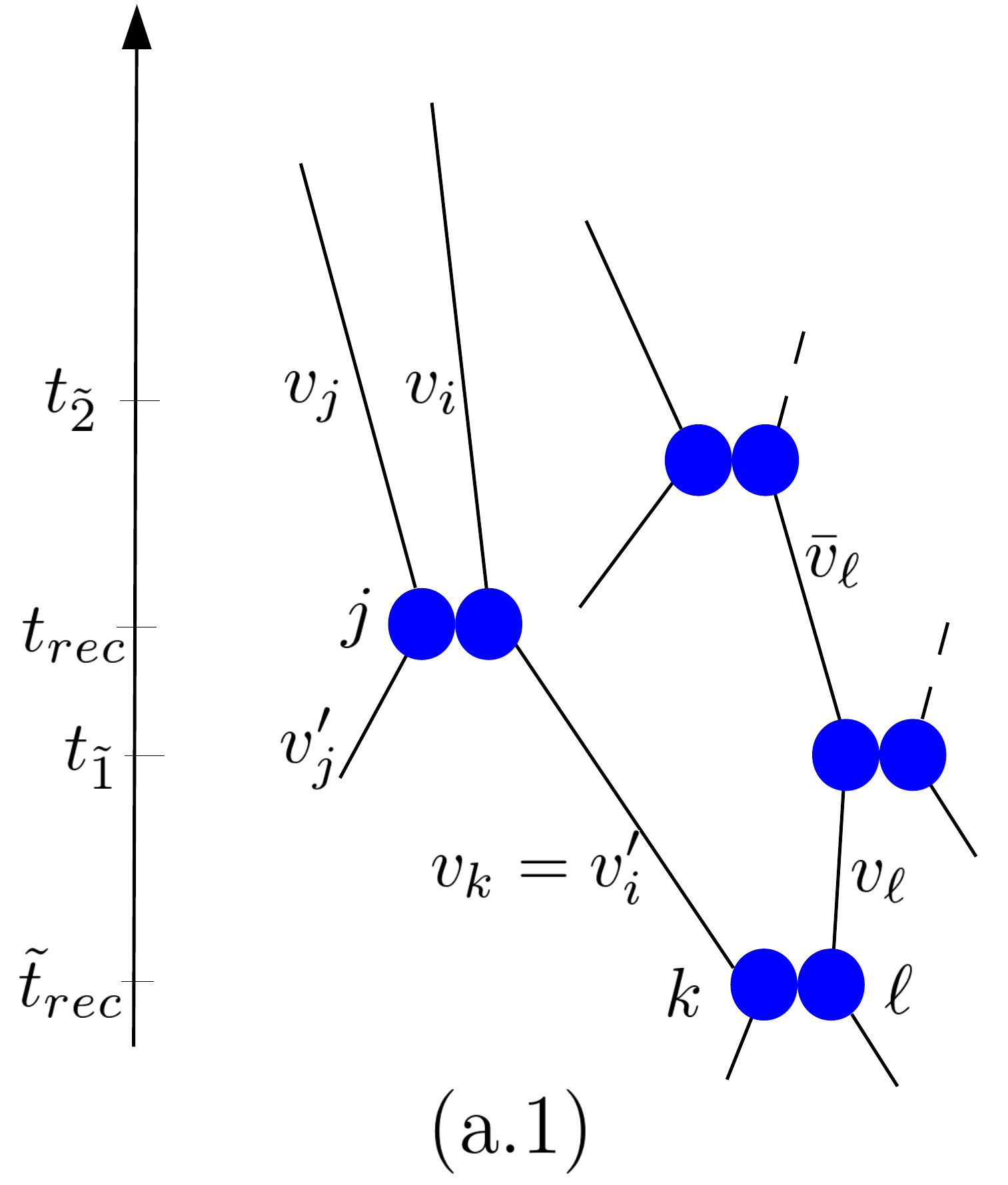}   \qquad    \quad\quad  \quad   \quad 
          \includegraphics[width=5.9cm]{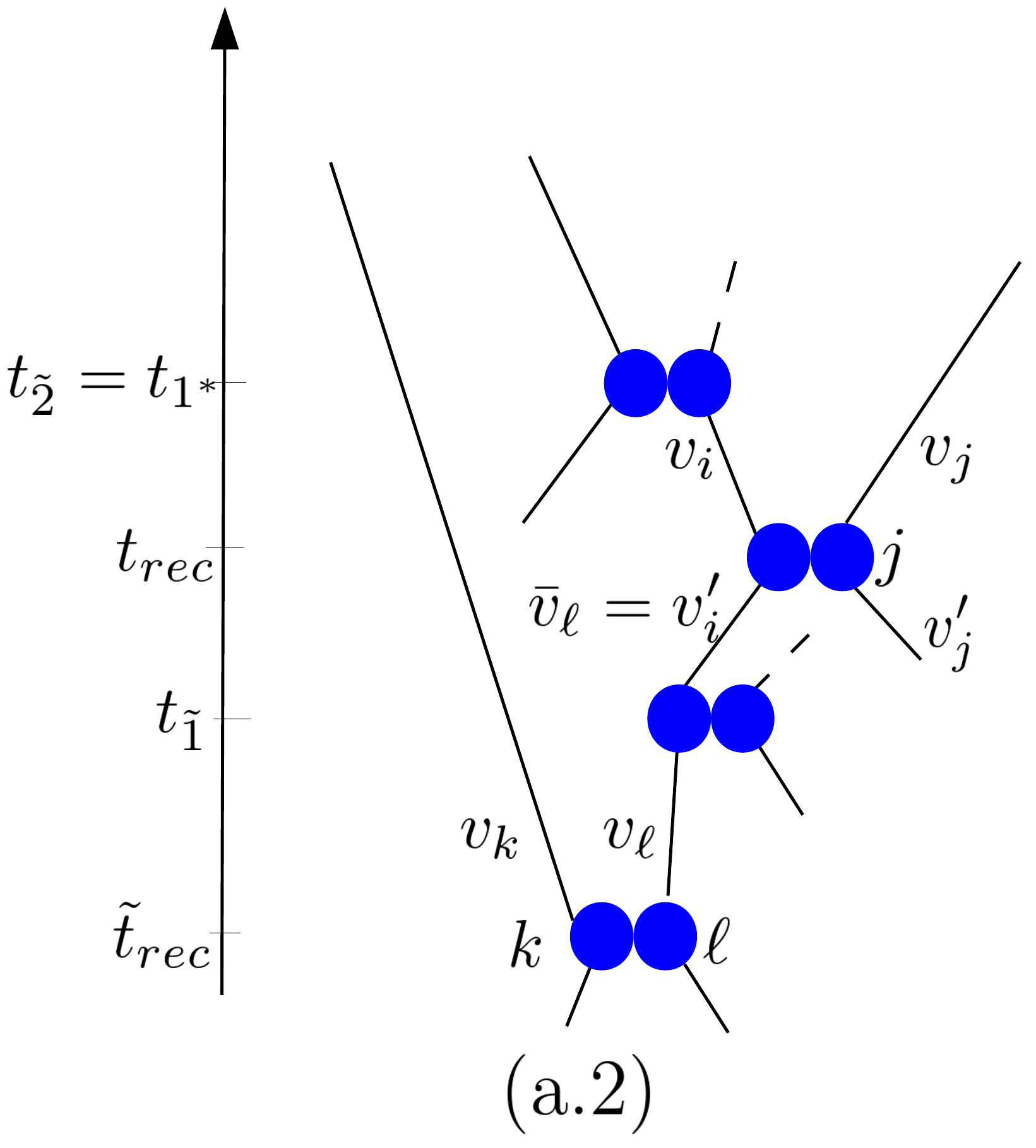}  \qquad    \quad\quad  \quad    \includegraphics[width=6cm]{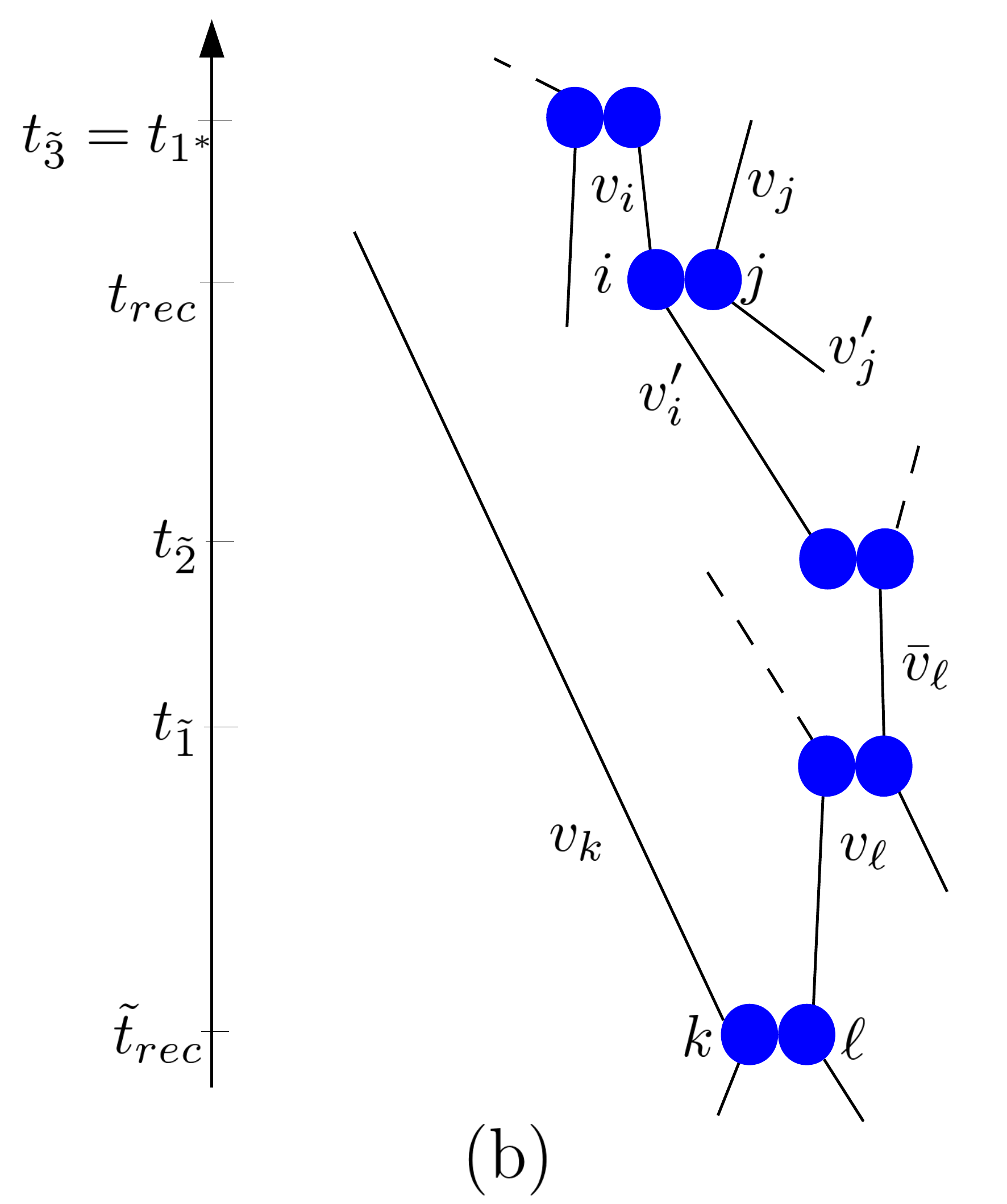} \qquad    \quad\quad  \quad   \quad\quad    \includegraphics[width=5.3cm]{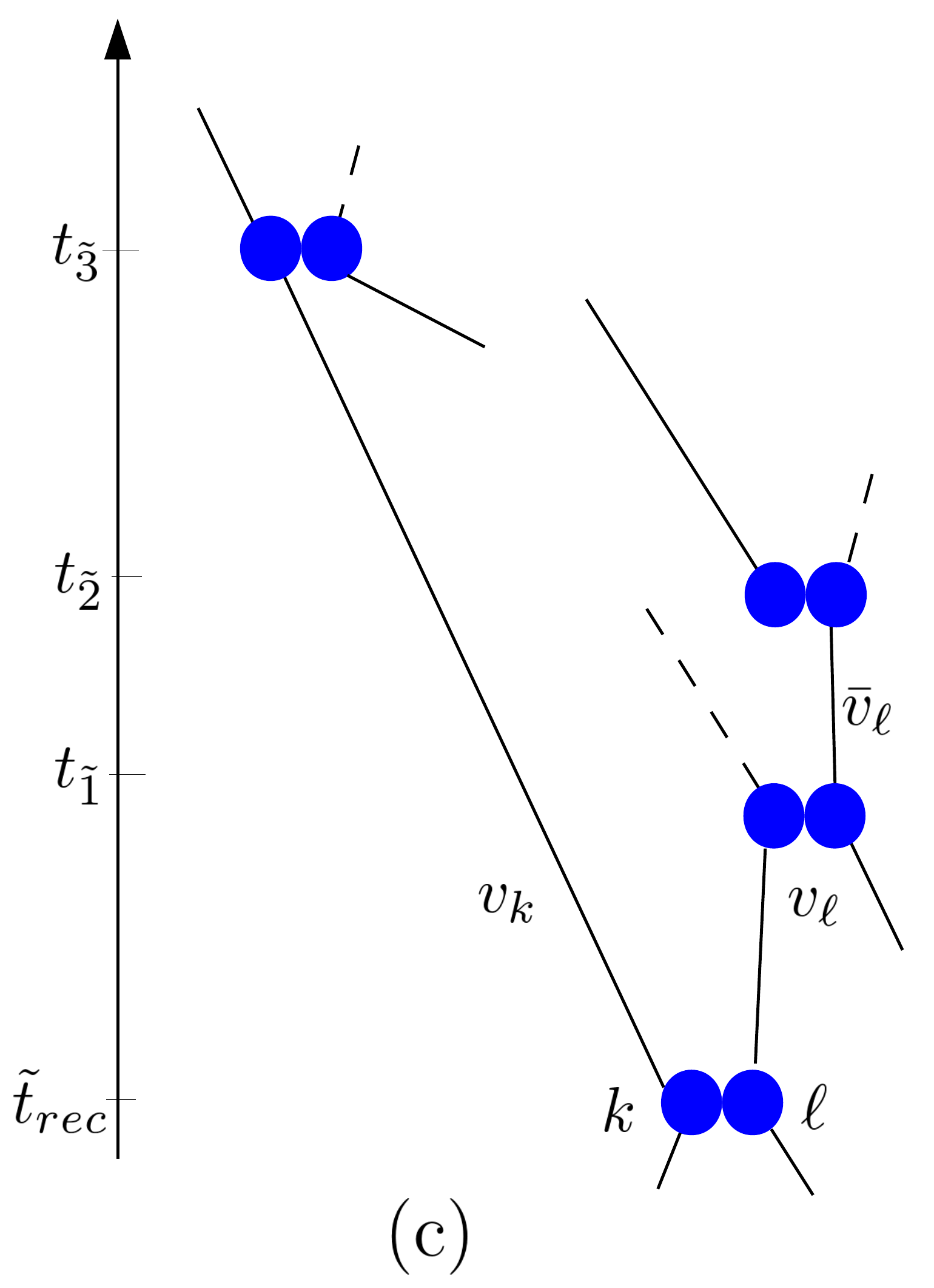}
   \caption{ Case $t_{\tilde 1}  < t_{1^*}$, Degree of intertwinement between both recollisions} \label{whereistrec}
\end{figure}

We therefore need to be more precise when describing the history  of $(k,\ell)$.
 We denote by~$(x_k,  x_\ell)$ the positions of the pseudo-particles~$k$ and~$\ell$ at time $  t_{\tilde 2}$.
We denote by~$( v_i,  v_j)$ (resp. $(v'_i, v'_j)$ )  the velocities of particles $i,j$ before the recollision (resp. after) the recollision $(i,j)$.

\begin{itemize}
\item[(a.1)] We need a more precise geometric argument to ensure both that the first recollision   occurs, and that it   produces an outgoing velocity~$v_i'$ or $v_j'$ in the ball $B(\bar v_\ell, \eps^{3/4})$.  This is provided in Lemma \ref{v-constraint}.
Note that this case can be degenerate if $\tilde 2 = 1^*$, which means that $\tilde 2$ is also the parent of $\ell$.  

\item[(a.2)] Again Lemma  \ref{v-constraint}
enables us to ensure both that the first recollision   occurs, and that it   produces an outgoing velocity~$v_i'$ or $v_j'$ in the ball $B(  v_k, \eps^{3/4})$.  
Note that this case can be degenerate if $\tilde 2$ is also the parent of $k$.

\item[(b)]  We know that $ \tilde 2 > 1^*, 2^*$. We then integrate over~$dt_{\tilde 2}dv_{\tilde 2}d\nu_{\tilde 2}$ the constraint of having small relative velocities $| v_k -  \bar v_\ell| \leq \eps^{3/4}$ and  by (\ref{rectangle0})  this gives a bound of the order of~$O( R^2 t \eps^{3/4}  |\log \eps|)$.
Then we apply Proposition~\ref{recoll1-prop} to recollision~$(i,j)$ which involves   parents $1^*, 2^* < \tilde 2$.  We are therefore in the situation~(i).

\item[(c)]  In that case  combining (\ref{preimage-sphere1}) and (\ref{carleman2}), we deduce that 
$$ \int \indc _{| v_k- \bar v_\ell| \leq \eps^\frac34 } \displaystyle  \prod_{m= \tilde2, \tilde 3} \,\big | \big(v_{m}-v_{a(m)} ( t_{{m}} ) )\cdot \nu_{{m}}\big)\big | d  t_{{m}} d \nu_{{m}}dv_{{m}}   
  \leq CR^5 t^2 \eps^{3/2} |\log \eps| \,.$$
  For any fixed $\{\tilde2, \tilde 3\}$, this scenario  will be labelled by $p=3$ hence 
$$\int \indc _{\cP_2(a,3,\sigma)  } \displaystyle  \prod_{m\in \sigma} \,
\big | \big(v_{m}-v_{a(m)} ( t_{{m}} ) )\cdot \nu_{{m}}\big)\big | d  t_{{m}} d \nu_{{m}}dv_{{m}}   
  \leq Cs R^5 t^2 \eps^{3/2} |\log \eps|\,. $$
  \end{itemize}
  Note the extra factor~$s$, which appears for the same reasons as explained page~\pageref{pagewiths}.

\subsubsection{Case 2: $t_{\tilde 1}  = t_{1^*}$} 
\label{secondcase1=1}

 This is a  very constrained case, as all the recolliding particles have the same first parent.
 We separate the analysis into two subcases.

\medskip
\noindent
\underbar{2.1 Parallel recollisions.}
 This case is depicted in  Figure~\ref{cherries}; the two recollisions take place with the same parent, but   there is no direct link between the two couples of recolliding pseudo-particles~$(i,j)$ and~$(k,\ell)$, meaning as previously that the trajectory of~$\ell$ and~$k$  between time~$t_{\tilde 1}=t_{1^*}$ and~$\tilde  t_{rec}$ is unaffected by that of~$i$ or~$j$ on the same time interval. 
  \begin{figure} [h] %  figure placement: here, top, bottom, or page
   \centering
    \includegraphics[width=5cm]{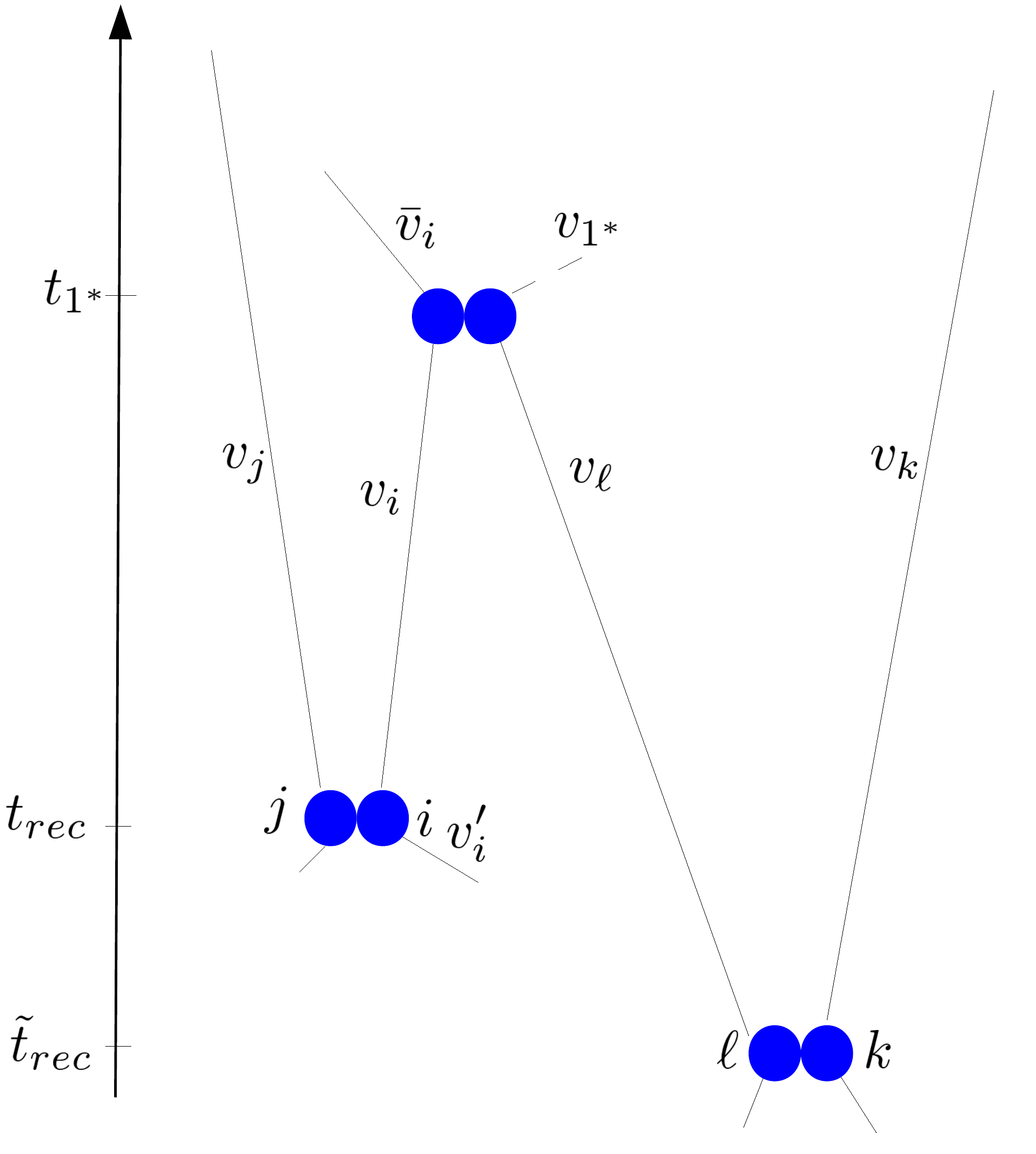}  
   \caption{$t_{\tilde 1}  = t_{1^*}$, parallel recollisions.\label{cherries}}
\end{figure}
     The analysis   is  postponed to      Lemma \ref{doublerecollision}.

 \medskip
\noindent
\underbar{2.2 Recollisions in chain.}
In this case   the two recollisions take place in chain (the trajectory of one of the recolliding particles~$k$ or~$\ell$ is affected by~$i$ or~$j$ between  time~$t_{\tilde 1}=t_{1^*}$ and~$\tilde  t_{rec}$)  (see Figure~\ref{murphyfig}). 
 \begin{figure} [h] %  figure placement: here, top, bottom, or page
   \centering
   \includegraphics[width=5.7cm]{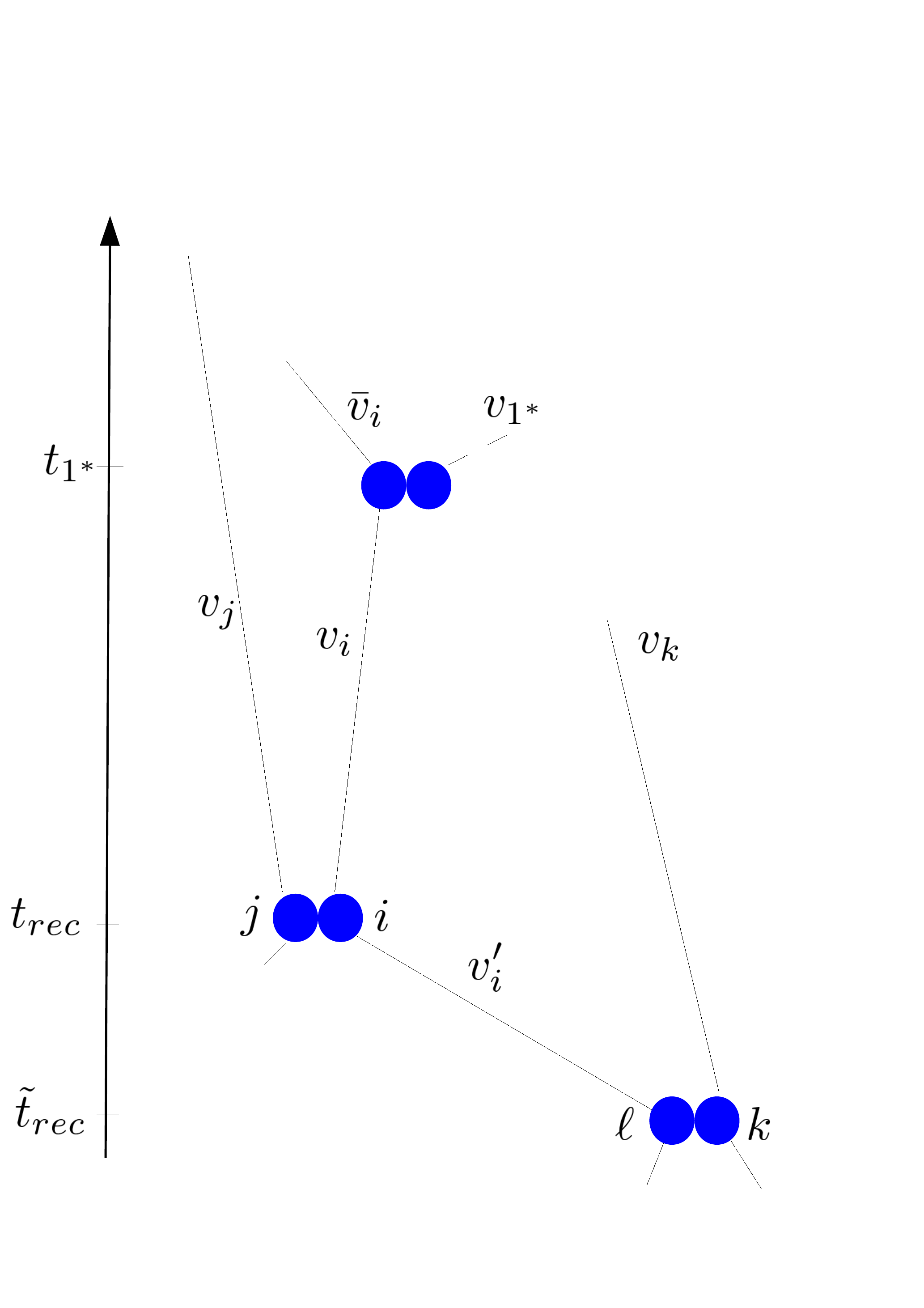}  
     \caption{$t_{\tilde 1}  = t_{1^*}$,  recollisions in chain. \label{murphyfig}}
\end{figure}
  
This case is dealt with in Lemma~\ref{Murphy} and Lemma \ref{self-recoll-periodic}  in Appendix~\ref{geometricallemmasappendix}

\subsubsection{Indexing with $\sigma$}  
\label{sec: indexing with sigma}

So far the two recollisions have been described in terms of the recolliding particles $i,j,k,\ell$, however in Proposition \ref{lemrecoll} the sets $\cP_2(a, p,\sigma)$ are not indexed by the recolliding particles but by the parents of these particles, i.e. by the degrees of freedom leading to these recollisions. Once the set $\sigma$ of parents is fixed, all the recolliding particles are not necessarily prescribed. Indeed for parallel recollisions or recollisions in chain, the set $\sigma$ could corresponds only to parents of $i$ in which case there is an extra combinatorial factor~$s^2$  for choosing the other two recolliding particles $j,k$.

\subsection{Estimate of $R_N^{K,>}$ (super exponential trees with multiple recollisions)}
\label{conclusion of the proof superexpmultiple}

Proposition \ref{propR>} comes   from a careful summation of all elementary contributions.
We   therefore need the following refinement of Proposition \ref{restrictedtrees}.  

\begin{Prop}
\label{restrictedtrees2}
{\color{black}
We fix~$z_1 \in \T^2 \times B_R$,   a label $p\leq p_0$ and a  set $\sigma \subset \{1, \dots, s\}$ of cardinal~$|\sigma |\leq 5$.
With the notation of Proposition~{\rm\ref{lemrecoll}}, denoting~$\eta =s^2 t^r \eps$, one has  for $t\geq 1$
\begin{equation}
\label{eq: L1 estimate petit2recoll}
\begin{aligned}
 \sum_{a \in \cA_s}   \int \indc _{\cT_{2,s}}    
 \indc_{\cP_2(a, p,\sigma)}
\,  \Big( \prod_{i=2}^s \,  \big| (v_i -v_{a(i)} (t_i))\cdot \nu_i \big| \Big)  & M_\beta^{\otimes s} \, 
  dT_{2,s}  d\Omega_{2,s}   dV_{2,s} \\
 &   \leq   s^5 (Ct)^{s-1}  \eta M_{\beta/2}(v_1) \,.
  \end{aligned}
\end{equation}
If we further specify that the last $n$ collision times have to be in an interval of length $h\leq 1$
(this constraint is denoted by $\cT_{s-n+1, s}^h$) then
\begin{equation}
\label{eq: L1 estimate petit2recoll h}
\begin{aligned}
 \sum_{a \in \cA_{s}}   \int \indc _{\cT_{2,s}}
 \indc_{\cT_{s-n+1, s}^h}  
 \indc_{\cP_2(a, p,\sigma)} 
 \; \Big( \prod_{i=2}^{s}   \,   \big| (v_i -v_{a(i)} (t_i))\cdot \nu_i \big|\Big)  M_\beta^{\otimes s } \, dT_{2,s}  d\Omega_{2,s}   dV_{2,s}    \\
\leq s^5  (Ct)^{s-n-1}  (Ch)^{n-5} \eta  M_{\beta/2}(v_1) \, .
\end{aligned}
\end{equation}
}
\end{Prop}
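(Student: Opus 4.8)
\textbf{Proof plan for Proposition \ref{restrictedtrees2}.}

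The plan is to follow the exact same structure as the proof of Proposition \ref{restrictedtrees}, replacing the single-recollision sets $\cP_1(a,p,\sigma)$ by the two-recollision sets $\cP_2(a,p,\sigma)$ provided by Proposition \ref{lemrecoll}, and keeping track of the fact that $|\sigma|$ can now be as large as $5$. The key structural facts to exploit are that, in case (ii) of Proposition \ref{lemrecoll}, the set $\cP_2(a,p,\sigma)$ is parametrized only in terms of the parameters $(t_m,v_m,\nu_m)$ with $m\in\sigma$ or $m<\sigma_-$, with the smallness estimate \eqref{cP2-est}; and that in case (i), $\cP_2(a,p,\sigma)$ is the intersection of a set $\cP_1(a,p,\sigma\setminus\{\sigma^+\})$ (already controlled in Proposition \ref{restrictedtrees}) with an additional constraint $\cQ(a,p,\sigma)$ on the single extra parameter $(t_{\sigma^+},v_{\sigma^+},\nu_{\sigma^+})$, whose contribution is small uniformly in all the remaining parameters. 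In both cases Fubini's theorem decouples the ``bad'' degrees of freedom (the parameters in $\sigma$) from the rest of the tree.

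First I would treat case (ii), which is the cleanest. Exactly as in Proposition \ref{restrictedtrees}, I integrate out all velocities and deflection angles with labels larger than $\sigma_+$ except those strictly needed, producing a combinatorial factor $(Cs)^{s-\sigma_+-|\sigma|+1}$ times a Maxwellian weight on the lower part of the tree, via the elementary bound from the proof of Proposition \ref{estimatelemmacontinuity}. Then I freeze the parameters with labels $<\sigma_-$ and apply \eqref{cP2-est} to integrate over the labels in $\sigma$, losing a factor $s^{|\sigma|}\le s^5$ for the choices of the $a(m)$, $m\in\sigma$, and gaining the smallness $\eta = s^2t^r\eps$. The remaining part of the tree below $\sigma_-$ is estimated by Proposition \ref{estimatelemmacontinuity}, giving $(Cs)^{\sigma_--1}$. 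Collecting the powers of $s$ (a bounded number of factors $(Cs)^{\bullet}$ multiplying the $(s-\bullet)!$ from Stirling) gives the $(Ct)^{s-1}$, and the prefactor $s^5\eta$, hence \eqref{eq: L1 estimate petit2recoll}. For \eqref{eq: L1 estimate petit2recoll h}, the time simplex is split: requiring the last $n$ times to lie in an interval of length $h$ forces $n-|\sigma|\ge n-5$ of the ``free'' times into that interval, so the time integration yields $t^{s-n-1}h^{n-5}/((s-n-1)!(n-5)!)$, and Stirling converts this to $(Ct)^{s-n-1}(Ch)^{n-5}$ up to powers of $s$ absorbed into $s^5$.

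For case (i), since $\cP_2(a,p,\sigma) = \{(t_m,v_m,\nu_m)\in\cP_1(a,p,\sigma\setminus\{\sigma^+\}) : (t_{\sigma^+},v_{\sigma^+},\nu_{\sigma^+})\in\cQ(a,p,\sigma)\}$, I would first perform the integration over the single extra parameter $(t_{\sigma^+},v_{\sigma^+},\nu_{\sigma^+})$ using the bound $\int\indc_{\cQ}|(v_{\sigma^+}-v_{a(\sigma^+)}(t_{\sigma^+}))\cdot\nu_{\sigma^+}|\,dt_{\sigma^+}d\nu_{\sigma^+}dv_{\sigma^+}\le CR^5st^2\eps^{1/4}|\log\eps|^2$, which holds uniformly in the remaining parameters; the labels that $\sigma^+$ may branch on contribute an extra factor $s$. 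This reduces the problem to exactly the quantity estimated in Proposition \ref{restrictedtrees} for $\cP_1(a,p,\sigma\setminus\{\sigma^+\})$, but with one more leaf already integrated, so that \eqref{eq: L1 estimate petit} (resp. \eqref{eq: L1 estimate petit h}) applies directly with $\eta_1 := CsR^7t^3\eps|\log\eps|^3$. Multiplying the two smallness factors gives a contribution of order $s^2R^{12}t^5\eps^{5/4}|\log\eps|^5$, which with $R^2,t\le C_0|\log\eps|$ is far smaller than $\eta=s^2t^r\eps$ (any $r$ large enough compensates the finitely many logarithms), so \eqref{eq: L1 estimate petit2recoll}--\eqref{eq: L1 estimate petit2recoll h} hold a fortiori in this case too.

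The main obstacle I anticipate is purely bookkeeping: ensuring that the various combinatorial factors $s^{|\sigma|}$, the choices of branching for each parent, and the extra factors coming from the situations where $\sigma$ only lists parents of some but not all of the recolliding particles (Paragraph \ref{sec: indexing with sigma}) are all absorbed into the single prefactor $s^5$, uniformly over the $p_0$ scenarios. One must also be careful that, in case (i), the constraint $\cQ(a,p,\sigma)$ genuinely depends only on parameters with labels $<\sigma^+$ so that Fubini applies in the stated order; this is exactly what Proposition \ref{lemrecoll}(i) guarantees. No genuinely new estimate is needed beyond Propositions \ref{estimatelemmacontinuity}, \ref{restrictedtrees}, and \ref{lemrecoll}.
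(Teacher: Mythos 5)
Your overall architecture (mimic Proposition~\ref{restrictedtrees}, decouple the labels in $\sigma$ by Fubini, treat case (i) by integrating the constraint $\cQ(a,p,\sigma)$ over $(t_{\sigma^+},v_{\sigma^+},\nu_{\sigma^+})$ first and then falling back on the single-recollision estimate, and track the factors $s^{|\sigma|}$ and $h^{n-|\sigma|}\ge h^{n-5}$) is the same as the paper's. But there is a genuine gap: you treat the smallness coming from \eqref{cP2-est} as if it were $\eta=s^2t^r\eps$, whereas \eqref{cP2-est} gives $C(Rt)^r s^2\eps$, where $R^2$ is an upper bound on the \emph{total} energy of the pseudo-trajectory. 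In Proposition~\ref{restrictedtrees2} the velocities $V_{2,s}$ are integrated over all of $\R^{2(s-1)}$ against $M_\beta^{\otimes s}$, so there is no a priori energy bound at all on the configurations you integrate, and even if you import the truncation $|V_s|^2\le C_0|\log\eps|$ from the application, the factor $R^r\sim|\log\eps|^{r/2}$ is a genuine divergence that is not dominated by $t^r$ (the proposition only assumes $t\ge 1$) and is not allowed by the statement, whose $\eta$ contains no logarithmic loss. This is exactly what the paper flags as ``the additional difficulty'': its proof decomposes the total energy into dyadic blocks $2^{m-1}\le|V_s|\le 2^m$, applies Proposition~\ref{lemrecoll} with $R=2^m$ on each block, and absorbs $R^r$ into a fraction of the Maxwellian weight via \eqref{eq: control R}, i.e. $\sup_{V_s}\indc_{\{R/2\le|V_s|\le R\}}R^r M_{\beta/6}^{\otimes s}(V_s)\le C^s e^{-CR^2}$, before summing over the blocks; this is also why the final weight degrades from $M_\beta$ to $M_{\beta/2}(v_1)$. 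Your case (i) discussion comes closer (the extra $\eps^{1/4}$ can indeed eat the finitely many powers of $R$ and $\log\eps$ once an energy bound is available), but it still silently assumes $R^2\le C_0|\log\eps|$ for the integrated velocities, which the integral in \eqref{eq: L1 estimate petit2recoll} does not provide; the block decomposition (or an equivalent device tying $R$ to $|V_s|$ from below so the Maxwellian decay can be used) is needed there too, and it is the one new estimate beyond Propositions~\ref{estimatelemmacontinuity}, \ref{restrictedtrees} and \ref{lemrecoll} that your plan omits.
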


\begin{proof}

The proof of Proposition \ref{restrictedtrees2} follows the same lines as the one of Proposition \ref{restrictedtrees}. The additional difficulty is to control the divergence in $R^r$ in the estimate~\eqref{cP2-est} on the recollisions. 
To do so, we decompose the total energy into blocks 
\begin{align*}
& \sum_{a \in \cA_s}  \int \indc _{\cT_{2,s}} \indc_{\cP_2 (a, p , \sigma)} 
\left( \prod_{i=2}^s\, \big|  (v_{a(i)}(t_i)  - v_i) \cdot  \nu_{i} \big|    \right)  M_\beta^{\otimes s} dT_{2,s}  d\Omega_{2,s}   dV_{2,s}  \\
& \qquad\qquad 
\leq  \sum_{m= 1}^{C |\log |\log \eps||}
\sum_{a \in \cA_s}  \int \indc _{\cT_{2,s}}
\indc_{\cP_2 (a,p, \sigma)}   \indc_{\{ 2^{m-1} \leq |V_s| \leq  2^m \}}\\
& \qquad\qquad  \qquad\qquad \qquad\qquad
\times \left( \prod_{i=2}^s\, \big|  (v_{a(i)}(t_i)  - v_i) \cdot  \nu_{i} \big|    \right)  M_\beta^{\otimes s} dT_{2,s}  d\Omega_{2,s}   dV_{2,s} \, .
\end{align*}
Situations $(i)$ and $(ii)$ in Proposition \ref{lemrecoll} will be dealt with separately. We start with $\cP_2  (a,p, \sigma)$ for $p >2$ which depends only on the configurations at time $t_{\sigma_-}$ and on the parameters labelled by $\sigma$.
Using  Proposition~\ref{estimatelemmacontinuity}, the contribution of the trees after  $\sigma_-$ (without the labels in $\sigma$) is estimated as in \eqref{eq: feuilles arbre}
\begin{equation}
\label{eq: feuilles arbre bis}
\sum_{( a(j))_{j > \sigma_-} }\left( \prod_{i > \sigma_-, \atop i \not  \in \sigma}  \,   \big| (v_i -v_{a(i)} (t_i))\cdot \nu_i \big| \right)  
M_{5 \beta/ 6}^{\otimes s } (V_s)\leq  (C s)^{s- \sigma_--3}  M_{2\beta/3}^{\otimes \sigma_-} (V_{\sigma_-}) \,.
\end{equation}
Then for any $R = 2^m$, we integrate with respect to the $|\sigma| $ variables indexed by $\sigma$ and get 
\begin{equation}
\label{eq: divergence en R}
 \int  \indc_{\cP_2 (a,p, \sigma)} \indc_{|\hat V| \leq  R } 
\, \prod_{i \in \sigma} \, 
\big|  (v_{a(i)}(t_i)  - v_i) \cdot  \nu_{i} \big|  \,  dT_\sigma d\Omega_\sigma dV_\sigma    \leq  \eta \, R^r, 
\end{equation}
 with $\hat V = \{  v_i \quad i \leq \sigma_- \ \text{and} \ i \in \sigma\}$.
The main difference with the proof of Proposition \ref{restrictedtrees}  is that we use once again 
the Maxwellian tails to get 
\begin{equation}
\label{eq: control R}
\sup_{V_s}  \Big\{ \indc_{ \{ R/2 \leq |V_s| \leq  R \}} R^r  \, M_{\beta/6} ^{\otimes s }(V_s)  \Big\}
\leq C^s \exp ( - CR^2) \,,
\end{equation}
for some constant $C$ depending only on $r$ and $\beta$. This controls the divergence in $R$ arising in~\eqref{eq: divergence en R}.
An additional factor $s^{| \sigma|}$ takes into account the choices $a(i)$ for the labels $i$ belonging to~$\sigma$.
Finally the contribution of the collision trees before $\sigma_-$ can be estimated by Proposition~\ref{estimatelemmacontinuity} loosing an additional $\beta/6$ in the exponential weight.

\medskip

The contribution of the sets $\cP_2( a,p,\sigma)  
$ for $p \leq 2$ can be estimated with minor changes in the order of integration in order to decouple the contraints on both recollisions. First the combinatorics of the trees after $\sigma_+$ is estimated, then the geometric constraint  at $\sigma_+$  is evaluated. Then  the combinatorics of the trees up to $\max(\sigma \setminus \{\sigma_+\})$, and the (independent) constraints to be  $\cP_1(a,p,\sigma \setminus \{\sigma_+\})$ . Finally it remains to take into account  the contribution of  the rest of the labels.
The large velocities are bounded also as in \eqref{eq: control R}.

\medskip

The final step is to integrate with respect to the remaining time variables. We only retain the condition for the times $(t_i)_{i \notin \sigma}$.
\begin{itemize}
\item
In the first case, we get a simplex of dimension $s-1-|\sigma| $, the volume of which is
$${t^{s-1-|\sigma|}\over (s-1-|\sigma|)!} \leq C^{s}  { t^{s-1-|\sigma|}\over s^{s-1-|\sigma|}} \,,$$
by Stirling's formula.

\item In the second case, we have to add the condition that the last $n$ times have to be in an interval of length $h\leq 1 $.
The worst situation is when all times $(t_i)_{i\in \sigma}$ are in this small time interval, as we loose the corresponding smallness. More precisely, we get
$${t^{s-1-n}\over (s-1-n)!}{h^{n-|\sigma|}\over (n-|\sigma|)!} \leq C^{s}  { t^{s-1-n} h^{n-|\sigma|} \over s^{s-1-|\sigma|}} \,\cdotp$$
\end{itemize}
We thus conclude that for any $R$,
 $$ 
 \begin{aligned}
 \sum_{a \in \cA_s}   \int \indc _{\cT_{2,s}} \indc_{\cP_2 (a,p, \sigma) }    \indc_{ \{ R/2 \leq |V_s| \leq  R \}}  \left( \prod_{i=2}^s\, \big|  (v_{a(i)}(t_i)  - v_i) \cdot  \nu_{i} \big|   \right)  M_\beta^{\otimes s} dT_{2,s}  d\Omega_{2,s}   dV_{2,s}  \\
 \leq   s^4 (Ct)^{s-1}  \eta e^{-CR^2}  M_{\frac{\beta}2}(v_1)  \, , 
 \end{aligned}
 $$
and 
$$\begin{aligned}
  \sum_{a \in \cA_{s}}   \int \indc _{\cT_{2,s}}
&  \indc_{\cT_{s-n+1, s}^h} \indc_{\cP_2 (a,p, \sigma) }   \indc_{ \{ R/2 \leq |V_s| \leq  R \} }  \\
& \qquad \times  \left( \prod_{i=2}^{s} \, \big|  (v_{a(i)}(t_i)  - v_i) \cdot  \nu_{i} \big|   \right)  
  M_\beta^{\otimes s } dT_{2,s}  d\Omega_{2,s}   dV_{2,s} \\
&  \qquad \qquad \leq s^{| \sigma|}  (Ct)^{s-n-1}  (Ch)^{n- | \sigma|} \eta e^{-CR^2}M_{\frac{\beta}2}(v_1)  \, ,
 \end{aligned}
 $$
where all constants are independent of $R$. 
The factor $s^{| \sigma|}$ comes from the summation over the possible choices of $( a(i) )_{i \in \sigma}$.
Finally, the result follows by summing over $R= 2^m$.

\end{proof}

\begin{proof}[Proof of Proposition~{\rm\ref{propR>}}]
The occurrence of multiple recollisions in a collision tree of size $s$ can be estimated by summing over 
all the possible $\sigma$ and using Proposition \ref{restrictedtrees2} 
\begin{align*}
\sum_\sigma \sum_{a \in \cA_{s}}   \int \indc _{\cT_{2,s}}
 \indc_{\cT_{s-n+1, s}^h} \indc_{\cP_2(a,p, \sigma)  }  
 \left( \prod_{i=2}^{s}  \, \big|  (v_{a(i)}(t_i)  - v_i) \cdot  \nu_{i} \big|    \right) {  \indc_{{\mathcal V}_{s} }  } f_N^{(s)} (t - k h) dT_{2,s}  d\Omega_{2,s}   dV_{2,s}\\  
 \leq N \exp(C\alpha^2) s^{12}  (Ct)^{s-n-1}  (Ch)^{n-5} \, t^r \eps \,  M_{\frac{\beta}2}(v_1)  \, ,
\end{align*}
where the a priori $L^\infty$-bound \eqref{Linfty} has been used.
The factor~$s^{12}$ comes from the contribution~$s^2$ in the definition of~$\eta$,~$s^5$ in  Proposition \ref{restrictedtrees2}, and
from the fact that there are at most~$O(s^5)$ choices for the  elements of $\sigma$.

%We have then to integrate with respect to all other parameters. By , for any fixed $k$ and $(j_i)_{i\leq k}$,  we get the following estimate of the error~:   

   \smallskip
 
Choosing~$n_k =   2^k n_0$, we then have, since~$\alpha^2 t h \ll 1$,  
\begin{align*}
\Big|  R_N^{K,>}(t,z_1) \Big| &\leq   M_{\frac{\beta}2}(v_1) N{\eps t^r \over h^5} \exp(C\alpha^2)  \sum_{k=1}^K  \sum_{j_1=0}^{n_1-1} \! \! \dots \! \! \sum_{j_{k-1}=0}^{n_{k-1}-1}\sum_{j_k= n_k}^{N-J_{k-1}} 
 (C   \alpha t)^{J_{k-1}} (C \alpha h)^{j_k }    J_k^{12}   \\
& \leq   M_{\frac{\beta}2}(v_1)  \exp(C\alpha^2){t^r   \over h^5}  \sum_{k=1}^K  \sum_{j_1=0}^{n_1-1} \! \! \dots \! \! \sum_{j_{k-1}=0}^{n_{k-1}-1}  n_k^{12} (C\alpha h)^{n_k} (C\alpha t)^{J_{k-1}} \\
& \leq   M_{\frac{\beta}2}(v_1)  \exp(C\alpha^2){t^r   \over h^5}  \sum_{k=1}^K 2^{k^2} \left( C\alpha^2 ht \right)^{  2^kn_0} \\
& \leq   M_{\frac{\beta}2}(v_1)  \exp(C\alpha^2){t^r   \over h^5}  (C\alpha^2 ht)^{n_0}  \,,
\end{align*}
and Proposition \ref{propR>}  follows with 
$h \leq  {\gamma}/{\exp (C\alpha^2) T^{3}}$ as soon as~$n_0$ is large enough.
Note that this is the only argument in which $n_0$ needs to be tuned.
\end{proof}

%%%%%%%%%%%%%%%%%%%%%%%%%%%%%%%%%%%%%%%%
%%%%%%%%%%%%%%%%%%%%%%%%%%%%%%%%%%%%%%%%
%%%%%%%%%%%%%%%%%%%%%%%%%%%%%%%%%%%%%%%%
%%%%%%%%%%%%%%%%%%%%%%%%%%%%%%%%%%%%%%%%

\section{Truncation of large velocities}
\label{sec: large velocities} 

In this section, we prove that   collision trees with   large velocities  contribute very little to the
iterated Duhamel series.
As a consequence, the error term $R_N^{K, vel}$ introduced in \eqref{reste cut} vanishes.
This holds also for the analogous term in the Boltzmann hierarchy
$$\begin{aligned}
\bar R ^{K, vel} (t)   :=
   \sum_{j_1=0}^{n_1-1}\! \!   \dots \!  \! \sum_{j_K=0}^{n_K-1} 
\bar Q_{1,J_1} (h )  \, \bar Q_{J_1,J_2} (h )
 \dots  \bar  Q_{J_{K-1},J_K} (h ) \, \left(  f^{(J_K)}_{0}    
 \indc_{|V_{J_K}|^2 > C_0 |\log \eps| }\right)    \\
+     \sum_{k=1}^K \; \sum_{j_1=0}^{n_1-1} \! \! \dots \! \! \sum_{j_{k-1}=0}^{n_{k-1}-1}\sum_{j_k \geq n_k} \; 
 \bar Q_{1,J_1} (h ) \dots  \bar Q_{J_{k-1},J_{k}} (h ) 
 \left( f^{(J_k)}(t-kh )  \indc_{|V_{J_k}|^2 > C_0 |\log \eps| }\right) .
\end{aligned}
$$
The contribution of the large energies can be estimated  by the following result.
\begin{Prop} 
\label{lem: truncation}
There exists a constant $C_0 \geq 0 $ such that for all $t\in [0,T]$ and  
$\alpha^2 h T \ll 1$
$$
\begin{aligned}
\left|  R_N^{K, vel} (t) \right|+ \left| \bar R ^{K, vel} (t) \right|
% \left|  \sum_{k=1}^K  \sum_{j_i <n_i \atop i \leq k-1}\sum_{j_k\geq n_k}   Q_{1,J_1} (h )Q_{J_1,J_2} (h )
% \dots  Q_{J_{k-1},J_k} (h ) \, \left( f^{(J_k)}_N(t-kh)  \indc_{|V_{J_k}|^2 
% \geq C_0 |\log \eps| }\right) \right|  \\
 \leq  \exp(C\alpha^2)n_0^K 2^{K^2}Ã(C\alpha  T)^{n_0 \cdot 2^K} \eps
M_{\beta/2}(z_1) ,
 \end{aligned}
$$
with the sequence $n_k =    2^kn_0$.
\end{Prop}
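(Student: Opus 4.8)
The strategy is to follow the same architecture as in Proposition~\ref{estimatelemmacontinuity} and Proposition~\ref{RNK0 boltz}, simply keeping track of the extra smallness gained by restricting to velocities with $|V_{J_k}|^2 > C_0 |\log \eps|$. The point is that such a restriction, combined with the Maxwellian weight $M_\beta^{\otimes J_k}$ carried through the iterated collision operators, produces a factor $\exp(-CC_0|\log\eps|) = \eps^{CC_0}$, which for $C_0$ large enough beats all the other (polynomially growing in $1/\eps$, exponentially growing in $\alpha^2$) losses. First I would treat the BBGKY term $R_N^{K,vel}$. Using the $L^\infty$ a priori bound \eqref{Linfty-marginals}, namely $|f_N^{(s)}(t,Z_s)| \leq N C^s \exp(Cs\alpha\eps) M_\beta^{\otimes s}(Z_s)\|g_{\alpha,0}\|_{L^\infty}$, one inserts this into each elementary term of \eqref{reste cut}. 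Then one applies the $L^\infty$ continuity estimate for $|Q|$ from Proposition~\ref{estimatelemmacontinuity}, except that the velocity cut-off $\indc_{{\mathcal V}_{J_k}^c}$ must be handled before distributing the exponential weight: since the cut-off is preserved by the (energy-conserving) transport operators, one may pull out a factor $\exp(-\tfrac{\beta}{8}(C_0|\log\eps| - |v_1|^2))$ as in the Cauchy--Schwarz step, i.e. roughly $\eps^{\beta C_0/8}M_{\beta/2}(v_1)$ times the usual Lanford bound. This gives, for each $k$ and each choice of $j_i$, a bound of the form $N\exp(C\alpha^2)(C\alpha T)^{J_k}\eps^{\beta C_0/8}M_{\beta/2}(v_1)$ (using $\alpha\eps \ll 1$ to absorb $\exp(Cs\alpha\eps)$, and $\alpha^2 h T\ll 1$ to sum the geometric series over $j_k \geq n_k$).

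\textbf{Summation.} The sum over $k \leq K$ and over $j_1 < n_1, \dots, j_{k-1}<n_{k-1}$ produces, by \eqref{eq: nk 2k}, at most $n_0^K 2^{K^2}$ terms, with $J_k \leq 2^{K+1}n_0$. Collecting everything and using $N = \alpha/\eps$, the factor $N\eps^{\beta C_0/8}$ is $\alpha\,\eps^{\beta C_0/8 - 1}$; choosing $C_0$ large enough that $\beta C_0/8 - 1 \geq 2$ leaves a spare $\eps$ and absorbs $\alpha \leq \sqrt{|\log\eps|}$. This yields exactly the claimed bound
$$
\left| R_N^{K,vel}(t)\right| \leq \exp(C\alpha^2)\, n_0^K 2^{K^2}(C\alpha T)^{n_0\cdot 2^K}\,\eps\, M_{\beta/2}(z_1).
$$
For the Boltzmann term $\bar R^{K,vel}$ the argument is the same but simpler: one uses the explicit form $f^{(s)}(t,Z_s) = M_\beta^{\otimes s}(V_s)\sum_i g_\alpha(t,z_i)$ together with the Lyapunov bound $\|g_\alpha(t)\|_{L^2_\beta}\leq\|g_{\alpha,0}\|_{L^2_\beta}$ (and the corresponding $L^\infty$ bound, since $g_\alpha$ is smooth and propagates Maxwellian-type bounds), so no factor $N$ even appears; the velocity truncation again gives $\eps^{\beta C_0/8}$, and the same summation over $k$ and $j_i$ closes the estimate.

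\textbf{Main obstacle.} The only delicate point is the interplay between the velocity cut-off and the collision operators: a collision operator $C_{s,s+1}$ creates a new velocity $v_{s+1}$ through an integral, and after scattering the outgoing velocities may redistribute energy, so one must be careful that the condition $|V_{s+1}|^2 > C_0|\log\eps|$ is genuinely inherited going up the tree and that the Maxwellian weight is not consumed before the cut-off can be exploited. This is precisely why one distributes the exponential weight $\exp(-\tfrac{\beta}{8n}|V_k|^2)$ evenly over the collision operators (as in \eqref{CS}) while keeping a residual slice of the Gaussian in reserve to convert the cut-off $\{|V_{J_k}|^2 > C_0|\log\eps|\}$ into the power $\eps^{\beta C_0/8}$; energy conservation under transport and under scattering (used already in \eqref{eq: conservation energy}) guarantees the cut-off propagates correctly. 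Once this bookkeeping is set up, the rest is a routine repetition of the estimates of Sections~\ref{lanfordstrategy} and~\ref{subsec: super exp boltz}, and one records that it is the need to gain this genuine power of $\eps$ that fixes the threshold $C_0$ (consistently with its role in Propositions~\ref{lem: truncation}, \ref{recoll1-prop} and \ref{lemrecoll}).
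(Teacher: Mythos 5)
Your proposal is correct and follows essentially the same route as the paper: bound the velocity-truncated datum by splitting the Maxwellian weight and using $|V_{J_k}|^2 \geq C_0|\log\eps|$ to gain a factor $\eps^{cC_0}$ (which absorbs the $O(N)$ loss from \eqref{Linfty-marginals} and leaves a spare $\eps$), then apply the $L^\infty$ continuity estimates of Proposition~\ref{estimatelemmacontinuity} and sum over $k$, $j_i<n_i$ and $j_k\geq n_k$ exactly as in the pruning argument, the Boltzmann term being handled in the same way. The only remark is that your bookkeeping about propagating the cut-off up the tree is unnecessary: the indicator $\indc_{{\mathcal V}_{J_k}^c}$ sits directly on the datum at the bottom of the tree, so the paper simply writes $M_\beta^{\otimes J_k}\indc_{{\mathcal V}_{J_k}^c} \leq C^{J_k}\,\eps\, M_{5\beta/6}^{\otimes J_k}$ for $C_0$ large and then runs the standard estimates.
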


\begin{proof}
The remainders $R_N^{K, vel}$ \eqref{reste cut} and $\bar R ^{K, vel}$ are made of two contributions, the first one is an energy cut-off for the Duhamel series up to time 0 (with a number of collisions less than~$2^Kn_0$) and the second one is a truncation at an intermediate time corresponding to a large number of collisions.
We shall   consider only the BBGKY hierarchy as~$\bar R ^{K, vel}$ can be treated similarly.

\medskip

For the Duhamel series up to time 0, we notice that for $C_0$ large enough
$$ 
\begin{aligned}
\left|  f^{(J_k)}_N(0)  \indc_{|V_{J_k}|^2 \geq C_0 |\log \eps| } \right|
& \leq  C^{J_k} N  M^{\otimes J_k}_\beta  \indc_{|V_{J_k}|^2 \geq C_0 |\log \eps| }
\; \|g_{\alpha,0}\|_{L^\infty(\D)}
\\
& \leq  \exp(C\alpha^2) C^{J_k} N M_{5 \beta/ 6 }^{\otimes J_k}  \exp \left( - \frac{\beta}{12}  |V_{J_k}|^2\right)  \indc_{|V_{J_k}|^2 \geq C_0 |\log \eps| }\\
& \leq  \eps  \exp(C\alpha^2) C^{J_k} M_{5 \beta/6}^{\otimes J_k} \, .
\end{aligned}
$$
 Then using the fact that 
$$
\big| Q_{1,J_1} (h )Q_{J_1,J_2} (h )
 \dots  Q_{J_{k-1},J_k} (h ) \big| \leq  | Q_{1, J_{k-1}}| (t) \,| Q_{J_{k-1}, J_k } | (h)\, , 
$$
 together with Proposition \ref{estimatelemmacontinuity}, we get
 $$
  \begin{aligned}
 \sum_{k=1}^K  \sum_{j_i <n_i } &  \left| Q_{1,J_1} (h )Q_{J_1,J_2} (h )
 \dots  Q_{J_{k-1},J_k} (h ) \, \big( f^{(J_k)}_N(0)  \indc_{|V_{J_k}|^2 \geq C_0 |\log \eps| }\big) \right|\\
 & \leq  \exp(C\alpha^2)  \sum_{k=1}^K  \sum_{j_i <n_i } (C\alpha  t)^{J_{K}}  \eps M_{ \beta/2} (z_1)\\
& \leq \exp(C\alpha^2)  n_0^K 2^{K^2} \,  (C\alpha  T)^{  2^{K+1} n_0}   \eps  M_{ \beta/2} (z_1)\,.
 \end{aligned}
 $$

From the maximum principle \eqref{Linfty-marginals}, we further  deduce that for $C_0$ large enough
$$ 
\big |  f_{N}^{(J_k)} (t - k h)  \indc_{|V_{J_k}|^2
\geq C_0 |\log \eps| }  \big| 
  \leq  \eps  \exp(C\alpha^2) C^{J_k} M_{5 \beta/6}^{\otimes J_k} ,
$$
so
\begin{align*}
 \sum_{j_k\geq n_k} &  \left|  Q_{1,J_1} (h )Q_{J_1,J_2} (h )
 \dots  Q_{J_{k-1},J_k} (h ) \, \big( f^{(J_k)}_N(t-kh)  \indc_{|V_{J_k}|^2 
 \geq C_0 |\log \eps| }\big) \right| \\
  & \qquad \qquad  \qquad \qquad   \qquad \qquad 
  \leq  \exp(C\alpha^2) (C\alpha  t)^{J_{k-1}} (C\alpha h )^{n_k} \eps M_{ \beta/2} (z_1)\, ,
\end{align*}
as soon as $\alpha h \ll 1$.

Recalling~(\ref{eq: nk 2k}), since  $\alpha^2 t h \ll 1$,  we can sum the different contributions corresponding to a large number of collisions  
  $$
  \begin{aligned}
 \sum_{k=1}^K  \sum_{j_i <n_i \atop i \leq k-1} & \sum_{j_k\geq n_k} \left|  Q_{1,J_1} (h )Q_{J_1,J_2} (h )
 \dots  Q_{J_{k-1},J_k} (h ) \, \big( f^{(J_k)}_N(t-kh)  \indc_{|V_{J_k}|^2 \geq C_0 |\log \eps| }\big) \right|\\
 & \leq  \exp(C\alpha^2)  \sum_{k=1}^K  \sum_{j_i <n_i \atop i \leq k-1} (C\alpha  t)^{J_{k-1}} (C\alpha h )^{n_k} \eps M_{ \beta/2} (z_1)\\
 &\leq  \exp(C\alpha^2)  \,  \, \eps \sum_{k=1}^K n_0^k 2^{k^2} (C\alpha  ^2 t h )^{n_0 \, 2^k} M_{ \beta/2} (z_1)\\
& \leq  \exp(C\alpha^2)  \,  \eps  M_{ \beta/2} (z_1)\,.
 \end{aligned}
 $$
 
 Combining both estimates concludes the proof of Proposition \ref{lem: truncation}.
\end{proof}

\section{End of the proof of Theorem \ref{long-time}, and open problems}
\label{sec: conclusion}

\subsection{Proof of Theorem \ref{long-time}}
In this section, we gather all the error estimates obtained in the previous section and conclude the proof of   Theorem \ref{long-time}. Fix $T>1$ and $t \in [0,T]$.

\medskip

We recall that due to~(\ref{decompositionmarginal}) and~(\ref{reste}) we have
$$
  f_N^{(1)} (t) = f^{(1,K)}_N(t) +  R_N^{K} (t)
$$
and
$$
 R_N^{K} (t)=R_N^{K,0} (t)+R_N^{K,1} (t)+R_N^{K,>} (t) + R_N^{K,vel} (t) \, .
$$
Similarly
$$
    f^{(1)} (t) = \bar f ^{(1,K)}(t) +  \bar R ^{K} (t)+  \bar R ^{K,vel} (t) \, .
$$

\bigskip

From Proposition \ref{main-prop}, we know that  the difference between the dominant parts is 
$$
\Big\| f^{(1,K)} _N(t)-  \bar f^{(1,K)} (t)  \Big\|_{L^2} \leq    (C\alpha  T)^{2^{K+1} n_0} \exp  (C\alpha^2 ) \Big(  \eps | \log\eps|^{10}+ {\eps \over \alpha }    \Big)   \, .$$
This contribution will be small provided that the number of collisions is bounded by
\begin{equation}
\label{cond1}
K = {T\over h} \ll \log | \log \eps | ,\qquad \alpha \ll  \sqrt{\log | \log \eps |}\,.
\end{equation}

\bigskip

Let us now gather the estimates for the remainders, under the assumption that
\begin{equation}
\label{cond2}
h \leq  \frac{\gamma^2}{\exp (C\alpha^2) T^3 },
\end{equation}
for some $C$ large enough.

\medskip

 By Propositions \ref{RNK0} and \ref{RNK0 boltz}, we have
$$
\Big\| R_N^{K, 0} (t) \Big\|_{L^2}   \leq \gamma   
\quad \text{and} \quad
\Big\| \bar R ^K (t) \Big\|_{L^2(\D)}  \leq \gamma\,.
$$
  By Proposition \ref{RNK1}, the remainder for 1 recollision is bounded by 
$$
\Big\| R_N^{K, 1}  (t) \Big\|_{L^2} 
\leq    \eps^{1/ 2}  | \log \eps |^{6} {\gamma \over h}   .
$$
 From Proposition \ref{propR>}, the remainder for multiple recollisions is bounded by 
$$
\Big\| R_N^{K,>} (t) \Big\|_{L^2} 
 \leq \gamma\,.  $$

\medskip

 By Proposition~\ref{lem: truncation} the remainders for large velocities satisfy, as soon as
$\alpha h \ll 1$,
$$
\begin{aligned}
\Big\|  R_N^{K, vel} (t) \Big\|_{L^2} 
 +\Big\| \bar R ^{K, vel} (t) \Big\|_{L^2}   \leq  \exp(C\alpha^2) n_0^K 2^{K^2} (C\alpha  T)^{   2^{K+1} n_0} \eps  \,  ,
 \end{aligned}
$$
which  is small under~(\ref{cond1}), (\ref{cond2}).

\medskip

 The convergence estimate \eqref{eq: approx temps gd} is then obtained by combining conditions (\ref{cond1}) and (\ref{cond2})
\begin{align*} 
\Big\| f_N^{(1)}(t)-   f^{(1)}(t)  \Big\|_{L^2} &\leq  
\Big\| f^{(1,K)} _N(t)- \bar f ^{(1,K)} (t)  \Big\|_{L^2} 
+ \Big\| R_N^{K, 0} (t) \Big\|_{L^2}  + \Big\| \bar R ^K (t) \Big\|_{L^2} \\
&\qquad \qquad    + \Big\| R_N^{K, 1}  (t) \Big\|_{L^2} 
+ \Big\| R_N^{K,>} (t) \Big\|_{L^2} + \Big\|  R_N^{K,vel} (t) \Big\|_{L^2}  + \Big\| \bar  R ^{K,vel} (t) \Big\|_{L^2}  \\
&\leq  \frac{ \exp (C\alpha^2) T^2}{\sqrt {\log |\log \eps|}} \, \cdotp
\end{align*}
This concludes the proof of Theorem \ref{long-time}.
\qed

\subsection{Open problems}
\label{openpbs} 

In this final section, we collect some open  problems related to those treated in this paper.

\medskip

{\it Finite range potentials.}

We expect the same convergence results to hold if microscopic interactions are described by a repulsive compactly supported potential (instead of the singular hard-sphere interactions).
The proof then involves  truncated marginals and cluster estimates as in \cite{GSRT,PSS}. With the present scaling, there is however a difficulty to control  triple interactions, the size of which is critical (see the computations of Appendix~\ref{geometricallemmasappendix}). Note that the case of a potential, non compactly supported, is rigorously  analyzed for the first time in~\cite{ayi} in the linear case.

\medskip

{\it Higher dimensions.}

We also expect the convergence results to extend to higher dimensions and it has been proven for short times in 
\cite{BLLS}. However, there are two important simplifications in dimension~2. The first one is due to the fact that the inverse partition function associated with the exclusion is bounded uniformly in $N$, as shown in~(\ref{eq: ZN});  in particular this makes it possible to propagate somehow the initial form of the initial datum and to decompose the marginals of the solution in a quasi-orthogonal form; see Section~\ref{decompositionL2andCss+1}. The second one is related to the control of recollisions: we have seen in this paper (namely in Section~\ref{recollisions}) that the probability of having pseudo-dynamics with multiple recollisions is~$O(\eps)$, which balances exactly  the~$O(N)$ size of the~$L^\infty$ norm of the solution, and that  is not the case in higher dimension in the Boltzmann-Grad scaling since~$\eps \sim N^{\frac1{1-d}}$.

\medskip

{\it Spatial Domain.} 

The spatial domain we consider here is the torus $\T^2$, which is equivalent to a rectangular box with specular reflection on the boundary. To extend the analysis to more general domains, we would need a geometric property of the free flow on these domains, stating roughly  that the probability, in velocities, for two trajectories to approach at a distance $\eps$ on a fixed time interval $[0,T]$  is vanishing 
  in the limit $\eps \to 0$.

\medskip

{\it Dissipation.}  

The control on the higher order cumulants $g_N^m$ is  the key to improve the convergence time
with respect to Lanford's original argument. 
This estimate can be seen as playing   the role of the dissipation on the limiting equation.
We indeed have
$$
\frac1N\int {f_N^2 (t)\over M^{\otimes N}_\beta} dZ_N  =   \| g_N^1(t) \|^2_{L^2_\beta (\D)}
+ \sum_{m=2}^N { \binom{N}{m}\over N}  \| g_N^{m} (t) \|^2_{L^2_\beta (\D^m)}=\frac1N\int {f_{N,0}^2 \over M^{\otimes N}_\beta} dZ_N\,.
$$
to be compared to 
$$\| g(t) \|^2_{L^2_\beta (\D)}+\alpha \int_0^t\int M_\beta  g\cL_\beta g (s,x,v)dvdxds = \| g_0 \|^2_{L^2_\beta (\D)}$$
for the limiting equation.
\medskip

{\it Stochastic corrections.}

 In \cite{S3}, Spohn studied the stochastic fluctuations around the Boltzmann equation and computed the variance of the fluctuation field in a non-equilibrium state
\begin{eqnarray*}
\zeta^N (g,t) = \frac{1}{\sqrt{N}} \left( \chi^N(g,t) - \la  \chi^N(g,t) \ra \right)
\quad \text{with} \quad
\chi^N(g,t) = \sum_{i =1}^N g( z_i(t) ) \, ,
\end{eqnarray*}
where $g$ is a smooth function and $\la \cdot \ra$ stands for the mean.
It would be of great interest to prove that the limiting field is Gaussian and to derive, even for short times, the fluctuating hydrodynamics.

\appendix

\section{The linearized Boltzmann equation and its fluid limits}
\label{appendixBoltzlin}

For the sake of completeness, we recall here some by now classical results about the linearized Boltzmann equation  \eqref{lBoltz}
\begin{equation}
\label{lBoltzq}
		  \begin{aligned}
		 &{1\over \alpha^q}  \d_t g _\alpha +v\cdot \nabla_x g_\alpha= - \alpha\cL _\beta g_\alpha\\
		& \cL_\beta g (v) = \int M_\beta (v_1) \Big(g (v)+g (v_1) - g (v') - g (v_1')\Big)\big((v_{1}-v) \cdot \nu\big)_+ d\nu dv_{1} 
		 \end{aligned}
\end{equation}
and its hydrodynamic limits as $\alpha \to \infty$ (for $q=0,1$). 
The results below are valid in any dimension~$d \geq 2$, thus contrary to the rest of this article, 
we assume the space  dimension to be~$d$.
 
Because of the  scaling invariance of the collision kernel, we shall actually restrict our attention 
in the sequel to the case where  $M_\beta$ is the reduced centered Gaussian, i.e.
$\beta = 1$ (and we omit the subscript~$\beta$ in the following).
The collision operator \eqref{lBoltzq} will be denoted by $\cL$.

\subsection{The functional setting}

The linearized Boltzmann operator $\cL$ has been studied extensively (since it governs small solutions of the nonlinear Boltzmann equation). In the case of non singular cross sections, its spectral structure was described by Grad~\cite{grad}.
The main result is that it 
satisfies the Fredholm alternative
in a weighted $L^2$ space. In the following we define the collision frequency
 $$
 a(|v|) := \int M(v_1) \big((v_{1}-v) \cdot  \nu\big)_+ d\nu dv_{1} 
 $$ 
  which satisfies, for some $C>1$,
$$
0<a_- \leq a(|v|)\leq C(1+|v|) \,.
$$

\begin{Prop}\label{coercivity} 
The linear collision
operator $\cL$ defined by {\rm(\ref{lBoltzq})} is a nonnegative unbounded 
self-adjoint operator on $L^2(Mdv)$  with domain
$$
\cD(\cL)=\{g\in L^2(Mdv)\,|\, a g\in L^2(Mdv)\}=L^2(\R^d;a M(v)dv)
$$
and nullspace
$$
\Ker(\cL)=\Span\{1,v_1,\dots,v_d,|v|^2\}\,.
$$
Moreover the following coercivity estimate holds: there exists $C>0$ such that, for 
each~$g$ in~$\cD(\cL)\cap(\Ker(\cL))^\perp$
$$
\int g\cL g(v)M(v)dv\geq C\|g\|_{L^2(a M dv)}^2\,.
$$
\end{Prop}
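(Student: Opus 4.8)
The plan is to prove Proposition~\ref{coercivity} by reducing the operator $\cL$ to a ``compact perturbation of a multiplication operator'' and then analyzing the kernel and coercivity by hand. First I would write $\cL = a(|v|)\,\mathrm{Id} - \cK$, where the multiplicative part comes from the loss term $g(v)\int M(v_1)\big((v_1-v)\cdot\nu\big)_+\,d\nu\,dv_1 = a(|v|)g(v)$, and $\cK$ collects the remaining three terms (the gain term $g(v')+g(v_1')$ and the loss term $g(v_1)$). The identification of the domain $\cD(\cL) = L^2(\R^d; aM\,dv)$ is then essentially the statement that $\cK$ is bounded relative to multiplication by $a$; in fact the classical result of Grad~\cite{grad} (for hard-sphere cross sections) is that $\cK$ is a \emph{compact} operator on $L^2(M\,dv)$, obtained by writing it as an integral operator with an explicit kernel (via a Carleman-type change of variables) and checking that the kernel is Hilbert--Schmidt after the Maxwellian weights are absorbed. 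I would cite this compactness rather than re-derive it, since it is standard.

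\textbf{Self-adjointness and nonnegativity.} Next I would verify that $\cL$ is symmetric and nonnegative on its domain. Symmetry follows from the pre-post collisional change of variables $(v,v_1,\nu)\mapsto(v',v_1',\nu)$, which has unit Jacobian and preserves both the measure $M(v)M(v_1)\,dv\,dv_1\,d\nu$ and the weight $\big((v_1-v)\cdot\nu\big)_+$ together with the microreversibility of the scattering; this gives the standard symmetrized bilinear form
$$
\int g\,\cL h\, M\,dv = \frac14 \int \big(g+g_1-g'-g_1'\big)\big(h+h_1-h'-h_1'\big)\, M(v)M(v_1)\big((v_1-v)\cdot\nu\big)_+\,d\nu\,dv_1\,dv\,,
$$
from which both symmetry ($g\leftrightarrow h$) and nonnegativity (take $h=g$) are immediate. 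Self-adjointness (not merely symmetry) then follows because $\cL = a\,\mathrm{Id} - \cK$ with $a\,\mathrm{Id}$ self-adjoint on $L^2(aM\,dv)$ and $\cK$ bounded and symmetric, so it is a bounded symmetric perturbation of a self-adjoint operator.

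\textbf{Kernel and coercivity.} The nullspace computation uses the symmetrized form: $\int g\,\cL g\, M\,dv = 0$ forces $g+g_1 = g'+g_1'$ for a.e.\ collision configuration, i.e.\ $g$ is a collision invariant, and the classical characterization of collision invariants gives $\Ker(\cL) = \Span\{1,v_1,\dots,v_d,|v|^2\}$. Finally, for the coercivity estimate on $(\Ker\cL)^\perp$: since $\cK$ is compact and $\cL\geq 0$ with the stated kernel, $0$ is an isolated eigenvalue of $\cL$ (viewed with form domain $L^2(aM\,dv)$, using Weyl's theorem on the essential spectrum, which is $[a_-,\infty)$), hence there is a spectral gap; equivalently one writes $\cL = a\,\mathrm{Id} - \cK$ and uses that $a^{-1/2}\cK a^{-1/2}$ is compact with operator norm $\le 1$ and eigenvalue $1$ exactly on (the image of) $\Ker\cL$, so on the orthogonal complement its norm is $\le 1-c$ for some $c>0$; this yields
$$
\int g\,\cL g\, M\,dv \ge c\int a(|v|)\,g^2(v)\,M\,dv = c\,\|g\|_{L^2(aM\,dv)}^2\,.
$$

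\textbf{Main obstacle.} The only genuinely technical point is the compactness of $\cK$ on $L^2(M\,dv)$: extracting the integral-kernel representation through the Carleman change of variables and establishing the Hilbert--Schmidt bound on the kernel, uniformly handling the large-velocity tails via the Gaussian weights. Since this is precisely Grad's classical analysis for hard spheres and is reproduced in the standard references on the linearized Boltzmann operator, I would invoke it directly rather than reprove it, and devote the written proof to the symmetrization identity, the collision-invariant argument for $\Ker(\cL)$, and the spectral-gap deduction of coercivity.
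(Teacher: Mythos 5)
Your proposal follows essentially the same route as the paper's sketch: the symmetrized quadratic form with the $\frac14$ factor for symmetry, nonnegativity and the collision-invariant characterization of $\Ker(\cL)$, Hilbert's/Grad's decomposition $\cL = a\,\mathrm{Id}-\cK$ with the compactness of $\cK$ invoked rather than reproved, and the lower bound $a\geq a_->0$ yielding the spectral gap and hence the weighted coercivity. The extra details you supply (Weyl's theorem, the $a^{-1/2}\cK a^{-1/2}$ variant to get the $L^2(aM\,dv)$ norm on the right-hand side) are standard and correct, so the proof is fine as written.
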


\begin{proof}[Sketch of proof]

$\bullet$
The first step consists in characterizing the nullspace of $\cL$. It must contain
the collision invariants since the integrand in
$
\cL g$
vanishes identically if $g(v) =1,v_1,v_2 , \dots ,v_d$ or $|v|^2$. Conversely, from the identity,
$$
 \int\psi\cL g Mdv=
\frac14\int (\psi\!+\!\psi_1\!-\!\psi'\!-\!\psi'_1)(g\!+\!g_1\!-\!g'\!-\!g'_1) 
\big((v_{1}-v) \cdot \nu\big)_+Mdvdv_1d\nu\,,
$$
where we have used the classical notation
$$
g_1:= g(v_1) \, , \quad g' = g(v') \ , \quad g'_1=g(v'_1) \, , 
$$
we deduce that,  if $g $ belongs to the nullspace of $\cL$, then
$$
g+g_1=g'+g'_1 \, ,
$$
which
entails that $g$ is a linear combination of $1,v_1$, $v_2,\dots ,v_d$ and $|v|^2$ (see for instance~\cite{perthame}).

Note that the same identity shows that $\cL$ is self-adjoint.

$\bullet$
In order to establish the coercivity of the linearized collision operator 
$\cL$, the key step is then  to introduce
 Hilbert's decomposition \cite{hilbert}, showing that $\cL$ is   a compact perturbation of 
a multiplication operator~:
$$
\cL g (v)=a(|v|)g(v)-\cK g (v) \, .
$$

  Proving that $\cK$ is a compact integral operator on $L^2(Mdv)$  relies on  intricate computations using Carleman's parametrization of collisions (which we   also use in this paper for the study of  recollisions). We shall not perform them here (see~\cite{hilbert}).

  Because $a$ is bounded from below, $\cL$ has  a spectral gap, which provides the coercivity estimate.
\end{proof}
 Proposition  \ref{coercivity}, along with classical results on maximal accretive operators, imply the following statement.
\begin{Prop}\label{Cauchy-pb} Let $g_0 \in L^2(M  dvdx)$. Then, for any fixed $\alpha$, there exists a unique solution $g_\alpha \in C(\R^+, L^2(Mdvdx))\cap C^1( \R^+_*, L^2(M dvdx)) \cap  C( \R^+_*, L^2(M a dvdx)) $ to the linearized Boltzmann equation {\rm(\ref{lBoltzq})}. It satisfies  the scaled energy inequality
\begin{equation}\label{scaledenergyinequality} \| g_\alpha(t)\|^2_{L^2(Mdvdx)} +\alpha^{1+q} \int_0^t \int g_\alpha \cL g_\alpha (t') Mdvdx dt'\leq \| g_0\|_{L^2(Mdv)}^2\,.
\end{equation}
\end{Prop}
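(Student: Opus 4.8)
The plan is to deduce Proposition~\ref{Cauchy-pb} from Proposition~\ref{coercivity} by recognizing that the operator $-(v\cdot\nabla_x + \alpha^{1+q}\cL)$ (after multiplying the equation by $\alpha^q$ and renaming the time scale suitably) generates a strongly continuous contraction semigroup on the Hilbert space $H:=L^2(M\,dv\,dx)$. First I would set up the functional framework: equip $H$ with the inner product $\langle f,g\rangle_H=\int f g\, M\,dv\,dx$, and introduce the unbounded operator $\mathcal{A}g:=-v\cdot\nabla_x g-\alpha^{1+q}\mathcal{L}g$ with domain $\mathcal{D}(\mathcal{A})=\{g\in H\,:\, v\cdot\nabla_x g\in H,\ a\,g\in H\}$, the natural intersection of the transport and collision domains. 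The transport part $-v\cdot\nabla_x$ is skew-adjoint on $H$ (on the torus, integration by parts produces no boundary term, and $\int (v\cdot\nabla_x g)\,g\,M\,dv\,dx=0$), hence accretive with dense domain; the collision part $-\alpha^{1+q}\mathcal{L}$ is, by Proposition~\ref{coercivity}, a nonnegative self-adjoint operator, hence $-\mathcal{L}$ is $m$-accretive. The key structural point is that $v\cdot\nabla_x$ and $\mathcal{L}$ act on complementary variables in a compatible way, so that their sum is again $m$-accretive: one checks $\mathrm{Re}\,\langle -\mathcal{A}g,g\rangle_H=\alpha^{1+q}\int g\,\mathcal{L}g\,M\,dv\,dx\ge 0$, and one verifies the range condition $\mathrm{Ran}(\lambda-\mathcal{A})=H$ for some (hence all) $\lambda>0$.

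The main technical step is thus to prove that $\lambda g + v\cdot\nabla_x g + \alpha^{1+q}\mathcal{L}g = F$ has a solution $g\in\mathcal{D}(\mathcal{A})$ for every $F\in H$, which is the only place where real work is needed. I would do this by a two-step approximation: (i) fix the coercive multiplication part by writing $\mathcal{L}=a-\mathcal{K}$ (Hilbert's decomposition recalled in the sketch of Proposition~\ref{coercivity}), and first solve $\lambda g + v\cdot\nabla_x g + \alpha^{1+q} a(|v|) g = G$; since $\lambda+\alpha^{1+q}a(|v|)\ge\lambda>0$ is bounded below, this transport-plus-damping equation is solved by the explicit characteristics formula on $\mathbb{T}^d$, giving a bounded inverse $T_\lambda$ with $\|T_\lambda\|_{H\to H}\le\lambda^{-1}$ and moreover $T_\lambda$ maps $H$ into $\mathcal{D}(\mathcal{A})$ with the gain $\|a\, T_\lambda G\|_H\lesssim \|G\|_H$; (ii) treat the compact operator $\mathcal{K}$ perturbatively: the equation becomes $(I-\alpha^{1+q} T_\lambda\mathcal{K})g=T_\lambda F$, and since $T_\lambda\mathcal{K}$ is compact on $H$ (composition of the bounded $T_\lambda$ with the $\mathcal{L}$-compact $\mathcal{K}$, using the velocity-localization inherent in $\mathcal{K}$), invertibility follows from the Fredholm alternative once one rules out a nontrivial kernel — and a nontrivial kernel would contradict accretivity since $\mathrm{Re}\,\langle(\lambda-\mathcal{A})g,g\rangle_H\ge\lambda\|g\|_H^2$. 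This yields $m$-accretivity of $-\mathcal{A}$, and the Hille–Yosida–Lumer–Phillips theorem then gives a $C_0$-contraction semigroup $(e^{t\mathcal{A}})_{t\ge0}$ on $H$.

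With the semigroup in hand, the solution is $g_\alpha(t)=e^{t\mathcal{A}}g_0$ for $g_0\in H$. Uniqueness is immediate from semigroup theory. For the regularity assertions: when $g_0\in\mathcal{D}(\mathcal{A})$ the semigroup theory gives $g_\alpha\in C^1(\mathbb{R}^+,H)\cap C(\mathbb{R}^+,\mathcal{D}(\mathcal{A}))$; for general $g_0\in H$ one gets $g_\alpha\in C(\mathbb{R}^+,H)$, and the claimed $C^1(\mathbb{R}^+_*,H)$ and $C(\mathbb{R}^+_*,L^2(Ma\,dv\,dx))$ smoothing for $t>0$ follows from the regularizing effect coming from the coercive lower bound — concretely, from the a priori estimate obtained by testing against $\mathcal{A}g_\alpha$ and using $\int g_\alpha\mathcal{L}g_\alpha\,M\,dv\,dx\gtrsim\|g_\alpha\|_{L^2(aM\,dv\,dx)}^2$ after projecting off $\mathrm{Ker}\,\mathcal{L}$ (the hydrodynamic modes are preserved/transported and cause no trouble since they contribute $0$ to the dissipation but are controlled by the energy identity). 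Finally, the scaled energy inequality~\eqref{scaledenergyinequality} is obtained by multiplying~\eqref{lBoltzq} by $\alpha^q\, g_\alpha\, M$, integrating in $v$ and $x$, using $\int (v\cdot\nabla_x g_\alpha)g_\alpha\,M\,dv\,dx=0$ to kill the transport term, and integrating in time; for $g_0\in\mathcal{D}(\mathcal{A})$ this is an exact identity $\tfrac12\tfrac{d}{dt}\|g_\alpha(t)\|_H^2 + \alpha^{1+q}\int g_\alpha\mathcal{L}g_\alpha\,M\,dv\,dx=0$, and the inequality for general $g_0\in H$ follows by approximation and lower semicontinuity of the $H$-norm. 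I expect the Fredholm/range-condition step (ii) — verifying compactness of $T_\lambda\mathcal{K}$ on $H$ and carefully handling the interplay of the transport characteristics with the velocity-integral operator $\mathcal{K}$ — to be the only genuinely delicate point; everything else is routine $m$-accretive-operator bookkeeping.
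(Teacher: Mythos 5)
Your argument is essentially the paper's own: the paper establishes this proposition by simply invoking Proposition~\ref{coercivity} together with ``classical results on maximal accretive operators'', and your Lumer--Phillips construction (skew-adjoint transport plus the nonnegative self-adjoint $\cL$, with the range condition checked via Hilbert's decomposition $\cL=a-\cK$ and the Fredholm alternative) followed by the energy identity obtained by testing \eqref{lBoltzq} against $\alpha^q g_\alpha M$ is precisely the classical argument being cited. The only thin spot is your justification of the $t>0$ regularity for general $g_0\in L^2(M\,dv\,dx)$: the generator is not sectorial, so there is no parabolic smoothing from coercivity; the gain is in the velocity weight (through the damping factor $e^{-t\,\alpha^{1+q}a(|v|)}$ and the weight gain of $\cK$ in Duhamel's formula) --- but the paper itself delegates this point to the classical theory, so your proposal matches its level of detail.
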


\subsection{The acoustic and Stokes limit}
The starting point for the study of hydrodynamic limits is the   energy inequality~(\ref{scaledenergyinequality}).
The uniform $L^2$ bound on $(g_\alpha)$ implies that, up to extraction of a subsequence,
\begin{equation}
\label{L2-cv}
g_\alpha \rightharpoonup g \hbox{ weakly in } L^2_{loc} (dt,L^2(Mdvdx))\,.
\end{equation}
Let $\Pi$ be the orthogonal projection on the kernel of $\cL$.
The dissipation, together with the coercivity estimate in Proposition  \ref{coercivity}, further provides
$$ \| g_\alpha -\Pi g_\alpha \|_{L^2(Ma dvdxdt)} = O(\alpha^{-(q+1)/2}) \, ,$$
from which we deduce that
\begin{equation}
\label{ansatz}
 g (t,x,v) =\Pi g(t,x,v) \equiv \rho(t,x) + u(t,x)\cdot v +\theta(t,x) {|v|^2-d\over 2}\, \cdotp
 \end{equation}

\bigskip
  If the Mach number $\alpha^q$ is of order 1, i.e. for $q=0$, one obtains asymptotically the acoustic equations. Denoting by $\la \cdot \ra$ the average with respect to the measure $Mdv$, we indeed have the following conservation laws
$$
\begin{aligned}
\d_t \la g_\alpha \ra +\nabla_x \cdot \la g_\alpha  v\ra=0 \, ,\\
\d_t \la g_\alpha v\ra +\nabla_x \cdot \la g_\alpha  v\otimes v\ra=0 \, ,\\
\d_t \la g_\alpha |v|^2 \ra +\nabla_x \cdot \la g_\alpha  v|v|^2\ra=0\,.
\end{aligned}
$$
From (\ref{L2-cv}) and (\ref{ansatz}) we then deduce that~$(\rho, u , \theta)$ satisfy
\begin{equation}
\label{acoustic}
\begin{aligned}
\d_t\rho  +\nabla_x \cdot u=0 \, ,\\
\d_t u +\nabla_x (\rho+\theta) =0 \, ,\\
\d_t \theta  +  \frac{2}{d}  \nabla_x \cdot u =0\,.
\end{aligned}
\end{equation}
By uniqueness of the limiting point, we get the convergence of the whole family $(g_\alpha)_{\alpha>0}$.

  Since the limiting distribution $g$ satisfies the energy equality
$$ \| g\|^2_{L^2(Mdvdx)} = \|  g_0\|_{L^2(Mdv dx)}^2$$
or equivalently  $$ \| g\|^2_{L^2(Mdvdx)} +\alpha \int_0^t \int g \cL g Mdvdx = \| \Pi g_0\|_{L^2(Mdv dx)}^2\,,$$
convergence is strong as soon as $g_0 =\Pi g_0$. 
We thus have the following result (see  \cite{golse-levermore} and references therein).

\begin{Prop}
Let $g_0\in L^2(Mdvdx)$. For all $\alpha$, let $g_\alpha$ be a solution to the scaled linearized Boltzmann equation {\rm(\ref{lBoltzq})} with $q=0$. Then, as $\alpha \to \infty$, $g_\alpha$ converges weakly in~$L^2_{loc} (dt,L^2(Mdvdx))$ to the infinitesimal Maxwellian 
$\displaystyle g=\rho+u\cdot v +\frac12 \theta (|v|^2-d)$ 
where  $(\rho, u,\theta)$ is the solution of the acoustic equations {\rm(\ref{acoustic})} with initial datum 
$\displaystyle (\la g_0\ra, \la g_0 v\ra, \la g_0 \frac1d(|v|^2-d)\ra)$. 

The convergence holds strongly in $L^\infty_t(L^2(Mdvdx))$ provided that $g_0=\Pi g_0$.
\end{Prop}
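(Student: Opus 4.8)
The plan is to follow the classical ``moment method'' and to exploit the uniform bounds coming from the scaled energy inequality \eqref{scaledenergyinequality} with $q=0$. First I would record that, since $\|g_\alpha(t)\|_{L^2(Mdvdx)}\le \|g_0\|_{L^2(Mdv)}$ uniformly in $\alpha$ and $t$, the family $(g_\alpha)$ is bounded in $L^\infty_t(L^2(Mdvdx))$, hence weakly-$\ast$ compact, and in particular bounded in $L^2_{loc}(dt,L^2(Mdvdx))$; extract a subsequence converging weakly to some $g$ as in \eqref{L2-cv}. Next, the energy inequality also controls the dissipation term $\alpha\int_0^t\int g_\alpha\cL g_\alpha\,Mdvdx\,dt'\le \|g_0\|^2$, so $\int g_\alpha\cL g_\alpha$ is $O(1/\alpha)$; invoking the coercivity estimate of Proposition~\ref{coercivity} on $(\Ker\cL)^\perp$ gives $\|g_\alpha-\Pi g_\alpha\|_{L^2(Ma\,dvdxdt)}=O(\alpha^{-1/2})$, whence the weak limit lies in $\Ker\cL$ and therefore has the infinitesimal-Maxwellian form \eqref{ansatz}, $g=\rho+u\cdot v+\tfrac12\theta(|v|^2-d)$ for suitable $(\rho,u,\theta)$ which, by the $L^2$ bound on $g$, belong to $L^\infty_t(L^2_x)$.

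Then I would pass to the limit in the local conservation laws. Multiplying \eqref{lBoltzq} (with $q=0$) by the collision invariants $1,v,|v|^2$ and integrating against $M\,dv$ kills the right-hand side exactly because $\Ker\cL$ is spanned by these invariants (equivalently $\langle \psi\,\cL g\rangle=0$ for $\psi\in\Ker\cL$, using the symmetrized identity in the sketch of Proposition~\ref{coercivity}), producing the exact system
\[
\d_t\langle g_\alpha\rangle+\nabla_x\cdot\langle g_\alpha v\rangle=0,\quad
\d_t\langle g_\alpha v\rangle+\nabla_x\cdot\langle g_\alpha v\otimes v\rangle=0,\quad
\d_t\langle g_\alpha|v|^2\rangle+\nabla_x\cdot\langle g_\alpha v|v|^2\rangle=0.
\]
Every flux here is a finite linear combination of first and second moments of $g_\alpha$ against the Gaussian weight, so the uniform $L^2(Mdvdx)$ bound lets one pass to the weak limit in each term (testing against smooth compactly supported functions of $(t,x)$, integration in $v$ being harmless since the velocity weights are Gaussian-integrable). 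Using the explicit form \eqref{ansatz} of $g$ one computes the limiting moments: $\langle g\rangle=\rho$, $\langle gv\rangle=u$, $\langle g|v|^2\rangle=d\rho+d(d+2)\theta/2$ up to normalization, $\langle g v\otimes v\rangle=(\rho+\theta)\mathrm{Id}$, and $\langle g v|v|^2\rangle=(d+2)u$, which, after bookkeeping the constants, yields precisely the acoustic system \eqref{acoustic} with initial data the projections $(\langle g_0\rangle,\langle g_0 v\rangle,\langle g_0\tfrac1d(|v|^2-d)\rangle)$; the initial data are identified by testing the conservation laws against functions supported near $t=0$ and using strong continuity of $g_\alpha$ at $t=0$ together with $g_\alpha(0)=g_0$. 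Since the acoustic system is a linear symmetric hyperbolic system with a unique solution in $C(\R^+;L^2_x)$, the limit point is unique and the whole family $(g_\alpha)$ converges, not merely a subsequence.

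Finally, for the strong-convergence statement when $g_0=\Pi g_0$, I would argue by energy comparison: the limit $g$ satisfies the energy \emph{equality} $\|g(t)\|^2_{L^2(Mdvdx)}=\|\Pi g_0\|^2_{L^2}$ (the acoustic system conserves the $L^2$ norm, as one checks directly, and the dissipation $\int g\cL g$ vanishes because $g\in\Ker\cL$), so when $g_0=\Pi g_0$ one has $\|g_0\|_{L^2}=\|\Pi g_0\|_{L^2}$; combining $\limsup_\alpha\|g_\alpha(t)\|_{L^2}\le\|g_0\|_{L^2}=\|g(t)\|_{L^2}\le\liminf_\alpha\|g_\alpha(t)\|_{L^2}$ with weak convergence upgrades it to strong convergence in $L^\infty_t(L^2(Mdvdx))$. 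The only genuinely delicate point is the one already handled by Proposition~\ref{coercivity}: controlling the relaxation $g_\alpha-\Pi g_\alpha$ uniformly, which hinges on the spectral gap of $\cL$; everything else is linear weak-compactness plus an exact (not approximate) closure of the moment equations, so no compensated-compactness or averaging lemma is needed here, in contrast with the viscous (Stokes) regime.
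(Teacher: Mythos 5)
Your proposal is correct and follows essentially the same route as the paper's proof: uniform bounds and weak compactness from the scaled energy inequality, the coercivity estimate of Proposition~\ref{coercivity} to show the relaxation $g_\alpha-\Pi g_\alpha$ vanishes so the limit is an infinitesimal Maxwellian, passage to the limit in the exact moment (conservation-law) equations to obtain the acoustic system, uniqueness of the limit to get convergence of the whole family, and the energy-equality argument to upgrade to strong convergence for well-prepared data $g_0=\Pi g_0$. No substantive differences from the paper's argument.
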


\bigskip
In the diffusive regime, i.e. for $q=1$, the moment equations state
$$
\begin{aligned}
{1\over \alpha} \d_t \la g_\alpha \ra +  \nabla_x \cdot \la g_\alpha  v\ra=0 \, ,\\
{1\over \alpha} \d_t \la g_\alpha v\ra + \nabla_x \cdot \la g_\alpha  v\otimes v\ra=0 \, ,\\
{1\over \alpha} \d_t \la g_\alpha |v|^2 \ra + \nabla_x \cdot \la g_\alpha  v|v|^2\ra=0\,.
\end{aligned}
$$
From (\ref{L2-cv}) and (\ref{ansatz}) we deduce that
$$\nabla_x \cdot u = 0 \, , \quad\nabla_x (\rho+\theta) =0\,,$$
referred to as incompressibility and Boussinesq constraints.

To characterize the mean motion, we then have to filter acoustic waves, i.e. to project on the kernel of the acoustic operator
$$
\begin{aligned}
\d_t P \la g_\alpha v\ra + \alpha P\nabla_x \cdot \la g_\alpha ( v\otimes v-\frac12 |v|^2 Id)\ra=0 \, ,\\
 \d_t \la g_\alpha({|v|^2 \over d+2} - 1 )\ra +\alpha \nabla_x \cdot \la g_\alpha   v({|v|^2 \over d+2} - 1 )\ra=0\,,
\end{aligned}
$$
where $P$ is the Leray projection on divergence free vector fields.
Define the kinetic momentum flux $\displaystyle\Phi(v) = v\otimes v-\frac{1}{d} |v|^2 Id$ and the kinetic energy flux $\displaystyle \Psi(v) =  \frac1{d+2} v(|v|^2-d-2)$. As~$\Phi, \Psi $ belong to~$ (\Ker \cL)^\perp$, and $\cL$ is a Fredholm operator, there exist
pseudo-inverses $\tilde \Phi, \tilde \Psi \in (\Ker \cL)^\perp$ such that $\Phi = \cL \tilde \Phi$ and $\Psi = \cL \tilde \Psi$. Then,
$$
\begin{aligned}
\d_t P \la g_\alpha v\ra + \alpha P\nabla_x \cdot \la \cL g_\alpha \tilde \Phi \ra=0 \, ,\\
 \d_t  \la g_\alpha ({|v|^2 \over d+2} - 1 )\ra +\alpha \nabla_x \cdot \la \cL g_\alpha  \tilde \Psi\ra=0\,.
\end{aligned}
$$
Using the equation
$$\alpha \cL g_\alpha = -v\cdot \nabla_x g_\alpha -\frac1\alpha \d_t g_\alpha $$
the Ansatz (\ref{ansatz}), and taking limits in the sense of distributions, we get
\begin{equation}
\label{Stokes}
\begin{aligned}
\nabla_x \cdot u = 0, \quad \nabla_x (\rho+\theta) =0 \, ,\\
\d_t  u - \mu  \Delta_x u=0 \, ,\\
 \d_t \theta  - \kappa  \Delta_x \theta =0\,.
\end{aligned}
\end{equation}
These are  exactly the Stokes-Fourier equations with 
$$
\mu = \frac{1}{(d-1)(d+2)} \la \Phi : \tilde \Phi \ra 
\quad \text{and} \quad 
\kappa = \frac{2}{d(d+2)} \la \Psi \cdot \tilde \Psi \ra.
$$

As previously, the limit is unique and the convergence is strong provided that the initial datum is well-prepared, i.e. if 
\begin{equation}
\label{wellp}
g_0 (x,v) =u_0\cdot v +\frac12 \theta_0(|v|^2-(d+2)) \hbox{ with } \nabla_x \cdot  u_0 = 0 \,.
\end{equation}
One can therefore prove the following result.
\begin{Prop}
Let $g_0\in L^2(Mdvdx)$. For all $\alpha$, let $g_\alpha$ be a solution to the scaled linearized Boltzmann equation {\rm(\ref{lBoltzq})} with $q=1$. Then, as $\alpha \to \infty$, $g_\alpha$ converges weakly in~ $L^2_{loc} (dt,L^2(Mdvdx))$ to the infinitesimal Maxwellian $\displaystyle g=u\cdot v +\frac12 \theta (|v|^2-(d+2) )$ where~$( u,\theta)$ is the solution of~{\rm(\ref{Stokes})} with initial datum $(P \la g_0 v\ra, \la g_0 ({|v|^2 \over d+2} - 1 )\ra)$. 

  The convergence holds in $L^\infty_t(L^2(Mdvdx))$ provided that the initial datum is well-prepared in the sense of {\rm(\ref{wellp})}.
\end{Prop}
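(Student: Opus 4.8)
The statement to be proved is the last Proposition in Appendix~\ref{appendixBoltzlin}: weak $L^2_{loc}$ convergence of $g_\alpha$ (solving the diffusively scaled linearized Boltzmann equation \eqref{lBoltzq} with $q=1$) to an infinitesimal Maxwellian $g=u\cdot v+\frac12\theta(|v|^2-(d+2))$ solving the Stokes--Fourier system \eqref{Stokes}, with strong convergence in $L^\infty_t(L^2(Mdvdx))$ for well-prepared data. The whole argument is the classical moment-method approach of Bardos--Golse--Levermore adapted to the linear setting, and the skeleton is already laid out in the paragraphs preceding the statement; my job is to organize it into a rigorous chain. First I would invoke Proposition~\ref{Cauchy-pb}, which gives, for each fixed $\alpha$, a unique solution $g_\alpha\in C(\R^+;L^2(Mdvdx))$ together with the scaled energy inequality \eqref{scaledenergyinequality}. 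This yields two uniform bounds: a bound on $g_\alpha$ in $L^\infty_t(L^2(Mdvdx))$, hence weak-$*$ compactness and extraction of a subsequence with $g_\alpha\rightharpoonup g$ weakly in $L^2_{loc}(dt;L^2(Mdvdx))$ as in \eqref{L2-cv}; and a bound $\alpha^{q+1}\int_0^t\langle g_\alpha\cL g_\alpha\rangle\le\|g_0\|^2$, which by the coercivity estimate of Proposition~\ref{coercivity} forces $\|g_\alpha-\Pi g_\alpha\|_{L^2(Ma\,dvdxdt)}=O(\alpha^{-(q+1)/2})$, so that the limit lies in $\Ker\cL$: $g=\Pi g=\rho+u\cdot v+\theta\frac{|v|^2-d}2$ as in \eqref{ansatz}.

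\textbf{Key steps.} The second block is to pass to the limit in the moment equations. Taking velocity moments $1,v,|v|^2$ of \eqref{lBoltzq} (legitimate because $g_\alpha\in L^2(Ma\,dvdx)$ controls the fluxes) and using $\alpha^{-1}\partial_t\langle g_\alpha\phi\rangle+\nabla_x\cdot\langle g_\alpha v\phi\rangle=0$ for collision invariants $\phi$, one reads off in the limit the constraints $\nabla_x\cdot u=0$ (incompressibility) and $\nabla_x(\rho+\theta)=0$ (Boussinesq), since the $\alpha^{-1}\partial_t$ terms vanish while $\langle g_\alpha v\otimes v\rangle\to\langle g\,v\otimes v\rangle$ can be computed explicitly from \eqref{ansatz}. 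To identify the dynamics of $u$ and $\theta$ one filters the acoustic modes: project the momentum equation onto divergence-free fields by the Leray projection $P$, and take the renormalized energy moment $\langle g_\alpha(\frac{|v|^2}{d+2}-1)\rangle$, obtaining equations of the form $\partial_t P\langle g_\alpha v\rangle+\alpha P\nabla_x\cdot\langle g_\alpha\Phi\rangle=0$ and $\partial_t\langle g_\alpha(\tfrac{|v|^2}{d+2}-1)\rangle+\alpha\nabla_x\cdot\langle g_\alpha\Psi\rangle=0$, with $\Phi(v)=v\otimes v-\frac1d|v|^2\mathrm{Id}$, $\Psi(v)=\frac1{d+2}v(|v|^2-d-2)$. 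Since $\Phi,\Psi\in(\Ker\cL)^\perp$ and $\cL$ is Fredholm, write $\Phi=\cL\tilde\Phi$, $\Psi=\cL\tilde\Psi$ with $\tilde\Phi,\tilde\Psi\in(\Ker\cL)^\perp$; then $\alpha\langle g_\alpha\Phi\rangle=\alpha\langle\cL g_\alpha\,\tilde\Phi\rangle=-\langle(v\cdot\nabla_x g_\alpha+\alpha^{-1}\partial_t g_\alpha)\tilde\Phi\rangle$ by the equation. Passing to the distributional limit, the $\alpha^{-1}\partial_t$ piece drops and the transport piece becomes $-\nabla_x\cdot\langle v\otimes\tilde\Phi\,\Pi g\rangle$, which after the explicit Gaussian moment computation gives $-\mu\Delta_x u$ with $\mu=\frac1{(d-1)(d+2)}\langle\Phi:\tilde\Phi\rangle$; similarly the energy flux yields $-\kappa\Delta_x\theta$ with $\kappa=\frac2{d(d+2)}\langle\Psi\cdot\tilde\Psi\rangle$. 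This establishes \eqref{Stokes}. Since the Stokes--Fourier system has a unique solution in the natural energy class for the given initial data $(P\langle g_0 v\rangle,\langle g_0(\tfrac{|v|^2}{d+2}-1)\rangle)$, the whole family $(g_\alpha)$ converges, not just a subsequence.

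\textbf{Strong convergence and the main obstacle.} For the last assertion, when $g_0=\Pi g_0$ is well-prepared in the sense of \eqref{wellp}, one compares the limiting energy with the initial energy: the energy inequality \eqref{scaledenergyinequality} gives $\limsup_\alpha\|g_\alpha(t)\|_{L^2}\le\|g_0\|_{L^2}$, while the weak limit $g$ together with the structure \eqref{ansatz} and the Stokes energy identity $\|g(t)\|^2+2\mu\int_0^t\|\nabla u\|^2+2\kappa\int_0^t\|\nabla\theta\|^2=\|g_0\|^2$ (valid precisely because well-preparedness removes the fast oscillations and makes $\Pi g_0=g_0$) gives $\liminf_\alpha\|g_\alpha(t)\|_{L^2}\ge\|g(t)\|_{L^2}$ by lower semicontinuity of the norm. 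Combining, $\|g_\alpha(t)\|\to\|g(t)\|$, which upgrades weak to strong convergence in $L^\infty_t(L^2(Mdvdx))$. The main technical obstacle, as always in this scheme even in the linear case, is the justification of the limit in the flux terms $\alpha\langle\cL g_\alpha\,\tilde\Phi\rangle$: one must control that $\langle v\otimes\tilde\Phi\,(g_\alpha-\Pi g_\alpha)\rangle\to 0$ and that the contributions of large velocities in the unbounded weights $\tilde\Phi,\tilde\Psi$ (which grow only polynomially, so are controlled by the Gaussian weight and the $L^2(Ma\,dvdx)$ bound) are negligible; the dissipation estimate $\|g_\alpha-\Pi g_\alpha\|_{L^2(Ma\,dvdx\,dt)}=O(\alpha^{-1})$ furnishes exactly the $\alpha^{-1}$ gain needed to kill the $\alpha$ prefactor. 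I would carry this out by decomposing $g_\alpha=\Pi g_\alpha+(g_\alpha-\Pi g_\alpha)$ inside every flux, handling the hydrodynamic part by explicit Gaussian integration and the remainder by Cauchy--Schwarz against the dissipation bound; all the spectral input needed ($\Ker\cL$, self-adjointness, spectral gap, Fredholm property, existence of pseudo-inverses) is already provided by Propositions~\ref{coercivity} and~\ref{Cauchy-pb}, so no new functional-analytic work is required. I refer to \cite{golse-levermore} for the detailed moment computations which are entirely routine once the above structure is in place.
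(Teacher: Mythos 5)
Your treatment of the weak convergence and of the identification of the limit dynamics follows exactly the paper's route (energy inequality of Proposition~\ref{Cauchy-pb} plus coercivity of Proposition~\ref{coercivity} to get the ansatz \eqref{ansatz}, moment equations for the constraints, filtering by the Leray projection, pseudo-inverses $\tilde\Phi,\tilde\Psi$ and the substitution $\alpha\cL g_\alpha=-v\cdot\nabla_x g_\alpha-\alpha^{-1}\d_t g_\alpha$, Cauchy--Schwarz against the $O(\alpha^{-1})$ dissipation bound to pass to the limit in the fluxes, uniqueness of the Stokes--Fourier solution to avoid extracting subsequences), and that part is sound.

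The strong-convergence step for well-prepared data, however, contains a genuine gap. You argue: $\limsup_\alpha\|g_\alpha(t)\|\leq\|g_0\|$ from \eqref{scaledenergyinequality}, $\liminf_\alpha\|g_\alpha(t)\|\geq\|g(t)\|$ by weak lower semicontinuity, ``combining, $\|g_\alpha(t)\|\to\|g(t)\|$''. This is a non sequitur in the diffusive regime: the Stokes--Fourier flow \eqref{Stokes} \emph{dissipates} energy, so by your own energy identity $\|g(t)\|^2=\|g_0\|^2-2\mu\int_0^t\|\nabla u\|^2-2\kappa\int_0^t\|\nabla\theta\|^2<\|g_0\|^2$ for $t>0$ (unless the solution is trivial), and the chain $\|g(t)\|\leq\liminf\leq\limsup\leq\|g_0\|$ does not close. (This is precisely where the Stokes case differs from the acoustic case, where the limit equation conserves the $L^2$ norm and the analogous two-line argument does work.) The missing idea is to show that the scaled kinetic dissipation asymptotically dominates the limiting viscous and thermal dissipation: from the equation, $\alpha(g_\alpha-\Pi g_\alpha)$ is bounded in $L^2(Ma\,dvdxdt)$ and its weak limit is identified as $-\cL^{-1}(v\cdot\nabla_x g)=-(\tilde\Phi:\nabla_x u+\tilde\Psi\cdot\nabla_x\theta)$; weak lower semicontinuity of the nonnegative quadratic form $h\mapsto\int \langle h\,\cL h\rangle$ then gives $\liminf_\alpha\,\alpha^{2}\int_0^t\!\int g_\alpha\cL g_\alpha\geq 2\mu\int_0^t\|\nabla u\|^2+2\kappa\int_0^t\|\nabla\theta\|^2$, whence $\limsup_\alpha\|g_\alpha(t)\|^2\leq\|g(t)\|^2$; only then does weak lower semicontinuity upgrade to convergence of norms and strong $L^2$ convergence (an alternative is a relative/modulated energy estimate on $\|g_\alpha(t)-g(t)\|^2_{L^2(Mdvdx)}$). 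A secondary point you should also address for a statement in $L^\infty_t(L^2)$: weak convergence in $L^2_{loc}(dt;L^2)$ does not by itself give $g_\alpha(t)\rightharpoonup g(t)$ for every fixed $t$, nor uniformity in $t$; one needs equicontinuity in time of the moments (or the monotonicity of $t\mapsto\|g_\alpha(t)\|^2$ together with continuity of $\|g(t)\|^2$, or the relative-energy route) to conclude uniformly on compact time intervals.
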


\begin{Rmk}
In both cases, the defect of strong convergence for ill-prepared initial data can be described precisely.

  If the initial profile in $v$ is not an infinitesimal Maxwellian, i.e. if $g_0\neq \Pi g_0$, one has a relaxation layer of size $\alpha^{-(1+q)}$ governed essentially by the homogeneous equation
$$\d_t \Pi_\perp g_\alpha = -\alpha^{q+1} \cL g_\alpha\,.$$

  In the incompressible regime $q=1$, if the initial moments do not satisfy the incompressibility and Boussinesq constraints, one has to superpose a fast oscillating component (with a time scale~$\alpha^{-1}$).
For each eigenmode of the acoustic operator, the slow evolution is given by a diffusive equation.

  A straightforward energy estimate then shows that the asymptotic behavior of $g_\alpha$ is well described by the sum of these three contributions (main motion, relaxation layer and acoustic waves in incompressible regime).
\end{Rmk}

%%%%%%%%%%%%%%

\section{Geometrical lemmas}\label{geometricallemmasappendix}
 
In this appendix, we prove several technical lemmas (namely Lemmas~\ref{v-constraint}, \ref{doublerecollision},~\ref{Murphy} and~\ref{self-recoll-periodic}) which were key steps in Sections~\ref{convergencepartieprincipales} and~\ref{recollisions} in proving Propositions~\ref{recoll1-prop}
and~\ref{lemrecoll}.

\smallskip

In the following we adopt the notation of those sections.

\subsection{A preliminary estimate}

Recall Equation~(\ref{rec-equation}) for the first recollision between $i,j$
\begin{equation}
\label{rec-equation'} 
v_i  - v_{j} = \frac1{\tau_{rec} } \delta x_\perp - \frac{\tau_1}{\tau_{rec}} (\bar v_{i} - v_{j})-  \frac1{\tau_{rec} } \nu_{rec} \,  , 
\end{equation}
with the notations \eqref{eq: tau1}
\begin{equation}
\label{eq: tau1'}
\tau_1 :=- \frac1\eps (t_{1^*} - t_{2^*}+\lambda ) \, ,\qquad  \tau_{rec} :=-\frac1\eps (t_{rec} - t_{1^*}) \, ,
\end{equation}
where
$$
 \frac1\eps ( x_{i} - x_{j}-q) =  {\lambda \over \eps} (\bar v_{i} - v_{j})  + \delta x_\perp \quad \hbox{ with } \quad\delta x_\perp \cdot (\bar v_i - v_{j}) =0 \, .
$$

\medskip

The  distance between particles $i,j$ at the collision time $t_{1^*}$ is given by 
\begin{equation*}
\big|  x_i(  t_{1^*}) - x_j (  t_{1^*}) \big|   = \eps  \big|   \delta x_\perp - \tau_1 (\bar v_{i} - v_{j})  \big|  
= \eps  \sqrt{\big|  \delta x_\perp\big|^2  + \big| \tau_1 (\bar v_{i} - v_{j})  \big|^2 }
\, .
\end{equation*}
The distance between the particles varies with the collision time $t_{1^*}$ and 
the closer they are, the easier  it is to aim (at the collision time $t_{1^*}$) to create  a recollision
at the later time $t_{rec}$.
The key idea is that for relative velocities $\bar v_{i} - v_{j} \not = 0$, the particles will never remain close for a long time so that integrating over $t_{1^*}$ allows us to recover some smallness uniformly over the initial positions at time $t_{2^*}$. 

\medskip
 Suppose $|\tau_1| |\bar v_i - v_j| \leq M$.
Since $v_{1^*}$ is in  a ball of size~$R$, and $\nu_{1^*}$ belongs to ${\mathbb S}$, we have
\begin{equation}
\label{eq: small distance cut off 0}
\int 
 \indc_{ \{ |\tau_1| |\bar v_i - v_j | \leq M \}} 
 |(v_{1^*} -  \bar v_i) \cdot \nu_{1^*}| |\bar v_i-v_j| d \tau_1  dv_{1^*} d\nu_{1^*} \leq C R^2  M \,.
\end{equation}

\medskip

For later purposes, it will be useful to evaluate the integral \eqref{eq: small distance cut off 0} in terms of the integration parameter $t_{1^*}$: we get by the change of variable 
$\tau_1 = ( t_{1^*} - t_{2^*} - \lambda)/\eps$
\begin{equation*}
\int 
\, \indc_{ \{ |\tau_1| |\bar v_i - v_j| \leq M \}} \,
\, |(v_{1^*} -  \bar v_i) \cdot \nu_{1^*}|  d t_{1^*}  dv_{1^*} d\nu_{1^*} \leq C R^2  M 
\frac{\eps}{|\bar v_i-v_j|} \,\cdotp
\end{equation*}
The singularity in $|\bar v_i-v_j|$ translates the fact that the distance between the particles may remain small during a long time if their relative velocity is small.
This singularity can then be integrated out, up to a loss of a $|\log \eps|$, using an additional parent of~$i$ or~$j$ thanks to~(\ref{carleman2})  in Lemma~\ref{scattering-lem}: we obtain
\begin{equation}
\label{eq: small distance cut off prime}
\int \indc_{ \{ |\tau_1| |\bar v_i - v_j| \leq M \}}
 \prod_{k \in \{1^*,2^\}} |(v_k -  v_{a(k)}) \cdot \nu_{k}|  d t_{k}  dv_{k} d\nu_{k }\leq C MR^5 t \eps |\log \eps|\,.
\end{equation}
To get rid of the logarithmic loss, one may use  two extra degrees of freedom associated with the parents 
$2^*, 3^*$ of~$i$ or~$j$:
from \eqref{carleman3}  and \eqref{alpha3}, we   obtain the upper bound 
\begin{equation}
\label{eq: small distance cut off}
\int \indc_{ \{ |\tau_1| |\bar v_i - v_j| \leq M \}}
 \prod_{k \in \{1^*,2^*,3^*\}} |(v_k -  v_{a(k)}) \cdot \nu_{k}|  d t_{k}  dv_{k} d\nu_{k }\leq C MR^8 t^2 \eps \,.
\end{equation}
Note that in the case when~$i$ and~$j$ are colliding at time~$t_{2^*}$, Lemma \ref{joint-scattering-lem} shows that only two integrations are necessary.

%%%%%%%%%%%%%%%%%%%%%%%%%%%%%%%%%%%%%%%%
%%%%%%%%%%%%%%%%%%%%%%%%%%%%%%%%%%%%%%%%
%%%%%%%%%%%%%%%%%%%%%%%%%%%%%%%%%%%%%%%%
%%%%%%%%%%%%%%%%%%%%%%%%%%%%%%%%%%%%%%%%

\subsection{A recollision with a constraint on the outgoing velocity}

The following lemma deals with the cost of the first  recollision when one of the outgoing velocities is constrained to lie in a given ball. More precisely, if the first recollision occurs between particles $i,j$, and~$k  $ is a given label, we will impose that (see Figure~\ref{v-picture})
\begin{equation}
\label{eq: contrainte proximite}
|v'_i - v_k| \leq \eps^{3/4}
\quad \text{or} \quad |v'_j - v_k| \leq \eps^{3/4}.
\end{equation}
\begin{figure} [h] %  figure placement: here, top, bottom, or page
   \centering
  \includegraphics[width=5cm]{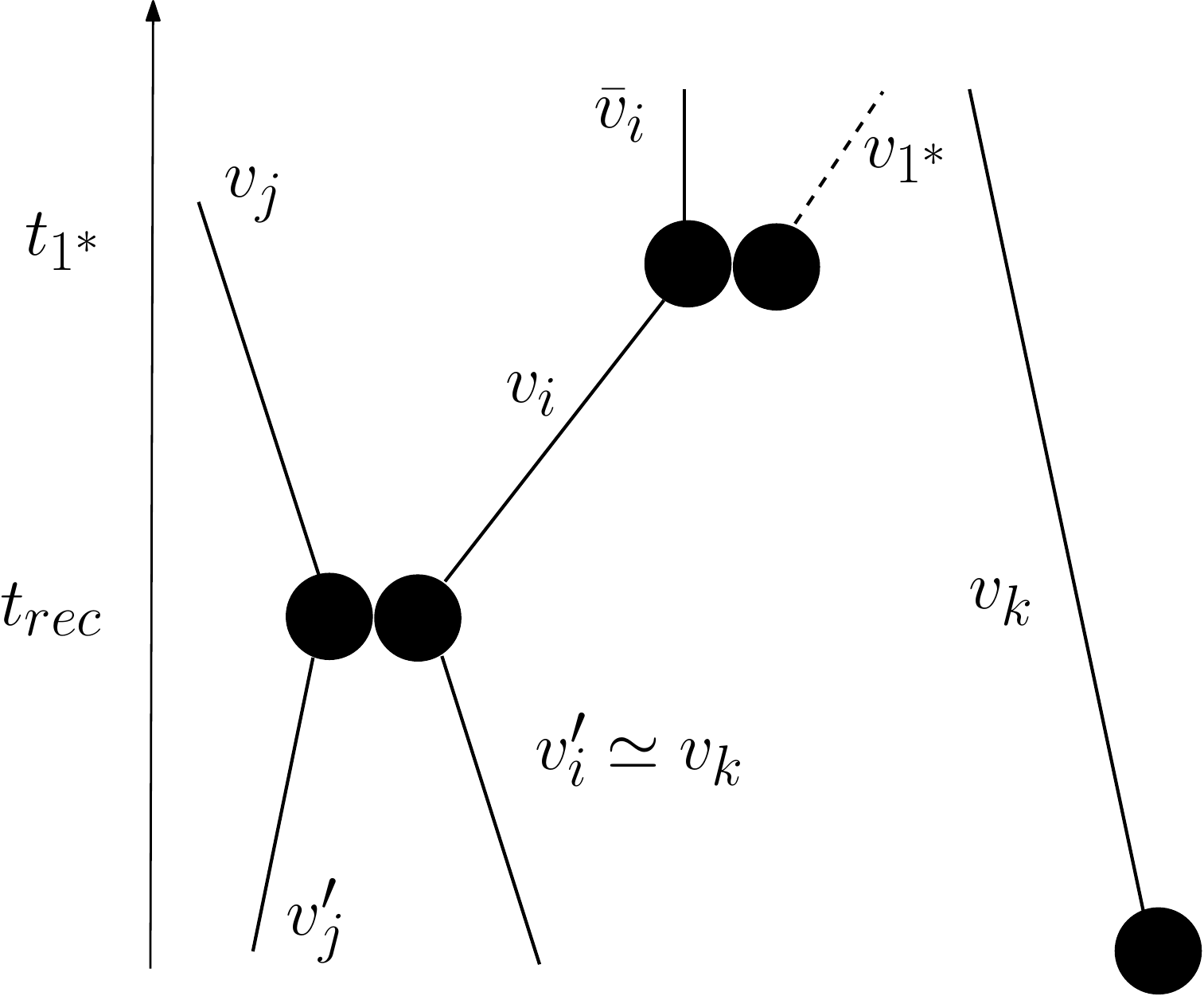} 
   \caption{Small relative velocities $|v'_i - v_k| \leq \eps^{3/4}$ after the first recollision. }
   \label{v-picture}
\end{figure}  
\begin{Lem}
\label{v-constraint}
Fix a final configuration of bounded energy~$z_1 \in \T^2 \times  B_R$ with~$1 \leq R^2 \leq C_0 |\log \eps|$,  a time $1\leq t \leq C_0|\log \eps|$ and  a collision tree~$a \in \cA_s$ with~$s \geq 2$.

There exist sets of bad parameters
$\cP_2 (a, p,\sigma)\subset \cT_{2,s} \times {\mathbb S}^{s-1} \times \R^{2(s-1)}$ for $4 \leq p \leq p_1$ (for some integer $p_1$) and $\sigma \subset \{2,\dots, s\}$ of cardinal $|\sigma|\leq 4$ such that
\begin{itemize}
\item
$\cP_2 (a, p,\sigma)$ is parametrized only in terms of   $( t_m, v_m, \nu_m)$ for $m \in \sigma$ and  $m < \min \sigma$;
\begin{equation}
\label{eq: A recollision with a constraint on the outgoing velocity}
\int \indc _{\cP_2 (a,p,\sigma)  } \displaystyle  \prod_{m\in \sigma} \,
\big | \big(v_{m}-v_{a(m)} ( t_{{m}} ) )\cdot \nu_{{m}}  \big | d  t_{{m}} d \nu_{{m}}dv_{{m}}   
\leq C(Rt)^r s^2 \eps \,,
\end{equation}
for some constant $r$,
\item  and any pseudo-trajectory starting from $z_1$ at $t$, with total energy bounded by $R^2$ and such that the first recollision produces a small relative velocity as in \eqref{eq: contrainte proximite} is parametrized by 
$$( t_n, \nu_n, v_n)_{2\leq n\leq s }\in  \bigcup _{4 \leq p \leq p_1} \bigcup _\sigma  \cP_2(a, p,\sigma)\,.$$
\end{itemize}
\end{Lem}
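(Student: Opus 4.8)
The strategy is to adapt the analysis of Proposition~\ref{recoll1-prop} to produce, on top of the recollision constraint, an \emph{extra} constraint forcing the outgoing velocity of the first recollision to land in a ball of radius~$\eps^{3/4}$. The point is that we must gain a full factor~$\eps$ (to compensate the~$O(N)$ loss of the~$L^\infty$ bound~\eqref{Linfty}), and a bare recollision only gives~$\eps|\log\eps|^3$ via~\eqref{cP-est}; the additional constraint~\eqref{eq: contrainte proximite} will provide the missing~$\eps^{3/4-\text{something}}$, at the price of introducing one more parent (hence the bound~$|\sigma|\le 4$ rather than~$\le 3$). First I would fix the first two recolliding particles~$i,j$, the label~$k$, and decompose into the same geometric scenarios as in the proof of Proposition~\ref{recoll1-prop}: self-recollision ($p=0$), and the cases~(i), (iia), (iib) according to whether particle~$i$ already exists at time~$t_{1^*}$ or is created there with/without scattering. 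In each case the condition~\eqref{eq: cond 1ere recoll} forces~$(t_{1^*},v_{1^*},\nu_{1^*})$ into the rectangle~$\mathcal R(\delta x_\perp,v_i-v_j,\tau_1,q)$, exactly as before.

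\textbf{Key steps.} Second, I would superimpose the new constraint. The outgoing velocity~$v_i'$ (or~$v_j'$) is an explicit function of~$(v_{1^*},\nu_{1^*},v_i)$ given by formulas~(i)/(iia)/(iib); requiring $|v_i'-v_k|\le\eps^{3/4}$ is a constraint on a set of measure~$O(\eps^{3/2})$ in~$v_{1^*}$ for fixed~$\nu_{1^*}$ (or can be read, after applying the scattering lemmas of Appendix~\ref{geometricallemmasappendix}, as a constraint of comparable strength on the pair~$(v_{1^*},\nu_{1^*})$). Combining this with the rectangle constraint from the recollision and Lemma~\ref{scattering-lem 2}, the integral over~$(t_{1^*},v_{1^*},\nu_{1^*})$ weighted by the cross-section~$|(v_{1^*}-v_{a(1^*)})\cdot\nu_{1^*}|$ is bounded by~$CR^r\eps^{7/4}/|v_i-v_j|$ or so, i.e.\ it carries a power of~$\eps$ strictly larger than~$1$, together with the usual singularity~$1/|v_i-v_j|$. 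Third, I would integrate out that singularity using one extra parent (the parent~$2^*$, or~$2^*$ and~$3^*$ if more room is needed) via~\eqref{carleman2} and the joint-scattering bound~\eqref{carleman3 joint} exactly as on page~\pageref{integrationsingulatity}; each such integration costs at most a factor~$|\log\eps|$, but since the budget of~$\eps$-powers is~$>1$ we can absorb all logarithmic losses and still conclude with a clean~$C(Rt)^r s^2\eps$ (the~$s^2$ accounting, as on page~\pageref{pagewiths}, for the freedom in the choice of~$a(m)$ for~$m\in\sigma$, and for the choice of the obstacle particle when the parent of~$1^*$ coincides with~$2^*$). Fourth, as in Proposition~\ref{recoll1-prop}, I would repackage the union over~$q$ (at most~$O(R^2t^2)$ values), over the scenario type, and over the choices of~$a(1^*),a(2^*)$, into finitely many sets~$\cP_2(a,p,\sigma)$ indexed by~$4\le p\le p_1$, each parametrized only by~$(t_m,v_m,\nu_m)$ for~$m\in\sigma$ and~$m<\min\sigma$, which is precisely the structure required so that Proposition~\ref{restrictedtrees2} can later decouple this local estimate from the rest of the collision tree.

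\textbf{Main obstacle.} The delicate point is the same degeneracy phenomenon flagged in Remark~\ref{rem: vide} and in the discussion of ``case~(a.1)/(a.2)'' degenerate situations (when~$\tilde 2$ is also the parent of~$\ell$, resp.\ of~$k$): if there are not enough independent parents — for instance if the recollision happens between the two very first particles before any other collision, or if the parent~$2^*$ coincides with~$1^*$'s parent — then the singularity~$1/|v_i-v_j|$ cannot be integrated out in the naive way, and one must instead use the joint-scattering estimate of Lemma~\ref{joint-scattering-lem} (valid when~$i$ and~$j$ are related through the same collision) or, in the worst case, accept only two integration variables and verify that the~$\eps^{3/4}$ gain from~\eqref{eq: contrainte proximite} alone still yields a net power of~$\eps$ without needing the singularity removed. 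Keeping track of exactly which configurations fall into which sub-case, and checking that in every one of them the final bound is~$\le C(Rt)^r s^2\eps$ with the correct parametrization by the upper part of the tree, is where the bookkeeping is heaviest; the velocity cut-off~$R^2\le C_0|\log\eps|$ is essential throughout to turn the various polynomial-in-$R$ prefactors into admissible~$|\log\eps|$-powers that the~$\eps$-gain can absorb.
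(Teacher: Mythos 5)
There is a genuine gap, and it sits exactly where the paper's proof spends most of its effort: the case $k=1^*$, which your proposal never isolates. Your key step treats the constraint \eqref{eq: contrainte proximite} as an independent restriction of measure $O(\eps^{3/2})$ on $v_{1^*}$ (for fixed $\nu_{1^*}$) that can simply be multiplied against the recollision rectangle, yielding a bound of order $\eps^{7/4}/|v_i-v_j|$. Two objections. First, even in the generic case $k\notin\{j,1^*\}$ this is not how the gain is produced: the constrained quantity $v_i'$ is the post\emph{-recollisional} velocity, which depends on the recollision angle $\nu_{rec}$, not directly on $(v_{1^*},\nu_{1^*})$; the paper's mechanism is that \eqref{eq: contrainte proximite} pins $\nu_{rec}$ to an angular sector of size $\eps^{1/8}$ (after dividing by $|v_j-v_k|\ge\eps^{5/8}$, cf.\ \eqref{eq: 2 options}), which then makes the rectangle for $v_i-v_j$ thinner by $\eps^{1/8}$ only, giving $\eps^{9/8}$ rather than $\eps^{7/4}$ — still enough, but the ``product of independent measures'' heuristic is not the correct mechanism. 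Second, and decisively, when $k=1^*$ the velocity $v_k$ is itself produced by the collision at $t_{1^*}$, so the target ball and the integration variables $(t_{1^*},v_{1^*},\nu_{1^*})$ are coupled: the constraint becomes an implicit equation of the type \eqref{eq: master equation B1}, with the unknown appearing on both sides through the direction of $v_i-v_k$, which varies arbitrarily fast when $|v_i-v_k|$ is small. One cannot claim any a priori measure bound for the preimage of the ball in $v_{1^*}$ there; the paper instead runs a fixed-point argument confining the solution to a thin tube around a Lipschitz curve $u\mapsto\hat v(u)$, with separate treatments of the degenerate scattering geometries (small $\mu$, small $\lambda$, small $|v_{1^*}-\bar v_i|$ compared with powers of $|w|$), and in the worst sub-case must invoke an additional geometric constraint at the earlier time $t_{2^*}$ (equation \eqref{fausserecoll}, with the specific choices $\gamma=3/4$, $\alpha=4/7$). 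That extra constraint is the actual reason the lemma needs up to four parents, i.e.\ $|\sigma|\le 4$ — not, as you suggest, the need to integrate out the singularity in $|\bar v_i-v_j|$.

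Your ``main obstacle'' paragraph points at a different and comparatively minor issue (shortage of parents as in Remark~\ref{rem: vide}, handled by Lemma~\ref{joint-scattering-lem}), so the proposal as written would go through only for $k=j$ and for $k$ unrelated to the collision at $t_{1^*}$, and would silently fail on the coupled case $k=1^*$. To repair it you would have to add the full case distinction on $k$, and for $k=1^*$ reproduce (or find a substitute for) the implicit-equation/fixed-point analysis together with the auxiliary constraint at $t_{2^*}$; without that, the claimed bound $C(Rt)^r s^2\eps$ with the required parametrization of $\cP_2(a,p,\sigma)$ by the upper part of the tree is not established.
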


Before launching into the   details of the proof, we first give the gist of it.
Since the first recollision occurs between $i,j$,    we get a  constraint as in \eqref{rec-equation'} on the velocity 
\begin{equation*}
v_i  - v_{j} 
= \frac1{\tau_{rec} } \delta x_\perp - \frac{\tau_1}{\tau_{rec}} (\bar v_{i} - v_{j})-  \frac1{\tau_{rec} } \nu_{rec} \, ,
\end{equation*}
meaning that $v_i$ belongs to a rectangle of width $ \frac{R}{|\tau_1| |\bar v_{i} - v_{j}|}$  thanks to~(\ref{sizetaurec}),
which after integration over two parents leads to estimate of the type  $\eps | \log \eps|^3$ 
(see Lemma~\ref{rec-eq}). Imposing an extra condition on the velocity $v'_i$ after the first recollision means that the recollision angle $\nu_{rec}$ can take values only in a small set. Thus the constraint above will be stronger and 
 $v_i$ has to take values in a reduced set, much thinner than the rectangle considered in Lemma~\ref{rec-eq}. 
The core of the proof of Lemma \ref{v-constraint} is to identify this reduced set and to show that after integrating over some parents its measure is less than $O(\eps)$: here and in the following, we do not try to keep track of the powers of~$R$ and~$t$ coming up in the estimates.

\begin{proof} 
Throughout the proof, we suppose that the parameters associated with the first recollision between $i,j$
satisfy 
\begin{equation}
\label{eq: gamma}
| (\bar v_{i} - v_{j}) \tau_1| \geq R^{4 \over 4 - 5 \gamma} \geq R^3
\quad \text{for some} \ 
\gamma \in ]\frac{2}{3}, \frac{4}{5}[  \ \text{to be fixed later}.
\end{equation}
Otherwise, the estimate~\eqref{eq: small distance cut off}
applied with $M = R^{4 \over 4 - 5 \gamma}$ leads to a suitable  upper bound of order $\eps$.
As a consequence of \eqref{eq: gamma}, we deduce that $| \tau_1|$ is large enough
\begin{equation}
\label{eq: tau R}
| \tau_1| \geq  R^2 .
\end{equation}

\medskip
After the first recollision, $v'_i$ is given by one of the following formulas
\begin{equation}\label{formulav"i}
\begin{aligned}
v'_i = v_i - (v_i-v_j) \cdot \nu_{rec} \, \nu_{rec}\, ,\\
\mbox{or} \quad v'_i = v_j + (v_i-v_j) \cdot \nu_{rec} \, \nu_{rec} \, .
\end{aligned}
\end{equation}
Note that the second choice is the value $v_j'$ and we use this abuse of notation to describe the case when 
$|v'_j - v_k| \leq \eps^{3/4}$.

\smallskip

We   expect the condition  \eqref{eq: contrainte proximite} 
to impose a strong constraint on 
the recollision angle $\nu_{rec}$. We indeed find from~(\ref{formulav"i}) that this condition implies
\begin{equation}
\label{eq: 2 options}
\begin{aligned}
\mbox{either} \qquad v_k - v_j  =  (v_i-v_j) \cdot \nu_{rec}^\perp \,  \nu_{rec}^\perp  +O(\eps^{3 /4}) \, ,\\
\mbox{or} \qquad  v_k - v_j  =  (v_i-v_j) \cdot \nu_{rec}  \, \nu_{rec} +O(\eps^{3 /4}) \, . 
\end{aligned}
\end{equation}
We  consider now three different cases according to the label $k$.
Each different case listed below will be associated with  scenarios, labelled by some $p$ which will take   values in $4, \dots, p_1$ for some~$p_1$ we shall not attempt to compute.

\medskip

\goodbreak 
 
\noindent
\underbar{Case $k\neq j$ and $k\neq 1^*$}

\medskip
 
\noindent
$\bullet$ If $| v_j-v_k |> \eps^{5/8} \gg  \eps^{3/4}$, we deduce from the constraint \eqref{eq: 2 options} that 
the recollision angle is in a small angular sector
\begin{equation}
\label{eq: plus d'options}
\nu_{rec} =\pm {(v_j - v_k)^\perp\over |v_k - v_j|} + O( \eps ^{1/8}) 
\quad  \hbox{ or }\quad 
\nu_{rec} =\pm  { v_k - v_j  \over |v_k - v_j |} + O( \eps ^{1/8}) \,.
\end{equation}
Plugging this Ansatz in (\ref{rec-equation'}), we get 
$$ 
v_i  - v_{j} = \frac1{\tau_{rec} } \delta x_\perp - \frac{\tau_1}{\tau_{rec}} (\bar v_{i} - v_{j})-  \frac1{\tau_{rec} }{\cR_{n' \pi/2}(v_k - v_j )  \over |v_k - v_j |} + O\left( {\eps ^{1/8}\over \tau_{rec}}\right) \, ,
$$
denoting by $\cR_\theta$ the rotation of angle $\theta$ and  $n' = 0,1,2,3$ depending on the identity in 
\eqref{eq: plus d'options}.
This implies that~$v_i  - v_{j} $ lies in  a finite union of thin rectangles of size~
$2R \times  4R\eps^{1/8} \min \left(1,\frac{1}{|\tau_1||\bar v_i-v_j|} \right)$,  recalling again~(\ref{sizetaurec}).
We thus conclude by integrating in $(t_{1^*}, v_{1^*}, \nu_{1^*})$ and $(t_{2^*}, v_{2^*}, \nu_{2^*})$, exactly as in the proof of Proposition~\ref{recoll1-prop}, that  these configurations are encoded in a set~$\cP_2(a,p, \sigma)$ (with $|\sigma |\leq 2$)   of size~$O( R^7 s t^3 \eps^{9/8}  |\log \eps|^3)$.   

\medskip
 
\noindent
$\bullet$ If $|v_j-v_k| \leq \eps^{5/8}$, we  forget about all other constraints: we conclude by combining \eqref{preimage-sphere1}-\eqref{carleman2}, as in the case 1.2(c) in Section \ref{classificationsection}, that these pseudodynamics are encoded in a set~$\cP_2(a,p,\sigma)$ (with $|\sigma |\leq 2$) of size $O( R^5 s t^2 \eps^{5/4}  |\log \eps|)$.

\bigskip

 \noindent
\underbar{Case $k= j$ }

\medskip
 If  $k= j$, then $|v_k - v_i' |=| v_j'- v_i'| = |v_i- v_j| \leq \eps^{3/4}$, and we conclude exactly as in the previous case by combining \eqref{preimage-sphere1}-\eqref{carleman2}, that these pseudodynamics are encoded in a set~$\cP_2(a,p,\sigma)$ (with $|\sigma |\leq 2$) of size $O( R^5 s t^2 \eps^{3/2}  |\log \eps|)$.

\bigskip

 \noindent
\underbar{Case $k= 1^*$ }

This is the most delicate case as $v_i'$ and $v_k$ are linked through the same collision.
 We stress the fact that the label $i$ refers to a pseudo particle, thus 
 many cases have to be considered (see Figure \ref{fig: k = 1*}).
\begin{figure} [h] %  figure placement: here, top, bottom, or page
%\centering
\includegraphics[width=4cm]{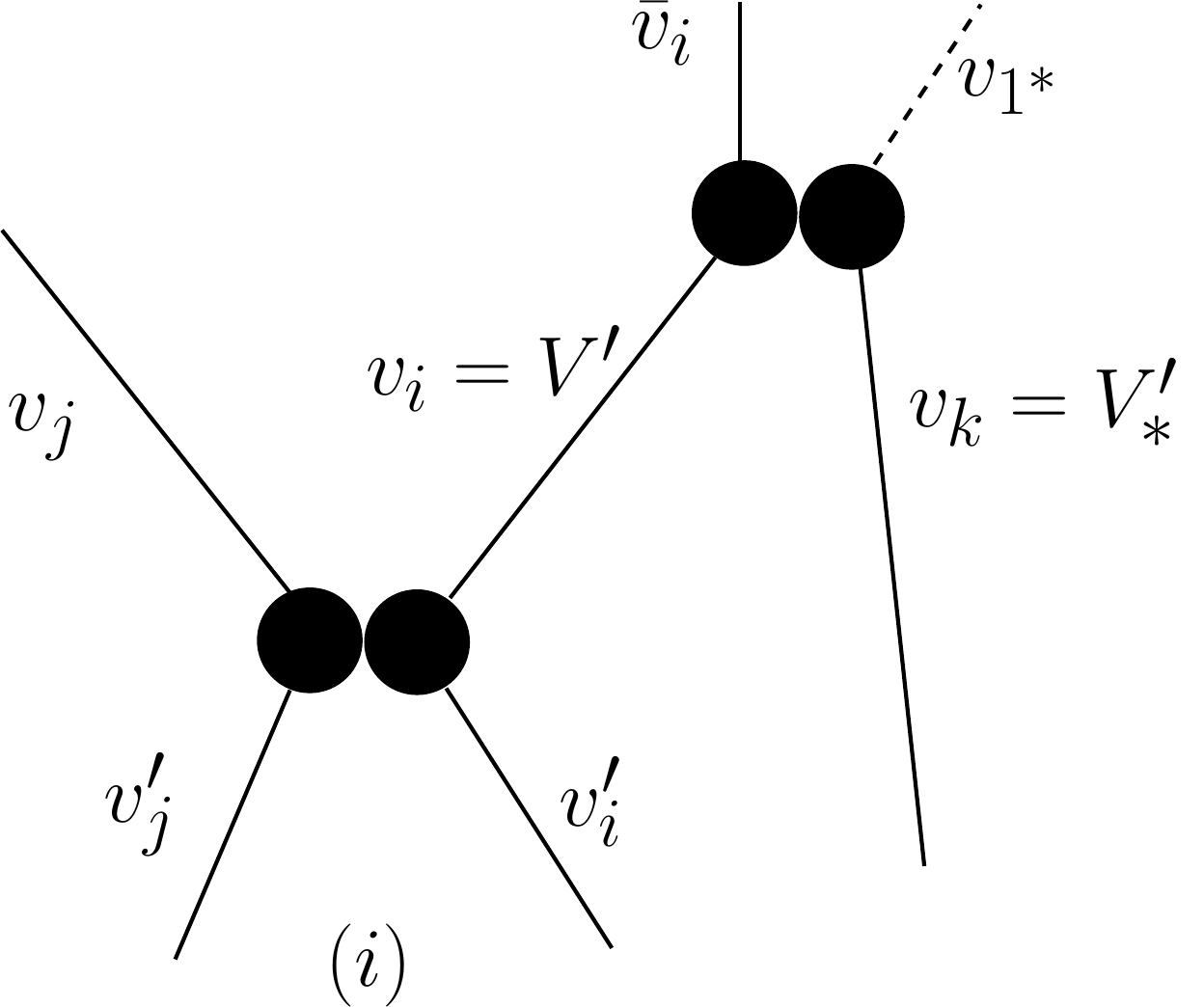} 
\hskip1.5cm
\includegraphics[width=3.2cm]{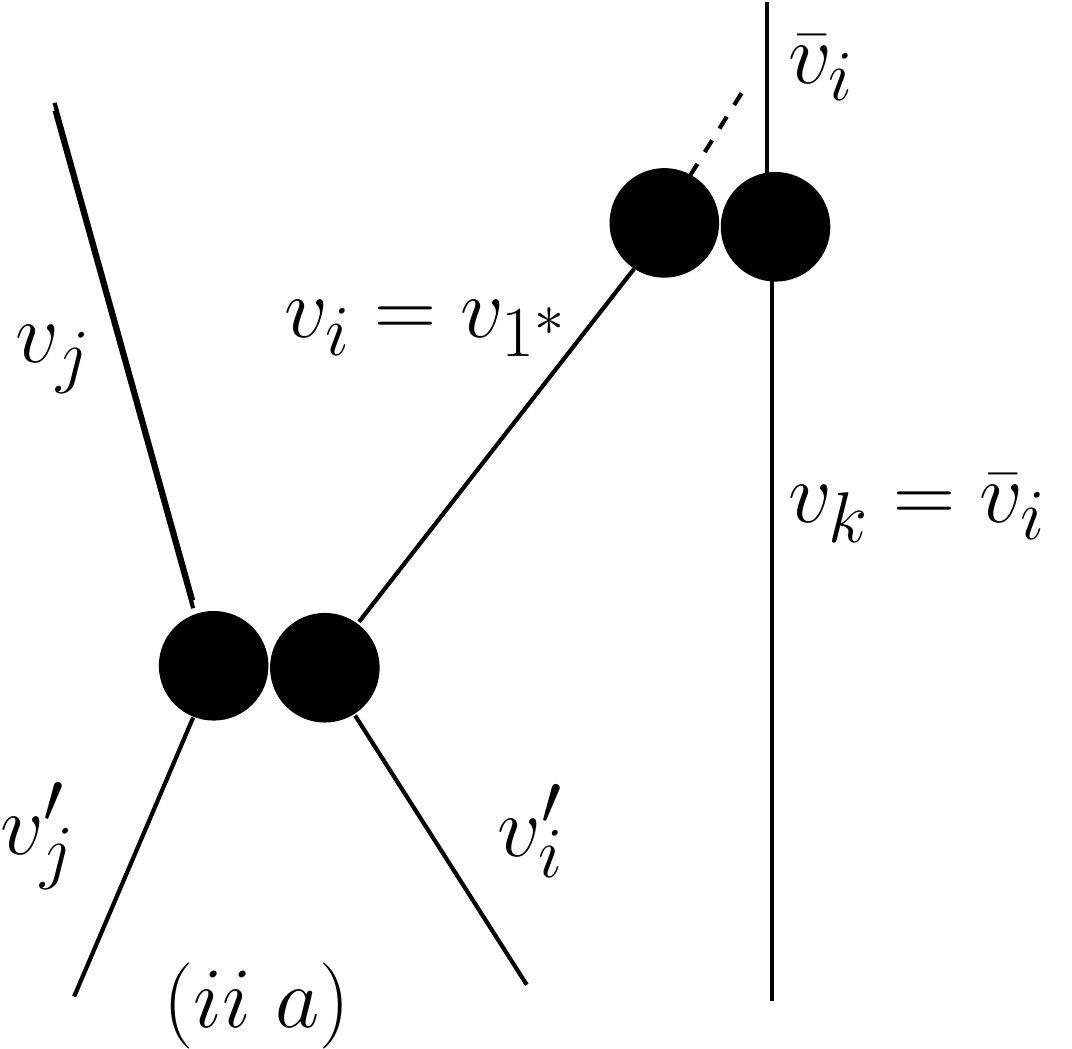} 
\hskip1.5cm
\includegraphics[width=4cm]{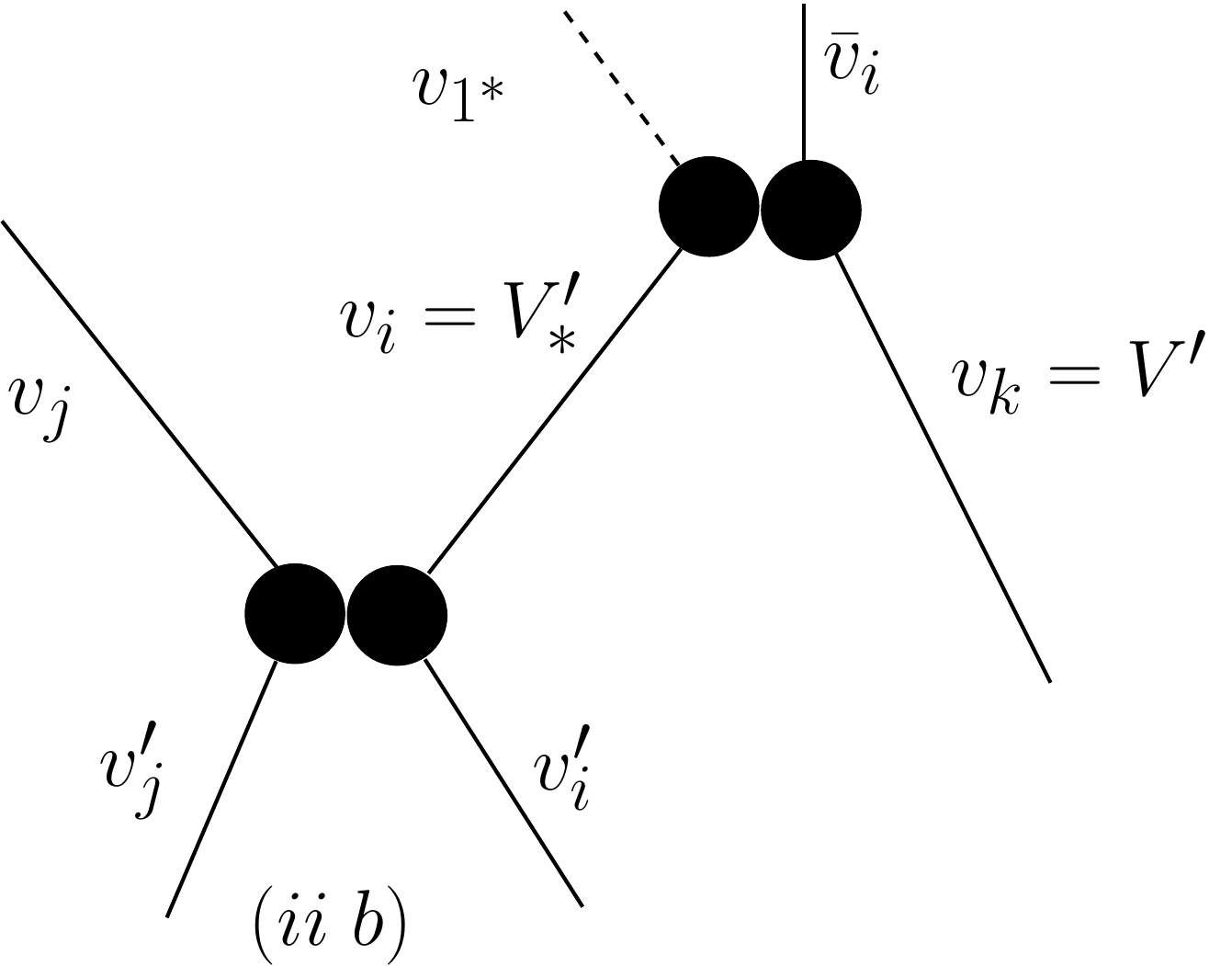} 
\caption{
The different scenarios associated with the case $k= 1^*$ are depicted. 
In all cases, $i$ refers to the pseudo particle recolliding with $j$; thus the labels $i,k$ can be switched after the collision. We used the notation \eqref{defcarleman} for the velocities $V',V'_*$ after scattering.}
\label{fig: k = 1*}
\end{figure}

We start by the case depicted in Figure \ref{fig: k = 1*} (i).
Denote by $v_k = V'_* $ the velocity of particle~$1^*$ after collision at $t_{1^*}^-$. 
The constraint \eqref{eq: contrainte proximite} that $|v_k  - v'_i| \leq \eps^{3/4}$ or that $|v_k  - v'_j| \leq \eps^{3/4}$ implies that 
one of the following identities then holds
$$
\begin{aligned}
 & v_i - (v_i-v_j) \cdot \nu_{rec}\,  \nu_{rec} = v_k + O(\eps^{3 /4})\,  ,\\
 \mbox{or}\quad & v_j + (v_i-v_j) \cdot \nu_{rec}\,  \nu_{rec}=v_i - (v_i-v_j) \cdot \nu^\perp_{rec} \nu^\perp_{rec} = v_k + O(\eps^{3/ 4}) \, ,
\end{aligned}
$$
and we further have that $|v_i - v_k| = | \bar v_i - v_{1^*}|$.

\medskip
 If $|\bar v_i - v_{1^*}|\leq \eps^{5/8}$, then~$v_{1^*}$ has to be in a ball of radius $\eps^{5/8}$ so we find a bound~$O(R\eps^{5/4}t )$ on integration over~$1^*$.

\medskip
 If $|\bar v_i - v_{1^*}|\geq \eps^{5/8}$, then
$$ \nu_{rec} =\pm  {v_i-  v_k \over |v_i - v_k |  }+O(\eps^{1/8}) 
\qquad \hbox{or} \qquad   
\nu_{rec} =\pm  {(v_i-  v_k )^\perp   \over |v_i - v_k  |}+O(\eps^{1/8}) \,.$$
Plugging this Ansatz in (\ref{rec-equation'}), we get
\begin{equation}
\label{eqansatz}
v_{i} - v_{j} = \frac1{\tau_{rec}  }\delta x_\perp - \frac{\tau_1}{\tau_{rec}} (\bar v_{i} - v_{j})-  \frac1{\tau_{rec} }\cR_{n \frac{\pi}{2} } { v_i - v_k \over | v_i- v_k |} + O\left( {\eps^{1/8} \over\tau_{rec}}\right) \, ,
\end{equation}
 with $n\in \{0,1,2,3\}$.
Compared with the formulas of the same type encountered in the proof of Proposition \ref{recoll1-prop}, this one has the additional difficulty that the ``unknown''~$v_i $ is on both sides of the equation. 
Furthermore the direction 
of~$v_k -v_i$ may have very fast variations  when~$|v_k - v_i| = |v_{1^*} - \bar v_i|$ is small.
To take this into account, we will consider different cases.

\medskip

Using the notation \eqref{eq: tau1'}, we define
$$
w:= \delta x_\perp  -  (\bar v_{i} - v_{j}) \tau_1, \quad \text{ and} \quad u:=|w|/\tau_{rec} \, .
$$  
By construction
\begin{equation}
\label{eq: estimation w}
|w| \geq | (\bar v_{i} - v_{j}) \tau_1| 
\quad \text{and} \quad u\leq 4R\,  ,
\end{equation}
where the latter inequality follows from (\ref{eqansatz}).
Recall that we can restrict to values such that 
$|w| \geq | (\bar v_{i} - v_{j}) \tau_1| \geq R^{4 \over 4 - 5 \gamma }$ due to~\eqref{eq: gamma}.
With these new variables, the condition (\ref{eqansatz}) may be rewritten
\begin{equation}
\label{eq: master equation B1}
v_{i} - \bar v_i = v_j-\bar v_i +u {w\over |w|} 
- {u\over |w|} \cR_{n \frac{\pi}{2} } { v_i - {v_k}\over | v_i- v_k|} 
+ O \Big( F ( |\bar v_{i} - v_{j}|, \tau_1, \eps) \Big) \,,
\end{equation}
where the error term 
$$
F( |\bar v_{i} - v_{j}|, \tau_1, \eps) = {\eps^{1/8} R\over | \bar v_{i} - v_{j}| |\tau_1|}
$$
has been estimated thanks to \eqref{eq: estimation w}.

The other cases depicted in Figure \ref{fig: k = 1*} obey the same equation (see  \eqref{withoutscattering} for $(ii \; a)$ and 
\eqref{Vprimestar} for $(ii \; b)$).
We will analyze the solutions of this equation for all cases of Figure \ref{fig: k = 1*}.

\bigskip

\noindent
$\blacktriangleright$ The easiest case is $(ii\, a)$ when the collision at $t_{1^*}$ has no scattering, i.e. 
$v_i = v_{1^*}$ and $v_k = \bar v_i$. We split the analysis into two more subcases.

\medskip

$\bullet$
If $| v_{1^*} - \bar v_i | \geq {1\over |w|^\gamma }$, the condition \eqref{eq: master equation B1} reads
\begin{equation}
\label{withoutscattering}
v_{1^*} - \bar v_i = v_j-\bar v_i +u {w\over |w|} - {u\over |w|} \cR_{n \frac{\pi}{2} } { v_{1^*} - \bar v_i\over | v_{1^*} - \bar v_i|} + O \Big( F ( |\bar v_{i} - v_{j}|, \tau_1, \eps) \Big)  \,,
\end{equation}
with $n \in \{ 0,1,2,3 \}$.
To analyse this equation, we  implement a fixed point method and consider $u \in [-4R,4R]$ as a parameter, forgetting its dependency on $\tau_{rec}$ (i.e. on $v_i$). 
Due to the assumption that~$| v_{1^*} - \bar v_i | \geq {1/ |w|^\gamma }$, this imposes a constraint on~$u$ since one needs to ensure that~$| v_j-\bar v_i +u {w\over |w|} | \geq c/ |w|^\gamma$. Depending on the angle between~$ v_j-\bar v_i $ and~$w$ and depending on the size of~$|v_j-\bar v_i|$, this implies that~$u$ should belong to one or two intervals in~$u \in [-4R,4R]$. Given~$u$ in one of those admissible intervals, we first look for a solution of the equation without the error term.
The mapping
\begin{align}
\label{eq: mapping theta}
\Theta : B_R \setminus B_{|w|^{-\gamma}} (\bar v_i)&\to {\mathbb S}\\
v_{1^*} & \mapsto {v_{1^*} - \bar v_i\over | v_{1^*} - \bar v_i|}\nonumber
\end{align}
is Lipschitz continuous with constant $ |w|^\gamma$. We deduce by a fixed point argument that, for any admissible $u$
$$
v_{1^*} - \bar v_i = v_j-\bar v_i +u {w\over |w|} 
- {u\over |w|} \cR_{n \frac{\pi}{2} } { v_{1^*} - \bar v_i\over | v_{1^*} - \bar v_i|}
$$
has a unique solution $\hat v_{1^*}(u)$ (which is clearly Lipschitz  in $u$).
Thus any solution of (\ref{withoutscattering}) satisfies
\begin{equation}
\label{eq: condition suffisante}
| v_{1^*} - \hat v_{1^*}(u)  | \leq  O \Big( F ( |\bar v_{i} - v_{j}|, \tau_1, \eps) \Big) .
\end{equation}
%C  {\eps^{1/8} u\over |w| } \,\cdotp$$
Note that among the solutions $v_{1^*}$ of \eqref{withoutscattering}, we are looking only for the solutions which are compatible with the constraint $u = |w|/\tau_{rec}$ where $\tau_{rec}$ is a function of~$v_{1^*}$. In particular~$\hat v_{1^*}(u)$ will not correspond to a velocity compatible with  this constraint.
Nevertheless it is enough use the bound \eqref{eq: condition suffisante} as a sufficient condition and to retain only the information that  $v_{1^*}$ has to belong to a tube $T( \delta x_\perp , v_j - \bar v_i , q,\tau_1)$ located around the curve $u \to \hat v_{1^*}(u)$ with $|u| \leq 4 R$ and of width 
$O \big( F ( |\bar v_{i} - v_{j}|, \tau_1, \eps) \big)$.
%${\eps^{1/8} R/ |w| }$.
Integrating first with respect to the collision with $1^*$, we get
\begin{equation}
\label{eq: integrale 1*}
\int \indc_{ \{ v_{1^*}\in T( \delta x_\perp , v_j - \bar v_i , q,\tau_1) \} } \,b(\nu_{1^*},v_{1^*}) \,  dv _{1^*} d\nu_{1^*}  \leq C R^2  F ( |\bar v_{i} - v_{j}|, \tau_1, \eps)
= C {R^3 \eps^{1/8}  \over |\tau_1| \,  |v_j-\bar v_i| }  ,
\end{equation}
where we replaced $F ( |\bar v_{i} - v_{j}|, \tau_1, \eps)$ by its value.
Next we integrate  with respect to $|\tau_1| $ in the set~$ [R, \frac{T R}{\eps}]$
(see \eqref{eq: tau R}) and we get after a change of variable in $t_1^*$
\begin{equation}
\label{eq: integrale t1*}
\int \indc_{ \{ v_{1^*}\in T( \delta x_\perp , v_j - \bar v_i , q,\tau_1) \}} \,b(\nu_{1^*},v_{1^*}) \,  dv _{1^*} d\nu_{1^*} dt_{1^*} \leq C R^3{\eps^{9/8} | \log \eps |\over  |v_j-\bar v_i| } \, \cdotp
\end{equation}
It remains then to integrate the singularity $ |v_j-\bar v_i|$.
As the first recollision involves $i,j$,  there exists always an additional parent of $(i,j)$ 
to provide a degree of freedom. Thus after integration, the singularity $ |v_j-\bar v_i|$ can be controlled 
up to a loss $O(|\log \eps|)$ by application of Lemma~\ref{scattering-lem}  (see also page~\pageref{integrationsingulatity}).
This situation will be referred to as a new scenario $\cP_2(a,p,\sigma)$ for some $p$, and~$|\sigma| \leq 2$. Summing over all possible $q$ and all possible $j$ provides the estimate
 $$
\int \indc_{\cP_2(a,p,\sigma) }  \prod_{m\in \sigma} \,\big | \big(v_{m}-v_{a(m)} ( t_{{m}} ) )\cdot \nu_{{m}} \big| 
d  t_{{m}} d \nu_{{m}}dv_{{m}}   
\leq C s R^5 t^2 \eps^{9/8}  |\log \eps|^2 \, .
$$

\bigskip

$\bullet$ If $| v_{1^*} - \bar v_i | \leq {1\over |w|^\gamma }$, then 
$$
\begin{aligned}
\int \indc_{ \{ | v_{1^*} - \bar v_i  |\leq \frac{1}{|w|^\gamma} \}} 
\,b(\nu_{1 ^*},v_{1 ^*}) \,  dv _{1^*} d\nu_{1^*} 
\leq {C R \over |w|^{2 \gamma}} 
\leq {C R \over \tau_1^{2 \gamma}  |v_j-\bar v_i|^{2 \gamma}},
%\min \left( {1\over |w|^\gamma |v_j-\bar v_i|}, 1 \right)\\
%&  \leq C R^2{1\over |w|^{\gamma(1+\alpha)} |v_j-\bar v_i|^\alpha } 
\end{aligned}
$$
by~(\ref{eq: estimation w}).  By \eqref{eq: tau R}, we know that 
$| \tau_1| \geq  R^2$, thus the singularity is integrable in $\tau_1$.
Changing to the variable $t_{1^*}$ by using~\eqref{eq: tau1'},
we find that
 $$
\int \indc_{\{ | v_{1^*} - \bar v_i  |\leq \frac{1}{|w|^\gamma} \} } \,b(\nu_{1^*},v_{1^*}) \,  dv_{1^*} d\nu_{1^*} dt_{1^*} 
\leq \eps {C \over  |v_j-\bar v_i|^{2 \gamma} } \, \cdotp
$$
It remains then to integrate the singularity $ |v_j-\bar v_i|^{- 2 \gamma }$, which can be done with two additional parents of $(i,j)$ since $\gamma<1$ (see Lemma \ref{scattering-lem}).
This situation will be referred to as a new scenario $\cP_2(a,p,\sigma)$ for some $p$, and~$|\sigma| \leq 3$. We have
\begin{equation}
\label{eq: sous cas d'un sous cas}
\int \indc_{\cP_2(a,p,\sigma) }  \prod_{m\in \sigma} \,
\big | \big(v_{m}-v_{a(m)} ( t_{{m}} ) )\cdot \nu_{{m}} \big| d  t_{{m}} d \nu_{{m}}dv_{{m}}   
 \leq C s (Rt)^r \eps\,  .
\end{equation}

\bigskip

\begin{figure} [h] %  figure placement: here, top, bottom, or page
\centering
\includegraphics[width=4cm]{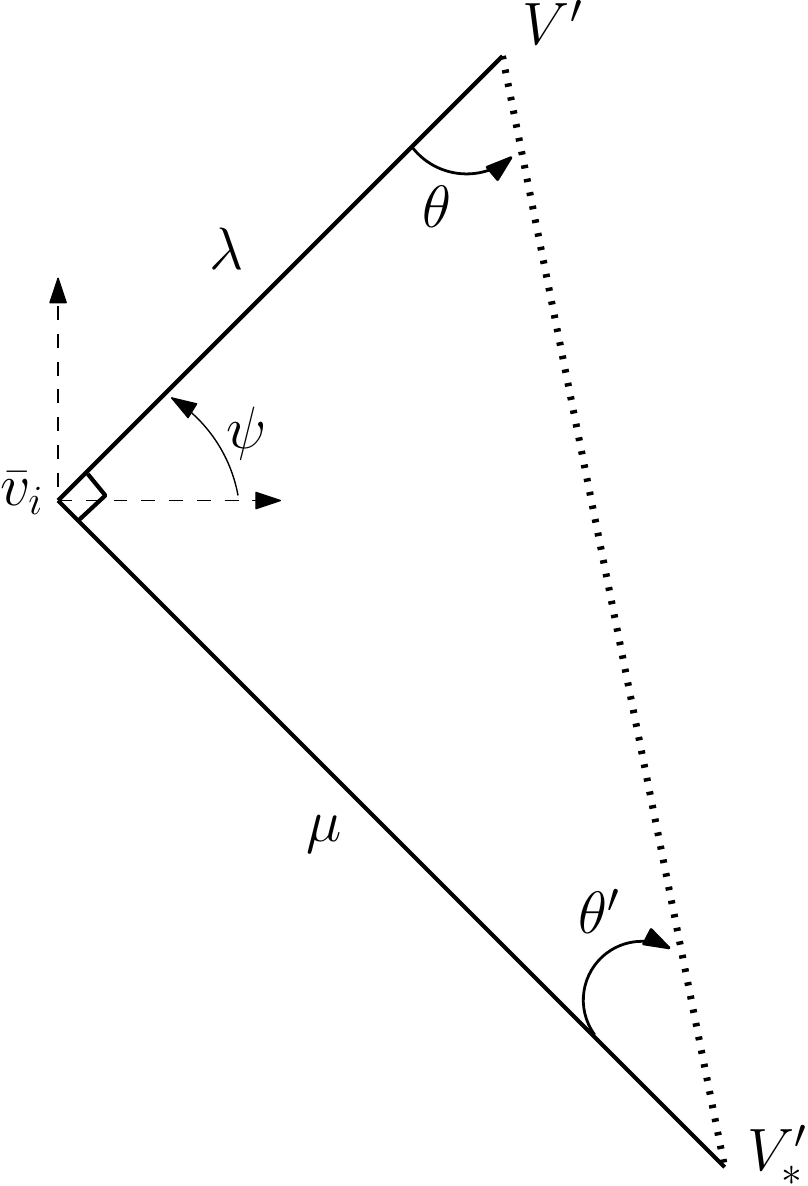} 
\caption{
Carleman's parametrization $(V',V_*')$ can be evaluated in terms of the measure $dV' d\mu$ 
 or alternatively by parametrizing $V' - \bar v_i$ in polar coordinates by the measure 
$\lambda d \lambda \, d \psi d\mu$ with $\lambda = | V' -  \bar v_i |$.
The direction ${V'-V_*'\over |V'-V_*'| }$ can be  recovered by a rotation from $V' - \bar v_i$ by an angle $\theta$ \eqref{eq: rotation dilatation} or $V_*' - \bar v_i$ by an angle $\theta'$  \eqref{eq: rotation dilatation 2}.
}
\label{fig: parametrisation carleman}
\end{figure}

\bigskip

\noindent
$\blacktriangleright$ If there is scattering at time $t_{1^*}$ of the type (i) such that the velocities are given by $v_i = V'$ and $v_k = v'_{1^*} = V'_*$,
 with the notation in \eqref{defcarleman}, then we consider two cases depending as above on the size of~$| v_i - \bar v_i |$  compared to~$ |w|^{-\gamma} $.
 
 \medskip
 
$\bullet$ If $| v_i - \bar v_i | = | V' - \bar v_i |  \geq {1/ |w|^\gamma }$, then 
Identity \eqref{eq: master equation B1} leads to
\begin{equation}
\label{Vprime}
V' - \bar v_i = v_j-\bar v_i +u {w\over |w|} - {u\over |w|} \cR_{n \frac{\pi}{2} } { V'- V'_*\over | V'-V'_*|} 
+ O \Big( F ( |\bar v_{i} - v_{j}|, \tau_1, \eps) \Big)  \,.
\end{equation}
We will use Carleman's parametrization  
and denote $\mu = | V'_* - \bar v_i|$ (see Figure \ref{fig: parametrisation carleman}). 
For a given~$V'_*$, we have to compute the regularity of the map which associates to 
$V'$ the direction of $V'- V'_*$. From the mapping $V' \to \Theta(V')$ defined in~\eqref{eq: mapping theta}, we first determine the direction of $V'- \bar v_i$ and then  rotate this direction by $\theta$ to get 
\begin{equation}
\label{eq: rotation dilatation}
 { V'- V'_*\over | V'-V'_*|} = \cR_\theta [ \Theta (V')]  
 \quad \hbox{  with } \quad \theta = \arctan { \mu \over \lambda } \,\cdotp
\end{equation}

If $\mu >  {1 \over |w|^{\gamma /4 } }$, the mapping $V' \to \cR_\theta [\Theta (V')]$ is  
 continuous  with Lipschitz constant less than
\begin{equation}
\label{eq: Lipschitz}
|w|^\gamma \times \frac{\mu}{\mu^2 + \lambda^2} \leq
\frac{|w|^\gamma}{\mu} \leq  |w|^{5 \gamma/4}.
\end{equation}
As $\gamma < \frac{4}{5}$ and $|w| \geq  R^{4 \over 4 -  5 \gamma}$ \eqref{eq: gamma}, we deduce by a fixed point argument that, for any $u\leq 4R$ 
$$
V' - \bar v_i = v_j-\bar v_i +u {w \over |w|} - {u\over |w|} \cR_{n \frac{\pi}{2} } { V'- V'_* \over | V'-V'_*|}
$$
has a unique solution $\hat V' (u)$ (which is Lipschitz in  $u$).
Thus for a given u, any solution of (\ref{Vprime}) satisfies
$$| V'-\hat V' (u)  | \leq   O \Big( F ( |\bar v_{i} - v_{j}|, \tau_1, \eps) \Big)  \,\cdotp$$
In other words, $V'$ has to belong to a tube $T( \delta x_\perp, v_j - \bar v_i , q,\tau_1)$
of width $ O \Big( F ( |\bar v_{i} - v_{j}|, \tau_1, \eps) \Big)$ around the curve  $u \to \hat V' (u)$. 
By Carleman's parametrization, we can then integrate over $d V' d\mu$ and get an estimate of the form 
\eqref{eq: integrale 1*} when replacing $F$ by its value
\begin{equation}
\label{eq: integrale 1* bis}
\int \indc_{ \{ \mu >  {1\over |w|^{\gamma /4} } \} } \indc_{ \{ V' \in T( \delta x_\perp , v_j - \bar v_i , q,\tau_1) \}} \,  d V' d \mu
%\int \indc_{ \{ v_{1^*}\in T( \delta x_\perp , v_j - \bar v_i , q,\tau_1) \} } \,b(\nu_{1^*},v_{1^*}) \,  dv _{1^*} d\nu_{1^*}  
\leq C R^2  F ( |\bar v_{i} - v_{j}|, \tau_1, \eps)
= C {R^3 \eps^{1/8}  \over |\tau_1| \,  |v_j-\bar v_i| }  .
\end{equation}
Integrating then over $\tau_1$ leads to an upper bound analogous to \eqref{eq: integrale t1*}
\begin{equation*}
%\label{eq: integrale t1*}
\int \indc_{ \{ \mu >  {1\over |w|^{\gamma /4} } \} } \indc_{ \{ V' \in T( \delta x_\perp , v_j - \bar v_i , q,\tau_1) \}} \,  d V' d \mu  dt_{1^*} \leq C R^3{\eps^{9/8} | \log \eps| \over  |v_j-\bar v_i| } \, \cdotp
\end{equation*}
We then conclude by integrating as usual the singularity $1/ |v_j-\bar v_i|$ thanks to Lemma \ref{scattering-lem}.
This provides  a new scenario $\cP_2(a,p,\sigma) $ for some $p$ and~$|\sigma| \leq 2$.

\medskip

If $\mu <  {1\over |w|^{\gamma /4} }$, we only use  the condition \eqref{rec-equation'}
which reads in this case
\begin{equation*} 
V'  - \bar v_i =  v_{j} - \bar v_i + \frac1{\tau_{rec} }  w -  \frac1{\tau_{rec} } \nu_{rec} 
\quad \text {with} \quad
\Big | \frac1{\tau_{rec} } \Big| \leq { 4 R \over  |\tau_1| \, |v_j-\bar v_i| } \, \cdotp
\end{equation*}
As a consequence, $V'$ has to be in the rectangle $\cR( \delta x_\perp , \bar v_i-v_j, q,\tau_1)$ of size $ R \times { 4 R \over  |\tau_1| \, |v_j-\bar v_i| }$. 
Together with the condition on  $\mu$,  this leads to
$$
\int \indc_{\{ V' \in \cR( \delta x_\perp , v_j -  \bar v_i, q,\tau_1) \} } \indc _{ \{ \mu <\frac{1}{|w|^{\gamma/4} }  \} } 
\,  d V' d \mu   
%\leq  { C R^2 \over  |w|^{1+\gamma/4} } 
\leq  { C R^2 \over  |\tau_1|^{1+\gamma/4} \; |v_j-\bar v_i|^{1+\gamma/4} } .
$$
Since $ 1 + \frac{\gamma}{4} >1$, we can integrate with respect to $t_{1^*}$ and gain a factor $\eps$. It remains then to integrate the singularity $ |v_j-\bar v_i|^{-(1+\gamma/4)}$.
Since $1+\frac{\gamma}{4} < 2$, this can be done by using~\eqref{alpha1} and~\eqref{alpha2} with two additional parents of $(i,j)$. This provides another contribution to $\cP_2(a,p,\sigma) $ with~$|\sigma| \leq 3$.

\medskip

$\bullet$ If $| V' - \bar v_i |  \leq {1\over |w|^\gamma }$. This can be dealt as in the case 
\eqref{eq: sous cas d'un sous cas}.

\bigskip

\noindent
$\blacktriangleright$
 If there is scattering at time $t_{1^*}$ of type $(ii \; b)$  such that the velocities are given by 
$v_i = V'$ and $v'_{1^*} = V'_*$ with the notation in \eqref{defcarleman}, then as above we separate the analysis into two sub-cases, depending on the relative size of~$| v_i - \bar v_i |$  and~$ |w|^{-\gamma} $.

\medskip

$\bullet$ Suppose that $| V'_* - \bar v_i |  \geq {1\over |w|^\gamma }$. 
The analogue of identity \eqref{eq: master equation B1} reads 
\begin{equation}
\label{Vprimestar}
V' _*- \bar v_i = v_j-\bar v_i +u {w\over |w|} - {u\over |w|} \cR_{n \frac{\pi}{2} } { V'- V'_*\over | V'-V'_*|} 
+ O \Big( F ( |\bar v_{i} - v_{j}|, \tau_1, \eps) \Big) \,,
\end{equation}
with $n \in \{ 0,1,2,3 \}$.
Denote by $\lambda = | V' - \bar v_i|$ and consider two cases.

\medskip

If $\lambda >  {1\over |w|^{\gamma /4} }$, we proceed as in \eqref{eq: rotation dilatation} and recover the direction of $V'- V'_*$  by
\begin{equation}
\label{eq: rotation dilatation 2}
{ V'- V'_*\over | V'-V'_*|} = 
\cR_{\theta'} [\Theta (V'_*)] 
\qquad   \hbox{  with } \qquad 
\theta' = \arctan {\lambda \over \mu} \,\cdotp
\end{equation}
Given $\lambda >  {1 \over  |w|^{\gamma /4} }$, the map~$V'_* \to \cR_{\theta'} [\Theta (V'_*)] $ has a Lipschitz constant 
$ |w|^{5 \gamma/4}$ as~ $| V'_* - \bar v_i |  \geq {1\over |w|^\gamma }$ (see \eqref{eq: Lipschitz}).
Since $\gamma < \frac{4}{5}$, we deduce by a fixed point argument that, for any $u\leq 4R$ 
$$
V'_* - \bar v_i = v_j-\bar v_i +u {w\over |w|} - {u\over |w|} \cR_{n \frac{\pi}{2} } { V'- V'_*\over | V'-V'_*|}
$$
has a unique solution $\hat V'_*(u)$ (which is Lipschitz in $u$) 
and that any solution of \eqref{Vprimestar} takes values close to this solution
$$
| V'_*-\hat V'_* (u) | \leq O \Big( F ( |\bar v_{i} - v_{j}|, \tau_1, \eps) \Big) \,\cdotp
$$
In other words, $V'_*$ has to belong to a tube $T( \delta x_\perp, v_j-  \bar v_i, q,\tau_1)$ of size 
$\delta = O \Big( F ( |\bar v_{i} - v_{j}|, \tau_1, \eps) \Big)$ around the smooth curve $u \to \hat V'_*(u)$ 
which stretches in the direction ${w \over |w|}$. 
Mimicking the proof of \eqref{rectangle0}, we can decompose the tube $T( \delta x_\perp, v_j-  \bar v_i, q,\tau_1)$ into small blocks of side length $\delta$. 
Summing over these blocks, we recover that the measure of 
$\{ V'_* \in T( \delta x_\perp, v_j-  \bar v_i, q,\tau_1)\}$ is less than
$$
R^2 \delta \big| \log \delta \big|
\leq 
{R^3 \eps^{1/8} \, \big| \log \eps \big| \over | \bar v_{i} - v_{j}| |\tau_1|}.
$$
Integrating with respect to $t_{1^*}$ and then integrating  the singularity $|\bar v_i - v_j|$, we get  a contribution of order 
$R^3 \eps^{1/8} |\log \eps|^2$ which  controls the occurence of a new scenario~$\cP_2 (a,p,\sigma)$.

\medskip

If $\lambda <  {1 \over |w|^{\gamma /4} }$, we only use the condition  \eqref{rec-equation'} which implies that $V'_*$ has to be in the rectangle $\cR( \delta x_\perp , v_j - \bar v_i, q,\tau_1)$ of size $ R \times \frac{4R}{|w|}$.
With the notation of Figure \ref{fig: parametrisation carleman}, Carleman's parametrization $(V',V_*')$ can be evaluated in terms of the measure  $\lambda d \lambda \, d \psi d\mu$.
Thus we get 
$$ 
\int \indc_{ \{ V' _* \in \cR( \delta x_\perp , v_j - \bar v_i , q,\tau_1) \} } d \mu d\psi   \leq {R|\log w| \over |w|}\,,
$$
together with the condition on  $\lambda$ which is independent
$$ 
\int \indc _{ \{ | \lambda |< \frac{1}{|w|^{\gamma/4}} \}  }  \lambda  d \lambda  \leq  {1\over |w|^{\gamma/2} } \, \cdotp
$$
As a consequence
$$
\int \indc_{ \{ V' \in \cR( \delta x_\perp , v_j -  \bar v_i, q,\tau_1) \}} 
\indc _{ \{ | V'_*  - \bar v_i|< \frac{1}{|w|^{\gamma/4}} \} } \,b(\nu_{1 ^*},v_{1 ^*}) \,  dv _{1^*} d\nu_{1 ^*} dt_{1^*}  
\leq C { \eps  \log |v_j-\bar v_i| \over  |v_j-\bar v_i|^{1+\gamma/2} } \, \cdotp
$$
It remains then to integrate the singularity $ |v_j-\bar v_i|^{-(1+\gamma/2)}$, which can be done by using \eqref{alpha1} and 
\eqref{alpha2} with two additional parents of $(i,j)$. This leads to another scenario to $\cP_2(a,p,\sigma) $ for some $p$.

\medskip

$\bullet$ Suppose that $| V'_* - \bar v_i |  \leq {1\over |w|^\gamma }$. By integration with respect to $b(\nu_{1 ^*},v_{1 ^*}) \,  dv _{1^*} d\nu_{1 ^*}$, we will gain only one power of $|w|^{-\gamma}$ due to the scattering (see Lemma \ref{scattering-lem}). This is not integrable with respect to time $\tau_1$ and we have therefore to use also the fact that there is a recollision between $(i,j)$ to regain some control.
Because $i$ has only a very small deflection at time  $t_{1^*}$, this implies that a ``kind of recollision" 
has to be triggered already before the collision with~$1^*$, i.e. at time $t_{2^*}$.

\begin{figure} [h] %  figure placement: here, top, bottom, or page
\centering 
\includegraphics[width=7cm]{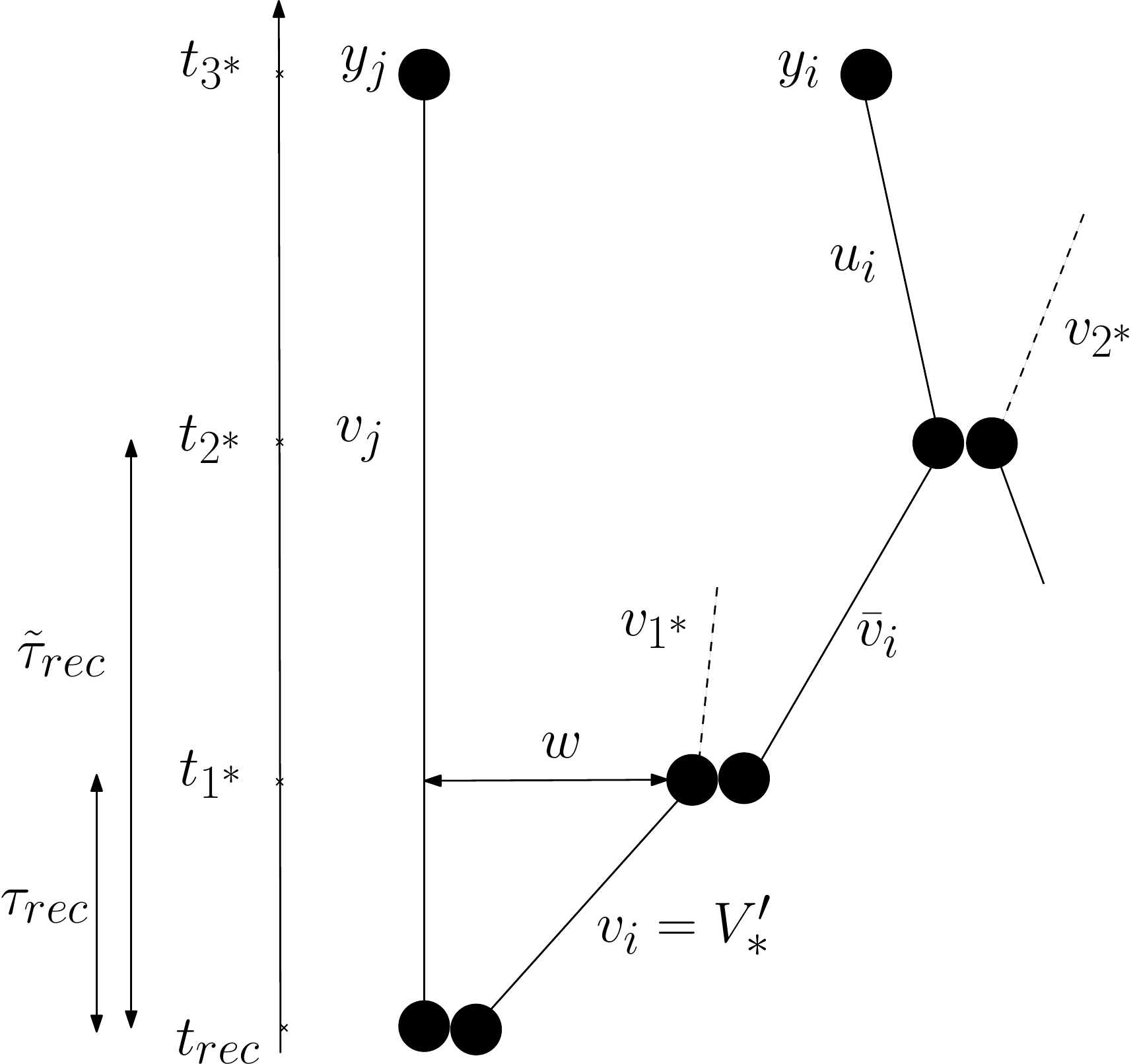} 
\caption{In the case $| V'_* - \bar v_i |  \leq {1\over |w|^\gamma }$, one has to use as well  the degree of freedom from the collision with $2^*$. In this figure, the particle $j$ is not deviated and $u_j = v_j$.
The parameter $w$ stands for the distance between $i,j$ at time $t_{1^*}$.}
\label{lemmeB1_cas3}
\end{figure}

Denote by $(y_i, u_i)$ and $(y_j,u_j)$ the positions and velocities  of the pseudo-particles $i$ and $j$ at time 
$t_{3^*}$ (see Figure \ref{lemmeB1_cas3}) and set
$$
\frac{y_i - y_j}{\eps} = \delta  y _\perp + \frac{\lambda'}{\eps} ( u_i - u_j)
\quad \text{with} \quad  \delta  y _\perp \cdot ( u_{i} - u_{j}) = 0 \, .
$$
We define  also
$$
\tau_2 = - \frac{1}{\eps}  ( t_{2^*} - t_{3^*} + \lambda')\,, \qquad 
\tilde \tau_{rec} = \frac{t_{2^*}-t_{rec}}{\eps}\,,
\qquad 
\tau_{rec} = \frac{t_{1^*}-t_{rec}}{\eps} \leq \tilde \tau_{rec}\,.
$$ 
By analogy with equation \eqref{rec-equation'}, we get
\begin{equation}
\label{fausserecoll}
\bar v_i  - v_{j} 
= \frac1{\tilde  \tau_{rec} } \delta y _\perp - \frac{\tau_2}{\tilde \tau_{rec}} ( u_{i} - u_{j}) - \frac{\tau_{rec}}{\tilde \tau_{rec}}  ( v_i - \bar v_i) -  \frac1{\tilde \tau_{rec}  } \nu_{rec} \, ,
\end{equation}
where the additional term comes from the small deflection at time $t_1^*$. By assumption, this term is less or equal than $|w|^{-\gamma}$.
As previously, we have that
\begin{equation*}
4 R \geq |\bar v_i  - v_j| +  \frac{\tau_{rec}}{\tilde \tau_{rec}}  |v_i - \bar v_i|
\geq   \frac{1}{\tilde \tau_{rec}}  \Big( |\tau_2| |u_{i} - u_{j}|  - 1 \Big) 
\geq   \frac{1}{ 2 \tilde \tau_{rec}}  |\tau_2| |u_{i} - u_{j}| \,,
\end{equation*}
as it is enough to consider $ |\tau_2| |u_{i} - u_{j}|  \gg 1$ (see \eqref{eq: small distance cut off}). Thus we get 
$$\tilde  \tau_{rec}  \geq \frac{1}{8 R} |\tau_2| |u_i-u_j| \,.$$
In the remaining of the proof, we fix the   parameter~$\gamma$ and a new parameter~$\alpha$ as follows
\begin{equation}
\label{eq: parametres lemme B1}
\gamma = \frac{3}{4}\,,  \quad \alpha =\frac{4}{7} 
\end{equation}
and  consider two cases according to large and small values of $\tau_1$.

\medskip

- If $ |\tau_1| \geq  {1 \over |\bar v_i - v_j|^{\gamma/(\gamma-\alpha)}}$, then   we get a control on the size 
of~$1/ |w|^\gamma$
$$ 
\frac{1}{|w|^\gamma} \leq 
\frac{1}{(|\tau_1| |\bar v_i - v_j|)^\gamma}  \leq \frac{1}{ |\tau_1|^\alpha} ,
$$
as $|w| \geq |\tau_1| |\bar v_i - v_j|$ from \eqref{eq: estimation w}.

Equation \eqref{fausserecoll}, imposes the condition that, at time $t_{2^*}$,
$\bar v_i - v_j$ has to be in a domain which is a kind of rectangle $\cK$ with axis
$\delta y _\perp - \tau_2 ( u_{i} - u_{j})$ and varying width 
$$ 
\frac{1}{|w|^\gamma} + \frac{1}{|\tau_2| |u_i-u_j|} \leq \frac{1}{|\tau_1|^\alpha} + \frac{1}{|\tau_2| |u_i-u_j|} \, \cdotp
$$
Recall from \eqref{eq: tau R} that $|\tau_1|\geq R$.
Combined with the condition $| V'_* - \bar v_i |  \leq {1\over |w|^\gamma }$ at time $t_{1^*}$, we get 
thanks to  Lemma \ref{scattering-lem}
\begin{align*}
\int \indc_{ \{ |\tau_1|\geq R \} } \, & \indc_{\{ | V'_* - \bar v_i |  \leq {1\over |w|^\gamma } \}}  \indc_{\{ \bar v_i -v_j \in \cK \}} 
 \prod_{\ell = 1^*,2^*}b(\nu_\ell ,v_\ell)  \,  dv_\ell d\nu_\ell   d t_{\ell}
\\
&\qquad  \leq  \int  \indc_{ \{ |\tau_1|\geq R \} } \,  {\indc_{\{ \bar v_i -v_j \in \cK \}} \over |\tau_1|^\alpha }
 b(\nu_{2^*},v_{2^*})  \,  dv_{2^*} d\nu_{2^*}  d t_{2^*} d t_1.
\end{align*}
At this stage, one has to be careful as $\tau_1$ was defined in  \eqref{eq: tau1'} by 
\begin{equation*}
\tau_1 :=- \frac1\eps (t_{1^*} - t_{2^*}+\lambda )
\quad  \text{with} \quad 
\lambda = \big( x_i (t_{2^*}) - x_j (t_{2^*}) - q  \big) \cdot \frac{\bar v_{i} - v_{j}}{|\bar v_{i} - v_{j}|},
\end{equation*}
thus~$|\tau_1| $ depends on~$\bar v_{i} - v_{j}$, i.e. also on $v_{2^*}$. 
In order to simplify the dependency between the variables $v_{2^*}$ and $t_1$, we replace $t_{1^*}$ with the variable $\tau_1$.
This boils down to integrating with respect to $\eps d\tau_1$.
The geometric structure implies that $\tau_1$ takes now values in a complicated domain which we will estimate from above by keeping only the constraint $ |\tau_1| \in[ R, \frac{R^2}{\eps}]$.
This decouples the variables in the integral and we finally get
\begin{align}
\label{eq: intricated}
\int \indc_{ \{ |\tau_1|\geq R \} } \, & \indc_{\{ | V'_* - \bar v_i |  \leq {1\over |w|^\gamma } \}}  \indc_{\{ \bar v_i -v_j \in \cK \}} 
 \prod_{\ell = 1^*,2^*}b(\nu_\ell ,v_\ell)  \,  dv_\ell d\nu_\ell   d t_{\ell}
\\
&\qquad  \leq \eps \int  \indc_{ \{ |\tau_1| \in [ R , \frac{R^2}{\eps}]\} } \,  {\indc_{\{ \bar v_i -v_j \in \cK \}} \over |\tau_1|^\alpha }
 b(\nu_{2^*},v_{2^*})  \,  dv_{2^*} d\nu_{2^*}  d t_{2^*} d \tau_1  \nonumber \\
   &  \qquad \leq 
CR^2  \eps\int \indc_{ \{ |\tau_1| \in [ R , \frac{R^2}{\eps}] \} } \,  \left( \frac{1}{|\tau_1|^{2 \alpha}} + \frac{1}{|\tau_1|^\alpha |\tau_2| |u_i-u_j|} \right)
 d t_{2^*}  d \tau_1 \, . \nonumber
\end{align}
The first term is integrable in $|\tau_1|\geq R$ as $2 \alpha >1$.  The second term can be integrated first with respect to $R \leq |\tau_1| \leq \frac{R^2}{\eps}$ which provides  a factor $\eps^\alpha$, then with respect to $t_{2^*}$ which provides an additional $\eps | \log \eps|$. The singularity with respect to small relative velocities can be controlled by two additional integrations. Thus the second term leads to an upper bound less than $\eps$ and the corresponding scenarios are indexed by sets $\sigma$ with cardinal 4.

\medskip

- If $|\tau_1| \leq  {1 \over |\bar v_i - v_j|^{\gamma/(\gamma-\alpha)}}$, then we can forget about (\ref{fausserecoll}). We indeed have that 
$$
\begin{aligned}
& \int \indc_{\{ | V'_* - \bar v_i |  \leq {1\over |w|^\gamma } \}}  \indc_{ \{ |\tau_1| \leq  {1 \over |\bar v_i - v_j|^{\gamma/(\gamma-\alpha)}}  \} }  b(\nu_{1^*} ,v_{1^*})  \,  dv_{1^*} d\nu_{1^*} d\tau_1\\
& \qquad \qquad  \leq \int \indc_{ \{ |\tau_1| \leq  {1 \over |\bar v_i - v_j|^{\gamma/(\gamma-\alpha)}}  \} }
{1\over (\tau_1 |\bar v_i - v_j|) ^{ \gamma}}  d\tau_1 \\
& \qquad \qquad  \leq {1\over  |\bar v_i - v_j| ^{ \gamma}} 
\int  \indc_{ \{ |\tau_1| \leq  {1 \over |\bar v_i - v_j|^{\gamma/(\gamma-\alpha)}}  \} }
\frac{1}{ |\tau_1|^\gamma} d\tau_1  
\leq {1\over  |\bar v_i - v_j| ^{ \gamma + \frac{\gamma(1-\gamma)}{\gamma-\alpha} }} \, \cdotp
\end{aligned}
$$
As $\frac{ \gamma (1-\alpha)}{\gamma-\alpha} <2$,
the singularity at small relative velocities is integrable by using Lemma \ref{scattering-lem}.
Thus the change of variable to $t_{1^*}$ allows us to recover an upper bound of order $\eps$.

\bigskip

Throughout the proof, the bad sets were analyzed in terms of the recolliding particles, thus we have to reindex these sets in terms of the labels $\sigma$ of the parents. A similar procedure has been done already at the end of the proof of  Proposition \ref{recoll1-prop}. Given a set $\sigma$ of parents, it may only determine the particle $i$, so that an extra factor $s^2$ has to be added in \eqref{eq: A recollision with a constraint on the outgoing velocity}
to take into account the choice of $j,k$.
\end{proof}

%%%%%%%%%%%%%%%%%%%%%%%%%%%%%%%%%%%%%%%%
%%%%%%%%%%%%%%%%%%%%%%%%%%%%%%%%%%%%%%%%
%%%%%%%%%%%%%%%%%%%%%%%%%%%%%%%%%%%%%%%%
%%%%%%%%%%%%%%%%%%%%%%%%%%%%%%%%%%%%%%%%

\subsection{Parallel recollisions}

The following result was used  in Section~\ref{secondcase1=1} page~\pageref{secondcase1=1}
to deal with parallel recollisions when~$t_{1^* } = t_{\tilde 1}$. The setting is recalled in Figure~\ref{cherriesagain}.
\begin{figure} [h] %  figure placement: here, top, bottom, or page
  \centering
    \includegraphics[width=5cm]{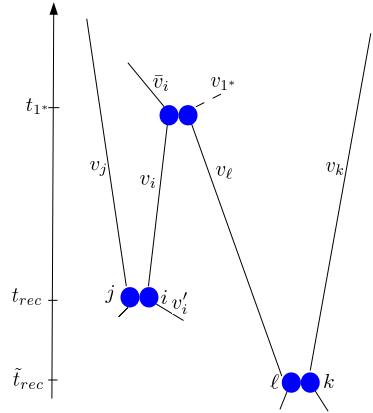}  
   \caption{Parallel recollisions.}
   \label{cherriesagain}
  \end{figure}

\begin{Lem}
\label{doublerecollision}
Fix a final configuration of bounded energy~$z_1 \in \T^2 \times  B_R$ with~$1 \leq R^2 \leq C_0 |\log \eps|$,  a time $1\leq t \leq C_0|\log \eps|$ and  a collision tree~$a \in \cA_s$ with~$s \geq 2$.

There exist sets of bad parameters
$\cP_2 (a, p,\sigma)\subset \cT_{2,s} \times {\mathbb S}^{s-1} \times \R^{2(s-1)}$ for $p_1 < p\leq p_2$ and $\sigma \subset \{2,\dots, s\}$ of cardinal $|\sigma|\leq 5$ such that
\begin{itemize}
\item
$\cP_2 (a, p,\sigma)$ is parametrized only in terms of   $( t_m, v_m, \nu_m)$ for $m \in \sigma$ and  $m < \min \sigma$;
\begin{equation}
\label{eq: cerises statement}
\int \indc _{\cP_2 (a,p,\sigma)  } \displaystyle  \prod_{m\in \sigma} \,
\big | \big(v_{m}-v_{a(m)} ( t_{{m}} ) )\cdot \nu_{{m}}  \big | d  t_{{m}} d \nu_{{m}}dv_{{m}}   
\leq C(Rt)^r s^2 \eps \,,
\end{equation}
for some constant $r \geq 1$,
\item  and any pseudo-trajectory starting from $z_1$ at $t$, with total energy bounded by $R^2$ and such that the first two recollisions involve two disjoint pairs of particles having the same first parent is parametrized by 
$$( t_n, \nu_n, v_n)_{2\leq n\leq s }\in  \bigcup _{p_1 < p\leq p_2} \bigcup _\sigma  \cP_2(a, p,\sigma)\,.$$
\end{itemize}
\end{Lem}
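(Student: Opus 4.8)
The plan is to follow the scheme of Proposition~\ref{recoll1-prop} and of Lemma~\ref{v-constraint}, now in the constrained geometry $t_{1^*}=t_{\tilde 1}$ where the two recolliding pairs $(i,j)$ and $(k,\ell)$ share the same first parent. Since $1^*=\tilde 1$, the parent $1^*$ branches off exactly one of the four particles, say $i$; the other three particles $j,k,\ell$ are obstacles (or have their own, later parents $2^*,3^*,\dots$ up the tree). The key observation is that ``parallel'' means the trajectory of the pair $(k,\ell)$ is \emph{not} affected by $i$ or $j$ between time $t_{1^*}$ and $\tilde t_{rec}$, so we have two independent recollision conditions of the form~\eqref{rec-equation'}: one for $(i,j)$ involving the degree of freedom $(t_{1^*},v_{1^*},\nu_{1^*})$ attached to the common parent, and one for $(k,\ell)$ which is parametrized by data strictly below $1^*$ in the tree (positions and velocities at time $t_{1^*}^+$ of $k,\ell$) plus possibly the extra parents $\tilde 2,\tilde 3$.

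The first step is to write the recollision condition for $(i,j)$. Exactly as in the proof of Proposition~\ref{recoll1-prop}, rescaling time by $\eps$ and decomposing $\delta x$ along and perpendicular to $\bar v_i-v_j$, the condition forces $v_i-v_j$ (hence, after inverting the scattering at $t_{1^*}$, the parameters $(v_{1^*},\nu_{1^*})$) to lie in a rectangle of size $2R\times 2R\min(4/(|\tau_1||\bar v_i-v_j|),1)$; by Lemma~\ref{rec-eq} and~(\ref{proofProp4.3}) the integral over $(t_{1^*},v_{1^*},\nu_{1^*})$ of this constraint against the collision cross-section is at most $CR^5t^2\,\eps|\log\eps|^2/|v_i-v_j|$. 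The second step is to handle the $(k,\ell)$ recollision: since it is parallel, it does not involve the common parent again, so we pick up a \emph{separate} degree of freedom, namely the first parent of the pair $(k,\ell)$ among the particles with label $<1^*$ (or $\tilde 2$ if $k,\ell$ are both present before $t_{1^*}$), and apply the one-recollision estimate~(\ref{estimate1recol1/vi-vj}) again, giving a factor $CR^5t^2\min(\eps|\log\eps|^2/|v_k-v_\ell|,1)$. The two constraints are on disjoint sets of parameters, so by Fubini their integrals multiply, producing a total bound of order $(Rt)^r\,\eps^2|\log\eps|^4$ times singularities $1/(|v_i-v_j||v_k-v_\ell|)$.

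The third step is to absorb the relative-velocity singularities and the excess power of $|\log\eps|$. Here we have \emph{two} independent singular factors, one per recollision. Each can be integrated out using one additional parent (the second parent of the relevant pair) via~(\ref{carleman2}) of Lemma~\ref{scattering-lem}, at a cost of $\eps|\log\eps|^3$ per integration, exactly as in the integration-of-the-singularity step on page~\pageref{integrationsingulatity}; if we want to kill all the logarithms one may instead use two extra parents per pair via~(\ref{carleman3}). After these integrations the overall bound is $C(Rt)^r s^2\eps\cdot(\text{powers of }\eps|\log\eps|^k)\ll C(Rt)^r s^2\eps$ since the extra $\eps^2$ from the two independent recollisions dominates any finite power of $|\log\eps|$; this is where the ``parallel'' structure pays off, just as the presence of a second recollision in Section~\ref{recollisions} in general gives an extra power of $\eps$. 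The factor $s^2$ accounts, as on page~\pageref{pagewiths} and in Paragraph~\ref{sec: indexing with sigma}, for the obstacle particles ($j$, $k$ or $\ell$) that are not determined once only the set $\sigma$ of parents is fixed; since $|\sigma|\le 5$ and the degenerate subcases (small $|v_i-v_j|$ or $|v_k-v_\ell|$, or some of the parents coinciding) are treated by forgetting one recollision condition and using~(\ref{eq: small distance cut off}) or~(\ref{eq: small distance cut off prime}), one obtains the claimed bound~\eqref{eq: cerises statement}.

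The main obstacle I expect is bookkeeping the case distinctions: whether the common parent $1^*$ branches off $i$, $j$, $k$ or $\ell$; whether $j$ and $k,\ell$ are genuinely present below $t_{1^*}$ so that they have their own parents, or whether the pair $(k,\ell)$ is itself created at $t_{1^*}$ (a self-recollision scenario as on page~\pageref{selfrecollpage}, handled by~(\ref{self-recoll-est})); and the possible coincidence of $\tilde 2$ or $\tilde 3$ with $1^*$ or with $2^*$. In each such degenerate configuration one loses a degree of freedom, but as in Remark~\ref{rem: vide} the estimate only improves (fewer variables to integrate). The genuinely delicate point, as always, is making sure the two recollision constraints really are supported on disjoint parameter blocks so that their small measures multiply rather than merely add; this is guaranteed precisely by the definition of ``parallel'' (the $(k,\ell)$ trajectory is $i,j$-independent on $[t_{1^*},\tilde t_{rec}]$), so the argument reduces, modulo this verification, to applying Proposition~\ref{recoll1-prop}'s local estimate twice on independent variables and then integrating out the resulting singularities.
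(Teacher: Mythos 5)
Your plan breaks down at its central step: the claim that the two recollision constraints ``are on disjoint sets of parameters, so by Fubini their integrals multiply.'' In the configuration this lemma addresses, $t_{\tilde 1}=t_{1^*}$, i.e.\ the pair $(k,\ell)$ has the \emph{same} first parent as $(i,j)$ --- there is no separate first parent of $(k,\ell)$ with label below $1^*$ to integrate over, contrary to what you write in your second step (that situation is exactly Case~1, $t_{\tilde 1}<t_{1^*}$, treated in Section~\ref{classificationsection}). Since a branching deviates at most the two particles $a(1^*)$ and $1^*$, having a common first parent forces one particle of the second pair (say $\ell$) to be the particle created at $t_{1^*}$ next to $i$; this is visible in the paper's recollision conditions \eqref{firstrecoll}--\eqref{secondrecoll}, where the position of $\ell$ at $t_{1^*}$ is $x_i(t_{1^*})+\eps\nu_{1^*}$. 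Consequently $v_i$ and $v_\ell$ are the two outgoing velocities of the \emph{same} scattering, $(v_i,v_\ell)=(V',V'_*)$ or $(V'_*,V')$ in Carleman's parametrization, and both recollision conditions are joint constraints on the single integral in $(t_{1^*},v_{1^*},\nu_{1^*})$. The ``parallel'' hypothesis only says the $(k,\ell)$ trajectory is not perturbed by $i,j$ \emph{after} $t_{1^*}$; it does not decouple the integration variables, so the two small measures cannot simply be multiplied and no $\eps^2$ gain is available by that route.

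What the paper actually does is a genuinely joint geometric analysis of this single collision integral: the $(i,j)$ condition confines $v_i-v_j$ to a thin rectangle $\cR_1$ of width $O(R\eps/|x_{i,j}(t_{1^*})|)$, the $(k,\ell)$ condition confines $v_\ell-v_k$ to a rectangle $\cR_2$ of width $O(R\eps/|\tilde x_{i,k}(t_{1^*})|)$, and one estimates the measure, in Carleman's variables $dV'\,dS(V'_*)$, of the set where both hold --- the key quantity being the intersection length of the line orthogonal to $V'_*-\bar v_i$ through $\bar v_i$ with $v_k+\cR_2$, controlled via the angle/distance parameter $u$ as in \eqref{eq: distance d}. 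The degenerate regimes ($u\le\eps^{3/4}$, or $|\tilde x_{i,k}|$ small, interpreted as a ``kind of recollision'' between $i$ and $k$ handled through the parents $\tilde 2,\tilde 3$ of that pair) are then treated separately, and where constraints on the pairs $(i,j)$ and $(i,k)$ must be combined on overlapping parent sets the paper decouples them by H\"older's inequality as in \eqref{eq: holder}, yielding bounds of the type $\eps^{9/8}|\log\eps|^{3}$ or $\eps^{33/32}|\log\eps|^{11/4}$ rather than your claimed $\eps^{2}$. So the gap is not bookkeeping of subcases, as you anticipate at the end, but the independence assumption itself; without replacing it by an argument of this joint type, the proof does not go through.
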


\begin{proof}
As in the previous section, we suppose from now on that the parameters associated with the first recollision are such that 
$|\tau_1| |\bar v_i - v_j| \geq R$. Otherwise, the estimate~\eqref{eq: small distance cut off}
applied with $M = R$ leads to the expected upper bound.

In the following, the parents of $i,j$ will be denoted by the superscript $\ ^*$ and those of 
$k, \ell$ by the superscript $\tilde \ $.
Denote by $t_* := \min (t_{2^*}, t_{\tilde 2})$ the first time (before $t_{1^*}$) when one of the particles $i,j $ or $k$ has been deviated. 
Without loss of generality (up to exchanging $j$ and $k$), we can assume that $i$ and $k$ are not colliding together at time $t_*$. 
%On the other hand it may be that~$j$ and~$k$ collide.

\medskip
We  describe the  recollision between $(i,j)$ by the identity
\begin{equation}
\label{firstrecoll}
v_{i} - v_{j} = \frac1{t_{rec}-t_{1^*}}  \big(
x_i(t_{1^*})
-x_j(t_{1^*})+q+\eps  \nu_{rec}
\big) \, ,
\end{equation}
with~$q$ an element in~$\Z^2$ which we fix from now on (in the end the estimates will be multiplied by~$R^2t^2$ to take this fact into account). Similarly the recollision between~$(k,\ell)$ can be written
\begin{equation}
\label{secondrecoll}
v_\ell - v_{k} = \frac1{\tilde  t_{rec}-t_{1^*}}  \big(
x_i(t_{1^*})+\eps \nu_{1^*}
-x_k(t_{1^*})+\tilde q+\eps  \tilde\nu_{rec}
\big)\end{equation}
with~$\tilde q$ an element in~$\Z^2$ which again we fix from now on, up to mutiplying again the estimates by~$R^2t^2$ at the end.

\medskip

We introduce the notation
$$
\tilde x_{i,k}(t_{1^*}):= x_i(t_{1^*}) -x_k(t_{1^*})+\tilde q  \quad \mbox{and} \quad x_{i,j}(t_{1^*}):= x_i(t_{1^*}) 
-x_j(t_{1^*})+  q \, .
$$
Equation~(\ref{firstrecoll}) implies that $v_i - v_j$   lies in a rectangle $\cR_1$ of main axis~$x_{i,j}(t_{1^*})$, and of size~$CR\times ( R\eps /|x_{i,j}(t_{1^*})| )$. We recall
that an integration of this constraint in the collision parameters of particle~$1^*$ gives a bound of the type~$\min (1, \eps |\log \eps|^2 / |\bar v_i - v_j|)$.
On the other hand, Equation~(\ref{secondrecoll}) implies that~$v_\ell- v_k$ lies in a rectangle $\cR_2$
of main axis~$\tilde x_{i,k}(t_{1^*})$ and
of size $CR\times (R\eps / |\tilde x_{i,k}(t_{1^*})|)$.

\bigskip

Let us give the main ideas of the argument.
We can rewrite these conditions with Carleman's parametrization \eqref{defcarleman}, with either $(v_i,v_\ell) = (V', V'_*)$ or $(v_i, v_\ell) = (V'_*, V')$. We will actually focus on the second situation which is the worst one.
We will use the  parametrization in polar coordinates as in Figure \ref{fig: parametrisation carleman}.

The first condition states that $V'_*$ lies in a small rectangle of size $CR\times (R\eps /|x_{i,j}(t_{1^*})| )$, which  we shall eventually integrate with the measure $d\mu d \psi $.
We can show that this integral  provides a contribution
$(R\eps /|x_{i,j}(t_{1^*})| )(1+ |\log (\eps /|x_{i,j}(t_{1^*})| ) |)$.

The second condition tells us that $V'$ has to be in the intersection of the line orthogonal to $(V'_*-\bar v_i)$ passing through $\bar v_i$ and the rectangle $v_k +\cR_2$. 
{  We have therefore to evaluate the length of this intersection which appears when we integrate with respect to $\lambda d\lambda$.}

\begin{figure} [h] %  figure placement: here, top, bottom, or page
  \centering
 \includegraphics[width=4in]{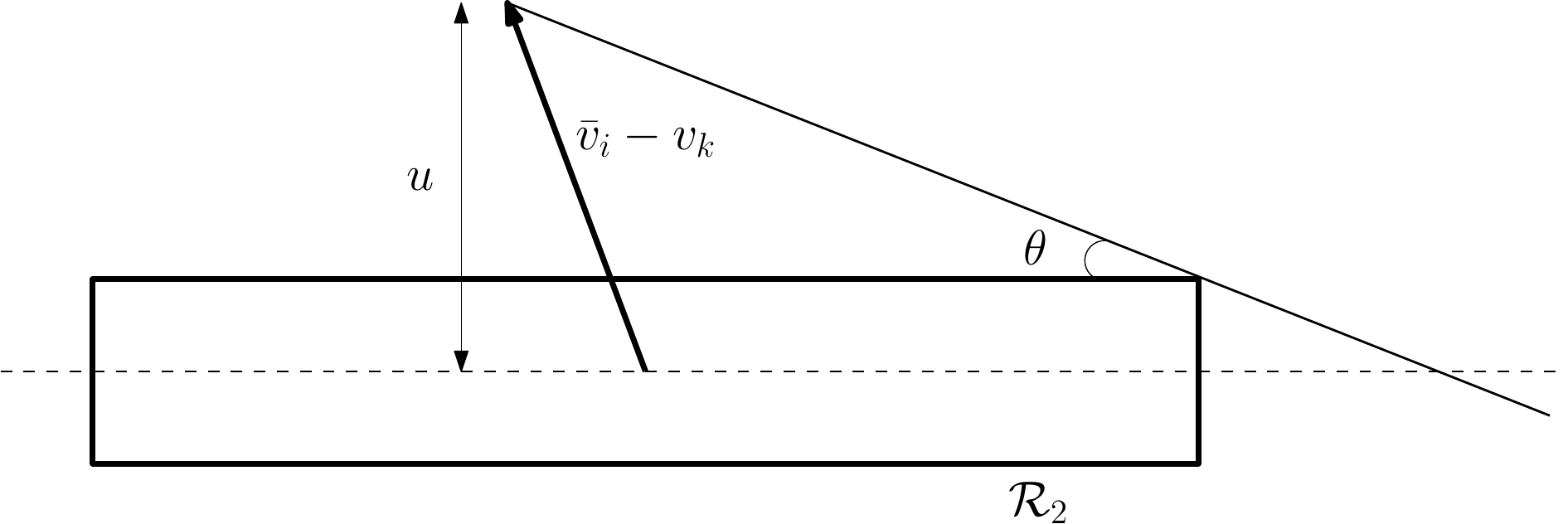} 
 \caption{\small The dashed line represents the main axis of the rectangle $\mathcal R_2$, oriented in the direction $\tilde x_{i,k}(t_{1^*})$. 
The angle $\theta$ is  the smallest angle between the axis of $\mathcal R_2$ and 
 any line passing through  $v_i$ and intersecting the axis of $\mathcal R_2$.}
\label{fig: angle}
\end{figure}

   Denote by $u$ the distance from $\bar v_i$ to the rectangle $v_k + \mathcal R_2$~:
   $$ u := \big |(  \bar v_i - v_k) \wedge \tilde x_{i,k}(t_{1^*}) |/ |  \tilde x_{i,k}(t_{1^*})|\,$$
    \begin{itemize}
   \item if this distance is large enough, we expect the length of the intersection to be small;
   \item
   if the distance $u$ is small, this imposes an additional constraint on $\bar v_i - v_k$, that we will analyse with different arguments depending on the size of $\tilde x_{i,k}(t_{1^*})$.
   \end{itemize}

   \bigskip
    \noindent
\underbar{Case $u \geq \eps^{3/4} $ }

The intersection of  the line orthogonal to $(V'_*-\bar v_i)$ passing through $\bar v_i$ and the rectangle $v_k +\cR_2$ 
of width $\frac{C R\eps}{ |\tilde x_{i,k}(t_{1^*})|}$ (see Figure \ref{fig: angle}) is a segment of size at most
\begin{equation}
\label{eq: distance d}
d \leq \min \Big(  \frac{C\eps R }{|\tilde x_{i,k}(t_{1^*})| \sin \theta } , CR\Big),
\end{equation}
where  $\theta $ is the minimal angle between the axis of $\cR_2$ and any line passing through $\bar v_i$ and intersecting $v_k+ \cR_2$. 
{ 
We have
%$$
%\sin \theta' = \frac{u}{ |v_i - v_j|} \geq \frac{\eps^{3/4}}{ |v_i - v_j|}
%\quad \Rightarrow \quad .
%$$
%As $u$ is much larger than the width $\frac{C\eps R }{|\tilde x_{i,k}(t_{1^*})|}$, we deduce that 
%{\tt [Pour etre complet, il faudrait probablement rajouter un cas $|\tilde x_{i,k}(t_{1^*})| \leq \eps^{1/4}$ ?]}
$$
\sin \theta \geq \frac{u}{2 R} \geq \frac{\eps^\frac34}{2 R} \, \cdotp
$$
}
It follows from \eqref{eq: distance d} that
$$
d 
\leq  \frac{C\eps^\frac14 R^2  }{ |\tilde x_{i,k}(t_{1^*})| } \, \cdotp
$$

Multiplying this estimate by the size of $\cR_1$, we get the following upper bound  for  the measure in~$|(v_{1^*}-\bar v_i) \cdot \nu_{1^*} | \, dv_{1^*} d\nu_{1^*}$
$$
\frac{C  R^5 \eps^{\frac54}|\log \eps| }{   \,  \big |\tilde x_{i,k}(t_{1^*})\big | \,  \big |x_{i,j}(t_{1^*})\big |} \big(1+ \big |\log  \frac \eps { |x_{i,j}(t_{1^*})|}  \big |\big )  \, \cdotp$$

\medskip
$\bullet$  If $|\tilde x_{i,k}(t_{1^*})\big | \geq \eps^{1/8}$ then the bound becomes
$$
\frac{C R^5 \eps^{\frac98}|\log \eps| }{    \big |x_{i,j}(t_{1^*})\big |} \, ,$$
and we are back to the usual computations as in the proof of Proposition~\ref{recoll1-prop}: we integrate over $t_{1^*}$ then  over one parent of~$(i,j)$ to kill the singularity at small relative velocities,  and this gives rise in the end to 
$$
C   R^5 (R^3t)^2 \eps^{\frac98}  |\log \eps|^3  \, .
$$ 

\medskip
$\bullet$  If $|\tilde x_{i,k}(t_{1^*})\big | \leq \eps^{1/8}$, we have a  kind of ``recollision" between particles~$i$ and~$k$ at time~$t_1^*$. Denote by $\tilde 2, \tilde 3$ the first two parents of $(i,k)$. We therefore get that $\bar v_i - v_k$ has to belong to the union of $(Rt)^2$ rectangles $\cR_3$ of size $CR\times ( R\eps^{1/8} /|\tilde x_{i,k}(t_{\tilde 2 })| )$, with
$$
\tilde x_{i,k}(t_{\tilde 2}) : = x_i(t_{\tilde 2}) -x_k(t_{\tilde 2})  +\tilde q \,.
$$
 Combined with the condition that $v_i \in \cR_1$, one has to integrate 
$$ R^3 \indc_{ \{ \bar v_i- v_k \in \cR_3 \}}  \min \left( {\eps \, |\log \eps|^2\over |\bar v_i - v_j|}, 1 \right) .$$
Denote by $\sigma =\{1^*, 2^*, 3^*\} \cup \{\tilde 2, \tilde 3 \}$ so that the cardinal $|\sigma|$ can be 3, 4 or 5.
Integrating over~$1^*$ leads to an inequality involving constraints on the pairs $(i,k)$ and $(i,j)$
\begin{align}
\label{eq: holder}
& \int \indc_{\{ \bar v_i -v_k \in \cR_3 \}} \indc_{ \{ v_i - v_j \in \cR_1 \} } 
\,  \prod_{m \in \sigma  } b(\nu_m,v_m)  \,  dv_m d\nu_m dt_m  \nonumber \\
& \qquad \leq  R^3 \eps |\log \eps|^2 \int \frac{ \indc_{\{ \bar v_i -v_k \in \cR_3 \} } }{ |  \bar v_{i}- v_{j}|} 
\,  \prod_{m \in \{ 2^*, 3^*, \tilde 2, \tilde 3 \} } b(\nu_m,v_m)  \,  dv_m d\nu_m dt_m   \nonumber \\
&\qquad \leq     R^3 \eps |\log \eps|^2
\left( (R^3t)^2 \int \indc_{ \{ \bar v_i -v_k \in \cR_3 \} }     \prod_{m= \tilde 2, \tilde 3} b(\nu_m,v_m)  \,  dv_m d\nu_m dt_m \right)^{1/4}\\
& \qquad \qquad \qquad \times 
\left((R^3t)^2  \int \frac{ 1 }{ |  \bar v_{i}- v_{j}|^{4/3}} 
 \prod_{m= 2^*,3^*} b(\nu_m,v_m)  \,  dv_m d\nu_mdt_m\right)^{3/4}  \nonumber \\
 &\qquad \leq     C(Rt)^r  \eps ^{33/32} |\log \eps|^{11/4} \,,   \nonumber 
 \end{align}
where we used H\"older's inequality in order to decouple both terms: the first one provides as previously a bound~$\eps^{\frac18}|\log \eps|^{3}$ and the second one is bounded thanks to~(\ref{alpha1})-(\ref{alpha2})
as the singularity in the relative velocities is less than 2.
Note that H\"older's inequality was performed over the 4 variables $\{ 2^*, 3^*, \tilde 2, \tilde 3 \}$, but 
only two variables are relevant for each integral, thus the contribution of the two others is bounded from above by the factor $(R^3t)^2$.

\bigskip
    \noindent
\underbar{Case $u \leq \eps^{3/4} $ }

We   recall that 
$$
\begin{aligned}
\tilde x_{i,k}(t_{1^*}) &: = x_i(t_{1^*}) -x_k(t_{1^*})+\tilde q
= x_i(t_{\tilde 2}) -x_k(t_{\tilde 2})  +\tilde q+  ( \bar v_i - v_k) (t_{1^*}-t_{\tilde 2}) \, .
\end{aligned}
$$
Recalling that 
$$
\tilde x_{i,k}(t_{\tilde 2}) : = x_i(t_{\tilde 2}) -x_k(t_{\tilde 2})  +\tilde q \, ,
$$
the constraint~$u = \frac{ |(  \bar v_i - v_k) \wedge \tilde x_{i,k}(t_{1^*}) |}{|  \tilde x_{i,k}(t_{1^*})|   } \leq  \eps^\frac34$ implies
\begin{equation}
\label{constraintrectangle3}
\big |(  \bar v_i - v_k) \wedge   \tilde x_{i,k}(t_{\tilde 2})   \big | \leq  C\eps^\frac34 Rt \, .
\end{equation}
Recall  that the constraint \eqref{firstrecoll} on the rectangle~${\mathcal R}_1$ produces a singularity
$\frac{R\eps}{|x_{i,j}(t_{1^*})|} (1+ |\log ( \frac{\eps}{|x_{i,j}(t_{1^*})| }) |) $, and
we argue as follows: 

\medskip

\noindent
$\bullet$  If~$| \tilde x_{i,k}(t_{\tilde 2})|  \leq \eps^{\frac58}$, we have a  kind of ``recollision" between particles~$i$ and~$k$ at time~$t_{\tilde 2} $. We thus proceed as in Case 1 of Section 6.

- For small relative velocities, we integrate the constraint $|\bar v_i-v_j | \leq \eps^\frac9{16}$ over two parents of~$\{ i, j \}$ using \eqref{preimage-sphere1}, \eqref{carleman2} and we find directly a bound~$ \eps^\frac98 |\log \eps|^2$.

- When the relative velocities are bounded from below $|\bar v_i-v_j | \geq \eps^\frac9{16}$,    the 
contribution  of rectangle~${\mathcal R}_1$ gives a bound of the order~$CR^2 \eps^\frac{7}{16} |\log \eps|^2$. By integrating the  ``recollision"~$(i,k)$  over $\tilde 2, \tilde 3$, we find   a bound~$CR^7 t^3 \eps^\frac{5}{8} |\log \eps|^3$ so finally this case produces as usual (see Proposition~\ref{recoll1-prop}), after integration over three parameters, the error~$CR^9 t^3 \eps^\frac{17}{16} |\log \eps|^5$.

\medskip
\noindent
$\bullet$ If ~$| \tilde x_{i,k}(t_{\tilde 2})  | \geq \eps^{\frac58}$ then  according to~(\ref{constraintrectangle3}),~$\bar v_i-v_k$ must lie in the union of $(Rt)^2$ rectangles~${\mathcal R}_4$ with axis $\tilde x_{i,k}(t_{\tilde 2})$ and size~$CR \times CRt\eps^{\frac18}$.
This condition has to be coupled with the singularity~$\eps |\log \eps|^2/|\bar v_i-v_j| $ due to the constraint from the rectangle $\cR_1$.
We therefore have to integrate
$$ 
R^3 \indc_{ \{ \bar v_i- v_k \in \cR_4 \} }  \min \left( {\eps   |\log \eps|^2 \over |\bar v_i - v_j|}, 1 \right) .
$$
Denote by $\sigma =\{1^*, 2^*, 3^*\} \cup \{\tilde 2\}$ where $\tilde 2$ is the first parent of $(i,k)$.
In this case, the cardinal of $\sigma$ is 3 or 4.
Integrating first over $1^*$ and then using H\"older's inequality as in \eqref{eq: holder}, we  have
$$
\begin{aligned}
& 
\int \indc_{ \{ \bar v_i -v_k \in \cR_4 \} } \indc_{ \{ v_i - v_j \in \cR_1 \} } 
\,  \prod_{m \in \sigma  }b(\nu_m,v_m)  \,  dv_m d\nu_mdt_m \\
& \qquad 
\leq R^3 \eps   |\log \eps|^2   \int {  \indc_{ \{ \bar v_i- v_k \in \cR_4 \} } \over |\bar v_i - v_j|}
\,  \prod_{m \in \sigma \setminus \{ 1^*\} }b(\nu_m,v_m)  \,  dv_m d\nu_mdt_m \\
&\qquad 
\leq     R^3 \eps |\log \eps|^2
\left( (R^3t)^2 \int \indc_{ \{ \bar v_i -v_k \in \cR_4 \}}     b(\nu_{\tilde 2},v_{\tilde 2})  \,  dv_{\tilde 2} d\nu_{\tilde 2} dt_{\tilde 2} \right)^{1/4}
\\
& \qquad \qquad \qquad \times \left((R^3t)  \int \frac{ 1 }{ |  \bar v_{i}- v_{j}|^{4/3}} 
\prod_{m= 2^*,3^*} b(\nu_m,v_m)  \,  dv_m d\nu_mdt_m \right)^{3/4}\\
 &\qquad \leq     C(Rt)^r  \eps ^{33/32} |\log \eps|^{9/4} \,.\nonumber
\end{aligned} 
$$

\bigskip

Given a set $\sigma$ of parents, it may only determine the particle $i$, so that an extra factor $s^2$ has to be added in \eqref{eq: cerises statement}
to take into account the choice of $j,k$. This completes the proof of   Lemma~\ref{doublerecollision}.
\end{proof}

%%%%%%%%%%%%%%%%%%%%

\subsection{Recollisions in chain}

The following Lemma was used in Section~\ref{secondcase1=1} page~\pageref{secondcase1=1}  to deal with the case when recollisions occur in chain, with~$t_{\tilde 1} = t_{1^*}$, i.e. both recollisions occur without any intermediate collisions 
as depicted in Figure~\ref{murphyfigagain}.

\begin{figure} [h] %  figure placement: here, top, bottom, or page
  \centering
  \includegraphics[width=5cm]{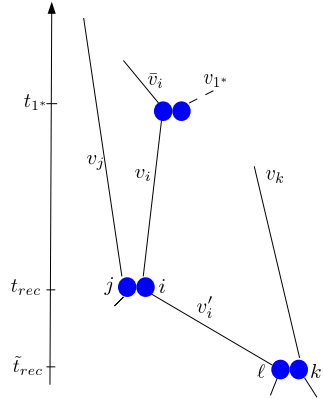}  
  \caption{ Recollisions in chain.}\label{murphyfigagain}
\end{figure}

\begin{Lem}
\label{Murphy}
Fix a final configuration of bounded energy~$z_1 \in \T^2 \times  B_R$ with~$1 \leq R^2 \leq C_0 |\log \eps|$,  a time $1\leq t \leq C_0|\log \eps|$ and  a collision tree~$a \in \cA_s$ with~$s \geq 2$.

There exist sets of bad parameters
$\cP_2 (a, p,\sigma)\subset \cT_{2,s} \times {\mathbb S}^{s-1} \times \R^{2(s-1)}$ for $p_2 < p\leq p_3$ and $\sigma \subset \{2,\dots, s\}$ of cardinal $|\sigma|\leq 3$ such that
\begin{itemize}
\item
$\cP_2 (a, p,\sigma)$ is parametrized only in terms of   $( t_m, v_m, \nu_m)$ for $m \in \sigma$ and  $m < \min \sigma$;
\begin{equation}
\label{eq: Recollisions in chain}
\int \indc _{\cP_2 (a,p,\sigma)  } \displaystyle  \prod_{m\in \sigma} \,
\big | \big(v_{m}-v_{a(m)} ( t_{{m}} ) )\cdot \nu_{{m}}  \big | d  t_{{m}} d \nu_{{m}}dv_{{m}}   
\leq C(Rt)^r s^2 \eps \,,
\end{equation}
for some constant $r \geq 1$,
\item  and any pseudo-trajectory starting from $z_1$ at $t$, with total energy bounded by $R^2$ and such that the first two recollisions  occur in chain as in Figure~{\rm\ref{murphyfigagain}} is parametrized by 
$$( t_n, \nu_n, v_n)_{2\leq n\leq s }\in  \bigcup _{p_2 < p\leq p_3} \bigcup _\sigma  \cP_2(a, p,\sigma)\,.$$
\end{itemize}
\end{Lem}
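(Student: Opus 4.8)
The setting is $t_{1^*}=t_{\tilde 1}$ with the two recollisions occurring in chain: there are no intermediate collisions between the creation time $t_{1^*}$ of the common first parent and the two recollision times $t_{rec}<\tilde t_{rec}$ (say), and the trajectory of one of the recolliding particles in the second pair is affected by the first recollision. Following the pattern of Proposition~\ref{recoll1-prop} and Lemma~\ref{doublerecollision}, the plan is to identify a small number of degrees of freedom (the parents $1^*$ and at most two more, $2^*$ and possibly $3^*$ or the first parent of the other recolliding pair) whose collision parameters $(t_m,\nu_m,v_m)_{m\in\sigma}$ are constrained to a set of measure $O((Rt)^r s^2\eps)$ once the chain-recollision scenario is imposed, and to re-index the resulting bad sets by the parents rather than the recolliding particles (which costs the factor $s^2$ in~\eqref{eq: Recollisions in chain}). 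As always we first dispose, via~\eqref{eq: small distance cut off}, of the degenerate case where $|\tau_1|\,|\bar v_i - v_j|$ is smaller than some fixed power of $R$, so we may assume the first recollision is ``genuinely transversal''.

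\textbf{Key steps.} First I would write the condition for the first recollision $(i,j)$ exactly as in~\eqref{rec-equation'}--\eqref{eq: tau1'}, fixing the integers $q,\tilde q\in\Z^2$ (multiplying the final bound by $R^2t^2$ to account for the $O(R^2t^2)$ admissible values). This localizes $v_i - v_j$ in a rectangle $\mathcal R_1$ with main axis $x_{i,j}(t_{1^*})$ and short side $\sim R\eps/|x_{i,j}(t_{1^*})|$, and integrating this constraint over the collision parameters of $1^*$ produces, via Lemma~\ref{scattering-lem 2}, the familiar factor $\min(1,\eps|\log\eps|^2/|\bar v_i-v_j|)$ together with a $|\log\eps|$ coming from the scattering operator. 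Second, I would write the condition for the second recollision: since $k$ or $\ell$ inherits the post-collisional velocity $v_i'$ or $v_j'$ produced at $t_{1^*}=t_{\tilde 1}$, the recollision equation for $(k,\ell)$ is of the form~\eqref{secondrecoll} but with $v_i$ replaced by the scattered velocity, so it becomes an equation in which the unknown $v_{1^*}$ (through $\nu_{1^*}$ and $v_{1^*}$) appears both in the rectangle constraint $v_i-v_j\in\mathcal R_1$ and in a second transversality constraint. This is precisely the situation analyzed in Lemma~\ref{v-constraint}: the second recollision forces the outgoing velocity of the first one to lie near the velocity of the third particle, hence a constraint of the type~\eqref{eq: contrainte proximite}. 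So the cleanest route is to \emph{reduce to Lemma~\ref{v-constraint}}: the chain structure guarantees that the second recollision takes place among the four particles $\{i,j\}$ and the obstacle/partner in the second pair, that its parent is $1^*$, and that one of the two outgoing velocities at $t_{1^*}$ is pinned to within $O(\eps^{3/4})$ of a frozen velocity, which is exactly the hypothesis of that lemma. The bad sets $\cP_2(a,p,\sigma)$ produced there have $|\sigma|\le 4$, slightly more than the $|\sigma|\le 3$ claimed here; I would trim the count by observing that in the chain case one of the two recolliding pairs shares \emph{both} its first parent and the relevant second degree of freedom with the other, so no more than three independent parents ($1^*$, $2^*$, and at most one of $3^*$/$\tilde 2$) are ever needed — the ``extra'' fourth integration in Lemma~\ref{v-constraint} was only used to kill singularities $|\bar v_i-v_j|^{-\theta}$ with $\theta<2$, and with $t_{1^*}=t_{\tilde 1}$ the degeneracy $|\tau_1|\to 0$ can instead be absorbed into the change of variable $t_{1^*}\mapsto\tau_1$ that already yields the factor $\eps$, so only one auxiliary parent (not two) is required to handle the residual singularity via~\eqref{carleman2}. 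Third, for the sub-case where the common first parent collision at $t_{1^*}$ has no (or tiny) scattering — so that $v_i'$ is essentially $v_{1^*}$ or $\bar v_i$ — the ``kind-of-recollision'' argument from Case~$k=1^*$, $(ii\,a)$ of Lemma~\ref{v-constraint} applies verbatim: small relative velocity $|v_{1^*}-\bar v_i|\le\eps^{5/8}$ costs $O(R\eps^{5/4}t)$ directly, and otherwise $\nu_{1^*}$ is pinned to an $O(\eps^{1/8})$ sector, reducing $v_i-v_j$ to a much thinner tube and giving $O(\eps^{9/8})$ after the standard two integrations. In every branch the exponent of $\eps$ is $>1$, so the $|\log\eps|$ losses are harmless and one ends with a bound $C(Rt)^r s^2\eps$.

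\textbf{Main obstacle.} The delicate point is the branch in which the collision at $t_{1^*}$ \emph{does} scatter transversally but the scattered velocity must still be forced close to the third particle's velocity: there the direction $(v_i-v_k)/|v_i-v_k|$ appearing in the recollision equation can vary very fast when $|v_{1^*}-\bar v_i|$ is small, and one must run the Carleman-parametrization fixed-point argument of Lemma~\ref{v-constraint} (the analysis of~\eqref{eq: master equation B1} via the Lipschitz map~\eqref{eq: mapping theta}, the split $\mu\gtrless|w|^{-\gamma/4}$, and the rotation-by-$\theta$ trick~\eqref{eq: rotation dilatation}) to show that the solution velocity lies in a tube of width $O(\eps^{1/8}R/(|\bar v_i-v_j|\,|\tau_1|))$. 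Verifying that the chain constraint $t_{1^*}=t_{\tilde 1}$ genuinely supplies this constraint — i.e. that the second recollision pins the \emph{outgoing} velocity rather than merely re-imposing the first rectangle — and that no case is lost, is where the bulk of the work lies; everything else is the now-routine bookkeeping of integrating the remaining collision operators with Proposition~\ref{estimatelemmacontinuity} and summing over $q,\tilde q$, over the choices of $a(m)$ for $m\in\sigma$, and over the identity of the non-distinguished recolliding particles (the source of the $s^2$).
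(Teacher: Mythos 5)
There is a genuine gap at the heart of your plan: the reduction to Lemma~\ref{v-constraint} rests on the claim that the chain structure forces one of the outgoing velocities at $t_{1^*}$ to lie within $O(\eps^{3/4})$ of a frozen velocity, i.e. that the hypothesis \eqref{eq: contrainte proximite} is automatically satisfied. It is not. What the second recollision actually imposes is the alignment condition \eqref{rec2-equation}: $v'_i-v_k$ must lie in a thin rectangle whose axis is $\tilde x_{jk}(t_{rec})$ and whose width is of order $R/|\tilde x_{jk}(t_{rec})|$, a quantity that depends on the unknown parameters themselves (through $\tau_{rec}$, hence through $v_{1^*},\nu_{1^*},t_{1^*}$) and is in general nowhere near $\eps^{3/4}$. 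Lemma~\ref{v-constraint} was invoked in Section~\ref{classificationsection} only in the sub-case $t_{\tilde 1}<t_{1^*}$ with $|v_k-\bar v_\ell|\le\eps^{3/4}$, where the smallness is an \emph{assumed} splitting, not a consequence of the recollision; in the chain case $t_{\tilde 1}=t_{1^*}$ there is no intermediate parent $\tilde 1\ne 1^*$ available to absorb the alignment constraint, and this coupling between the Carleman circle of the first recollision (through $\nu_{rec}$, via the bisector identity \eqref{eq: symmetry argument}-type relation) and the rectangle of the second is precisely what the paper's proof has to handle by a new argument: a constraint on the angle between $\nu_{rec}$ and the axis $\tilde x_{jk}(t_{rec})$, a regularized $\arccos_{|w|}$ with controlled Lipschitz constant, a fixed point producing a curve $u\mapsto\hat v_i(u)$ whose regularity must be checked because $\tilde x_{jk}(t_{rec})$ itself moves with $u$, and finally a tube estimate. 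None of this is a ``verbatim'' application of \eqref{eq: master equation B1}--\eqref{eq: rotation dilatation}, because in Lemma~\ref{v-constraint} the rectangle axis is frozen, while here it is not.

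Two further branches of the paper's proof are missing from your plan. First, the degenerate regime $|\tilde x_{jk}(t_{rec})|\le|\tau_{rec}|^{3/4}$, in which the rectangle is too wide for the alignment argument: there one must forget the second recollision altogether, interpret the closeness of $j$ and $k$ at $t_{rec}$ as a ``kind of recollision'' involving the first parent $\tilde 1$ of $(j,k)$, and split on $|\tau_1|\gtrless|\bar v_i-v_j|^{-6}$; your $|\sigma|\le 3$ trimming argument does not engage with this case, which is exactly where the extra parent enters. Second, the case $k=1^*$ (the obstacle of the second recollision is the common parent itself) is not the same as ``the collision at $t_{1^*}$ has no scattering'': it needs its own splitting on $|\bar v_i-v_{1^*}|\gtrless R|\tau_{rec}|^{-3/4}$, pinning the recollision direction $\nu_{rec}$ to an $O(|\tau_{rec}|^{-1/4})$ sector and only then reducing to an equation of the type \eqref{eqansatz}. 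The small-relative-velocity sub-case $|v_i-v_j|\le|\tau_{rec}|^{-5/8}$ must also be treated separately via \eqref{preimage-sphere1} to produce an integrable power of $|\tau_1|$. So while your overall philosophy (gain an integrable function of $|\tau_1|\,|\bar v_i-v_j|$ from the second constraint, convert the $t_{1^*}$-integration into a factor $\eps$, kill velocity singularities with at most two extra parents, reindex by $\sigma$ at cost $s^2$) matches the paper, the central step as you state it would fail, and the case analysis is incomplete.
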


\begin{proof}
Recall that the condition for the first recollision states
\begin{equation}
\label{firstcondmurphy}
v_{i} - v_{j} = \frac1{\tau_{rec}}  \delta x_\perp - \frac{\tau_1}{\tau_{rec}} (\bar v_{i} - v_{j})-  \frac1{\tau_{rec}}  \nu_{rec} \, ,
\end{equation}
with $x_i, x_j$ the positions at time $t_{2^*}$
\begin{equation}
\label{changevarmurphy}
\begin{aligned}
\delta x &:= \frac1\eps (x_i-x_j-q)=  \lambda  ( \bar v_{i} - v_{j})  + \delta x_\perp
\qquad  \hbox{with} \quad \delta x_\perp \cdot (  \bar  v_i - v_{j}) =0 \, ,\\
\tau_1 & := \frac1\eps (t_{1^*} - t_{2^*} - \lambda  )  \, ,\qquad  \tau_{rec}: =\frac1\eps (t_{rec} - t_{1^*}) \, ,
\end{aligned}
\end{equation}
for some $q$ in $\Z^2$ of norm smaller than~$O(Rt)$ to take into account the periodicity.

\medskip

When $|\tau_1| |  \bar v_i-v_j| \leq R^2$,   estimate \eqref{eq: small distance cut off} is enough to obtain an upper bound of order $\eps$ without taking into account the second recollision.
Our goal here  is to prove that the constraint of having a second recollision produces an integrable 
function of~$|\tau_1| \, |  \bar v_i-v_j| \geq R^2$, hence a bound~$O(\eps)$ after integration over~$1^*$.

\medskip

From \eqref{firstcondmurphy}, we deduce as in \eqref{sizetaurec} that
\begin{equation}
\label{sizetaurec bis}
\frac1{|\tau_{rec} |} \leq  {4R\over |\tau_1| |  \bar  v_{i} - v_{j}|}
\quad \text{which implies that} \quad 
|\tau_{rec} | \geq R/4 \gg 1\, .
\end{equation}

\medskip

%As explained previously self-recollisions can be neglected here as they account for an~$O_R(\eps)$ error.

Two cases have to be considered: $k= 1^*$ and~$k\neq 1^*$.

\bigskip
    \noindent
\underbar{Case $k=1^* $.}

 The equation for the second recollision states
$$
\tau'_{rec} (v'_{i} - {v_{1^*}'})=\pm  \nu_{1^*} - \tau_{rec} ( v_i - {v_{1^*}'}) (+\nu_{rec})-\tilde \nu_{rec},
$$
where
$$
\tau'_{rec} :=\frac1\eps (\tilde  t_{rec} -  t_{rec}) \, , 
$$
and
where the $\pm$ and the translation by~$\nu_{rec}$ depend on the possible exchanges in the labels of the particles at collision times.
It can be rewritten, thanks to~(\ref{formulav"i}),
\begin{equation}
\label{rec2bis-equation}
\begin{aligned}
 \tau'_{rec} (v_j - v_i )\cdot \nu_{rec} \,  \nu_{rec} 
& =\pm  \nu_{1^*} - (\tau_{rec}+ \tau'_{rec}) ( v_i - {v'_{1^*}}) (+\nu_{rec})-\tilde \nu_{rec} \\
\hbox{ or } \qquad  \tau'_{rec} (v_j - v_i )\cdot \nu_{rec}^\perp \, \nu_{rec}^\perp 
&=\pm  \nu_{1^*} - (\tau_{rec}+ \tau'_{rec}) ( v_i - {v'_{1^*}}) (+\nu_{rec})-\tilde \nu_{rec}\,.
\end{aligned}
\end{equation}
We further know that $|v_i - {v'_{1^*}}| = | \bar v_i - v_{1^*}|$.

\medskip
 
- If $| \bar v_i - v_{1^*}| \geq  R \, |\tau_{rec}|^{-3/4}$, then 
the vector in the right-hand side of \eqref{rec2bis-equation} has a magnitude of order 
$$
|\tau_{rec}+ \tau'_{rec}| \, | v_i - {v'_{1^*}}| \geq |\tau_{rec}|  \, | v_i - {v'_{1^*}}|
\geq R \, |\tau_{rec}|^{1/4} \, .
$$ 
It follows that the vector $\nu_{rec}$ has to be aligned in the direction of $v_i - v_{1^*}'$ with a controlled error
$$
\nu_{rec} = \cR_{n\pi/2} { v_i - {v_{1^*}'}\over | \bar v_i- v_{1^*}|} 
+ O \left( {1\over| \tau_{rec}|^{1 /4}} \right)\,,
$$
for $n=0,1,2,3$, recalling that $\cR_\theta$ is the rotation of angle $\theta$.

\medskip
 
Plugging the  formula for $\nu_{rec}$ into~(\ref{firstcondmurphy})
and using  \eqref{sizetaurec bis}, we  get 
$$
v_{i} - v_{j} = \frac1{\tau_{rec} } \delta x_\perp - \frac{\tau_1}{\tau_{rec}} ( \bar v_{i} - v_{j})
-  \frac1{\tau_{rec}} \cR_{n\pi/2}{ v_i - {v_{1^*}'}\over | \bar v_i- v_{1^*}|}  
+ O \left(  {R^{5 /4} \over |\tau_1|^{5 /4}  \;   |\bar  v_{i} - v_{j}|^{5 /4}}  \right)\,.
$$
This equation has the same structure as \eqref{eqansatz}. Thus using 
the same arguments as in the proof of Lemma \ref{v-constraint}, we get 
\begin{itemize}
\item  a contribution of size  $O( 
 |\tau_1|^{-5 /4}  \;   |\bar  v_{i} - v_{j}|^{-5 /4}|\log| \tau_1 (\bar v_i - v_j)| )$ when the mapping  $v_i \mapsto  {v_i - v_{1^*}' \over |v_i - v_{1^*}'|}$ is Lipschitz with constant strictly less than $|w|^{\gamma}$ for some $\gamma  \in (0,1)$;
 \item the  same  integrable contribution as in Lemma \ref{v-constraint}  in degenerate cases when some velocities are close to each other (typically at a distance  $O(|w|^{-\gamma})$).
 \end{itemize}
Thus integrating with respect to $t_{1^*}$ we recover the factor $\eps$ and the singularity in $|\bar v_{i} - v_{j}|$ is removed as usual by integration over the parents of $i,j$.

\medskip
 
- If $| \bar v_i - v_{1^*}| \leq  R|\tau_{rec} |^{-3/4}$, we find that $v_{1^*}$ has to belong to a domain of 
size less than $(  |\tau_1| \,  | \bar v_i-v_j| )^{-3/2}$ as $|\tau_{rec}| \geq |\tau_1| \,  | \bar v_i-v_j|$.
Hence again, we obtain an integrable function of $ |\tau_1| \,  | \bar v_i-v_j|$, with no extra gain in~$\eps$.

\bigskip
    \noindent
\underbar{Case $k\neq1^* $.} In the following we denote by~$1^*,2^*\dots$ the parents of the set~$(i,j,k,\ell)$ at time~$t_{rec}$: contrary to previous cases, and since they both have the same first parent  we do not distinguish the parents of~$(i,j)$ and~$(k,\ell)$ but consider them as a whole.

 The position of particle $k$ at the time $\tilde  t_{rec}$ of the second recollision is given by
$$x_k (t_{rec}) = x_k +  v_k (\tilde  t_{rec} -t_{2^*}) \, .$$
%while the position of the particle $i$  (up to the exchange of $i$ and $j$ after the first recollision) is given by
%$$ x_i + v_i (t_{1^*}- t_{2^*}) (+\eps  \nu_{1^*}) + v'_i (t_{rec}-t_{1^*})  (+\eps  \nu_{rec}) + v''_i (\tilde  t_{rec} -t_{rec}) $$
We have written~$x_k$ for the position of particle~$k$ at time~$t_{2^*}$. We end up with the condition for the second recollision
\begin{equation}
\label{eq: b22}
(\tilde  t_{rec} -  t_{rec}) (v'_{i} - v_{k}) =  (x_j-x_k )(t_{rec})   -\eps\tilde\nu_{rec}(+\eps\nu_{rec})+ \tilde q \, ,
\end{equation}
for some~$\tilde q \in \Z^2$ not larger than~$O(R t)$, and where the translation $\eps \nu_{rec}$ arises only if the labels of particles are exchanged at  $t_{rec}$. 
In the following, we fix~$q$ and~$\tilde q $ and will multiply the final estimate by~$(R^2 t^2)^2$ to take into account the periodicity in both recollisions.
Using the notation \eqref{changevarmurphy},
%As in the proof of Proposition~\ref{recoll1-prop}, 
we then rescale in~$\eps$ and write 
$$
\tau_{rec}:=\frac{t_{rec} -  t_{1^*}}\eps \, , 
\quad 
 \tau'_{rec}:=\frac{ \tilde  t_{rec} -  t_{rec}}\eps \, \cdotp
$$
Then  Equation \eqref{eq: b22} for the second recollision becomes
\begin{equation}
\label{rec2-equation}
 \tau'_{rec}(v'_i - v_k) 
=  \tilde x_{jk} (t_{rec}) - \tilde\nu_{rec} ( + \nu_{rec}) 
 \, ,
 \end{equation}
where $\eps \tilde  x_{jk} (t_{rec})$ stands for the relative position between $j,k$ at time $t_{rec}$.

As  in the proof of Lemma \ref{rec-eq}, the equation \eqref{rec2-equation} implies that $v'_i - v_k$  belongs to a rectangle $\cR$ of size  $2R \times \frac{2R}{|\tilde x_{jk} (t_{rec})|}$
and axis $ \tilde x_{jk} (t_{rec})$.
%$2R \times \frac{2R}{|(v_j-v_k)( \tau_1^*  + \tau_{rec})|}$ 
Furthermore $v'_i$  belongs as well to the circle of diameter $[v_j, v_i]$ by definition.
Computing the intersection of the rectangle and of the circle, we obtain a constraint on the angle $\nu_{rec}$. Then plugging this constraint in the equation for the first recollision, we will conclude as in Lemma B.1 that $v_i$ has to belong to a very small set.

\begin{figure} [h] %  figure placement: here, top, bottom, or page
   \centering
  \includegraphics[width=6cm]{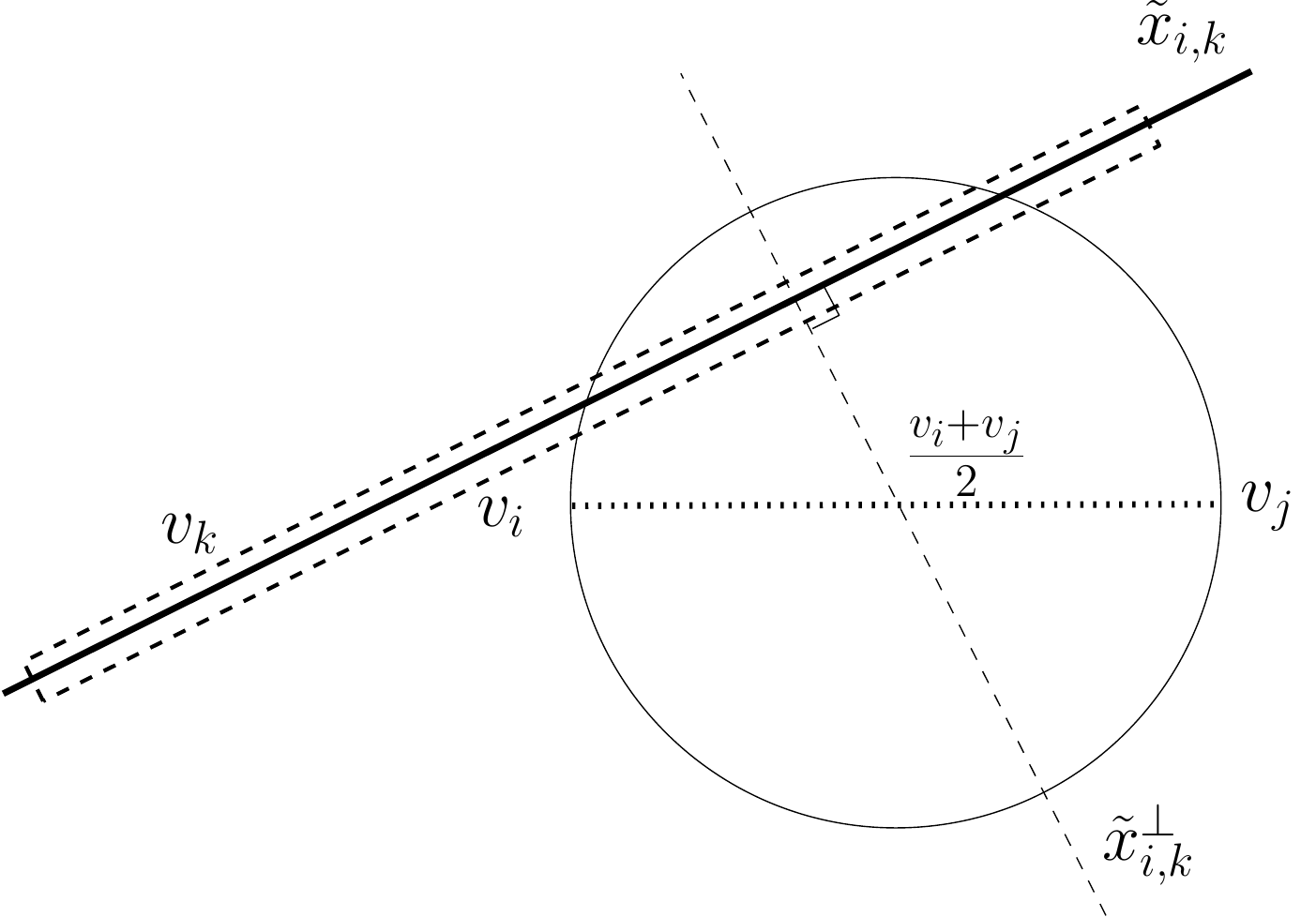} 
  \caption{The velocity $v_i'$ belongs to the rectangle of axis $\tilde  x_{jk} (t_{rec})$ as well as
  to the circle of diameter $[v_j, v_i]$.}
\label{fig: LemmeB3_tangence}
\end{figure}

This strategy can be applied in most situations. We have however to deal separately with the  two following geometries~:
%- if the axis of the rectangle is tangent to the circle, the intersection will be singular~: but $\nu_{rec}$ is then almost orthogonal to the axis $\tilde  x_{jk} (t_{rec})$, which provides some additional constraint;
\begin{itemize}
\item[-]
   if the relative velocity 
$v_i- v_j $ is small, the  rectangle can contain a macroscopic part of the circle~: we forget about the second recollision and just study the constraint of  small relative velocities;
\item[-]
 if the distance $| \tilde  x_{jk} (t_{rec}) |$ is small, then $i$ will be close to $k$ at the first recollision time and this will facilitate the second recollision~: we then forget about the second recollision and write  two independent constraints at the first recollision time.
\end{itemize}

\medskip

\noindent
$\bullet $ Suppose  that 
\begin{equation}
\label{bound-case1}
| v_i- v_j | \geq {1 \over |\tau_{rec}|^{5 /8}}
\quad \text{and} \quad 
| \tilde  x_{jk} (t_{rec}) |  \geq| \tau_{rec} |^{ 3 / 4} .
\end{equation}

%From \eqref{sizetaurec bis} we know that $|\tau_{rec}| \gg 1$, so that 
%combining the first two inequalities of the assumption implies
%$$
%| v_i- v_j | \geq  \frac{1}{| \tilde  x_{jk} (t_{rec}) |^{5/6}}
% \gg  \frac{1}{| \tilde  x_{jk} (t_{rec}) |}.
%$$

From (\ref{rec2-equation}), we deduce that a necessary condition for the second recollision to hold is that
$$  
(v_i' - v_k) \cdot {\tilde  x_{jk}^\perp  (t_{rec})\over
| \tilde  x_{jk} (t_{rec})|}  
=
{(v_i - \big(  (v_i-v_j) \cdot \nu_{rec} \big) \, \nu_{rec} - v_k ) \cdot \tilde  x_{jk}^\perp  (t_{rec})\over
| \tilde  x_{jk} (t_{rec})|}  
= O\Big( {1\over |\tau'_{rec}|}\Big) ,
$$
where $|\tau'_{rec}|$ can be bounded from below thanks to \eqref{bound-case1}
$$
|\tau'_{rec}| \geq {| \tilde  x_{jk} (t_{rec})|\over 4R} \geq {|\tau_{rec}|^{3/4} \over 4R}\,.
$$
Using the bound from below on the relative velocity $|v_i - v_j|$, we finally get
$$ \Big( {v_i-v_j\over |v_i - v_j|}  \cdot \nu_{rec} \Big) \Big( {\tilde  x_{jk}^\perp  (t_{rec})\over |\tilde  x_{jk} (t_{rec})|}  \cdot \nu_{rec} \Big) ={(v_i-v_k) \cdot\tilde  x_{jk}^\perp  (t_{rec}) \over |v_i-v_j| |\tilde  x_{jk}^\perp  (t_{rec})|} +  O\Big( {1\over |\tau_{rec}|^{1/8} }\Big)\,.$$
Define the angles  $\theta= < \tilde  x_{jk}^\perp  (t_{rec}),  \nu_{rec} > $ and  $ \alpha =< \tilde  x_{jk}^\perp  (t_{rec}), v_i-v_j>$. We have
$$ 
\cos\theta \cos(\theta - \alpha) = \frac12 \Big( \cos(2\theta - \alpha) +\cos\alpha\Big) = {(v_i-v_k) \cdot\tilde  x_{jk}^\perp  (t_{rec}) \over |v_i-v_j| |\tilde  x_{jk}^\perp  (t_{rec})|}+  O\Big( {1\over |\tau_{rec}|^{1/8} }\Big),
$$
so that 
$$ 
\cos(2\theta - \alpha)  = {(v_i + v_j - 2v_k) \cdot\tilde  x_{jk}^\perp  (t_{rec}) \over |v_i-v_j| |\tilde  x_{jk}^\perp  (t_{rec})|}+  O\Big( {1\over |\tau_{rec}|^{1/8} }\Big).
$$
Recall the notation of the proof of Lemma \ref{v-constraint}
$$
w:= \delta x^\perp_{i,j}  -  (\bar v_{i} - v_{j}) \tau_1, \quad \text{ and} \quad u:={|w| \over \tau_{rec}} 
\leq 4R,
$$  
where $\eps w$ is the distance between $x_i,x_j$ at time $t_{1^*}$ and it is enough to consider $|w| \geq R^2$ thanks to \eqref{eq: small distance cut off}.
As the derivative of $\arccos$ is singular at $\pm1$, we will consider an approximation $\arccos_{|w|}$ which coincides with $\arccos$ on $[-1 + \frac{1}{|w|^{2 \delta}}  ,1- \frac{1}{|w|^{2 \delta}} ]$ (for a given $\delta \in (0,\frac{1}{16})$) and is constant in the rest of $[-1,1]$ so that 
$$
\big| \partial_x \arccos_{|w|} (x) \big| \leq  |w|^\delta
\quad \text{and} \quad 
\|  \arccos_{|w|} - \arccos  \|_\infty \leq \frac{1}{|w|^\delta} .
$$
Thus the angle $\theta$ can be approximated by 
$$ 
\theta = \bar \theta _\pm +  O\Big( \left( \frac{u}{|w|} \right)^{1/8}|w|^\delta
 +  \frac{1}{|w|^\delta} \Big),
$$
with
\begin{equation}
\label{bartheta-def}
\bar \theta _\pm = \pm \frac12 \arccos_{|w|} \left( {(v_i + v_j - 2v_k) \cdot\tilde  x_{jk}^\perp  (t_{rec}) \over |v_i-v_j| |\tilde  x_{jk}^\perp  (t_{rec})|}\right) 
+ \frac12 < \tilde  x_{jk}^\perp  (t_{rec}), v_i-v_j> .
\end{equation}
Plugging this constraint in the equation for the first recollision, we get
\begin{equation}
\label{rec1-lemB3}
v_{i} -v_j = u {w\over |w|} 
- {u\over |w|} \cR_{\bar \theta _\pm} {\tilde  x_{jk}^\perp  (t_{rec}) \over |\tilde  x_{jk} (t_{rec})|} 
+ O \Big(  u^\delta \left( \frac{u}{|w|} \right)^{9/8-\delta} 
+   \frac{u}{|w|} \, \frac{1}{|w|^\delta} \Big) .
\end{equation}
As $|w| \gg 1$, the leading term of this equation is $v_{i} -v_j \simeq u {w\over |w|}$, but we have to analyse carefully the corrections.
Compared with the formulas of the same type encountered in the proof of Lemma \ref{v-constraint}, this one has the additional difficulty that the dependence with respect to $u$ is very intricate.
Instead of solving \eqref{rec1-lemB3}, we are going to look at sufficient conditions satisfied by the solutions of \eqref{rec1-lemB3}. 
In particular, $u$ will be considered as a parameter independent of $|\tau_{rec}|$.
For a given $u$, we are going to solve the equation
\begin{equation}
\label{barv-def}
v_{i} -v_j = u {w\over |w|} 
- {u\over |w|} \cR_{\bar \theta _\pm} {\tilde  x_{jk}^\perp  (t_{rec}) \over |\tilde  x_{jk} (t_{rec})|} 
\quad \text{with} \quad 
|v_{i} -v_j| \geq  \frac{1}{2}  \left( {u \over |w|} \right)^{5/8},
\end{equation}
where $\tilde  x_{jk} (t_{rec})$ was originally defined in  \eqref{rec2-equation} as the relative position between $j,k$ at time $t_{rec}$, but is now simply a function of $u$
$$
\tilde  x_{jk} (t_{rec}) = x_{jk} (t_{1^*} ) - {|w| \over u} (v_j-v_k).
$$
The solutions of \eqref{rec1-lemB3} are such that $u \simeq |v_{i} -v_j|$, 
thus from the condition \eqref{barv-def} on the relative velocities,
 it is enough to restrict the range  of the 
parameter $u$ to  $ u \geq  \frac{1}{4}  \left( {u \over |w|} \right)^{5/8}$, i.e. $u \in [\frac{1}{4 |w|^{5/3}}, 4R]$.
We will first show that for any such~$u$, there is a unique solution $\hat v_i (u)$ of \eqref{barv-def}. 
The solution of \eqref{rec1-lemB3} will be located close to the curve $u \to \hat v_i(u)$, thus 
we will then need to control the regularity of the curve $u \to \hat v_i(u)$ to estimate the size of the tubular neighborhood around this curve.

\medskip

- For fixed $u$, note  that $\tilde  x_{jk}^\perp  (t_{rec})$ is also fixed.  The only dependence with respect to $v_i$ in the right-hand side of (\ref{barv-def}) is via $\bar \theta_\pm$~:
\begin{align}
d\bar\theta_\pm  \leq &\frac12  |w|^\delta \, { |v_i-v_j|^2  \tilde  x_{jk}^\perp  (t_{rec}) \cdot dv_i  - ((v_i+v_j - 2v_k) \cdot  \tilde  x_{jk}^\perp  (t_{rec})) \;(v_i-v_j)\cdot dv_i \over |v_i-v_j|^3 | \tilde  x_{jk} (t_{rec})|} 
\nonumber \\
&- \frac12 d< \tilde  x_{jk}^\perp  (t_{rec}), v_i-v_j>\,.
\label{eq: 2nd partie derivee}
\end{align}
where we used the Lipschitz bound satisfied by $\arccos_{|w|}$. 
Note that second term in \eqref{eq: 2nd partie derivee} controls the variation of the angle 
$< \tilde  x_{jk}^\perp  (t_{rec}), v_i-v_j>$ and has Lipschitz constant less than $\frac{1}{|v_i-v_j|}$.
Together with the bounds (\ref{bound-case1}), this implies that $ v_i \mapsto \bar \theta_\pm(v_i,u) $ is Lipschitz continuous with constant 
$$
C |w|^{ \delta}  \; \max \frac{1}{|v_i-v_j|} \leq C\left( {|w| \over u} \right)^{5/8 + \delta} u^\delta \ll {|w| \over u} .
$$
We therefore conclude by Picard's fixed point  theorem that there is a unique solution $\hat v_i = \hat v_i (u)$.
As $\delta < 1/16$, we  further have that any solution to (\ref{rec1-lemB3}) satisfies
$$ 
v_i = \hat v_i (u) + O \Big(  u^\delta \left( \frac{u}{|w|} \right)^{9/8-\delta} 
+   \frac{u}{|w|} \, \frac{1}{|w|^\delta} \Big),
$$
for  $u \in [\frac{1}{4 |w|^{5/3}}, 4R]$.

\medskip

- Let us now study the regularity of $ u \mapsto \hat v_i (u)$. In (\ref{barv-def}), we have both an explicit dependence with respect to $u$ and a dependence via the direction of 
$\tilde  x_{jk} (t_{rec})$.
To take into account the condition \eqref{bound-case1}, we further restrict the range of $u$ to 
\begin{equation}
\label{eq: conditions sur u}
u \in [\frac{1}{4 |w|^{5/3}}, 4R]
\quad \text{and} \quad
| \tilde  x_{jk} (t_{rec}) |  \geq \left( {|w| \over u} \right)^{ 3 / 4} .
\end{equation}
The derivative of ${ \tilde  x_{jk} (t_{rec}) \over | \tilde  x_{jk} (t_{rec})|}$ 
with respect to $u$ is controlled by
\begin{equation}
\label{eq: angle xik}
{|w| |v_j-v_k| \over  u^2| \tilde  x_{jk} (t_{rec})|} \leq C(R) \left( {|w| \over u} \right)^{1+5/8-3/4}
= C(R)\left( {|w| \over u} \right)^{7/8}
\end{equation}
as $u \geq {1 \over 2} \left( u  \over {|w|} \right)^{5/8}$ thanks to \eqref{eq: conditions sur u}.
Thus the Lipschitz constant of $ u\mapsto \bar\theta_\pm (v_i,u)$ is less than 
$\big( |w| / u \big)^{7/8} \, |w|^\delta$.
Gathering both estimates, we finally get by differentiating (\ref{barv-def}) with respect to $v_i$ and $u$ that 
$u\mapsto \hat v_i(u)$ is Lipschitz continuous with constant $1+ C \big( |w| / u \big)^{-1/8 + \delta} u^\delta$, which is bounded as  $\delta<1/16$.
The solutions of \eqref{rec1-lemB3} are at a distance at most ${R \over  |w|^{1+\delta} }$ from the curve $u\mapsto \hat v_i(u)$.
Thus under the condition \eqref{bound-case1}, any recollision in chain will belong to the tubular  
neighborhood of $u\mapsto \hat v_i(u)$.

In order to estimate the measure that $v_i$ belongs to this tubular neighborhood, we proceed as in 
\eqref{rectangle0} and  cover this tube by $O( |w|^{1+\delta} )$ balls of size ${R \over  |w|^{1+\delta} }$.
Integrating with respect to $dv_{1^*} d\nu_{1^*}$, we get an estimate 
$O \left( \frac{|\log|w||}{|w|^{1+\delta} } \right)$.
By construction $|w| \geq |\tau_1| |\bar v_i - v_j|$ so that the remainder
 can be integrated with respect to $\tau_1$. Changing to the variable $t_{1^*}$, we obtain an upper bound of order $\eps$.
 We then kill the singularity $\frac{\big| \log |\bar v_i - v_j| \big| }{ |\bar v_i - v_j|^{1+\delta}}$ at small relative velocities by integrating with respect to two additional parents, applying \eqref{alpha1} and then \eqref{alpha2}.

\bigskip

\noindent
$\bullet $ Suppose  that $| v_i- v_j | \leq |\tau_{rec}|^{-5 /8} $. 
We obtain by~(\ref{preimage-sphere1}) that 
\begin{align}
& \int  \indc_{ \{ | v_i- v_j | \leq| \tau_{rec}|^{-5 /8} \}} |(v_{1^*} - v_i)\cdot \nu_{1^*}| d\nu_{1^*} dv_{1^*} \\
& \qquad  \leq  \int  \indc_{\{ | v_i- v_j | \leq \frac{1}{|\tau_1|^{5 /8}   \, |\bar v_i - v_j|^{5 /8}} \}} |(v_{1^*} - v_i)\cdot \nu_{1^*}| d\nu_{1^*} dv_{1^*}  \nonumber \\
& \qquad  \leq \frac{C R^2}{|\tau_1|^{5 /8}   \, |\bar v_i - v_j|^{5 /8} } 
\min \left(  \frac{1}{|\tau_1|^{5 /8}   \, |\bar v_i - v_j|^{13 /8}}, 1\right) 
  \leq   \frac{C R^2}{|\tau_1|^{9 /8}   \, |\bar v_i - v_j|^{77 /40}} , \nonumber
%& \leq C \frac{   \log \,  |\tau_1|   \, |\bar v_i - v_j|   }{|\tau_1|^{5 /4}   \, |\bar v_i - v_j|^{5 /4} } \,,
\end{align}
where in the last inequality, we used that $\min (\delta, 1) \leq \delta^{4/5}$.
This produces an integrable function of~$|\tau_1|$ and leads to an upper bound of order $\eps$.
The singularity in $|\bar v_i-v_j|$ can be integrated out by applying \eqref{alpha1} and then \eqref{alpha2}
on the parents of $i,j$.

\bigskip

\noindent
$\bullet$  Suppose  that  $| \tilde x_{j,k} (t_{rec}) | \leq| \tau_{rec}| ^{ 3 / 4} $, this condition can be interpreted as a ``kind of recollision" between $j$ and $k$ at time $t_{rec}$. 
Note that this situation is similar to the last case studied in Lemma \ref{v-constraint}, where the size of the error depends on $|\tau_1| |\bar v_i - v_j|$.

\begin{figure} [h] %  figure placement: here, top, bottom, or page
   \centering
  \includegraphics[width=6cm]{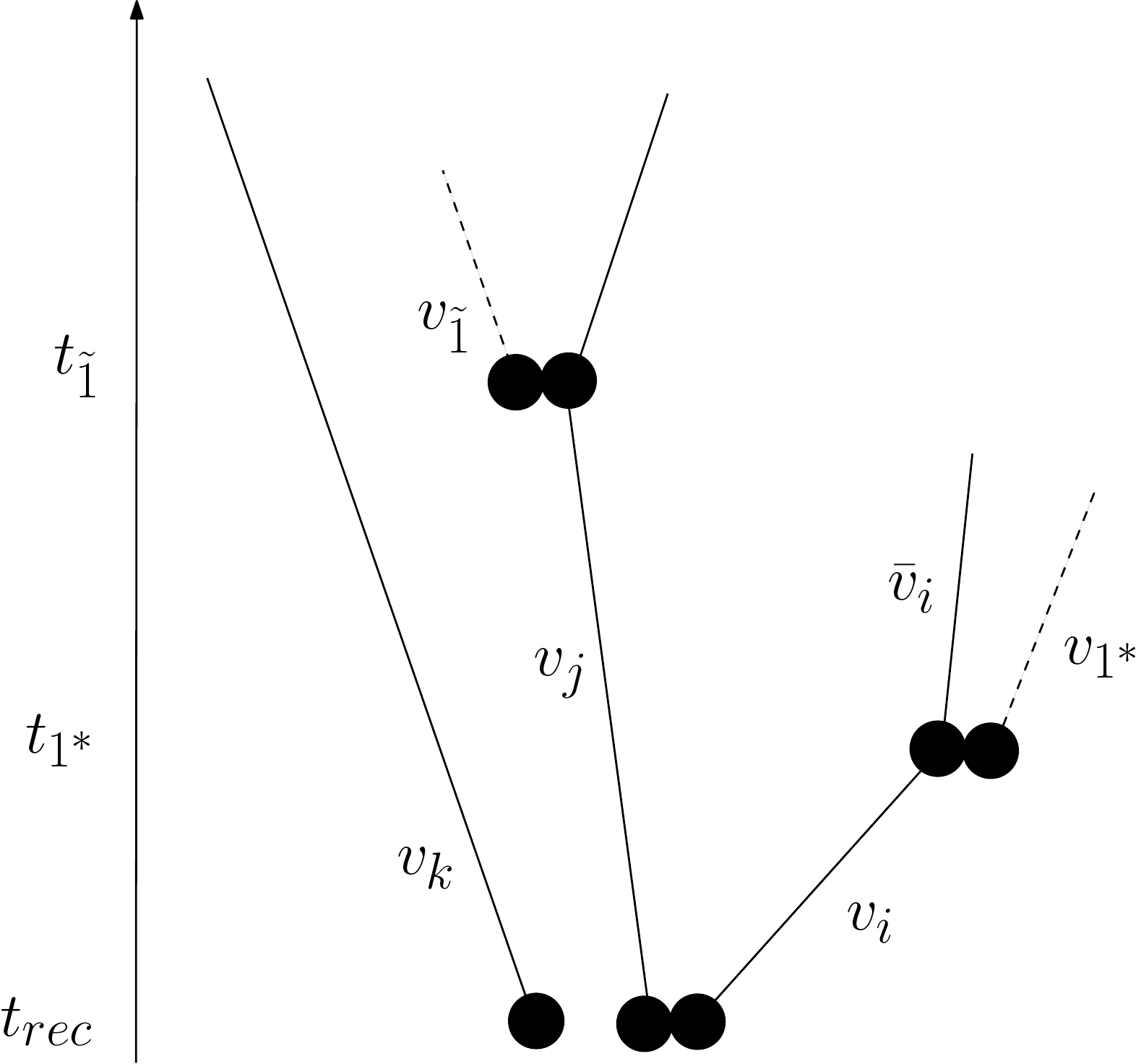} 
  \caption{In the case $| \tilde x_{j,k} (t_{rec}) | \leq| \tau_{rec}| ^{ 3 / 4} $, we will forget about the recollision between $i,k$ and use instead  that $j$ and $k$ are close at time $t_{rec}$.}
\label{fig: LemmeB3_cas3}
\end{figure}

The first recollision between $i,j$ imposes that $ v_i - v_j$ belongs to a rectangle $\cR$.
Integrating  first the condition for the recollision between $(i,j)$ with respect to 
$b(\nu_{1 ^*},v_{1 ^*}) \,  dv _{1^*} d\nu_{1 ^*}$, we  gain  a factor $(\tau_1 |\bar v_i - v_j|)^{-1} $. 
We will not use the recollision between $i,k$ and focus on the additional constraint that the distance between $j,k$ is less than $\eps |\tau_{rec}| ^{ 3 / 4}$ at  time $t_{rec}$.

Denote by $\tilde 1$ the first parent of $(j,k)$.  By analogy with equation \eqref{rec-equation'}, 
the constraint $| \tilde x_{j,k} (t_{rec}) | \leq| \tau_{rec}| ^{ 3 / 4} $ reads
\begin{equation*}
(x_j - x_k) (t_{\tilde 1} )+ ( t_{rec} - t_{\tilde 1}) (  v_j - v_k) 
= \eps \eta + q,
%\in B( \eps ( |\tau_1| |\bar v_i - v_j|)^{3/4} ) 
%={1\over  |\tau_1| |\bar v_i - v_j|} \tilde \nu_{rec} +q\,,
\end{equation*}
with $|\eta| \leq  |\tau_{rec}|^{ 3 / 4}$ and  a given $q \in \Z^2$ with modulus less than $Rt$.
With the notation $\tilde \tau_{rec} = \frac{t_{rec} - t_{\tilde 1}}{\eps}$, this can be rewritten
\begin{equation}
\label{fausserecoll2}
v_j - v_k = \frac{(x_j - x_k) (t_{\tilde 1}  + q)}{ \eps \tilde \tau_{rec}} 
+  \frac{ \eta}{\tilde \tau_{rec}} .
\end{equation}
Since $\tilde \tau_{rec} \geq \tau_{rec}$, we get
$$
\frac{ \eta}{|\tilde \tau_{rec}|} \leq \frac{ 1}{ |\tau_{rec}|^{1/4}} \leq \frac{1}{( |\tau_1| |\bar v_i - v_j|)^{1/4}} ,
$$
so that  $v_j - v_k$ has to belong to a rectangle $\tilde \cR$ of width less than  $\frac{1}{( |\tau_1| |\bar v_i - v_j|)^{1/4}}$.

As in the last case of Lemma \ref{v-constraint}, we split the proof according to the size of $|\tau_1|$.

\noindent
- If $|\tau_1| \geq  {1 \over |\bar v_i - v_j|^6}$, we deduce  that 
$$\frac{1}{|\tau_1| |\bar v_i - v_j|}   \leq \frac{1}{|\tau_1|^{5/6}}.$$ 
Then, we compute the cost of satisfying the previous constraints
\begin{align*}
\int  \indc_{\{ v_i - v_j \in  \cR \}}  \indc_{\{ v_j -v_k \in \tilde \cR \}} 
\prod_{\ell =1^*,\tilde 1} b(\nu_\ell,v_\ell)  \,  dv_\ell d\nu_\ell  d t_\ell 
& \leq \int {\indc_{\{ v_j -v_k \in \tilde \cR \}} \over |\tau_1| |\bar v_i - v_j| }  
 b(\nu_{\tilde 1},v_{\tilde 1 })  \, dt_{1^*} \,   dv_{\tilde 1} d\nu_{\tilde 1} d t_{\tilde 1} \\
 &   \leq \eps \int {\indc_{\{ v_j -v_k \in \tilde \cR \}} \over |\tau_1| |\bar v_i - v_j| }  
 b(\nu_{\tilde 1},v_{\tilde 1 })  \, d \tau_1 \,   dv_{\tilde 1} d\nu_{\tilde 1} d t_{\tilde 1}.
  \end{align*}
As in the case of \eqref{eq: intricated}, the change of variable from $t_{1^*}$ to $\tau_1$ 
leads to a factor $\eps$ and decouples the dependence between the variable 
$t_{1^*}$ and $v_{\tilde 1}$ by  keeping only the constraint $ |\tau_1| \geq R$.
We can then complete the upper bound as usual
\begin{align*}
\int  \indc_{\{ v_i - v_j \in  \cR \}}  \indc_{\{ v_j -v_k \in \tilde \cR \}} 
\prod_{\ell =1^*,\tilde 1} b(\nu_\ell,v_\ell)  \,  dv_\ell d\nu_\ell  d t_\ell 
 &  \leq \eps \int {\indc_{\{ v_j -v_k \in \tilde \cR \}}  \over |\tau_1|^{5/6} } 
  b(\nu_{\tilde 1},v_{\tilde 1 })  \, d \tau_1 dv_{\tilde 1} d\nu_{\tilde 1} d t_{\tilde 1} \\
  &  \leq \eps C(R)  \int  {\log |\tau_1| \over |\tau_1|^{25/24}  } d \tau_1, 
  \end{align*}
where the singularity is  integrable in $|\tau_1| \in[ R, + \infty]$.

\medskip

\noindent
- If $|\tau_1| \leq  {1 \over |\bar v_i - v_j|^6}$, we forget about (\ref{fausserecoll2}). We indeed have that 
$$
\int {1\over |\tau_1| \, |\bar v_i - v_j| }  d\tau_1 \leq {1\over  |\bar v_i - v_j| }  \int \frac{1}{|\tau_1|} d\tau_1   \leq {C |\log  |\bar v_i - v_j||\over  |\bar v_i - v_j| }\,.
$$
The singularity at small relative velocities is controlled with two additional integration.

\bigskip

Given a set $\sigma$ of parents, it may only determine the particle $i$, so that an extra factor $s^2$ has to be added in \eqref{eq: Recollisions in chain}
to take into account the choice of $j,k$. This concludes the proof of Lemma~\ref{Murphy}.
\end{proof}

%%%%%%%%%%%%%%%%%%%%
\subsection{Two   particles recollide twice in chain due to  periodicity} 

We have seen in Proposition  \ref{recoll1-prop} that a self-recollision between two particles created at the same collision has a cost $\eps$. It may happen also that two particles have a recollision and then a second self-recollision due to periodicity (see Figure~\ref{coccinelleagain}). This is a very constrained case which is treated in 
the following Lemma.

 \begin{figure} [h] %  figure placement: here, top, bottom, or page
\centering 
\includegraphics[width=6cm]{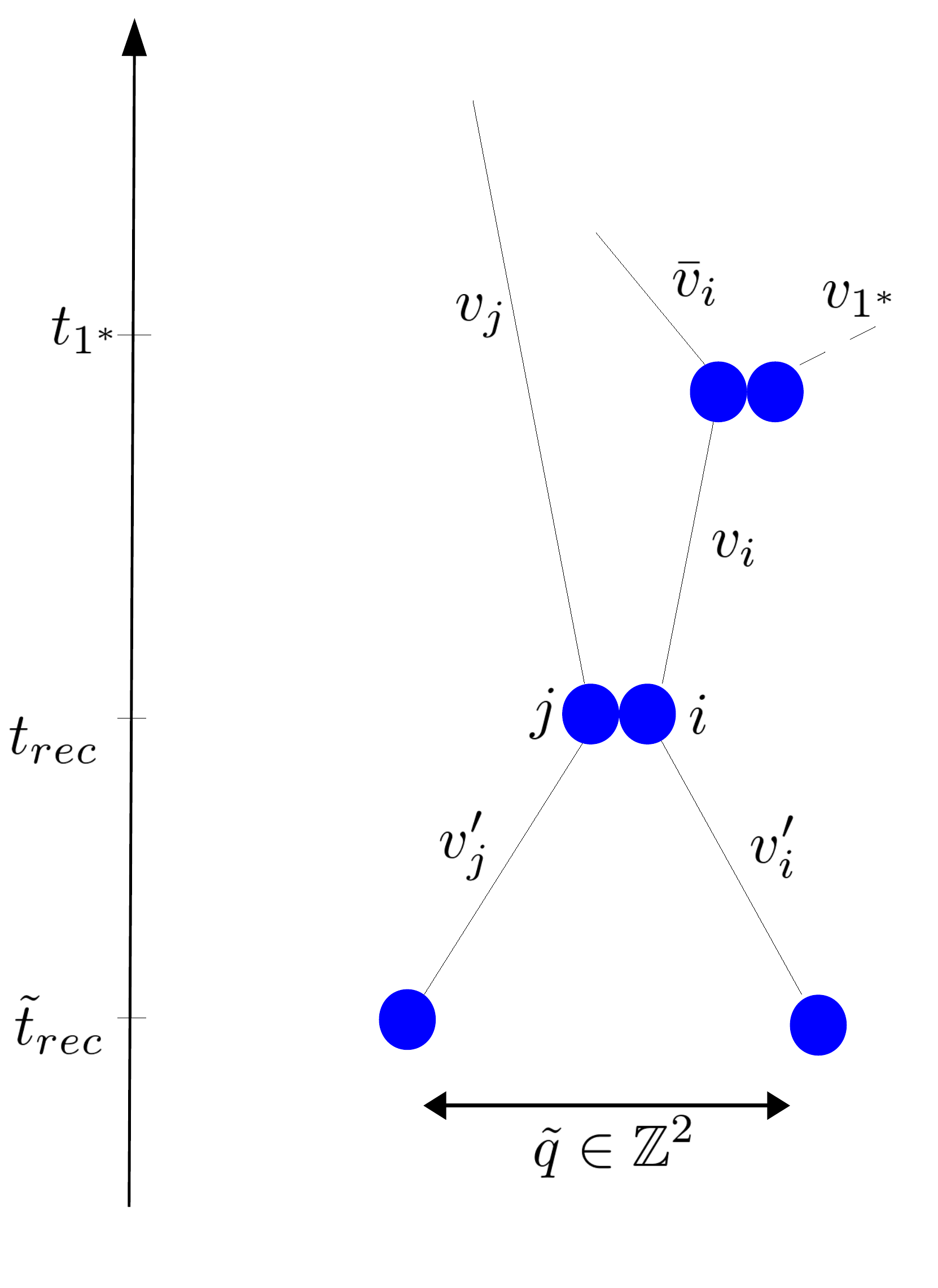} 
\hskip2cm  \includegraphics[width=6cm]{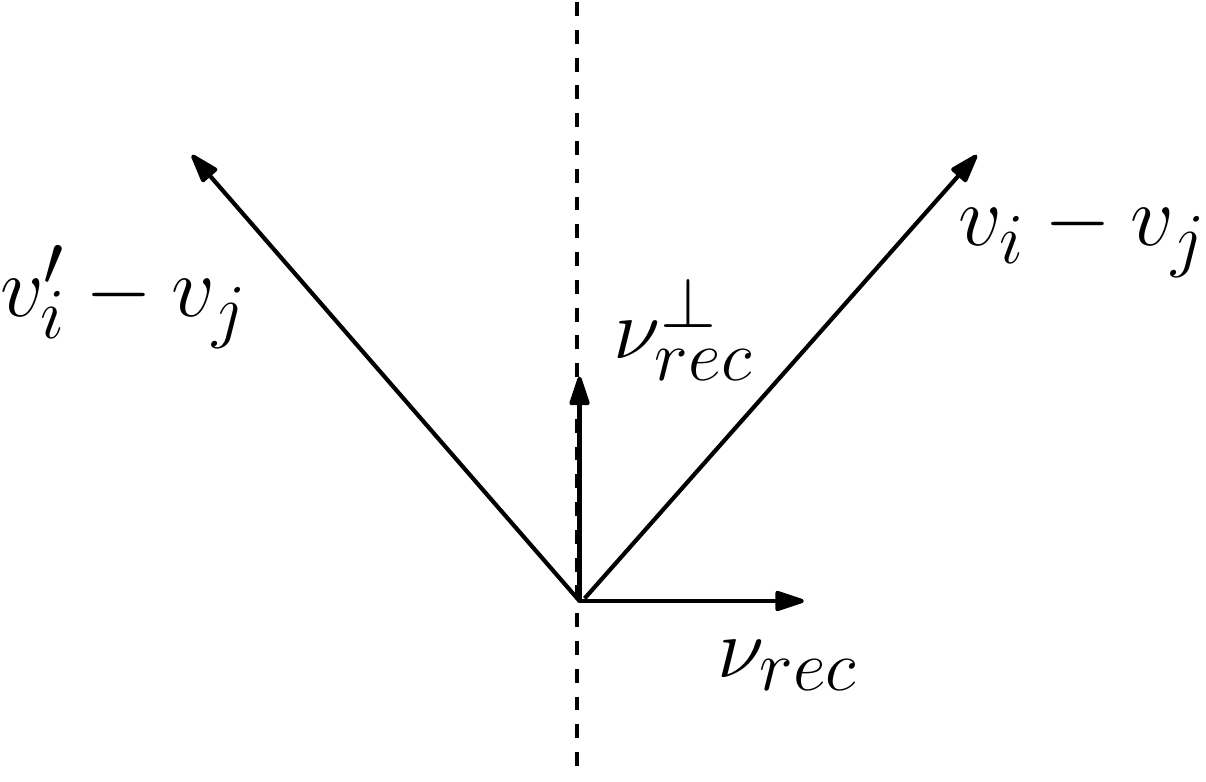}   
   \caption{On the left, two recollisions in chain due to  periodicity.
   On the right, the symmetry argument \eqref{eq: symmetry argument}. }
   \label{coccinelleagain}
\end{figure}

    \begin{Lem}
    \label{self-recoll-periodic}
    Fix a final configuration of bounded energy~$z_1 \in \T^2 \times  B_R$ with~$1 \leq R^2 \leq C_0 |\log \eps|$,  a time $1\leq t \leq C_0|\log \eps|$ and  a collision tree~$a \in \cA_s$ with~$s \geq 2$.

There exists a  set of bad parameters
$\cP_2 (a, p_4,\sigma)\subset \cT_{2,s} \times {\mathbb S}^{s-1} \times \R^{2(s-1)}$  and $\sigma \subset \{2,\dots, s\}$ of cardinal $|\sigma| \leq  3$ such that
\begin{itemize}
\item
$\cP_2 (a, p_4 ,\sigma)$ is parametrized only in terms of   $( t_m, v_m, \nu_m)$ for $m \in \sigma$ and  $m < \min \sigma$;
\begin{equation}
\label{eq: triple salto vrille} 
\int \indc _{\cP_2 (a,p_4 ,\sigma)  } \displaystyle  \prod_{m\in \sigma} \,
\big | \big(v_{m}-v_{a(m)} ( t_{{m}} ) )\cdot \nu_{{m}}  \big | d  t_{{m}} d \nu_{{m}}dv_{{m}}   
\leq C(Rt)^r s \eps \,,
\end{equation}
for some constant $r$,
\item  and any pseudo-trajectory starting from $z_1$ at $t$, with total energy bounded by $R^2$, and such that the first two recollisions involve the same two particles which recollide twice in chain is parametrized by 
$$( t_n, \nu_n, v_n)_{2\leq n\leq s }\in  \bigcup _\sigma  \cP_2(a, p_4 ,\sigma)\,.$$
\end{itemize}
\end{Lem}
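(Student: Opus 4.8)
The situation to analyse is that the same pair of particles $(i,j)$ recollides a first time at $t_{rec}$ and then, due to periodicity, recollides a second time in chain at $\tilde t_{rec}$, with no intermediate collision. The starting point is again Equation~(\ref{rec-equation'}) for the first recollision, written as
$$v_i - v_j = \frac1{\tau_{rec}}\delta x_\perp - \frac{\tau_1}{\tau_{rec}}(\bar v_i - v_j) - \frac1{\tau_{rec}}\nu_{rec}\,,$$
with the rescaled variables $\tau_1,\tau_{rec}$ of~(\ref{eq: tau1'}). As in the previous lemmas, by~(\ref{eq: small distance cut off}) applied with $M=R^2$ we may reduce to the regime $|\tau_1|\,|\bar v_i-v_j|\geq R^2$, so that $|\tau_{rec}|\geq R/4$ by~(\ref{sizetaurec bis}). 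First I would write the condition for the \emph{second} recollision between $i$ and $j$: denoting by $v'_i,v'_j$ the post-collisional velocities after $t_{rec}$ (given by the scattering formula~(\ref{formulav"i})), the second self-recollision at time $\tilde t_{rec}$ with periodicity vector $\tilde q\in\Z^2\setminus\{0\}$ (possibly also the original $q$ contributes) imposes
$$\eps\nu_{rec} + (v'_i - v'_j)(\tilde t_{rec} - t_{rec}) = \eps\tilde\nu_{rec} + \tilde q\,.$$
Rescaling with $\tau'_{rec} := (\tilde t_{rec}-t_{rec})/\eps$ this reads $v'_i - v'_j = \tilde q/(\eps\tau'_{rec}) + (\nu_{rec}-\tilde\nu_{rec})/\tau'_{rec}$. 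Since $|v'_i-v'_j|=|v_i-v_j|\leq 2R$ and $|\tilde q|\geq 1$, this forces $\tau'_{rec}$ to be large, of order at least $1/(\eps\,|v_i-v_j|)$, and more importantly it forces $v'_i - v'_j$ to be nearly aligned with $\tilde q$: precisely $v'_i-v'_j$ lies in a thin cone $C(\tilde q, C R/|\tau'_{rec}|)$ of opening $O(\eps R|v_i-v_j|)$.

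Next I would translate this alignment constraint on $v'_i-v'_j$ into a constraint on $\nu_{rec}$, exactly as was done in the case $p=0$ of Proposition~\ref{recoll1-prop} and in Lemma~\ref{v-constraint}: from $v'_i-v'_j = (v_i-v_j) - 2((v_i-v_j)\cdot\nu_{rec})\nu_{rec}$ (the scattering reflection of $v_i-v_j$ across $\nu_{rec}^\perp$), requiring $v'_i-v'_j$ to point in a prescribed direction $\tilde q/|\tilde q|$ confines $\nu_{rec}$ to a union of a bounded number of small arcs, of size controlled by the ratio of the cone opening to $|v_i-v_j|$, hence $O(\eps R)$ up to the singularity at $v_i-v_j\to 0$. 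Plugging this sharp constraint on $\nu_{rec}$ back into the first-recollision equation~(\ref{rec-equation'}) then forces $v_i-v_j$ into a very thin rectangle/tube, much thinner than the $O(\eps|\log\eps|^2/|v_i-v_j|)$ rectangle of Lemma~\ref{rec-eq}: the extra smallness from the second periodicity recollision gives an additional power of $\eps$ (up to $|\log\eps|$ factors). Integrating this constraint over the collision parameters $(t_{1^*},\nu_{1^*},v_{1^*})$ of the parent $1^*$ of $(i,j)$, using~(\ref{rectangle0}) of Lemma~\ref{scattering-lem 2} for the angular/velocity part and changing variable $t_{1^*}\mapsto\tau_1$ to extract a factor $\eps$ from the time integral, and then summing over $|\tilde q|\leq CRt$ (which contributes $O(R^2t^2)$ terms since energies and time are bounded), I expect a bound of the order $C(Rt)^r\eps^{1+\delta}|\log\eps|^3/|\bar v_i - v_j|$ for some $\delta>0$, with a residual singularity in the relative velocity. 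That singularity is then integrated out using one more parent of $(i,j)$ via~(\ref{carleman2}) of Lemma~\ref{scattering-lem}, at the cost of one power of $|\log\eps|$; since there is a genuine gain $\eps^\delta$, all logarithmic losses are absorbed and the final bound is $\leq C(Rt)^r s\eps$, with $|\sigma|\leq 3$ (the parents $1^*,2^*$ of $(i,j)$, plus possibly a third). The extra factor $s$ accounts for the choice of the recolliding pair once $\sigma$ is fixed, as explained at the end of Proposition~\ref{recoll1-prop}.

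The main obstacle, and the one I would expect to require the most care, is the same degeneracy that recurs throughout Appendix~\ref{geometricallemmasappendix}: the possibility that some relative velocity — here $|v_i-v_j|$ or $|\bar v_i - v_{1^*}|$ — is so small that the alignment argument is ineffective (the cone $C(\tilde q,\cdot)$ contains a macroscopic part of the admissible velocity set, or the direction of $v'_i-v'_j$ varies too fast as a function of $\nu_{rec}$). In that degenerate regime I would abandon the second-recollision equation entirely and argue as in case $1.2(c)$ of Section~\ref{classificationsection}: impose only the small-relative-velocity constraint $|v_i-v_j|\leq\eps^{\kappa}$ (or $|\bar v_i-v_{1^*}|\leq\eps^\kappa$) for a suitable exponent $\kappa>0$, integrate it against the collision measures of two parents via~(\ref{preimage-sphere1})--(\ref{carleman2}), and recover directly a bound of the form $C(Rt)^r s\,\eps^{1+\kappa}|\log\eps|$, which is more than enough. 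Combining the two regimes — alignment when relative velocities are bounded below, small-velocity cutoff otherwise — and re-indexing the bad set in terms of the parent labels $\sigma$ rather than the recolliding particles (whence the factor $s$), yields the single scenario $\cP_2(a,p_4,\sigma)$ with the claimed estimate~(\ref{eq: triple salto vrille}), and the proof is complete. \qed
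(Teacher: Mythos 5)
Your skeleton is the paper's: write the two recollision conditions \eqref{rec-equationrecol}--\eqref{rec-equationrecolperiodic}, note that the periodic second recollision pins the direction of $v_i'-v_j'$ to $\tilde q/|\tilde q|$ up to $O(\eps)$, use the reflection identity \eqref{eq: symmetry argument} to constrain $\nu_{rec}$, feed this back into the first recollision to confine $v_i-v_j$ to a thin rectangle, integrate with Lemma~\ref{scattering-lem 2} and the change of variables $t_{1^*}\mapsto\tau_1$, remove the leftover velocity singularity with further parents, and sum over $q,\tilde q$ and over the choice of the recolliding pair (factor $s$). But the quantitative heart of the lemma is missing. The bisector $\nu_{rec}^\perp$ is determined by \emph{two} directions: that of $v_i'-v_j'$, known up to $O(\eps)$, and that of $v_i-v_j$, which by \eqref{rec-equationrecol} is only known up to $O\big(1/(|\tau_1|\,|\bar v_i-v_j|)\big)$ (the $-\nu_{rec}/\tau_{rec}$ term measured against $|w|=|\delta x_\perp-\tau_1(\bar v_i-v_j)|$). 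You drop this second error entirely; this is why you predict a tube thinner by an extra power of $\eps$, a final bound of the form $\eps^{1+\delta}|\log\eps|^3/|\bar v_i-v_j|$, and a single residual singularity handled by one parent via \eqref{carleman2}. In fact, over most of the admissible range $|\tau_1||\bar v_i-v_j|\in[R^2, CRt/\eps]$ the error $1/(|\tau_1||\bar v_i-v_j|)$ dominates $\eps$, and carried through naively it produces, after the $\tau_1$-integration, a term of order $\eps/|\bar v_i-v_j|^{2}$: the exponent $2$ is exactly the borderline that Lemma~\ref{scattering-lem} cannot integrate (Remark~\ref{rmkcarleman}). This is the one nontrivial point of the lemma, and it is why the paper deliberately weakens the error to $\eta=\eps+|\tau_1(\bar v_i-v_j)|^{-1/2}$: the rectangle width $R\eta/(|\tau_1||\bar v_i-v_j|)$ is then still integrable in $\tau_1$ (power $3/2>1$) while the resulting singularity $|\bar v_i-v_j|^{-3/2}$ is subcritical and is removed with \emph{two} additional parents via \eqref{alpha1}--\eqref{alpha2} (whence $|\sigma|\le 3$). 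The surviving contribution is $O(\eps)$ with no extra power and no slack to absorb the logarithmic losses you were counting on, so your error budget does not close as written.

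Your fallback regime is also mis-quantified and, as it turns out, unnecessary: imposing only $|v_i-v_j|\le\eps^{\kappa}$ and integrating over two parents via \eqref{preimage-sphere1} and \eqref{carleman2} yields $O(\eps^{2\kappa}|\log\eps|)$, not $O(\eps^{1+\kappa}|\log\eps|)$, so one needs $\kappa>1/2$ and must then verify that the alignment argument still closes when relative velocities are as small as $\eps^{\kappa}$ --- precisely what the first gap leaves open. The paper avoids any such split: the only reduction is $|\tau_1||\bar v_i-v_j|$ bounded below (via \eqref{eq: small distance cut off}), the near-degenerate geometry being absorbed automatically into the $|\tau_1(\bar v_i-v_j)|^{-1/2}$ part of $\eta$, and all remaining singularities are in the pre-collisional relative velocity $|\bar v_i-v_j|$, which is exactly what the parents $2^*,3^*$ can integrate.
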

\begin{proof}
We recall the equation \eqref{rec-equation} on the first recollision
\begin{equation}
\label{rec-equationrecol}
v_i-v_j = \frac1{\tau_{rec}} (\delta x_\perp - \tau_1 (\bar v_i-v_j) - \nu_{rec}) 
\quad \text{with} \quad 
\frac1{|\tau_{rec} |} \leq  {4R\over |\tau_1| | \bar v_{i} - v_{j}|}\, .
\end{equation}
The equation on the second recollision is
\begin{equation}
\label{rec-equationrecolperiodic}
(v'_i - v'_j) (\tilde  t_{rec}-  t_{rec}) = \eps \tilde \nu_{rec} + \eps \nu_{rec} + \tilde q
\end{equation}
for some~$\tilde  t_{rec}\geq 0$, $ \tilde \nu_{rec} \in {\mathbb S}$, and~$\tilde q \in \Z^2 \setminus \{ 0 \}$.
Note that $\tilde q  \not = 0$ as the second recollision  occurs from the periodicity.
As usual we fix~$\tilde q$ and multiply the estimates in the end by~$O(R^2t^2)$ to take that into account.

The condition \eqref{rec-equationrecolperiodic} implies that the vector $v'_i - v'_j$ is located in a cone of axis $\tilde q$ and angular sector $2\eps$.
By definition, we have
\begin{equation}
\label{eq: symmetry argument}
v_i'- v_j'=( v_i-v_j ) - 2 (v_i- v_j)\cdot \nu_{rec} \; \nu _{rec},
\end{equation}
which means that $\nu_{rec}^\perp$ is the bisector of $v_i- v_j$ and $v_i'- v_j'$ (see Figure \ref{coccinelleagain}).

From \eqref{rec-equationrecol}, we deduce that the direction of $v_i- v_j$ is
$$
{\delta x_\perp - \tau_1 (\bar v_i-v_j) \over |\delta x_\perp - \tau_1 (\bar  v_i-v_j)|} 
+ O\left( {1\over |\tau_1 (\bar v_i-v_j)|}\right)\,.
$$
From \eqref{rec-equationrecolperiodic}, we deduce that the direction of $v'_i-v'_j$ is
$$ {\tilde q\over |\tilde q|} + O( \eps)\,.$$
Finally we get that $\nu_{rec}^\perp$ is known up to an error term which can be bounded by
$$
\eta =  \eps + {1\over \sqrt{ |\tau_1 ( \bar  v_i-v_j)|}} \,\cdotp
$$
Note that we have introduce the square root as in the proof of Lemma  \ref{Murphy} for integrability purposes of the singularity $|\bar  v_i-v_j|$.

Plugging this constraint on $\nu_{rec}$ in \eqref{rec-equationrecol}, we get that $v'_i- v_j$ has to belong, for each given $q, \tilde q$, to a rectangle $\cR$ of axis $\delta x_\perp - \tau_1 (\bar v_i-v_j)$
and  size $R \times R \frac{\eta}{|\tau_1( \bar v_i-v_j)|}$.
By Lemma \ref{scattering-lem 2}, we obtain 
$$
\int  \indc_{ \{ v_i - v_j \in \cR \} }  \;  \big | \big ( v_1^*-v_j ) \cdot \nu_1^*\big) \big | dv_1^* d\nu_1^* 
\leq CR^3 {\eps |\log \eps| \over \tau_1| \bar v_i-v_j|} +  { CR^3 \over \tau_1^{3/2} | \bar v_i-v_j|^{3/2}}\,\cdotp
$$
Taking the union of the previous rectangles for the different choices of $q,\tilde q$, we define
the set $\cP_2(a, p_4 ,\sigma)$ associated with the scenario of two particles recolliding twice in chain due to  periodicity.
By integration with respect to time, we then get
$$ 
\int 
\indc_{\cP_2(a, p_4 ,\sigma)}
%\indc_{\eqref{rec-equationrecol} \hbox{ and } \eqref{rec-equationrecolperiodic}} 
 \;   \big | \big ( v_1^*-v_j ) \cdot \nu_1^*\big) \big |dv_1^* d\nu_1^*dt_1^* 
 \leq CR^3 {\eps^2 |\log \eps| ^2\over |\bar v_i-v_j|} + CR^3 
 { \eps \over  |\bar v_i-v_j|^{3/2}}
 \,\cdotp$$
We then apply twice Lemma \ref{scattering-lem} on two parents of $i,j$ to integrate the singularities at small relative velocities.

\bigskip

Given $\sigma$, there are at most $s$ choices for the pair $(i,j)$ as $\sigma$ determines at least one of the labels.
Thus the previous scenario leads to the set $\cP_2(a, p_4 ,\sigma)$ with measure controlled by \eqref{eq: triple salto vrille}.
\end{proof}

%%%%%%%%%%%%%%%%%%%%%%%%%%%%%%%%%%%%%%%%%%%%%%%%%%%%%%%%%%%%%%%%%%%%%%%%%%%%%%%%%%%%%%%%%%%%%%%%
%%%%%%%%%%%%%%%%%%%%%%%%%%%%%%%%%%%%%%%%%%%%%%%%%%%%%%%%%%%%%%%%%%%%%%%%%%%%%%%%%%%%%%%%%%%%%%%%%

\section{Carleman's parametrization and scattering estimates}
\label{sectionappendixcarleman}

In Sections \ref{convergencepartieprincipales}, \ref{recollisions} and Appendix~\ref{geometricallemmasappendix},  we were faced with integrals containing singularities in relative velocities~$v_i-v_j$
and with a multiplicative factor of the type~$(v^* -  \bar v_{i}) \cdot \nu^*$ where~$v_i$ is recovered from~$v^*$, $\nu^*$ and~$\bar v_{i}$ through a scattering condition. 
This appendix is devoted to the proof of ``tool-box'' lemmas for computing these singular integrals.
These lemmas are used many times in this paper.

\begin{figure} [h] %  figure placement: here, top, bottom, or page
   \centering
  \includegraphics[width=8cm]{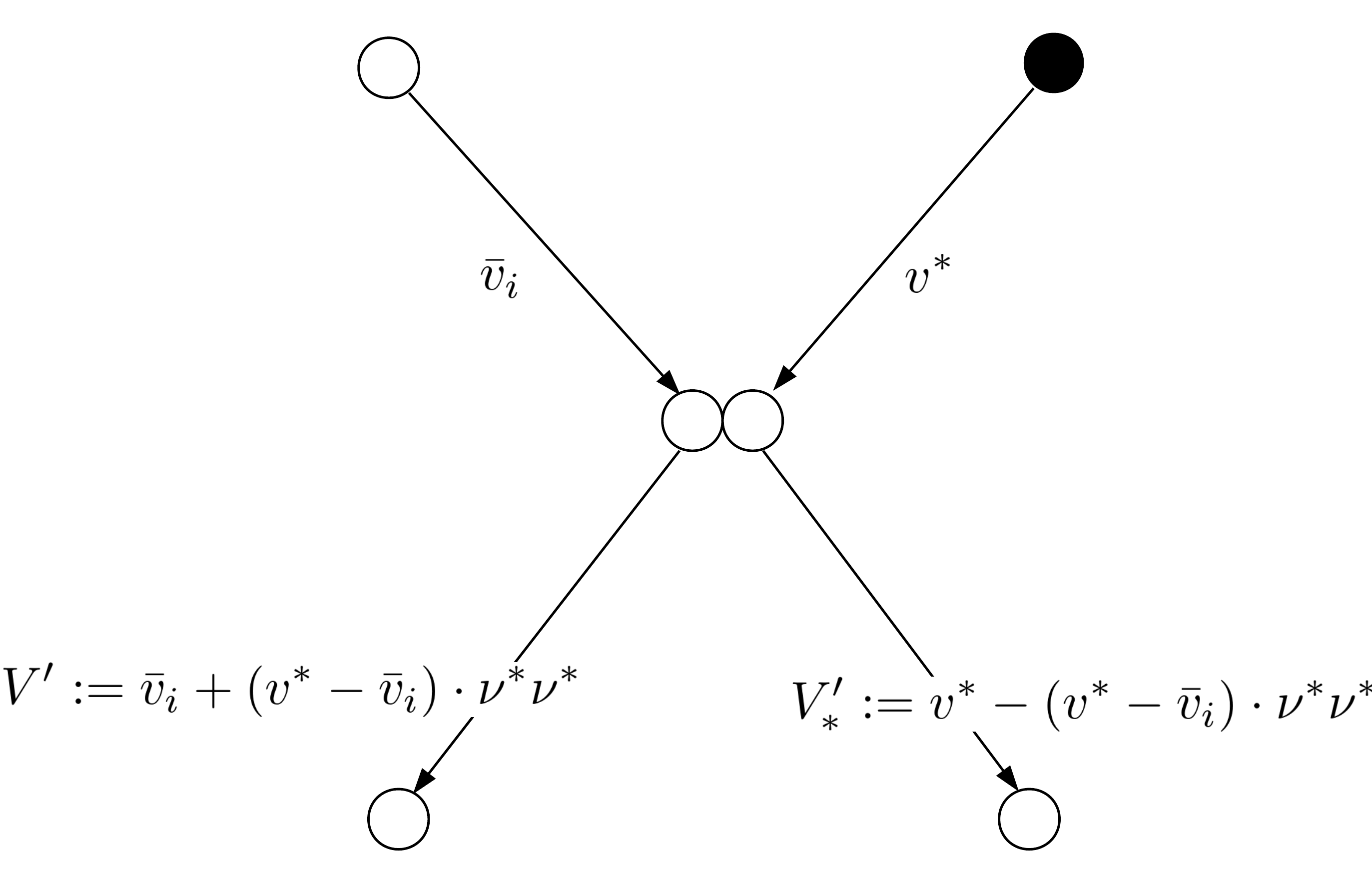} 
  \caption{Scattering relations.}
\label{fig: carlemanpicture}
\end{figure}

\begin{Lem}
\label{joint-scattering-lem} 
Fix a velocity $\bar v_i$ and let $v_i, v_j$  be the velocities after a collision (with or without scattering)
\begin{align*}
(v_i, v_j)  = (\bar v_i, v^*)
\quad \text{or} \quad  
\begin{cases}
v_i = \bar v_{i} + (v^*-  \bar v_{i}) \cdot \nu^* \nu^*, \\
v_j  = v^*- (v^* -  \bar v_{i}) \cdot \nu^* \nu ^*,
\end{cases}
\end{align*}
with~$ \nu^* \in {\mathbb S}$ and~$v^* \in \R^2$ (see Figure~{\rm\ref{fig: carlemanpicture}}). 
Assume all the velocities are bounded by~$R$ then  
\begin{eqnarray}
\int   { 1\over |v_i - v_j| }  \,  \big | \big( v ^* -  \bar v_i)\cdot   \nu ^* \big |\,   dv^* d\nu^* \leq  CR^2 . 
\label{carleman3 joint}  
\end{eqnarray} 
\end{Lem}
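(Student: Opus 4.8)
The statement to be established is the bound~\eqref{carleman3 joint}, which asserts that the singularity $1/|v_i-v_j|$ is integrable against the scattering measure when $v_i,v_j$ are \emph{both} produced by a single collision with a fixed incoming velocity $\bar v_i$. The key observation is that, unlike the generic case treated in Lemma~\ref{scattering-lem} (where one integrates in $(v^*,\nu^*)$ the product $|(v^*-\bar v_i)\cdot\nu^*|$ times a function of $|v_j-v_i|$ with $v_i,v_j$ related through \emph{different} collisions), here the two outgoing velocities are linked by the \emph{same} scattering, so $v_i-v_j$ can be expressed directly in terms of the Carleman-type variables. I would first dispose of the trivial case $(v_i,v_j)=(\bar v_i,v^*)$: then $v_i-v_j=\bar v_i-v^*$, the weight is $|(v^*-\bar v_i)\cdot\nu^*|\le|v^*-\bar v_i|$, and one is left with $\int_{B_R}\frac{|v^*-\bar v_i|}{|v^*-\bar v_i|}\,dv^*\,\int_{\mathbb S}d\nu^*\le CR^2$, which is immediate.

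For the genuine scattering case, write $V:=v^*-\bar v_i$, so that $V\in B_{2R}$, and note $v_i-v_j = V-2(V\cdot\nu^*)\,\nu^*$, a vector of the same modulus $|V|$ as $V$ (energy conservation / specular reflection is an isometry). The plan is to pass to Carleman's parametrization, which is exactly the device recalled around equation~\eqref{defcarleman} and Figure~\ref{fig: parametrisation carleman}: decompose $V = V_\parallel\nu^* + V_\perp(\nu^*)^\perp$ and change variables from $(V,\nu^*)$ to the pair $(V', V'_*)$ where $V'=\bar v_i+(V\cdot\nu^*)\nu^*$ and $V'_*=v^*-(V\cdot\nu^*)\nu^*$ denote the two post-collisional velocities. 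A standard computation (the one underlying Carleman's representation of the Boltzmann kernel, as used in~\cite{hilbert} and throughout Appendix~\ref{geometricallemmasappendix}) shows that
$$
\big|(v^*-\bar v_i)\cdot\nu^*\big|\,dv^*\,d\nu^* \;=\; \frac{1}{|v_i-v_j|}\; dv_i\,d v_j
$$
up to a bounded Jacobian factor, because $|v_i-v_j|=|V'-V'_*|$ is precisely the length appearing in the Carleman parametrization: the factor $|(v^*-\bar v_i)\cdot\nu^*|=|V_\parallel|$ is, geometrically, the height that converts the surface measure $d\nu^*\,dV_\parallel$ on the collision manifold into the volume element, and the residual weight is $1/|V'-V'_*|$. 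Substituting this identity into the left-hand side of~\eqref{carleman3 joint} cancels the singularity \emph{exactly}:
$$
\int \frac{1}{|v_i-v_j|}\,\big|(v^*-\bar v_i)\cdot\nu^*\big|\,dv^*\,d\nu^*
\;=\; C\int_{B_R\times B_R} dv_i\,dv_j \;\le\; CR^4 \cdot \frac{1}{R^2} \cdot R^{?}
$$
— here I must be careful: one of the two new variables is constrained (the velocities remain in $B_R$, and $V'$, $V'_*$ are not independent of the radial coordinate), so the honest count is that after using Carleman one integrates $1/|V'-V'_*|\cdot|(v^*-\bar v_i)\cdot\nu^*|$ against a two-dimensional measure, giving $\int_{B_{2R}} |V|\cdot\frac{1}{|V|}\,dV\le CR^2$. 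The upshot is a clean $CR^2$ bound with no logarithmic loss, in contrast with the single-parent estimates~\eqref{carleman2}.

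\textbf{Main obstacle.} The only delicate point is to carry out the change of variables $(v^*,\nu^*)\mapsto$ (Carleman variables) cleanly and to verify that the Jacobian, together with the weight $|(v^*-\bar v_i)\cdot\nu^*|$, produces exactly the factor $1/|v_i-v_j|$ rather than some other power — i.e.\ to confirm that the singularity cancels perfectly rather than being merely tamed. This is a by-now classical computation (it is the two-dimensional Carleman representation; see the references to~\cite{hilbert} in Appendix~\ref{appendixBoltzlin}), so I would present it by reducing to polar coordinates centred at $\bar v_i$: writing $V=\rho\,\omega$ with $\rho=|V|$, $\omega\in{\mathbb S}$, one has $|(v^*-\bar v_i)\cdot\nu^*|=\rho|\cos\theta|$ with $\theta$ the angle between $\omega$ and $\nu^*$, $dv^*\,d\nu^*=\rho\,d\rho\,d\omega\,d\nu^*$, while $|v_i-v_j|=\rho$; hence the integrand becomes $\rho|\cos\theta|\cdot\rho/\rho = \rho|\cos\theta|$, integrated over $\rho\in[0,2R]$, $\omega,\nu^*\in{\mathbb S}$, which is bounded by $C R^2$. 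This makes the cancellation manifest and completes the proof; the case distinction (scattering vs.\ no scattering) is handled as above, and the hypothesis that all velocities lie in $B_R$ is used only to bound the range of $\rho$.
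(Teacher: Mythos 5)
Your final argument is correct and is essentially the paper's: in both cases the collision map is an isometry, so $|v_i - v_j| = |v^* - \bar v_i|$, and then the weight $\big|(v^*-\bar v_i)\cdot\nu^*\big| \le |v^*-\bar v_i|$ (equivalently, the integrability of $1/|v^*-\bar v_i|$ in two dimensions) gives the $CR^2$ bound at once, exactly as in the paper's one-line proof. The Carleman change-of-variables detour in the middle is unnecessary and its displayed measure identity is not correct as written (the Carleman image measure is $dV'\,dS(V'_*)$, not $dv_i\,dv_j$), but since you discard it in favour of the direct polar-coordinate computation, the proof stands.
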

\begin{proof}
In both cases, the velocities before and after the collision are related by
$|v_i - v_j|  = |v ^* -  \bar v_i|$. Inequality \eqref{carleman3 joint}   follows from the fact that the 
singularity $1/ |v ^* -  \bar v_i|$ is integrable.
\end{proof}

\begin{Lem}
\label{scattering-lem} 
Fix $\bar v_i$ and $v_j$,  and define $v_i$ to be one of the following velocities
\begin{align}
\label{eq: scattering relation}
v_i  = v^*- (v^* -  \bar v_{i}) \cdot \nu^* \nu ^*,\\
\mbox{or} \quad v_i = \bar v_{i} + (v^*-  \bar v_{i}) \cdot \nu^* \nu^*\,, \nonumber
\end{align}
with~$ \nu^* \in {\mathbb S}$ 
and~$v^* \in B_R \subset \R^2$ (see Figure~{\rm\ref{fig: carlemanpicture}}). 
Assume all {  the velocities}  are bounded by~$R>1$ and fix~$\delta\in ]0,1[$. Then the following estimates hold, denoting~$b(\nu ^*,v ^*):= |(v ^* -  \bar v_i) \cdot \nu ^*| $:
\begin{eqnarray}
\int \indc_{|v_i -v_j|\leq \delta} \,b(\nu ^*,v ^*) \,  dv ^* d\nu ^*  \! \! \! &\leq &  \!\!  \! C R^2 \delta \min \left( {\delta \over |v_j-\bar v_i|}, 1 \right) \, ,
\label{preimage-sphere1} \\
  \int\! \!    \min \left(  { \delta \over |v_i - v_j| }, 1 \right)    b(\nu ^* ,v ^*)  \,   dv^* d\nu^* &\leq& CR^2\delta |\log\delta | {  + C R^3 \delta} \, ,
  \label{carleman2}\\
      \int   { 1\over |v_i - v_j| }  \,   b(\nu ^*,v ^*) \,   dv^* d\nu^* \! \! \! &\leq &  \!\!  \! CR^2 \Big( \big |\log |\bar v_{i}- v_{j}|\big|  +R\Big)  \, , \label{carleman3}  \\
 \int   { 1\over |v_i - v_j|^\gamma }  \,  b(\nu ^*,v ^*)  \,    dv^* d\nu^* \! \! \! &\leq &  \!\!  \!  {CR^2\over  |\bar v_{i}- v_{j}|^{\gamma-1}} +CR^3 \quad \mbox{ for } \,  \gamma \in ]1,2[  \,,\label{alpha1}\\
 \int   { 1\over |v_i - v_j|^\gamma }  \,  b(\nu ^*,v ^*)  \,  dv^* d\nu^*\! \! \! &\leq &  \!\!  \!  CR^3 \quad \mbox{ for } \, \gamma \in ]0,1[  \,,\label{alpha2}\\
 \int  \big |\log  |v_i - v_j|\big|   \, b(\nu ^*,v ^*) \,    dv^* d\nu^* \! \! \! &\leq &  \!\!  \!  CR^3 \,.\label{alpha3}  
\end{eqnarray} 
\end{Lem}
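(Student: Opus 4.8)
The key object is Carleman's parametrization of a collision: once $\bar v_i$ is fixed and the pair $(v^*,\nu^*)$ ranges over $B_R\times{\mathbb S}$, the relative velocity $v_i-v_j$ that comes out of the scattering relation \eqref{eq: scattering relation} depends on $(v^*,\nu^*)$ in a way that is best understood by passing to variables adapted to the circle on which the collision takes place. Concretely, in the first case of \eqref{eq: scattering relation}, $v_i-v_j = v^* - \bar v_i - 2\big((v^*-\bar v_i)\cdot\nu^*\big)\nu^* $ has modulus $|v^*-\bar v_i|$, and $v_i$ itself lies on the circle of diameter $[\bar v_i,v_j]$ once we observe that $v_i-\bar v_i$ and $v_i-v_j=-(v^*-\bar v_i-2((v^*-\bar v_i)\cdot\nu^*)\nu^*)$ are orthogonal (this is the standard fact that scattered velocities lie on a sphere). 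So the plan is: (1) reduce all six estimates to integrals over this circle, writing $b(\nu^*,v^*)\,dv^*d\nu^*$ in terms of an arclength (or polar) parameter on the circle of diameter $|\bar v_i-v_j|$ and a radial variable; (2) for each inequality, bound the radial integral and the angular integral separately. The factor $b(\nu^*,v^*)=|(v^*-\bar v_i)\cdot\nu^*|$ is exactly the Jacobian weight that makes the change of variables to $(v_i, \text{angle})$ nonsingular, so after the change of variables one is left with plain integrals of $|v_i-v_j|^{-\gamma}$, $\indc_{|v_i-v_j|\le\delta}$, or $|\log|v_i-v_j||$ against a bounded measure on a disk of radius $\le 2R$.

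First I would set up the parametrization carefully. Fix $\bar v_i$ and $v_j$, let $C$ be the circle of diameter $[\bar v_i,v_j]$, and parametrize $v_i\in C$ by an angle; the distance $|v_i-v_j|$ along $C$ behaves like $|\bar v_i-v_j|\,|\sin\varphi|$ or $|\bar v_i-v_j|\,|\cos\varphi|$ depending on the case. The point $v^*$ then ranges over a half-line through $v_i$, and $b(\nu^*,v^*)\,d\nu^*dv^*$ pushes forward to $dv_i\,d(\text{radial})$ up to a harmless constant (this is Carleman's classical computation; see the discussion around Figure~\ref{fig: carlemanpicture} and Figure~\ref{fig: parametrisation carleman}). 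Once this is established, \eqref{carleman3 joint} of Lemma~\ref{joint-scattering-lem} is immediate: $|v_i-v_j|=|v^*-\bar v_i|$, so one integrates $|v^*-\bar v_i|^{-1}$ over a disk of radius $R$, which gives $CR$, times the measure of $\nu^*$, i.e. $CR^2$ — actually a direct polar-coordinate estimate with no scattering circle is enough there, and I would do that one first as a warm-up.

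For Lemma~\ref{scattering-lem}, I would handle the estimates in the order \eqref{alpha2}, \eqref{alpha3}, \eqref{carleman3}, \eqref{alpha1}, \eqref{preimage-sphere1}, \eqref{carleman2}, since they share the same skeleton with increasing delicacy. For \eqref{alpha2} ($\gamma\in(0,1)$) and \eqref{alpha3} (the log), after the change of variables the integrand $|v_i-v_j|^{-\gamma}$ or $|\log|v_i-v_j||$ is integrable with respect to arclength on $C$ uniformly, and the disk has radius $\le 2R$, so one gets $CR^3$. For \eqref{carleman3} ($\gamma=1$), the arclength integral of $|v_i-v_j|^{-1}$ over $C$ produces the logarithmic factor $|\log|\bar v_i-v_j||$ — this is where the length scale $|\bar v_i-v_j|$ of the circle enters — plus a boundary contribution from the part of the domain far from $C$, giving the additive $+R$; so the bound is $CR^2(|\log|\bar v_i-v_j||+R)$. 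For \eqref{alpha1} ($\gamma\in(1,2)$), the arclength integral of $|v_i-v_j|^{-\gamma}$ diverges at the scale where $v_i$ approaches $v_j$, and one extracts the dominant term $|\bar v_i-v_j|^{1-\gamma}$, plus the far contribution $CR^3$. Finally \eqref{preimage-sphere1} and \eqref{carleman2}: \eqref{preimage-sphere1} is the measure of the set $\{|v_i-v_j|\le\delta\}$ on the circle — a small arc of length $\lesssim\delta\min(\delta/|v_j-\bar v_i|,1)$ (the preimage of a ball of radius $\delta$ under the ``$v_i$ on $C$'' map is controlled by the curvature $1/|\bar v_i-v_j|$), times $R^2$; and \eqref{carleman2} follows by writing $\min(\delta/|v_i-v_j|,1)$ as an integral of $\indc_{|v_i-v_j|\le s}$ in $s$, i.e. by layer-cake over \eqref{preimage-sphere1}, or directly, splitting the circle into $|v_i-v_j|\le\delta$ and $|v_i-v_j|>\delta$ and using \eqref{carleman3} on the second piece, which yields $CR^2\delta|\log\delta|+CR^3\delta$.

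The main obstacle, and the place where real care is needed, is step (1): making the change of variables $(v^*,\nu^*)\mapsto(v_i,\text{angle on }C,\text{radial})$ rigorous in \emph{both} cases of \eqref{eq: scattering relation}, keeping track of the two-to-one or one-to-one nature of the map and of the precise form of the Jacobian so that $b(\nu^*,v^*)\,dv^*d\nu^*$ really becomes (a constant times) Lebesgue measure in the new variables. In particular one must verify that the singular prefactor $b$ is exactly compensated — this is the whole point of including it — and identify what portion of the $(v^*,\nu^*)$ domain maps far from $C$ and hence contributes only the ``$+R$'' or ``$+R^3$'' remainder terms rather than a logarithm. Once the parametrization and its Jacobian are pinned down, the six inequalities are routine one-dimensional integral estimates of the form just described, and I would not belabor the constants.
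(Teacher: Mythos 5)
Your reduction in step (1) is based on a false geometric identification, and this is not a technicality but the heart of the lemma. The orthogonality you invoke — that $v_i-\bar v_i$ and $v_i-v_j$ are orthogonal, so that $v_i$ lies on the circle of diameter $[\bar v_i,v_j]$ and $|v_i-v_j|=|v^*-\bar v_i|$ — holds only when $v_j$ is the \emph{partner outgoing velocity of the same collision}, which is the setting of Lemma~\ref{joint-scattering-lem}, not of Lemma~\ref{scattering-lem}. Here $v_j$ is a fixed velocity unrelated to the collision parametrized by $(v^*,\nu^*)$: with $v_i=v^*-((v^*-\bar v_i)\cdot\nu^*)\,\nu^*$ one has $v_i-v_j=v^*-v_j-((v^*-\bar v_i)\cdot\nu^*)\,\nu^*$, and your formula $v_i-v_j=v^*-\bar v_i-2((v^*-\bar v_i)\cdot\nu^*)\,\nu^*$ silently substitutes $v_j=\bar v_i+((v^*-\bar v_i)\cdot\nu^*)\,\nu^*$. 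The circle that genuinely appears in Carleman's parametrization has diameter $[\bar v_i,v^*]$, i.e.\ it moves with the integration variable; $v_i$ ranges over a two-dimensional set, not over a fixed circle through $v_j$. As a consequence your key claim — that after the change of variables the weight $b$ is ``exactly compensated'' and one is left with $|v_i-v_j|^{-\gamma}$ integrated against a bounded measure on a disk — is false, and in fact incompatible with the statement you are proving: a bounded pushforward density would give \eqref{carleman3} with no $|\log|\bar v_i-v_j||$ and \eqref{alpha1} with no $|\bar v_i-v_j|^{1-\gamma}$, uniformly in $v_j$. (Within your own circle picture the computation also fails: the arclength integral of $|v_i-v_j|^{-1}$ over a circle passing through $v_j$ diverges, so it cannot produce the finite factor $|\log|\bar v_i-v_j||$.)

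The correct mechanism, and the one the paper uses, is the Carleman pushforward onto the pair $(V',V'_*)$ with $(V'-\bar v_i)\cdot(V'_*-\bar v_i)=0$: the measure $b\,dv^*d\nu^*$ becomes $dV'\,dS(V'_*)$, where for each $V'$ the point $V'_*$ runs over the \emph{line through $\bar v_i$} orthogonal to $V'-\bar v_i$. In the delicate case $v_i=V'_*$ the induced measure on $v_i$ has density of order $R^2/|v_i-\bar v_i|$, and all the $|\bar v_i-v_j|$-dependent factors in \eqref{preimage-sphere1}, \eqref{carleman3}, \eqref{alpha1} come from the interplay of this singularity at $\bar v_i$ with the singularity of the integrand at $v_j$ (the case $v_i=V'$ is harmless). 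Concretely, the paper proves \eqref{preimage-sphere1} directly: the constraint $|V'_*-v_j|\le\delta$ confines $V'_*$ to a segment of length $O(\delta)$ on the line, and when $|v_j-\bar v_i|>\delta$ it additionally forces the direction of $V'-\bar v_i$ into an angular sector of aperture $O(\delta/|v_j-\bar v_i|)$, whence the bound $CR^2\delta\min(\delta/|v_j-\bar v_i|,1)$; then \eqref{carleman2}--\eqref{alpha3} follow by exactly the layer-cake/Fubini argument you propose, applying \eqref{preimage-sphere1} at every scale $r$. So the part of your plan that deduces the remaining estimates from the ball estimate is sound and matches the paper; what is missing is a valid proof of \eqref{preimage-sphere1} itself, and to obtain it you must replace the circle-of-diameter-$[\bar v_i,v_j]$ reduction by the line-plus-angular-sector geometry of the actual Carleman parametrization.
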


\begin{proof}
We start by recalling  Carleman's parametrization, which we shall be using many times in this Appendix: it is defined by
\begin{equation}
\label{defcarleman}
(v^*,\nu^* )\in \R^2 \times {\mathbb S} \mapsto 
\begin{cases}
 V' _* := v ^*- (v ^* - \bar  v_{i}) \cdot \nu ^* \nu  ^*\\
 V' := \bar v_{i} + (v ^*-  \bar v_{i}) \cdot \nu ^* \nu ^*
\end{cases}
\end{equation}
where $(V',V'_*)$ belong to the set ${\mathcal C}$ defined by 
$${\mathcal C}:=\Big \{(V',V'_*)\in \R^2\times \R^2 \,/\, (V'-\bar v_i)\cdot (V'_*-\bar v_i) =0\Big\} \, .$$
This map sends the measure $ b(\nu ^*,v ^*)  \,  dv^*d\nu^*$ on the 
measure $dV'dS(V'_*)$, where $dS$ is the Lebesgue measure  on the line
orthogonal to $(V'-\bar v_i)$ passing through $\bar v_i$.

\medskip
  Now let   us consider  the case when~$|v_i-v_j| \leq \delta$ and prove~(\ref{preimage-sphere1}).  
What we need here is  to estimate the measure of the pre-image  of the small ball of center $v_j$ and radius $\delta$ by the scattering operator: let us study how for fixed~$v_j$, the set~$\{|v_i-v_j| \leq \delta\}$ is transformed by the inverse scattering map. Notice that the most singular case concerns the case when~$v_i = V' _*$ belongs to the small ball of radius~$\delta$: indeed in the case when it is~$V'$ then the measure~$  b(\nu ^*,v ^*)  \,  dv^*d\nu^*$ will have support in a domain of size~$O(\delta^2)$. So now assume that~$V' _*$ satisfies~$|V' _* -v_j|\leq \delta$.

\begin{figure} [h] %  figure placement: here, top, bottom, or page
   \centering
  \includegraphics[width=3in]{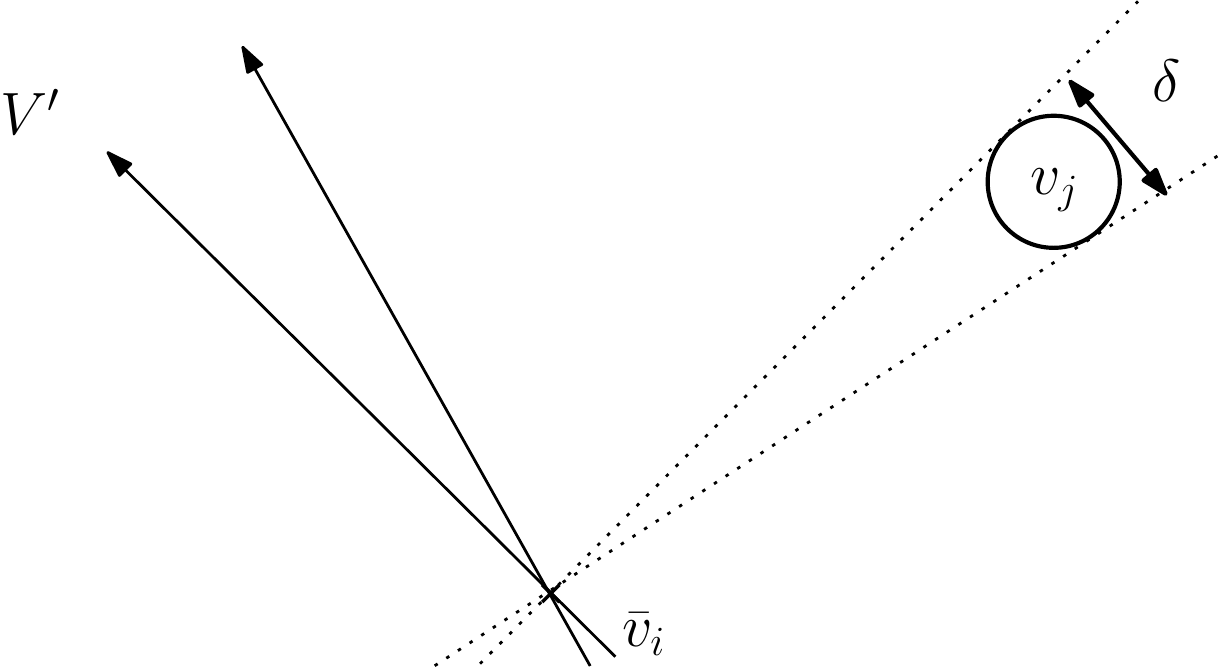} 
\caption{\small 
 $V'_*$ has to belong to the ball of radius $\delta$ around $v_j$, thus it has to be in the cone with the doted lines.
By Carleman's parametrization, this imposes constraints on the angular sector of $V'-\bar v_i$.
}  
\label{fig: carleman0}
\end{figure}

\medskip

$ \bullet $ $ $  If  $|v_j - \bar v_i| \leq \delta $, meaning that $\bar v_i$ is itself in the same ball, then for any $V' \in B_R$, the intersection between the small ball and the line $ \bar v_i + \R (V'-\bar v_i)^\perp$ is  a segment, the length of which is at most 
$\delta$. We therefore find 
$$ \int \indc_{|V' _* -v_j|\leq \delta}  \, b(\nu ^*,v ^*)  \,    dv ^* d\nu ^* \leq  C R^2 \delta \,.$$

\medskip

$ \bullet $ $ $  If $|v_j - \bar v_i| >\delta$, in order for  the intersection between the ball and the line $ \bar v_i + \R (V'-\bar v_i)^\perp$ to be non empty, we have the additional condition that $V'-\bar v_i$ has to be in an angular sector of size $\delta /| v_j-\bar v_i|$ (see Figure \ref{fig: carleman0}). We therefore have 
$$ \int \indc_{|V' _*  -v_j|\leq \delta}  \, b(\nu ^*,v ^*)  \,   dv ^* d\nu ^* \leq  C R^2 {\delta^2\over |v_j-\bar v_i|}  \,\cdotp$$
Thus~(\ref{preimage-sphere1}) holds.

\medskip

The other estimates provided in Lemma \ref{scattering-lem} then come from  Fubini's theorem: let us start with~(\ref{carleman2}). We write
$$
\begin{aligned}
\int \min\big( {\delta\over |v_i-v_j|}  ,1\big) \, b(\nu ^*,v ^*)  \,    dv ^* d\nu ^* &= \int \indc_{ |v_i-v_j| \leq \delta} 
 \, b(\nu ^*,v ^*)  \,    dv ^* d\nu ^* \\
& \qquad +  \int \indc_{ |v_i-v_j| > \delta} {\delta\over |v_i-v_j|}
 \, b(\nu ^*,v ^*)  \,   dv ^* d\nu ^*\\
& \leq CR^2 \delta +  \int \indc_{ |v_i-v_j| > \delta} {\delta\over |v_i-v_j|}
 \, b(\nu ^*,v ^*)  \,    dv ^* d\nu ^*
\end{aligned}
$$
thanks to~(\ref{preimage-sphere1}). 
The contribution of the velocities such that $|v_i - v_j| \geq 1$ can be bounded by $R^3 \delta$. Thus 
 it is enough to consider 
$$
\begin{aligned}
 \int {\delta \indc_{ 1 \geq  |v_i-v_j| > \delta}\over |v_i-v_j|}
 \, b(\nu ^*,v ^*)  \,     dv ^* d\nu ^*
& = \delta  \int  \Big( \int_{|v_i-v_j|} ^1 \frac{dr}{r^{2}}   +1 \Big)   \indc_{ 1 \geq  |v_i-v_j| > \delta} \, b(\nu ^*,v ^*)  \,   dv ^* d\nu ^*\\
& \leq\delta \int_{\delta}^1  \frac{dr}{r^2} \int \indc_{ |v_i-v_j| \leq r} \, b(\nu ^*,v ^*)  \,  dv ^* d\nu ^*+CR^3 \delta \, ,
\end{aligned}
$$
so using~(\ref{preimage-sphere1}) again we get
$$
\begin{aligned}
 \int \indc_{ 1 \geq |v_i-v_j| > \delta} {\delta\over |v_i-v_j|}
 \, b(\nu ^*,v ^*)  \,    dv ^* d\nu ^*
&\leq CR^2\delta \int_{\delta}^1  \frac{dr}{r} + CR^3 \delta\, ,
\end{aligned}
$$
from which~(\ref{carleman2}) follows.

\medskip

Next let us prove~(\ref{carleman3})-(\ref{alpha2}).
We have 
$$
\begin{aligned}
\int {1\over |v_i-v_j|^\gamma}  \, b(\nu ^*,v ^*)  \,    dv ^* d\nu ^* &=  \gamma \int  \Big( \int_{|v_i-v_j|} ^1 \frac1{r^{1+\gamma}} dr  +1 \Big)   \, b(\nu ^*,v ^*)  \,   dv ^* d\nu ^*\\
& =\gamma  \int_0^1 \frac1{r^{\gamma+1}} \Big( \int \indc_{|v_i -v_j|\leq  r}   \, b(\nu ^*,v ^*)  \,  dv ^* d\nu ^* \Big) dr + CR^3\\
& \leq C_\gamma R^2 \Big(  \int_0^{ |v_j-\bar v_i|}  \frac1{|v_j-\bar v_i|}  r^{1-\gamma} dr 
+ \int_{ |v_j-\bar v_i|} ^1 \frac1{r^{\gamma} } dr \Big)  + C R^3
\end{aligned}
$$ which gives the expected estimates. Similarly
$$
\begin{aligned}
\int \big |\log |v_i-v_j|\big| \, b(\nu ^*,v ^*)  \,   dv ^* d\nu ^* &=  \int   \int_{|v_i-v_j|} ^1 \frac1{r} dr    \, b(\nu ^*,v ^*)  \,   dv ^* d\nu ^*\\
& \leq CR^2 \left(  \int_0^{ |v_j-\bar v_i|}  \frac r{|v_j-\bar v_i|}  dr + \int_{ |v_j-\bar v_i|} ^1  dr  \right) \leq CR^3 \, .
\end{aligned}
$$
This ends the proof of Lemma~\ref{scattering-lem}.
\end{proof}

\begin{Rmk}\label{rmkcarleman}
The proof of Lemma~{\rm\ref{scattering-lem}} shows that in order to keep control on the collision integral the power~$\gamma$ of the singularity must not be too large (namely smaller than~$2$).
\end{Rmk}

\medskip

Finally the following result 
describes the size of a collision integral when relative velocities are prescribed to lie in a given rectangle.
\begin{Lem}
\label{scattering-lem 2} 
Consider two pseudo-particles $i$, $j$ as well as their first   parent~$1^*$.  
Denote by~$ \nu_{1^*} \in {\mathbb S}$ and~$v_{1^*} \in \R^2$ their scattering parameters.
We assume also that all the velocities  are bounded by~$R>1$.
Let $\cR$ be a rectangle with sides of length $\delta, \delta' $, then 
\begin{align}
\int    \indc_{v_i -v_j \in \cR}  \; \big| (v_{1^*} - v_{a(1^*)} ) \cdot \nu_{1^*} \big|  \,  dv_{1^*} d\nu_{1^*}  
  \leq C R^2 \min (\delta, \delta')     \big( |\log \delta| + |\log \delta' | + 1\big) ,
  \label{rectangle0}
%&   \int     \indc_{v_i -v_j \in \cR}  \prod_{\ell = 1^*, 2^*} \big| (v_{\ell} - v_{a(\ell)} ) \cdot \nu_{\ell} \big| \,  dv_\ell d\nu_\ell  
%  \leq CR^5 \delta \, \delta'   \big( |\log \delta| + |\log \delta' | + 1\big) .
%  \label{rectangle1}
\end{align} 
\end{Lem}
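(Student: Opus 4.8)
The plan is to use Carleman's parametrization to reduce the estimate to a two-dimensional computation over the variables $(V', V'_*)$, exactly as in the proof of Lemma~\ref{scattering-lem}. Recall that the map $(v_{1^*},\nu_{1^*})\mapsto (V',V'_*)$, with $V'_* := v_{1^*}-(v_{1^*}-v_{a(1^*)})\cdot\nu_{1^*}\,\nu_{1^*}$ and $V' := v_{a(1^*)}+(v_{1^*}-v_{a(1^*)})\cdot\nu_{1^*}\,\nu_{1^*}$, pushes forward the measure $|(v_{1^*}-v_{a(1^*)})\cdot\nu_{1^*}|\,dv_{1^*}d\nu_{1^*}$ onto $dV'\,dS(V'_*)$, where $dS$ is arclength on the line through $v_{a(1^*)}$ orthogonal to $V'-v_{a(1^*)}$. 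Now $v_i$ is one of $V'$ or $V'_*$ (a pseudo-particle velocity after scattering), and similarly $v_j$ is obtained by transport from $t_{2^*}$, so it is a fixed parameter; more precisely one of the two recolliding velocities coincides with one of $V', V'_*$ while the other is frozen. So it suffices to bound $\int \indc_{\{v_i-v_j\in\cR\}}\,dV'\,dS(V'_*)$ when, say, $v_i = V'_*$ (the case $v_i=V'$ is easier, as then the constraint confines $V'$ to a set of two-dimensional measure $O(\delta\delta')$ and the $dS$-integral contributes only a factor $O(R)$).

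The key step is the following geometric observation: the constraint $v_i = V'_* \in v_j+\cR$ confines $V'_*$ to a rectangle of sides $\delta,\delta'$; say $\delta\le\delta'$ without loss (otherwise swap the roles). For each admissible $V'_*$ (which ranges over a set of two-dimensional measure $\delta\delta'$), the point $V'$ must lie on the line through $v_{a(1^*)}$ perpendicular to $V'_*-v_{a(1^*)}$ and within $B_R$, hence contributes at most a length $O(R)$ to the $dS$-integral -- but this alone gives $O(R^2\delta\delta')$, which is not sharp enough when $\delta'$ is large. Instead, I would integrate in the other order: first integrate over $V'\in B_R$, and for fixed $V'$ estimate the $dS(V'_*)$-measure of the set where $V'_*\in v_j+\cR$. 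This is the intersection of the rectangle $v_j+\cR$ with the line $v_{a(1^*)}+\R\,(V'-v_{a(1^*)})^\perp$, so it is a segment of length controlled by $\min(\delta,\delta')$ divided by the sine of the angle between the line and the long axis of the rectangle -- and, crucially, this sine is bounded below away from a small angular sector of $V'$-directions. Integrating the resulting $1/\sin\theta$ singularity over the direction of $V'-v_{a(1^*)}$ produces precisely the logarithmic factor $|\log\delta|+|\log\delta'|+1$; more carefully, the length of the segment is at most $\min(\delta,\delta')$ (when the line is near-perpendicular to the short side) and at most $C\min(\delta,\delta')/\sin\theta$ in general, and $\int_0^\pi \min(1,\,c/\sin\theta\,)\,d\theta \le C(1+|\log c|)$ with $c$ of size $\delta'/\min(\delta,\delta')$ or so. Putting this together with the radial integration $\int_0^R r\,dr = R^2/2$ over $|V'-v_{a(1^*)}|$ gives the bound $C R^2 \min(\delta,\delta')(|\log\delta|+|\log\delta'|+1)$.

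I expect the main obstacle to be bookkeeping rather than anything deep: one must carefully handle the several scattering scenarios (whether $v_i$ equals $V'$ or $V'_*$, and whether $j$ is the second recolliding pseudo-particle or a frozen obstacle transported from an earlier collision time), and in each case identify which of the two Carleman variables is pinned by the rectangle constraint and which is integrated freely. In the scenario where the constrained variable is $V'$, the estimate is immediate from $|\cR| = \delta\delta'$ and the trivial bound on the orthogonal $dS$-integral, which actually yields the stronger bound $O(R^2\min(\delta,\delta')\max(\delta,\delta'))$; the delicate scenario is the one analyzed above, where the constrained variable is $V'_*$ and one genuinely needs the angular integration producing the logarithm. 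A minor point to check is that $v_j$ (and $v_{a(1^*)}$, which is a velocity of an already-existing pseudo-particle) may be treated as fixed parameters independent of the integration variables $(v_{1^*},\nu_{1^*})$; this is the case since they are determined by the history of the tree below the level $1^*$. Once these cases are laid out, each reduces to the elementary one-variable computations already performed in the proof of Lemma~\ref{scattering-lem}, and the constants depending only on $R$ through the stated powers.
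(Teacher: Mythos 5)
Your argument is essentially correct, but it follows a different route from the paper. The paper does not redo the Carleman computation for the rectangle: assuming $\delta\le\delta'\le 1$, it covers $v_j+\cR$ by $\lfloor\delta'/\delta\rfloor$ balls of radius $2\delta$ centred at points $w_k$ spaced by $\delta$ along the long axis, applies the already-established ball estimate \eqref{preimage-sphere1} of Lemma~\ref{scattering-lem} to each ball (giving $CR^2\delta\min(\delta/|w_k-\bar v_i|,1)$), and obtains the logarithm from the harmonic-type sum $\sum_k \delta/(|w_k-\bar v_i|+\delta)\le C(\log(\delta'/\delta)+1)$. You instead work directly in the Carleman variables $(V',dS(V'_*))$, fix the direction of $V'-\bar v_i$, bound the $dS$-measure of $\{V'_*\in v_j+\cR\}$ by the chord length of a line crossing the rectangle, and integrate the resulting $1/\sin\theta$ singularity over the direction, the cap on the chord supplying the logarithm of the aspect ratio; the radial integration gives the $R^2$. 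Both yield the stated bound; the paper's proof is shorter because it recycles \eqref{preimage-sphere1} as a black box, while yours is self-contained and makes the geometric origin of the log more transparent. Two small points to fix in your write-up: the cap on the chord length is the long side (or diagonal), of order $\delta'$, not $\min(\delta,\delta')$ — a line nearly parallel to the long axis can cross the rectangle along a segment of length $\approx\delta'$ — and correspondingly the angular integral should read $\int_0^\pi\min\bigl(c,\,1/\sin\theta\bigr)\,d\theta\le C(1+\log c)$ with $c\simeq\delta'/\delta$ (as written, $\int\min(1,c/\sin\theta)\,d\theta$ is trivially $\pi$ and proves nothing); with this correction the chord is $\le\min(\delta/|\sin\theta|,\sqrt2\,\delta')$ and the computation gives exactly $C\min(\delta,\delta')\bigl(1+\log(\delta'/\delta)\bigr)$ per unit of $r\,dr$. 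Also, in the easy case $v_i=V'$ your bound $CR\,\delta\delta'$ dominates the stated one only after noting that the rectangle matters only inside $B_{2R}(\bar v_i)$, so $\max(\delta,\delta')$ may be replaced by $CR$; this is harmless but worth saying.
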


\begin{proof}

Note that if $i,j$ are generated by the same collision, then   better 
estimates can be obtained from Lemma
\ref{joint-scattering-lem}. The case without scattering is also straightforward.
Thus from now, we assume that $v_i$ is given by  \eqref{eq: scattering relation}.

\medskip

To derive \eqref{rectangle0}, we suppose that $\delta \leq \delta' \leq 1$ and that the collision with $1^*$ takes place with~$i$ which had a velocity $\bar v_i$.
We cover the rectangle $v_j + \cR$ into~$\lfloor \delta'/\delta \rfloor$ balls of radius~$2 \delta$.
Let $\omega$ be the axis of the rectangle  $v_j + \cR$ and denote by $w_k = w_0 +  \delta k \, \omega$ the centers of the balls which are indexed by the integer $k \in \{ 0,  \dots,  \lfloor \delta'/\delta \rfloor \}$.
Applying \eqref{preimage-sphere1} to each ball, we get 
\begin{eqnarray}
 \int    \indc_{v_i -v_j \in \cR}  \; b(\nu_{1^*},v_{1^*})  \,  dv_{1^*} d\nu_{1^*}  
&\leq&  \sum_{k = 0}^{ \lfloor \delta'/\delta \rfloor}
\int    \indc_{ |v_i - w_k| \leq 2 \delta}  \; b(\nu_{1^*},v_{1^*})  \,  dv_{1^*} d\nu_{1^*}  \nonumber
\\
&\leq&   C R^2 \sum_{k = 0}^{ \lfloor \delta'/\delta \rfloor} \delta \min \left( {\delta \over |w_k -\bar v_i|}, 1 \right) ,
\nonumber\\
&\leq&   C R^2 \delta \sum_{k = 0}^{ \lfloor \delta'/\delta \rfloor}   {\delta \over |w_k -\bar v_i| + \delta }
\leq C R^2 \delta \left( \log (\frac{\delta'}{\delta}) +1 \right), \nonumber
\end{eqnarray}  
where the log divergence in the last inequality follows by summing over $k$.
This completes the proof of~\eqref{rectangle0}.

%
%\medskip
%
%
%
%We turn now to the derivation of \eqref{rectangle1} and suppose that $\delta \leq \delta'$.
%Applying~{\rm(\ref{carleman2})} to the intermediate upper bound
%$$  C R^2 \sum_{k = 0}^{ \lfloor \delta'/\delta \rfloor} \delta \min \left( {\delta \over |w_k -\bar v_i|}, 1 \right)$$
%in the previous inequality, we obtain a contribution for  each ball of radius $2 \delta$ of order  $O \big(R^5  \delta^2 \, |\log \delta| \big)$ after integrating over $2^*$.
%Summing over all these contributions, we find a bound $CR^5\delta \, \delta'  |\log \delta| $.

This completes the proof of Lemma \ref{scattering-lem 2}.
\end{proof}

%%%%%%%%%%%%%%%%%%%%%%%%%%%%%%%%%%%%%%%%%%
%%%%%%%%%%%%%%%%%%%%%%%%%%%%%%%%%%%%%%%%%%
%%%%%%%%%%%%%%%%%%%%%%%%%%%%%%%%%%%%%%%%%%

\section{Initial data estimates}
\label{sec: appendix Initial data estimates}

This section is devoted to the proof  of Proposition \ref{exclusion-prop2} stated page~\pageref{exclusion-prop2}.

Using the notation $X_{k,N} := \{ x_k, \dots, x_N \}$,
we write
\begin{align*}
\Big| \left(f_{N }^{0(s)}  -  f^{(s)} _0 \right) (Z_s)\indc_{{\mathcal D}_{\eps}^s}(X_s) \Big|
\leq M_\beta^{\otimes s}(V_s) \sum_{i=1}^s \big|g_{\alpha,0}(z_i)\big| \,
\Big| {\mathcal Z}_N^{-1}     \int \indc_{{\mathcal D}_{\eps}^N}(X_N) \, dX_{s+1,N}-1\Big| \\
+{}{\mathcal Z}_N^{-1}
M_\beta^{\otimes s}(V_s) \sum_{i=s+1}^N \Big|\int M_\beta (v_i) g_{\alpha,0}(z_i)
\indc_{{\mathcal D}_{\eps}^N}(X_N) \, dv_idX_{s+1,N}\Big| \, ,
\end{align*}
where ${\mathcal D}_{\eps}^N$ stands for the exclusion constraint on the positions (with a slight abuse of notation compared to \eqref{eq: phase space}).
The first term is estimated as in the proof of Proposition 3.3 in~\cite{GSRT}
$$
M_\beta^{\otimes s}(V_s) \sum_{i=1}^s \big|g_{\alpha,0}(z_i)\big| \,
\Big| {\mathcal Z}_N^{-1}     \int \indc_{{\mathcal D}_{\eps}^N}(X_N) \, dX_{s+1,N}-1\Big|
\leq
C ^s \eps \alpha    M_\beta ^{\otimes s} (V_s) \|g_{\alpha,0}\|_{L^\infty} \, .
$$
The exchangeability of the variables allows us to rewrite the second term as
\begin{align*}
I(Z_s)&:= {\mathcal Z}_N^{-1}
M_\beta^{\otimes s}(V_s) \sum_{i=s+1}^N \Big|\int M_\beta (v_i) g_{\alpha, 0}(z_i)
\indc_{{\mathcal D}_{\eps}^N}(X_N) \, dv_i dX_{s+1,N}\Big|  \\
&\,   \leq
%\exp( C \alpha^2)
(N-s) M_\beta^{\otimes s}(V_s){\mathcal Z}_N^{-1}
\\
& \qquad \qquad \Big| \int M_\beta (v_{s+1}) g_{\alpha, 0}(z_{s+1})
\Big( \prod_{k \neq s+1} \indc_{|x_k-x_{s+1} |>  \eps} \Big) \;
\chi_{s+2}(X_N) \, dz_{s+1} dX_{s+2,N}\Big|   \, ,
\end{align*}
where we used  %${\mathcal Z}_N^{-1} \leq C \exp (C \alpha^2)$ (see \eqref{eq: ZN}) and
the notation
\begin{equation}
\label{eq: chi s+2}
\chi_{s+2}(X_N):= \hat \chi^+_{s+2}(X_{s+2,N}) \, \hat \chi^-_{s+2}(X_N)
%\prod_{s+2 < \ell \leq N \atop 1 \leq k \leq N, \ k \not = \ell} \indc_{|x_k-x_\ell|> \eps} ,
\end{equation}
which distinguishes the interaction of the particles $X_{s+2,N}$ with themselves and with $X_s$, defining
$$
\hat \chi^+_{s+2}(X_{s+2,N}):=
%\frac{1}{ {\mathcal Z}_N}
\prod_{s+2 \leq \ell < k \leq N}  \indc_{|x_k-x_\ell|> \eps}
\quad \text{and} \quad
\hat \chi^-_{s+2}(X_N):=\prod_{s+2 \leq \ell \leq N \atop 1 \leq k \leq s}
\indc_{|x_k-x_\ell|> \eps} \,.
$$
The exclusion between $s+1$ and the rest of the system is also decomposed into a term for the interaction with  $X_s$ and another one for the interaction with $X_{s+2,N}$. Defining
$$
\chi^-_{s+1}(X_{s+1}) := \prod_{k \leq  s} \indc_{|x_k-x_{s+1} | > \eps}\quad \text{and} \quad  \chi^+_{s+1}(X_{s+1,N}):=\prod_{k \geq  s+2} \indc_{|x_k-x_{s+1} | > \eps}
$$
we have
\begin{align*}
\prod_{k \neq s+1} \indc_{|x_k-x_{s+1} | > \eps}
& = \chi^-_{s+1}(X_{s+1}) \, \chi^+_{s+1}(X_{s+1,N}) \\
&= \chi^+_{s+1}(X_{s+1,N})  - \big( 1- \chi^-_{s+1}(X_{s+1}) \big)  \chi^+_{s+1}(X_{s+1,N}) \,.
\end{align*}
We deduce that
\begin{equation*}
I(Z_s)  \leq  M_\beta^{\otimes s}(V_s) \Big( I_1 (Z_s)  + I_2 (Z_s) \Big)
\end{equation*}
with
\begin{align*}
\begin{cases}
\displaystyle I_1 (Z_s): =  {\mathcal Z}_N^{-1} N \int M_\beta (v_{s+1})  \big| g_{\alpha, 0}(z_{s+1})  \big|
\big( 1- \chi^-_{s+1}(X_{s+1}) \big)  \hat  \chi^+_{s+2}(X_{s+2,N}) \, dz_{s+1} dX_{s+2,N}\,  , \\
\displaystyle I_2 (Z_s) := {\mathcal Z}_N^{-1}N  \Big| \int M_\beta (v_{s+1}) g_{\alpha, 0}(z_{s+1})
\chi^+_{s+1}(X_{s+2,N})  \; \chi_{s+2}(X_N) \, dz_{s+1} dX_{s+2,N}\Big|\,.
\end{cases}
\end{align*}
From \eqref{eq: ZNZN-k} and the assumption $N \e = \alpha \ll 1 /\eps$, we get
$$
{\mathcal Z}_N^{-1}\int \hat \chi^+_{s+2}(X_{s+2,N}) \,  dX_{s+2,N}
=
\frac{{\mathcal Z}_{N-s-2} }{ {\mathcal Z}_N} \leq
\exp  \big(C s \alpha \eps \big)  \leq \exp  \big(C s \big) \,.
$$
We infer that the term $I_1$ is bounded by the fact that $x_{s+1}$ is close to $X_s$
\begin{align*}
I_1 (Z_s)
\leq s N \eps^2    \exp  \big(C s \big) \|g_{\alpha,0}\|_{L^\infty} \leq s
\exp \big(C s \big) \alpha \eps     \|g_{\alpha,0}\|_{L^\infty}\,.
\end{align*}
Using the assumption  $\displaystyle \int_\D M_\beta g_{\alpha,0} (z) dz = 0$, the second term is rewritten as
\begin{align*}
I_2 (Z_s) ={\mathcal Z}_N^{-1} N  \big|
\int M_\beta (v_{s+1}) g_{\alpha, 0}(z_{s+1})
\big( 1 - \chi^+_{s+1}(X_{s+1,N}) \big)  \;
\chi_{s+2}(X_N) \, dz_{s+1} dX_{s+2,N}  \big|\,.
\end{align*}
Plugging the identity \eqref{eq: chi s+2}
$$
\chi_{s+2}(X_N) = \hat \chi^+_{s+2}(X_{s+2,N}) - \big( 1 - \hat \chi^-_{s+2}(X_N) \big)
\hat  \chi^+_{s+2}(X_{s+2,N})
$$
we  distinguish two more contributions $I_2 (Z_s) \leq I_{2,1} (Z_s) + I_{2,2} (Z_s)$ with
\begin{align*}
\begin{cases}
\displaystyle I_{2,1} (Z_s) :=   {\mathcal Z}_N^{-1}  N \|g_{\alpha,0}\|_{L^\infty} \,
\int  \big( 1 -   \chi^+_{s+1}(X_{s+1,N}) \big)  \; \big( 1 - \hat \chi^-_{s+2}(X_N) \big)
\hat  \chi^+_{s+2}(X_{s+2,N}) \, dX_{s+1,N} \, ,\\
\displaystyle I_{2,2} (Z_s): =
 {\mathcal Z}_N^{-1} N  \Big| \int M_\beta (v_{s+1}) g_{\alpha, 0}(z_{s+1})
\big( 1 -   \chi^+_{s+1}(X_{s+1,N}) \big)  \;
\hat  \chi^+_{s+2}(X_{s+2,N}) \, dz_{s+1} dX_{s+2,N}  \Big|\,. \\
\end{cases}
\end{align*}
The term $I_{2,1}$ takes into account two constraints : $s+1$ is close to a particle in $X_{s+2,N}$ and one particle in
$X_{s+2,N}$ is close to $X_s$. Since $N \eps = \alpha$, we deduce that
$$
I_{2,1} (Z_s) \leq N  s \eps^2 \, (N-s-1) ^2  \eps^2 \, \frac{{\mathcal Z}_{N-s-3} }{ {\mathcal Z}_N} \|g_{\alpha,0}\|_{L^\infty}
\leq s \alpha^3 \eps \exp( C s)  \|g_{\alpha,0}\|_{L^\infty}\,.
$$
The term $I_{2,2}$ does not depend on  $X_s$, thus one can integrate over $z_{s+1}$ and use again the
assumption $\displaystyle\int_\D M_\beta g_{\alpha,0} (z) dz = 0$. To see this, it is enough to note that the function
$$
x_{s+1} \mapsto \int \big( 1 - \chi^+_{s+1}(X_{s+1,N}) \big)  \;
\hat \chi^+_{s+2}(X_{s+2,N}) dX_{s+2,N}
$$
is independent of $x_{s+1}$ thanks to the periodic structure of $\D^{N- s-2}$. Thus $I_{2,2} (Z_s) = 0$.

\medskip
Combining the previous estimates, we conclude Proposition \ref{exclusion-prop2}. \qed

%%%%%%%%%%%%%%%%%%%%%%%%%%%%%%%%%%%%%%%%%%%%%%%%
%%%%%%%%%%%%%%%%%%%%%%%%%%%%%%%%%%%%%%%%%%%%%%%%
%%%%%%%%%%%%%%%%%%%%%%%%%%%%%%%%%%%%%%%%%%%%%%%%


\begin{thebibliography}{99}
	
		
\bibitem{alexanderthesis} R. Alexander,  The Infinite Hard Sphere System,  {\it Ph.D. dissertation,} Dept. Mathematics, Univ. California, Berkeley, 1975.

\bibitem{ayi} N. Ayi, From Newton's law to the linear Boltzmann equation without cut-off, {\it accepted for publication, Comm. Math. Phys.}

\bibitem{BGL2} C. Bardos, F. Golse, C.D. Levermore,
{\it Fluid Dynamic Limits of Kinetic Equations II: Convergence
Proofs for the Boltzmann Equation},
Comm. Pure \& Appl. Math {\bf 46} (1993), 667--753.



\bibitem{BLLS} H. van Beijeren, O.E. Lanford, J. Lebowitz, H. Spohn,
{\it Equilibrium time correlation functions in the low-density limit},
J. Statist. Phys. {\bf 22} (1980),  2, 237--257.



\bibitem{BGSR1} T. Bodineau, I. Gallagher, L. Saint-Raymond, 
{\it  The Brownian motion  as the  limit of a   deterministic system of hard-spheres},   {Inventiones}  (2015), 1--61.

%@article{
%year={2015},
%issn={0020-9910},
%journal={Inventiones mathematicae},
%doi={10.1007/s00222-015-0593-9},
%title={The Brownian motion as the limit of a deterministic system of hard-spheres},
%url={http://dx.doi.org/10.1007/s00222-015-0593-9},
%publisher={Springer Berlin Heidelberg},
%author={Bodineau, Thierry and Gallagher, Isabelle and Saint-Raymond, Laure},
%pages={1-61},
%language={English}
%}
%
%
%\bibitem{Bogoliubov} N. Bogoliubov,   {\it Problems of Dynamical Theory in Statistical Physics}, in Studies in Statistical Mechanics, J. de Boer and G.E. Uhlenbeck, Ed. Interscience, New York, 1962.

%  
%\bibitem{Born} M. Born, H. S.   Green, A general kinetic theory of liquids. I. The molecular distribution functions, {\it Proc. Roy. Soc. London.} Ser. A. {\bf 188} (1946), p. 10-18. 


\bibitem{caflisch} R.E. Caflisch:
{\it The fluid dynamic limit of the nonlinear Boltzmann equation},
Comm. on Pure and Appl. Math. {\bf 33} (1980), 651--666.


\bibitem{CIP} C. Cercignani, R. Illner, M. Pulvirenti,
{\it   The Mathematical Theory of Dilute Gases},
Springer Verlag, New York NY, 1994.


\bibitem{diperna-lions}  R. DiPerna, P.-L. Lions, 
{\it On the Cauchy problem for Boltzmann equations: global existence and weak stability}, {Ann. of Math.} (2) {\bf 130} (1989), no. 2, p. 321-366.



\bibitem{GSRT} I. Gallagher, L. Saint-Raymond, B. Texier,
{\it From Newton to Boltzmann : the case of hard-spheres and short-range potentials}, {  ZLAM} (2014).


\bibitem{golse-levermore} F. Golse, D. Levermore, 
{\it The Stokes-Fourier and Acoustic Limits for the
Boltzmann Equation},
Comm. on Pure and Appl. Math. {\bf 55} (2002), 336--393.


\bibitem{GSR}
F. Golse, L. Saint-Raymond,
{\it The Incompressible Navier-Stokes Limit of the Boltzmann Equation for Hard Cutoff Potentials},
J. Math. Pures Appl. {\bf 91}  (2009), 508--552.

\bibitem{grad} H. Grad, {\it Principles of the kinetic theory of gases}, { Handbuch der Physik} {\bf 12}, Thermodynamik der Gase p. 205-294 Springer-Verlag, Berlin-Gottingen-Heidelberg, 1958.
 
\bibitem{guo} Y. Guo,
{\it The Boltzmann equation in the whole space,} Indiana Univ. Math. J. {\bf 53} (2004), 1081--1094.

\bibitem{HS} E. Hewitt, L. Savage,  {\it Symmetric measures on Cartesian products}. { Trans. Amer. Math.
Soc.} {\bf 80} (1955),  470-501.


\bibitem{hilbert0}
D. Hilbert, {\it Mathematical problems} Bull. Amer. Math. Soc. {\bf 8} (1902), no. 10, 437--479.


\bibitem{hilbert} D. Hilbert,
{\it Begr\"undung der kinetischen Gastheorie}
Math. Ann. {\bf 72} (1912), 562--577.



\bibitem{king} F. King,  BBGKY hierarchy for positive potentials, {\it Ph.D. dissertation,} Dept. Mathematics, Univ. California, Berkeley, 1975.


%
%\bibitem{Kirkwook} J. G. Kirkwood, {The Statistical Mechanical Theory of Transport Processes I. General Theory}, {\it  Journal of Chemical Physics} {\bf 14} (1946) p.180-202.


 \bibitem{lanford} O.E. Lanford,
 {\it Time evolution of large classical systems}, 
Lect. Notes in Physics {\bf 38}, J. Moser ed., 1--111, Springer Verlag (1975).


\bibitem{LPS}
J. L. Lebowitz, J. K. Percus, J. Sykes, 
{\it  Time evolution of the total distribution function of a one-dimensional system of hard rods}, Physical Review 
{\bf 171} (1968), (1), 224.


\bibitem{liu} T.-P. Liu,  S.-H. Yu~: {\it  Boltzmann equation: micro-macro decompositions and positivity of shock profiles},  Comm. Math. Phys. {\bf 246} (2004),  133--179. 

\bibitem{perthame}
B. Perthame:
{\it Introduction to the collision models in Boltzmann's theory},
in ``Modeling of Collisions", P.-A. Raviart ed., Masson, Paris, 1997.


%\bibitem{P} R. Pettersson,
%On weak and strong convergence to equilibrium for solutions to the linear Boltzmann equation. 
%{\it J. Statist. Phys.}{\bf  72} (1993),  355-380. 

\bibitem{PSS} M. Pulvirenti, C. Saffirio, S. Simonella, 
{\it  On the validity of the Boltzmann equation for short range potentials},  Reviews in Mathematical Physics, {\bf 26} (2014), (2), 1450001.


\bibitem{PS} M. Pulvirenti, S. Simonella, 
{\it  The Boltzmann-Grad limit of a hard sphere system: analysis of the correlation error},
 Invent. math. (2016). doi:10.1007/s00222-016-0682-4


\bibitem{Rez}
F. Rezakhanlou,  {\it Equilibrium fluctuations for the discrete Boltzmann equation}, Duke mathematical journal, {\bf 93}  (1998), (2), 257-288.


\bibitem{Simonella}
S. Simonella, {\it  Evolution of correlation functions in the hard sphere dynamics},
Journal of Statistical Physics,  {\bf 155} (2014), (6), 1191-1221.


 
\bibitem{S1} H. Spohn, Boltzmann hierarchy and Boltzmann equation,  in {\it Kinetic theories and the Boltzmann equation } (Montecatini, 1981),  207-220. 

\bibitem{S2}
H. Spohn, Large scale dynamics of interacting particles, {\it Springer-Verlag}  {\bf 174} (1991).

\bibitem{S3}
H. Spohn,  {\it  Fluctuations around the Boltzman equation}, Journal of Statistical Physics, {\bf 26} (1981), (2), 285-305.

\bibitem{uchiyama} K. Uchiyama, {\it Derivation of the Boltzmann equation from particle dynamics}, 
Hiroshima Math. J. {\bf 18} (1988), (2),  245-297. 

\bibitem{ukai} S. Ukai. {\it On the existence of global solutions of a mixed problem
for the nonlinear Boltzman equation},
Proc. of the Japan Acad. {\bf 50} (1974), 179-184.

%
%  \bibitem{Yvan} J. Yvon, {\it La th\'eorie statistique des fluides et l'\'equation d'\'etat}, Actual. Sci. et Indust. {\bf 203} (Paris, Hermann), 1935.
%



 
%%%%%%%%% ajouts %%%%%%%%%%%%%%%

\end{thebibliography}
\end{document}